\documentclass[12pt]{amsart}
\usepackage{amssymb,amscd,verbatim,
mathrsfs, latexsym,amsmath,amsthm}
\usepackage{a4wide}
\usepackage{color}

\numberwithin{equation}{section}
\newtheorem{theorem}{Theorem}[section]
\newtheorem{proposition}[theorem]{Proposition}
\newtheorem{lemma}[theorem]{Lemma}
\newtheorem{corollary}[theorem]{Corollary}
\theoremstyle{definition}
\newtheorem{definition}[theorem]{Definition}

\newtheorem{assumption}[theorem]{Assumption}

\def\Lra{\Leftrightarrow}

\def\tr{\text{tr}}
\DeclareMathOperator{\IM}{Im} \DeclareMathOperator{\RE}{Re}

\DeclareMathOperator{\Res}{Res}
\DeclareMathOperator{\Ker}{Ker}

\DeclareMathOperator{\sgn}{sgn}
\DeclareMathOperator{\Li}{Li}
\DeclareMathOperator{\Log}{Log}


\def\sR{\hbox{I\kern-.1667em\hbox{R}}}

\newcommand{\R}{\mathbb R}

\newcommand{\C}{\mathbb C}

\newcommand{\Z}{\mathbb Z}

\newcommand{\N}{\mathbb N}
\newcommand{\Q}{\mathbb Q}
\newcommand{\bH}{\mathbb H}

\newcommand{\mm}{\underline{m}}
\newcommand{\ve}{\varepsilon}

\def\tr{\hbox{Tr}}

\def\tr{\mathrm{tr}}


\begin{document}
\title[Selberg type zeta functions for Hilbert modular surfaces]
{Differences of the Selberg trace formula 
and Selberg type zeta functions for Hilbert modular surfaces}

\author[Y. Gon]{Yasuro Gon}
\email{ygon@math.kyushu-u.ac.jp}
\address{Faculty of Mathematics\\ Kyushu University\\
Motooka \\ Fukuoka 819-0395\\ Japan}

\thanks{2000 Mathematics Subject Classification. 11M36,11F72 \\
This work is partially supported by Grant-in-Aid for Scientific Research (C) no. 23540020.}

\date{\today}

\begin{abstract}
{We present the first example of the Selberg type zeta function
for noncompact higher rank locally symmetric spaces.
We study certain Selberg type zeta
functions and Ruelle type zeta functions 
attached to the Hilbert modular 
group of a real quadratic field. We show that they have meromorphic extensions
to the whole complex plane and satisfy functional equations.
The method is based on considering the differences among several Selberg 
trace formulas with different weights
for the Hilbert modular group. 
Besides as an application of the differences
of the Selberg trace formula, we also obtain an asymptotic average 
of the class numbers of indefinite binary quadratic forms over the real quadratic integer ring.}
\end{abstract}

\keywords{Hilbert modular surface; Selberg zeta function.}

\maketitle

\setcounter{tocdepth}{2}
\tableofcontents

\section{Introduction}

In this article, we consider Selberg type zeta functions attached to
the Hilbert modular group of a real quadratic field.
First of all, we recall the original Selberg zeta function constructed by Selberg in 1956.
Let $\Gamma$ be a co-finite discrete subgroup
of  $\mathrm{PSL}(2,\R)$ acting on the upper half plane $\bH$. 
Take a hyperbolic element $\gamma \in \Gamma$, that is $|\tr(\gamma)| > 2$,
then the centralizer of $\gamma$ in $\Gamma$ is infinite cyclic  
and $\gamma$ is conjugate in $\mathrm{PSL}(2,\R)$ to 
$\Bigl( 
\begin{array}{cc}
N(\gamma)^{1/2} & 0 \\
0 & N(\gamma)^{-1/2}
\end{array} \Bigr) $
with $N(\gamma)>1$. 
Put $\text{Prim}(\Gamma)$ be the set of $\Gamma$-conjugacy classes of 
the primitive hyperbolic elements in $\Gamma$. 
The Selberg zeta function for $\Gamma$ is defined by 
the following Euler product:
\[
Z_\Gamma(s):= \prod_{p \in \text{Prim}(\Gamma)} \prod_{k=0}^{\infty}
\Bigl( 1- N(p)^{-(k+s)} \Bigr) \qquad \mbox{for $\RE(s)> 1$}.
\]
Selberg defined this zeta function 
and proved (Cf. Selberg \cite{Sel1,Sel2}) :
\begin{enumerate}
\item $Z_{\Gamma}(s)$ defined for $\RE(s)> 1$ extends meromorphically 
over the whole complex plane.
\item $Z_{\Gamma}(s)$ has ``non-trivial" zeros at $s=\frac{1}{2} \pm i r_{n}$
of order equal to the multiplicity of the eigenvalue $1/4+r_n^2$
of  the Laplacian $\Delta_0 = -y^2(\frac{\partial^2}{\partial x^2}
+\frac{\partial^2}{\partial y^2})$ acting on 
$L^2(\Gamma \backslash \bH)$.
\item $Z_{\Gamma}(s)$ satisfies a functional equation between $s$ and $1-s$.
\end{enumerate}
The theory of Selberg zeta functions for locally symmetric
spaces of rank one is evolved by Gangolli \cite{Gang} 
(compact case) and Gangolli-Warner \cite{GW} (noncompact case).
For higher rank cases, Deitmar  \cite{D} defined and studied 
``generalized Selberg zeta functions" for compact higher rank locally symmetric spaces.  
(See also Kelmer-Sarnak \cite{KS}).
Therefore, our concern is to define and study ``Selberg type zeta functions" 
for {\it noncompact} higher rank locally symmetric spaces such as
Hilbert modular surfaces.  

Let us explain our main results on Selberg type zeta functions 
for Hilbert modular surfaces in more detail.
Let
$K/\Q$ be a real quadratic field with class number one and
$\mathcal{O}_K$ be the ring of integers of $K$. Put $D$ be the
discriminant of $K$ and $\varepsilon > 1 $ be the fundamental
unit of $K$. 
We denote the generator of $\mathrm{Gal}(K/\Q)$
by $\sigma$ and put $a' := \sigma(a)$ and $N(a) :=a a'$ 
for $a \in K$. 
We also put
$\gamma' =  \Bigl( 
\begin{array}{cc}
a' & b' \\
c' & d'
\end{array} \Bigr)$ for 
$\gamma =  \Bigl( 
\begin{array}{cc}
a & b \\
c & d
\end{array} \Bigr) \in \mathrm{PSL}(2, \mathcal{O}_K)$.
Let $\Gamma_{K} = \{ (\gamma, \gamma') \, | \, 
\gamma \in \mathrm{PSL}(2, \mathcal{O}_K) \}$ 
be the Hilbert modular group of $K$.
It is known that $\Gamma_{K}$ is a
co-finite ({\it non-cocompact}) irreducible discrete subgroup 
of $\mathrm{PSL}(2, \R) \times \mathrm{PSL}(2, \R)$ and $\Gamma_K$ acts on 
the product $\bH^2$ of two copies of the upper half plane $\bH$ 
by component-wise linear fractional transformation.
$\Gamma_K$ have only one cusp $(\infty,\infty)$, i.e. 
$\Gamma_K$-inequivalent parabolic fixed point.
$X_K := \Gamma_K \backslash \bH^2$ 
is called the Hilbert modular surface.

Let $(\gamma, \gamma') \in \Gamma_{K}$ be hyperbolic-elliptic,
i.e, $|\tr(\gamma)|>2$ and $|\tr(\gamma')|<2$. 
Then the centralizer of hyperbolic-elliptic 
$(\gamma, \gamma')$ in $\Gamma_K$ is infinite cyclic.

\begin{definition}[Selberg type zeta function for $\Gamma_{K}$ with the weight $(0,m)$]
For an even integer $m \ge 2$, we define
\label{def:zeta}
\[
Z_{K}(s;m) := \prod_{(p,p')} 
\prod_{k=0}^{\infty} \Bigl( 1-e^{i (m-2) \omega} \, N(p)^{-(k+s)} 
\Bigr)^{-1} 
\quad \mbox{for $\RE(s) >1$} 
\]
\end{definition}

Here, $(p,p')$ run through the set of primitive hyperbolic-elliptic 
$\Gamma_K$-conjugacy classes of $\Gamma_K$, and $(p,p')$ is conjugate 
in $\mathrm{PSL}(2,\R)^2$ to
\[ (p, p') \sim \Bigl( 
\Bigl( 
\begin{array}{cc}
N(p)^{1/2} & 0 \\
0 & N(p)^{-1/2}
\end{array} \Bigr), 
\, 
\Bigl( 
\begin{array}{cc}
\cos \omega & - \sin \omega \\
\sin \omega & \cos \omega
\end{array} \Bigr)\Bigr).
\]
Here, $N(p)>1$, $\omega \in (0,\pi)$ and $\omega \notin \pi \Q$.
The product is absolutely convergent for $\RE(s)>1$.

Our main theorems on analytic properties of $Z_K(s;m)$
are followings. 
\begin{theorem}[Theorems \ref{th:a1} and \ref{th:b1}] \label{th:s1} 
For an even integer $m \ge 2$, 
$Z_K(s;m)$ a priori defined for 
$\RE(s) >1$ has a meromorphic extension over the whole complex plane. 
\end{theorem}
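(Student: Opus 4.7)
The plan is to derive the meromorphic extension of $Z_K(s;m)$ by studying its logarithmic derivative and identifying it as the hyperbolic-elliptic geometric contribution in a suitably chosen Selberg trace formula for $\Gamma_K$. Expanding the Euler product formally, for $\RE(s)>1$,
\[
\frac{Z'_K(s;m)}{Z_K(s;m)} = \sum_{(p,p')} \log N(p) \sum_{n=1}^{\infty} \frac{e^{in(m-2)\omega}\, N(p)^{-ns}}{1 - N(p)^{-n}},
\]
where $(p,p')$ ranges over primitive hyperbolic-elliptic classes and $\omega = \omega(p,p')$. This has precisely the shape of the hyperbolic-elliptic orbital-integral contribution produced by a tensor-product test function $h_1(r_1)\, h_2^{(m)}(r_2)$ on $\mathrm{PSL}(2,\R) \times \mathrm{PSL}(2,\R)$, with $h_2^{(m)}$ selected so that its orbital integral at an elliptic rotation of angle $\omega$ reproduces the factor $e^{i(m-2)\omega}$ (the natural candidate being the kernel attached to the weight $m$ holomorphic discrete series, or equivalently the resolvent of the weight $m$ Maass Laplacian on the second $\bH$-factor).

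The key step is to take an appropriate difference of two such trace formulas, for instance the weight-$m$ trace formula minus the weight-$2$ trace formula, so that all contributions insensitive to the weight (identity term, most hyperbolic-hyperbolic terms, some parabolic/unipotent constants, etc.) cancel. What remains is an identity whose left-hand side is the hyperbolic-elliptic sum above (or rather its difference between weights) and whose right-hand side is a combination of: (i) finite spectral data coming from eigenvalues of the mixed Laplacian $\Delta_0 \otimes \Delta_m$ that are present in weight $m$ but not in weight $2$, (ii) explicit archimedean gamma and trigonometric factors, and (iii) the logarithmic derivative of a scattering determinant attached to the single cusp $(\infty,\infty)$. Each of these terms admits a meromorphic continuation in $s$ by classical spectral theory, so $Z'_K(s;m)/Z_K(s;m)$ does too; integrating, with the constant of integration pinned down by the Euler product on $\RE(s)>1$, yields the meromorphic continuation of $Z_K(s;m)$ asserted in the theorem.

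The hard part will be the parabolic contribution. Since $\Gamma_K$ is noncocompact with cusp $(\infty,\infty)$, the weight-$(0,m)$ test function generates nontrivial unipotent orbital integrals and a continuous-spectrum integral against the scattering matrix of $\Gamma_K$, and the meromorphic behavior of these pieces in $s$ must be tracked precisely; in particular one must verify that the apparent poles arising from the scattering integral either cancel between spectral and geometric sides or are absorbed into the stated meromorphic structure of $Z_K(s;m)$. A secondary technical point is to justify the convergence needed to apply the trace formula to these non-compactly-supported tensor test functions and to interchange limits with the geometric sum; this is where one likely splits the argument into the two cases of Theorems \ref{th:a1} and \ref{th:b1}, treating the ``bulk'' spectral/discrete part and the Eisenstein/scattering part by separate regularization procedures.
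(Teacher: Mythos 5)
Your overall strategy---identify $Z_K'/Z_K$ with the hyperbolic-elliptic contribution to a trace formula, difference between weights so that weight-insensitive terms cancel, and continue the remaining terms one by one---is indeed the paper's strategy, but two of your specific claims would derail the argument as written. First, the differencing scheme: the paper does not subtract the weight-$2$ formula from the weight-$m$ formula, but the weight-$(0,m-2)$ formula from the weight-$(0,m)$ formula. This consecutive-weight difference is essential: via the Maass raising/lowering operators the spectral side collapses onto $\Ker(\Lambda_m^{(2)})$ and $\Ker(K_{m-2}^{(2)})$, and on the geometric side every surviving term acquires the common factor $h_2\bigl(\tfrac{i(m-1)}{2}\bigr)$, so that a \emph{second} difference (of the normalized single differences at $m$ and at $m-2$) eliminates the auxiliary test function $h_2$ altogether and leaves a one-variable identity in $h_1$ alone (Theorems \ref{th:dtrf} and \ref{th:ddtrf}). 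Your ``weight $m$ minus weight $2$'' difference does not produce this telescoping, and your claim that the identity term cancels is false: the identity term is weight-dependent, survives the difference as $(m-1)h_2(\tfrac{i(m-1)}{2})\cdot\frac{\mathrm{vol}}{16\pi^2}\int r h_1(r)\tanh(\pi r)\,dr$, and is precisely what produces the trivial zeros/poles at $s=-k$ and the Weyl law. (Your worry about the parabolic/scattering terms, on the other hand, largely evaporates: after the double difference they reduce to the elementary factors $\zeta_\ve(s+\tfrac m2-1)\zeta_\ve(s+\tfrac m2-2)^{-1}$, and for $m\ge 4$ the purely parabolic piece vanishes because $\sgn(m-1)-\sgn(m-3)=0$; only $m=2$ needs separate treatment.)

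Second, and more seriously, meromorphic continuation of $Z_K'/Z_K$ does not by itself give meromorphic continuation of $Z_K$: you must also show that every pole of the logarithmic derivative is simple with \emph{integer} residue, otherwise ``integrating and exponentiating'' produces branch points rather than zeros and poles. This is where the paper does most of its work in Theorem \ref{th:a1}: after inserting the resolvent-type test function $h_1(r)=\bigl(r^2+(s-\tfrac12)^2\bigr)^{-1}+\sum_h c_h(s)(r^2+\beta_h^2)^{-1}$, one computes $\Res_{s=-k}$ of the identity-plus-elliptic contribution and shows it equals $(2k+1)E(X_K)+2\sum_j[k/\nu_j]-2kN-\sum_j\beta_{k,j}(m)\in\Z$, using Siegel's formula $\mathrm{vol}(\Gamma_K\backslash\bH^2)/(4\pi^2)=2\zeta_K(-1)$ and the Euler characteristic formula $E(X_K)=2\zeta_K(-1)+\sum_j(\nu_j-1)/\nu_j$. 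Without this integrality step your proof is incomplete.
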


Our Selberg zeta functions $Z_{K}(s;m)$ have also ``non-trivial" zeros or poles
and they have connections with the eigenvalues of two Laplacians.
Let $\Delta_{0}^{(1)} := -y_1^2(\frac{\partial^2}{\partial x_1^2}
   +\frac{\partial^2}{\partial y_1^2})$
and 
$\Delta_{m}^{(2)} := -y_2^2(\frac{\partial^2}{\partial x_2^2}
   +\frac{\partial^2}{\partial y_2^2}) 
+ i  m \, y_2 \frac{\partial}{\partial x_2}$
be the Laplacians of weight $0$ and $m$ 
for $(z_1,z_2) \in \bH^2$.
Two Laplacians $\Delta_0^{(1)}$ and $\Delta_{m}^{(2)}$ act on 
$L^2_\text{dis} ( \Gamma_{K} \backslash \bH^2 
; (0,m) )$, the space of Hilbert Maass forms 
of weight $(0,m)$. (See Definition \ref{def:HM} for definition).

\begin{theorem}[Theorem \ref{th:a1}] \label{th:s2}
For an even integer $m \ge 4$, 
\begin{enumerate}
\item $Z_{K}(s;m)$ has ``non-trivial" zeros at  
$s=\frac{1}{2} \pm i \rho_j(m)$ of order equal to the multiplicity 
of the eigenvalue $\frac{1}{4}+\rho_j(m)^2$ of $\Delta_0^{(1)}$ acting
on $\Ker(\Lambda_m^{(2)})$, 
\item $Z_{K}(s;m)$ has ``non-trivial" poles at  
$s=\frac{1}{2} \pm i \rho_j(m-2)$ of order equal to the multiplicity 
of the eigenvalue $\frac{1}{4}+\rho_j(m-2)^2$ of $\Delta_0^{(1)}$ acting
on $\Ker(\Lambda_{m-2}^{(2)})$.
\end{enumerate}
Here, 
\[ \Ker(\Lambda_{q}^{(2)}) =
\biggl\{  f \in  L^2_{dis} \Bigl( \Gamma_{K} \backslash \bH^2 
\, ; \, (0,q) \Bigr)  \, \Bigl| \,  \Delta_{q}^{(2)} f  = \frac{q}{2} \Bigl( 1-\frac{q}{2} \Bigr) \, f  
\biggr\} \]
for $q=m$ and $m-2$ 
and $\Lambda_q^{(2)}  \colon L^2_{dis} ( \Gamma_{K} \backslash \bH^2 
; (0,q) ) \to L^2_{dis} ( \Gamma_{K} \backslash \bH^2 
; (0,q-2) )$ is a ``weight down" Maass operator. 
For ``trivial zeros" of $Z_{K}(s;m)$, see Theorem \ref{th:a1}. 
\end{theorem}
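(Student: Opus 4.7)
The plan is to derive an explicit formula for the logarithmic derivative $(Z_K'/Z_K)(s;m)$ from a difference of two Selberg trace formulas on the Hilbert modular surface $X_K$---one for Maass forms of weight $(0,m)$ and one for weight $(0,m-2)$---and then to read the locations and multiplicities of the non-trivial zeros and poles off the spectral side. The first step is to expand the Euler product and differentiate, obtaining
\[
\frac{Z_K'(s;m)}{Z_K(s;m)} = -\sum_{(p_0,p_0')} \log N(p_0) \sum_{n\geq 1} \frac{e^{in(m-2)\omega}\, N(p_0)^{-ns}}{1 - N(p_0)^{-n}},
\]
summed over primitive hyperbolic-elliptic classes $(p_0,p_0')$. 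This is the expression that must be matched against the geometric side of the trace formula.

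Next, one applies the Selberg trace formula for $\Gamma_K$ acting on Maass forms of weight $(0,k)$, with test function $h_s$ representing the resolvent $[s(1-s)-1/4-r^2]^{-1}$ of $\Delta_0^{(1)}$, first for $k=m$ and then for $k=m-2$, and takes the difference. The hyperbolic-elliptic contribution carries a weight-dependent factor $\chi_k(\omega)$ coming from the elliptic (second) factor, and a Chebyshev-type identity of the form $\chi_m(\omega) - \chi_{m-2}(\omega) = e^{i(m-2)\omega} + e^{-i(m-2)\omega}$ is precisely what is needed so that the difference of the hyperbolic-elliptic geometric terms reproduces $(Z_K'/Z_K)(s;m)$ together with its complex conjugate, modulo the remaining geometric contributions (identity, elliptic-elliptic, and parabolic/cuspidal terms).

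On the spectral side, one exploits the Maass weight-down operator $\Lambda_m^{(2)}$, which intertwines $\Delta_m^{(2)}$ with $\Delta_{m-2}^{(2)}$ up to a scalar shift and commutes with $\Delta_0^{(1)}$. A standard representation-theoretic argument for $\mathrm{PSL}(2,\R)$ shows that $\Lambda_m^{(2)}$ restricts to an isomorphism of the orthogonal complements of $\ker(\Lambda_m^{(2)})$ and $\ker(\Lambda_{m-2}^{(2)})$ inside the respective weight-$(0,m)$ and weight-$(0,m-2)$ $L^2$-spaces. Consequently, in the difference of the spectral sides all terms cancel except those coming from the $\Delta_0^{(1)}$-spectrum on $\ker(\Lambda_m^{(2)})$ (with a plus sign) and on $\ker(\Lambda_{m-2}^{(2)})$ (with a minus sign). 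With the resolvent choice of $h_s$, each eigenvalue $\tfrac14+\rho_j(m)^2$ on $\ker(\Lambda_m^{(2)})$ produces a simple pole of the spectral side at $s=\tfrac12 \pm i\rho_j(m)$ with the correct multiplicity, and analogously each eigenvalue on $\ker(\Lambda_{m-2}^{(2)})$ produces such a pole with the opposite sign. Translating back to $Z_K(s;m)$ gives zeros at $s=\tfrac12 \pm i\rho_j(m)$ and poles at $s=\tfrac12 \pm i\rho_j(m-2)$ of the asserted orders.

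The main obstacle is the non-compactness of $X_K$: the parabolic and Eisenstein series contributions from the unique cusp of $\Gamma_K$ must be shown to produce only the trivial zeros and poles listed in Theorem \ref{th:a1}, and not to contaminate the non-trivial spectrum. This demands explicit control of the scattering matrix of $\Gamma_K$ at weights $(0,m)$ and $(0,m-2)$---expressible through the Dedekind zeta function of $K$---together with its logarithmic derivative under $m \mapsto m-2$. A secondary technical point is the justification of the trace formula with the polynomially-decaying test function $h_s$; this can be handled by analytic continuation starting from the rapidly-decaying difference $h_s - h_{s_0}$ with $s_0$ fixed deep inside the half-plane of absolute convergence, and then differentiating in $s$.
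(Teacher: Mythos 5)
Your overall architecture (difference of trace formulas, resolvent test function, Maass-operator cancellation on the spectral side, separate control of the cusp terms) is the right one, but the proposal contains a genuine gap: a \emph{single} difference of the weight-$(0,m)$ and weight-$(0,m-2)$ trace formulas does not produce either side of the identity you need. On the spectral side, the weight-lowering operator $\Lambda_m^{(2)}$ identifies $\Ker(\Lambda_m^{(2)})^{\perp}\subset L^2_{\mathrm{dis}}(\Gamma_K\backslash\bH^2;(0,m))$ with $\Ker(K_{m-2}^{(2)})^{\perp}\subset L^2_{\mathrm{dis}}(\Gamma_K\backslash\bH^2;(0,m-2))$, where $K_{m-2}^{(2)}$ is the weight-\emph{raising} operator; so what survives the single difference is $\sum_{\Ker(\Lambda_m^{(2)})}h_1-\sum_{\Ker(K_{m-2}^{(2)})}h_1$, not $\sum_{\Ker(\Lambda_m^{(2)})}h_1-\sum_{\Ker(\Lambda_{m-2}^{(2)})}h_1$ as you assert. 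Since $\Ker(K_{m-2}^{(2)})=\{0\}$ for $m\ge 4$ (by the eigenvalue lower bound of Lemma \ref{lem:least-eigen} and irreducibility), your single difference would yield \emph{no} non-trivial poles at all, whereas part (2) of the theorem requires poles at $s=\tfrac12\pm i\rho_j(m-2)$ coming from $\Ker(\Lambda_{m-2}^{(2)})$; these can only enter by also subtracting the $(m-2)$-versus-$(m-4)$ difference.

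The same problem appears on the geometric side. The hyperbolic-elliptic term of the weight-$(0,k)$ trace formula is not of the form $\chi_k(\omega)\times(\text{weight-independent integral})$: both the prefactor $e^{i(k-1)\omega}/\sin\omega$ and the $u_2$-integrand $e^{\frac{k-1}{2}u_2}$ depend on $k$, so the difference of the full terms is not the difference of character factors. What one actually gets from the single difference (Theorem \ref{th:dtrf}) is the factor $\tfrac{ie^{i(m-1)\omega}}{2\sin\omega}\,h_2(\tfrac{i(m-1)}{2})$, whose $\omega$-dependence (with the $\sin\omega$ in the denominator) does not match the log-derivative of the Euler product defining $Z_K(s;m)$. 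The Chebyshev-type identity you invoke, $\tfrac{\sin((m-1)\omega)}{\sin\omega}-\tfrac{\sin((m-3)\omega)}{\sin\omega}=e^{i(m-2)\omega}+e^{-i(m-2)\omega}$, is precisely the relation between two \emph{consecutive single differences}; it is realized only after forming the normalized double difference $\bigl(L(m)-L(m-2)\bigr)h_2(\tfrac{i(m-1)}{2})^{-1}-\bigl(L(m-2)-L(m-4)\bigr)h_2(\tfrac{i(m-3)}{2})^{-1}$, which is exactly what the paper does (Theorems \ref{th:ddtrf} and \ref{th:ddtrfa}) and which simultaneously fixes the spectral side (the terms over $\Ker(K_{m-2}^{(2)})$ and $\Ker(K_{m-4}^{(2)})$ vanish for $m\ge 6$, leaving the two $\Ker(\Lambda)$ sums, with the extra $\delta_{m,4}$ term when $m=4$). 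Your remaining points---the Parnovskii-type resolvent test function, the treatment of the scattering/parabolic terms, and the admissibility of polynomially decaying $h_1$---are consistent with the paper's argument, but the proof cannot be completed without the second differencing step.
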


On the contrary to the case of $m \ge 4$, $Z_{K}(s;2)$ has no ``non-trivial" poles.
\begin{theorem}[Theorem \ref{th:b1}]  \label{th:s3}
$Z_{K}(s;2)$ has ``non-trivial" zeros at  
\begin{enumerate}
\item $s=\frac{1}{2} \pm i \rho_j(2)$ of order equal to the multiplicity 
of the eigenvalue $\frac{1}{4}+\rho_j(2)^2$ of $\Delta_0^{(1)}$ acting
on $\Ker(\Lambda_{2}^{(2)})=
\Bigl\{  f \in  L^2_{dis} \Bigl( \Gamma_{K} \backslash \bH^2 
\, ; \, (0,2) \Bigr)  \, \Bigl| \,  \Delta_{2}^{(2)}  f  = 0
\Bigr\}$, 
\item $s=\frac{1}{2} \pm i \mu_j(-2)$ of order equal to the multiplicity 
of the eigenvalue $\frac{1}{4}+\mu_j(-2)^2$ of $\Delta_0^{(1)}$ acting
on $\Ker(K_{-2}^{(2)})=
\Bigl\{  f \in  L^2_{dis} \Bigl( \Gamma_{K} \backslash \bH^2 
\, ; \, (0,-2) \Bigr)  \, \Bigl| \,  \Delta_{-2}^{(2)} \, f  = 0
\Bigr\}$.
\end{enumerate}
Here, $K_{-2}^{(2)} \colon L^2_{dis} ( \Gamma_{K} \backslash \bH^2 
; (0,-2) ) \to L^2_{dis} ( \Gamma_{K} \backslash \bH^2 
; (0,0) )$ is a ``weight up" Maass operator. 
For ``trivial zeros" of $Z_{K}(s;2)$, see Theorem \ref{th:b1}. 
\end{theorem}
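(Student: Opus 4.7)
The plan is to imitate the strategy used for Theorem \ref{th:a1} (the $m\ge 4$ case) but with a modification designed to avoid the weight-$(0,0)$ trace formula entirely. In the $m\ge 4$ proof, one subtracts the Selberg trace formulas for $\Gamma_K\backslash\bH^2$ at weight $(0,m)$ and weight $(0,m-2)$, paired against a test function of Selberg-transform type whose geometric side produces the logarithmic derivative of $Z_K(s;m)$. The weight-down Maass operator $\Lambda_m^{(2)}$ intertwines the two Laplacians $\Delta_m^{(2)}$ and $\Delta_{m-2}^{(2)}$ up to a spectral shift, so eigenvalues lying in its image cancel in the difference and one is left with $\Ker(\Lambda_m^{(2)})$ giving zeros of $Z_K(s;m)$ and $\Ker(\Lambda_{m-2}^{(2)})$ giving poles.

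Naively applied at $m=2$, this procedure would require the weight-$(0,0)$ trace formula. That spectral problem, however, carries the constant function as a Maass form of eigenvalue $0$, a residual spectrum, and a genuine continuous Eisenstein spectrum coming from the cusp $(\infty,\infty)$, all of which wreck the clean cancellation. My plan is to replace the weight-$(0,0)$ trace formula by the weight-$(0,-2)$ one and to use the reflection on the second upper half-plane factor that exchanges weight $q$ with weight $-q$ and intertwines $\Lambda_q^{(2)}$ with $K_{-q}^{(2)}$. With a symmetric choice of test function, the hyperbolic-elliptic orbital integrals at weights $(0,2)$ and $(0,-2)$ reassemble into a single sum that exponentiates to $Z_K(s;2)$, while the spectral side now receives contributions from $\Ker(\Lambda_2^{(2)})$ and $\Ker(K_{-2}^{(2)})$ instead of from the problematic weight-$0$ discrete/continuous spectrum. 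A careful sign analysis should show that the $\Ker(K_{-2}^{(2)})$ contribution enters with the same sign as $\Ker(\Lambda_2^{(2)})$, which is the structural reason behind the assertion that $Z_K(s;2)$ has no non-trivial poles.

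Once this identity is established for $\RE(s)>1$, I would integrate from a base point and exponentiate to obtain the meromorphic continuation and read off the divisor claimed in the statement; the residual identity, parabolic, and elliptic terms produce exactly the explicit trivial zeros referenced in Theorem \ref{th:b1}. The principal obstacle I anticipate is controlling the weight-$(0,\pm 2)$ Eisenstein contributions: unlike the $m\ge 4$ case, where the two Eisenstein terms have compatible shapes and cancel after the difference, here one must verify via an explicit computation of the scattering matrix at weight $\pm 2$ that the two Eisenstein contributions combine into an entire function absorbable in the trivial-zero factor, so that no spurious poles or zeros survive. A second, more delicate point is the careful bookkeeping of the sign flip induced by replacing $\Lambda_0^{(2)}$ with $K_{-2}^{(2)}$, since this sign is precisely what distinguishes Theorem \ref{th:s3} from Theorem \ref{th:s2} and must be tracked through every orbital integral and Plancherel density.
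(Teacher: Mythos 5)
Your overall strategy (difference trace formulas against the resolvent-type test function, integrate, exponentiate) is the right one, and you correctly identify that the structural reason $Z_K(s;2)$ has no non-trivial poles is that $\Ker(\Lambda_2^{(2)})$ and $\Ker(K_{-2}^{(2)})$ must enter the spectral side with the \emph{same} sign. But the specific mechanism you propose for achieving this — ``replace the weight-$(0,0)$ trace formula by the weight-$(0,-2)$ one'' and combine the weight-$(0,2)$ and weight-$(0,-2)$ formulas directly so as to ``avoid the weight-$(0,0)$ trace formula entirely'' — does not work, and this is a genuine gap. The kernels $\Ker(\Lambda_2^{(2)})$ and $\Ker(K_{-2}^{(2)})$ are by definition the parts of the weight-$(0,\pm 2)$ discrete spectra that are \emph{not} reached from weight $(0,0)$ by the Maass raising/lowering operators; the telescoping that strips away everything else on the spectral side comes precisely from differencing a weight-$(0,q)$ formula against the adjacent weight-$(0,q\mp 2)$ formula, because $\Lambda_q^{(2)}$ (resp.\ $K_{q-2}^{(2)}$) is an isomorphism off its kernel. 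The weight-$(0,2)$ and weight-$(0,-2)$ spectral problems are carried into each other by complex conjugation, so their trace formulas contain \emph{identical} spectral data: adding them doubles the full spectrum (including the large piece isomorphic to the weight-$0$ spectrum and the full weight-$\pm 2$ Eisenstein contribution) rather than cancelling it, and subtracting them gives zero. No choice of test function $h_2$ can localize onto the single eigenvalue $\lambda^{(2)}=0$, since admissible $h_2$ are entire. So without the weight-$(0,0)$ formula there is no way to reduce the spectral side to the two kernels.

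What the paper actually does is take the double difference $L(2)-2L(0)+L(-2)$ (Theorem \ref{th:ddtrf} at $m=2$, i.e.\ Corollary \ref{cor:ddtrf2}), in which the weight-$(0,0)$ formula enters with coefficient $-2$. Your stated fear that the weight-$0$ problem ``wrecks the clean cancellation'' is unfounded: in each single difference $L(2)-L(0)$ and $L(0)-L(-2)$ the continuous spectra cancel against the explicitly computed ratio $\varphi_{(0,m)}/\varphi_{(0,m-2)}$ of scattering matrices, and the entire weight-$0$ discrete spectrum cancels except for $\Ker(\Lambda_0^{(2)})=\Ker(K_0^{(2)})=\C$, the constants, which survive as the term $-2h_1(i/2)$ in Corollary \ref{cor:ddtrf2}. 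Far from being a nuisance, this term is indispensable: it is what produces the double pole of $Z_K(s;2)$ at $s=1$ (and the corresponding behaviour at $s=0$) recorded in Theorem \ref{th:b1}, which your plan — attributing all trivial divisor data to the identity, elliptic and parabolic terms — would miss. To repair your argument you should restore the weight-$(0,0)$ formula with multiplicity two and verify, exactly as in Theorem \ref{th:b0}, that the surviving geometric terms (with $\alpha_l(2,j)=\overline{\alpha_l}(2,j)=l$, and with the parabolic/scattering term now nonzero since $\sgn(m-1)-\sgn(m-3)=2$ at $m=2$) give the stated trivial zeros.
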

Actually $Z_{K}(s;m)$ has infinite ``non-trivial" zeros by the following ``Weyl's law".
\begin{theorem}[Theorem \ref{th:weyl2}]
For an even integer $m \ge 2$, let
\[
N_m^{+}(T) := \# \bigl\{ j \, \big| \,  1/4 + \rho_j(m)^2 \le T \bigr\}
\]
for $T>0$. Then we have
\[
N_m^{+}(T) \sim (m-1) \frac{\mathrm{vol}(\Gamma_K \backslash \bH^2)}{16 \pi^2}
\, T \quad (T \to \infty).
\]
\end{theorem}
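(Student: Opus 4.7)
The plan is to apply the difference of Selberg trace formulas for weights $(0,m)$ and $(0,m-2)$---the central technical tool of the paper---to a smoothed spectral cutoff in the first spectral parameter. The Maass weight-down operator $\Lambda_m^{(2)}$ intertwines $\Delta_m^{(2)}$ on $L^2_{\dis}(\Gamma_K\backslash \bH^2;(0,m))$ with $\Delta_{m-2}^{(2)}$ on $L^2_{\dis}(\Gamma_K\backslash \bH^2;(0,m-2))$, so forming the difference of the two weight trace formulas cancels the contributions from eigenfunctions outside $\Ker(\Lambda_m^{(2)})$; on the spectral side only $\sum_j h(\rho_j(m))$ survives, where $h$ is the test function in the first variable.

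Concretely, fix an even $h_0\in C_c^\infty(\R)$ with $h_0\equiv 1$ on $[-1,1]$, and set $h_T(r):=h_0(r/\sqrt T)$. Applying the difference trace formula to $h_T$, the identity contribution on the geometric side equals, up to lower-order terms,
\[
\vol(\Gamma_K \backslash \bH^2)\cdot\frac{m-1}{4\pi}\cdot\frac{1}{4\pi}\int_{-\infty}^{\infty}h_T(r)\,r\tanh(\pi r)\,dr,
\]
which is asymptotic to $\dfrac{(m-1)\vol(\Gamma_K \backslash \bH^2)}{16\pi^2}\,T$ as $T\to\infty$. Here the factor $(m-1)/(4\pi)$ is the formal degree of the holomorphic discrete series $D_m$, appearing as the difference of Plancherel densities at weights $m$ and $m-2$, while $\tfrac{1}{4\pi}\int h_T(r)\,r\tanh(\pi r)\,dr$ is the standard weight-zero Plancherel contribution in the first variable and grows like $T$. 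A Tauberian sandwich argument then replaces $h_T$ by the sharp cutoff $\chi_{[-\sqrt{T-1/4},\sqrt{T-1/4}]}$, producing the claimed asymptotic for $N_m^{+}(T)$.

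To finish, the remaining contributions must be shown to be $o(T)$. The hyperbolic-elliptic and doubly-elliptic orbital integrals give $O(\sqrt T)$ by Huber-type bounds on the length spectrum, while the contribution from the single cusp $(\infty,\infty)$, involving the weight-$(0,m)$ scattering determinant built from $\zeta_K(s)$, is $O(\sqrt T\,\log T)$ by the convexity bound applied to the Dedekind zeta function. The main obstacle will be this last continuous-spectrum estimate: one must verify that the parabolic and Eisenstein contributions really do telescope in the weight-$m$ minus weight-$(m-2)$ difference and produce an error term uniform in $T$. Fortunately, the explicit form of the weight-$(0,m)$ scattering matrix and the relevant trace-formula identities have already been worked out in the earlier sections that established the meromorphic continuation and functional equation of $Z_K(s;m)$, so assembling the required bound reduces to standard estimates on $\zeta_K$ and its logarithmic derivative on vertical lines.
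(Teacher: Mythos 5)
Your overall strategy---take the weight-$(0,m)$ minus weight-$(0,m-2)$ difference of the trace formulas, isolate the identity contribution as the main term, and recover the sharp count by a Tauberian/sandwich step---is exactly the paper's route. But there is a genuine defect in the execution: the test function $h_T(r)=h_0(r/\sqrt{T})$ with $h_0\in C_c^\infty(\R)$ is \emph{not admissible} in the trace formula. The admissibility conditions (Section 5.2) require $h_1$ to be analytic in the strip $|\IM(r_1)|<\tfrac12+\delta$; this is needed both for convergence of the geometric side and because the spectral parameters can be purely imaginary (for instance $\mu_0(0)=i/2$ coming from the constants when $m=2$, so $h_1$ must be evaluated at non-real points). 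A nonzero compactly supported smooth function is never analytic, so your $h_T$ cannot be inserted into Theorem \ref{th:dtrf} at all. The paper instead takes the heat-kernel test function $h_1(r)=e^{-\beta r^2}$, obtains
\[
\sum_{j=0}^{\infty}e^{-\beta(1/4+\rho_j(m)^2)}
=(m-1)\,\frac{\mathrm{vol}(\Gamma_K\backslash\bH^2)}{16\pi^2\,\beta}+O(\beta^{-1/2})
\qquad(\beta\to 0^{+}),
\]
and concludes by Karamata's Tauberian theorem; alternatively one could use Paley--Wiener test functions (with $g_1$ compactly supported, hence $h_1$ entire of exponential type) to run your sandwich argument. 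Either repair is routine, but as written the proof does not go through.

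Two smaller points. First, the spectral side of the difference formula is not just $\sum_j h_1(\rho_j(m))$: the term $-\sum_j h_1(\mu_j(m-2))$ over $\Ker(K_{m-2}^{(2)})$ also survives, and one must note that this kernel is finite dimensional for every even $m\ge 2$ (it equals $\C$ for $m=2$ and is trivial for $m\ge 4$, by Lemma \ref{lem:least-eigen} and irreducibility of $\Gamma_K$), hence contributes $O(1)$. Second, your announced ``main obstacle'' concerning the continuous spectrum is not an obstacle at all: in the weight-$m$ minus weight-$(m-2)$ difference the ratio $\varphi_{(0,m)}/\varphi_{(0,m-2)}$ of scattering functions is an elementary rational function, so the parabolic, type 2 hyperbolic and Eisenstein contributions collapse to the explicit elementary terms already recorded in Theorem \ref{th:dtrf}; no convexity bounds on $\zeta_K$ or estimates of its logarithmic derivative on vertical lines are needed.
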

Our $Z_{K}(s;m)$ also satisfy a symmetric functional equation.
\begin{theorem} [Theorems \ref{th:a2} and \ref{th:b2}] \label{th:s4}
\label{th:3}
The zeta function $Z_K(s;m)$ satisfies the 
functional equation
\[ \hat{Z}_K(s;m) = \hat{Z}_K(1-s;m). \]
Here the completed zeta function $\hat{Z}_K(s,m)$ is given by
\[ \hat{Z}_K(s;m) := Z_K(s;m)  
\, Z_{\mathrm{id}}(s) \, Z_{\mathrm{ell}}(s;m)
\, Z_{\mathrm{par/sct}}(s;m) 
\, Z_{\mathrm{hyp2/sct}}(s;m).  \]
Each local Selberg zeta functions corresponding to each $\Gamma_K$-conjugacy classes
of $\Gamma_K$ are
 explicitly given. See Theorems \ref{th:a2} and \ref{th:b2}
for details.
\end{theorem}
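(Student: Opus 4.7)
The plan is to combine the difference of Selberg trace formulas (the central tool of the paper) with the standard computation of $\hat Z_K'/\hat Z_K(s;m)$ via test-function transforms, and then exploit the manifest $s \mapsto 1-s$ symmetry of the spectral side. First I would fix a point-pair-invariant test function $h_s(r)$ whose spectral side, inserted in the difference of the weight-$(0,m)$ and weight-$(0,m-2)$ trace formulas, reproduces the non-trivial zero/pole pattern described in Theorem \ref{th:s2}; the natural choice is something proportional to $\frac{1}{(s-1/2)^2+r^2}$, so that the spectral sum
\[
\sum_j \frac{1}{(s-1/2)^2+\rho_j(m)^2}\;-\;\sum_j \frac{1}{(s-1/2)^2+\rho_j(m-2)^2}
\]
is patently invariant under $s \mapsto 1-s$.

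Next I would evaluate the geometric side orbit by orbit. The hyperbolic-elliptic contribution, after taking a $\log$-antiderivative in $s$, reassembles the Euler product $Z_K(s;m)$ via the identity $-\log(1-x) = \sum_{j \ge 1} x^j/j$ applied to $x = e^{i(m-2)\omega} N(p)^{-(k+s)}$. Each of the remaining orbital integrals -- identity, elliptic, and the two ``mixed'' types involving the cusp (\emph{par/sct} and \emph{hyp2/sct}) -- evaluates to an elementary transcendental expression in $s$ (Gamma, Barnes double-Gamma, and Lerch-type factors). Declaring these expressions to \emph{be} the local zeta factors $Z_{\mathrm{id}}$, $Z_{\mathrm{ell}}$, $Z_{\mathrm{par/sct}}$, $Z_{\mathrm{hyp2/sct}}$ realises the sum of geometric contributions as $\hat Z_K'/\hat Z_K(s;m)$ term by term.

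Since the spectral side is invariant under $s \mapsto 1-s$, so must be $\hat Z_K'/\hat Z_K(s;m)$, forcing $\hat Z_K(s;m) = C\cdot \hat Z_K(1-s;m)$ for some constant $C$. The constant is pinned to $C=1$ either by evaluating at the symmetric point $s=1/2$ or by comparing a single residue of the explicit factor $Z_{\mathrm{id}}(s)$ with its reflected partner.

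The main obstacle, I expect, is the non-cocompactness: the cusp $(\infty,\infty)$ of $\Gamma_K$ forces a scattering contribution into the trace formula whose $s \mapsto 1-s$ behaviour is governed by the functional equation of the Hilbert-modular Eisenstein series, and the completion can be invariant only if the ``pure parabolic'' and ``scattering'' pieces are bundled appropriately into $Z_{\mathrm{par/sct}}$, and likewise the continuous-spectrum-coupled hyperbolic piece into $Z_{\mathrm{hyp2/sct}}$. Getting these two groupings right, and verifying that the asymmetric weight shift $m \leftrightarrow m-2$ in the difference of trace formulas cancels inside the completion without breaking $s \leftrightarrow 1-s$, will be the delicate part of the argument; the ``hyp2/sct'' block is the genuinely new feature compared with the classical rank-one Selberg setting.
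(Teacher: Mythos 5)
Your plan is essentially the paper's own argument: one inserts a test function built from $\bigl(r^2+(s-\tfrac12)^2\bigr)^{-1}$ into the (double) difference of the weight-$(0,m)$ and weight-$(0,m-2)$ trace formulas, identifies each geometric term with the logarithmic derivative of a local factor (identity giving Barnes double Gamma, elliptic giving Gamma factors, parabolic and type-2 hyperbolic each bundled with their scattering counterparts), and deduces $\hat Z_K(s;m)=\hat Z_K(1-s;m)$ from the $s\leftrightarrow 1-s$ symmetry of the spectral side, fixing the constant at $s=\tfrac12$. The only technical point you elide is that the bare kernel $\bigl(r^2+(s-\tfrac12)^2\bigr)^{-1}$ decays too slowly to be admissible, so the paper adds two auxiliary terms $c_h(s)/(r^2+\beta_h^2)$ (Parnovskii's device), whose contributions cancel when the two sides at $s$ and $1-s$ are combined.
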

We also consider the Ruelle type zeta function.
\begin{definition}[Ruelle type zeta function for $\Gamma_{K}$]
For $\RE(s)>1$, the Ruelle type zeta function for 
$\Gamma_K$ is defined by the following absolutely convergent
Euler product:
\[
R_{K}(s) := \prod_{(p,p')} 
\bigl( 1 - N(p)^{-s} \bigr)^{-1}. 
\]
Here, $(p,p')$ run through the set of primitive hyperbolic-elliptic 
$\Gamma_K$-conjugacy classes of $\Gamma_K$, and $(p,p')$ is conjugate 
in $\mathrm{PSL}(2,\R)^2$ to
\[ (p, p') \sim \Bigl( 
\Bigl( 
\begin{array}{cc}
N(p)^{1/2} & 0 \\
0 & N(p)^{-1/2}
\end{array} \Bigr), 
\, 
\Bigl( 
\begin{array}{cc}
\cos \omega & - \sin \omega \\
\sin \omega & \cos \omega
\end{array} \Bigr)\Bigr).
\]
Here, $N(p)>1$, $\omega \in (0,\pi)$ and $\omega \notin \pi \Q$.
\end{definition}
By the relation
\[ R_{K}(s) = \frac{Z_{K}(s;2)}{Z_{K}(s+1;2)}, \]
we have
\begin{theorem}[Theorem \ref{th:Ruelle}] 
The function $R_{K}(s)$ has a meromorphic continuation to the 
whole $\C$. $R_{K}(s)$ has a double pole at $s=1$ and nonzero for 
$\RE(s) \ge 1$.
\end{theorem}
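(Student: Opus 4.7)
The plan is to leverage the factorization $R_K(s) = Z_K(s;2)/Z_K(s+1;2)$ stated just above the theorem and reduce every assertion to already-established properties of $Z_K(s;2)$. First I would verify the identity by telescoping inside the Euler product: since weight $m=2$ kills the twist $e^{i(m-2)\omega}$, one has $Z_K(s;2) = \prod_{(p,p')} \prod_{k \ge 0} (1-N(p)^{-(k+s)})^{-1}$, and the ratio of the two infinite products collapses, for each primitive class $(p,p')$, to $(1-N(p)^{-s})^{-1}$. Meromorphic continuation of $R_K$ to all of $\C$ is then inherited immediately from the meromorphic continuation of $Z_K(s;2)$ provided by Theorem \ref{th:b1}.

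To pin down the pole at $s=1$, observe that for $s$ near $1$ one has $\RE(s+1)=2$, so $Z_K(s+1;2)$ is holomorphic and non-vanishing near $s=1$ by absolute convergence of its Euler product. Consequently, the order of the pole of $R_K$ at $s=1$ equals the order of the pole of $Z_K(s;2)$ at $s=1$. Using the completed factorization
\[
\hat{Z}_K(s;2) = Z_K(s;2)\, Z_{\mathrm{id}}(s)\, Z_{\mathrm{ell}}(s;2)\, Z_{\mathrm{par/sct}}(s;2)\, Z_{\mathrm{hyp2/sct}}(s;2)
\]
from Theorem \ref{th:b2}, I would read off the Laurent behaviour at $s=1$ of each explicit local factor. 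The parabolic/scattering factor $Z_{\mathrm{par/sct}}(s;2)$ (attached to the unique cusp of $\Gamma_K$) together with the identity factor $Z_{\mathrm{id}}$ (carrying the volume contribution of the trace formula) is expected to deliver the entire pole of order $2$, while the elliptic and hyperbolic-elliptic/scattering factors remain regular and nonzero. Any non-trivial zero of $Z_K(s;2)$ at $s=1$ coming from exceptional eigenvalue $0$ in $\Ker(\Lambda_2^{(2)})$ or $\Ker(K_{-2}^{(2)})$ (cf. Theorem \ref{th:s3}) must then be counted and shown not to reduce the pole order below $2$.

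For non-vanishing on $\RE(s)\ge 1$ away from $s=1$: when $\RE(s)>1$, the Euler product defining $R_K(s)$ converges absolutely to a nonzero value. When $\RE(s)=1$ and $s\neq 1$, the denominator $Z_K(s+1;2)$ is nonzero since $\RE(s+1)=2$, and the non-trivial zeros of $Z_K(s;2)$ listed in Theorem \ref{th:s3} lie either on $\RE(s)=\tfrac12$ or, for possible exceptional eigenvalues of $\Delta_0^{(1)}$, in the real interval $[0,1]$; the only crossing of the line $\RE(s)=1$ is at $s=1$ itself. The trivial zeros of the explicit local factors in $\hat{Z}_K(s;2)$ are located at specific real points which can be verified to miss $\RE(s)=1$ outside $s=1$.

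The main obstacle is the middle step: bookkeeping the exact order of vanishing or pole of every local factor of $\hat{Z}_K(s;2)$ at $s=1$ and showing that their algebraic sum is precisely $-2$ rather than $-1$ or $-3$. This requires the detailed closed forms of $Z_{\mathrm{id}}$, $Z_{\mathrm{par/sct}}$ and $Z_{\mathrm{hyp2/sct}}$ from Theorem \ref{th:b2}, careful use of the functional equation to relate behaviour at $s=1$ with behaviour at $s=0$, and an accurate count of the multiplicity of eigenvalue $0$ in the weight $(0,\pm 2)$ spaces.
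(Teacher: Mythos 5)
Your overall reduction is the one the paper uses: its entire proof of this theorem is ``By Theorem \ref{th:b1} and Lemma \ref{lem:ruelle}'', i.e.\ write $R_{K}(s)=Z_{K}(s;2)/Z_{K}(s+1;2)$, observe that for $\RE(s)\ge 1$ the denominator is evaluated in the half-plane $\RE(s+1)\ge 2$ of absolute convergence and is therefore holomorphic and nonzero there, and read everything else off the zero/pole list of $Z_{K}(\cdot\,;2)$ in Theorem \ref{th:b1}. Your verification of the lemma, the inheritance of meromorphy, and the non-vanishing on $\RE(s)\ge 1$ are all fine and match the paper.

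The gap is in your middle step, the order of the pole at $s=1$. You correctly reduce it to the order of the pole of $Z_{K}(s;2)$ at $s=1$, but then propose to compute that order by ``reading off the Laurent behaviour at $s=1$'' of the local factors of $\hat{Z}_{K}(s;2)$, expecting $Z_{\mathrm{par/sct}}$ and $Z_{\mathrm{id}}$ to ``deliver the entire pole of order $2$''. That step would fail: at $s=1$ every local factor is regular and nonzero ($Z_{\mathrm{par/sct}}(s;2)=\varepsilon^{-2s}$ is entire and nonvanishing, $Z_{\mathrm{hyp2/sct}}(1;2)=(1-\varepsilon^{-2})^{-2}$ is a finite nonzero number, and $Z_{\mathrm{id}}(1)$ and $Z_{\mathrm{ell}}(1;2)$ are finite nonzero values of Gamma-type functions at positive arguments), so these factors contribute nothing to the pole, and the functional equation alone only transports the problem from $s=1$ to $s=0$ without fixing the order at either point. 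The fact you need is simply item (1) of Theorem \ref{th:b1}: $Z_{K}(s;2)$ has a double pole at $s=1$. This comes from the term $-2\,h_{1}(i/2)=-2\bigl[\tfrac{1}{s(s-1)}+\cdots\bigr]$ on the spectral side of Corollary \ref{cor:ddtrf2} (equivalently Theorem \ref{th:b0}), reflecting $\Ker(\Lambda_{0}^{(2)})=\Ker(K_{0}^{(2)})=\C$, which forces $\tfrac{1}{2s-1}\tfrac{Z_{K}'}{Z_{K}}(s;2)\sim-\tfrac{2}{s-1}$ near $s=1$ and hence a pole of order exactly $2$; all other terms of that identity are regular at $s=1$. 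Citing this removes the ``main obstacle'' you describe and also disposes of your residual worry about an exceptional eigenvalue $0$ partially cancelling the pole, since that possibility is already excluded in the statement of Theorem \ref{th:b1}.
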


As a byproduct of Theorem  \ref{th:s4}, 
we obtain a simple functional equation for $R_{K}(s)$
and an explicit formula of the coefficient of the leading term of $R_{K}(s)$ at $s=0$.  
\begin{theorem}[Corollary \ref{cor:ruelle}]
Let $D$ be the discriminant of $K$ and $D \ge 13$. Then,   
the function $R_{K}(s)$ satisfy the functional equation
\begin{equation*}
\begin{split}
R_{K}(s) \, R_{K}(-s)=&(-1)^{E(X_K)} \, 2^{2E(X_K)}
\sin (\pi s)^{2E(X_K) -2a_2(\Gamma) -2 a_3(\Gamma)} \\
& \cdot \sin \Bigl( \frac{\pi s}{2} \Bigr)^{2a_2(\Gamma)} 
 \sin \Bigl( \frac{\pi s}{3} \Bigr)^{2 a_3(\Gamma)} 
 \biggl( \frac{\zeta_{\ve}(s-1) \, \zeta_{\ve} (s+1)}{\zeta_{\ve}(s)^2} \biggr)^2
 \end{split}
\end{equation*}
and the absolute value of the coefficient of the leading term of $R_{K}(s)$ at $s=0$ is 
given by
\[ |R_{K}^{*}(0)|
  = \frac{(2 \pi)^{E(X_{K})}}{2^{a_2(\Gamma)} \, 3^{a_3(\Gamma)}} 
\frac{(2 \ve \log \ve)^2}{(\ve^2-1)^2}.
\]
Here, $E(X_{K})$ denotes the Euler characteristic of $X_{K}$,
$\ve$ is the fundamental unit of $K$, 
$\zeta_\ve(s)=(1-\ve^{-2s})^{-1}$ and
$a_r(\Gamma)$ is the number of elliptic fixed points
in $X_K$ for which  corresponding points have isotropy  groups
of order $r$.  
For $D=5,8$ or $12$, See Theorem \ref{th:ruelle-fe}
and Corollary \ref{cor:5812}.
\end{theorem}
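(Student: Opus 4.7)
The plan is to derive both assertions from the functional equation $\hat Z_K(s;2)=\hat Z_K(1-s;2)$ supplied by Theorem \ref{th:s4}, combined with the factorization $R_K(s)=Z_K(s;2)/Z_K(s+1;2)$ from Theorem \ref{th:Ruelle}. Writing $L(s):=Z_{\mathrm{id}}(s)\,Z_{\mathrm{ell}}(s;2)\,Z_{\mathrm{par/sct}}(s;2)\,Z_{\mathrm{hyp2/sct}}(s;2)$, the functional equation reads $Z_K(s;2)\,L(s)=Z_K(1-s;2)\,L(1-s)$, so
\[
R_K(s)\,R_K(-s)
\;=\;\frac{Z_K(s;2)\,Z_K(-s;2)}{Z_K(s+1;2)\,Z_K(1-s;2)}
\;=\;\frac{L(1-s)\,L(1+s)}{L(s)\,L(-s)}.
\]
The $Z_K$-factors therefore cancel entirely, and the functional equation for $R_K$ reduces to an algebraic identity among the four local factors in $L(s)$.

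Next I would substitute the explicit expressions for each local factor taken from Theorems \ref{th:a2} and \ref{th:b2}. The identity piece $Z_{\mathrm{id}}(s)/Z_{\mathrm{id}}(1-s)$ accounts for the sign $(-1)^{E(X_K)}$ and a power of $\sin(\pi s)$ whose exponent is determined by the Gauss--Bonnet contribution $E(X_K)$. The elliptic piece, once simplified across $s\mapsto 1-s$, produces the trigonometric factors $\sin(\pi s/2)^{2a_2(\Gamma)}$ and $\sin(\pi s/3)^{2a_3(\Gamma)}$; the hypothesis $D\ge 13$ ensures that only isotropy orders $r=2,3$ appear, so these are all the elliptic contributions. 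The parabolic/scattering and hyp2/scattering pieces together yield the factor $\bigl(\zeta_\varepsilon(s-1)\zeta_\varepsilon(s+1)/\zeta_\varepsilon(s)^2\bigr)^2$, with the remaining $\sin(\pi s)$ factors combining to the total exponent $2E(X_K)-2a_2(\Gamma)-2a_3(\Gamma)$ displayed in the statement. This gives the functional equation for $R_K(s)R_K(-s)$.

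For the leading term at $s=0$, I would use the derived functional equation as an evaluation tool. Let $n$ denote the order of $R_K$ at $s=0$; then $R_K(s)\sim R_K^{*}(0)\,s^n$ and $R_K(-s)\sim(-1)^n R_K^{*}(0)\,s^n$, so $R_K(s)R_K(-s)\sim(-1)^n\bigl(R_K^{*}(0)\bigr)^2 s^{2n}$ as $s\to 0$. On the right-hand side of the functional equation I would carry out a Laurent expansion near $s=0$: the factor $\sin(\pi s)^{2E(X_K)-2a_2-2a_3}$ contributes $(\pi s)^{2E(X_K)-2a_2-2a_3}$, the elliptic factors contribute $(\pi s/2)^{2a_2}(\pi s/3)^{2a_3}$, and the zeta ratio is evaluated using the simple pole of $\zeta_\varepsilon(s)=(1-\varepsilon^{-2s})^{-1}$ at $s=0$ together with the values $\zeta_\varepsilon(\pm 1)=(1-\varepsilon^{\mp 2})^{-1}$. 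Collecting the powers of $\pi$ gives $(2\pi)^{E(X_K)}$ after pairing with the $1/2^{a_2}\cdot 1/3^{a_3}$, and the zeta contribution simplifies to $\bigl(2\varepsilon\log\varepsilon/(\varepsilon^2-1)\bigr)^2$. Taking absolute values and square roots produces the stated formula for $|R_K^{*}(0)|$.

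The main obstacle is the bookkeeping of orders of vanishing on both sides and the precise combinatorics of trigonometric exponents: one must verify that the $Z_K$-factors really do cancel (the fact that $R_K$ is a quotient with shift by $1$ is crucial here), and that after expanding the local factors the pole at $s=0$ from $1/\zeta_\varepsilon(s)^2$ together with the zeros of the sines produces exactly the correct overall order $2n$, with no spurious contribution from the hyp2/scattering factor. Once those cancellations are checked and the expansion of $\zeta_\varepsilon$ near $s=0$ is inserted (using $\zeta_\varepsilon(s)\sim 1/(2s\log\varepsilon)$), the remaining arithmetic is a direct substitution.
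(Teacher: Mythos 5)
Your proposal is correct and follows essentially the same route as the paper: the paper likewise writes $R_K(s)R_K(-s)=\frac{Z_K(s;2)}{Z_K(1-s;2)}\cdot\frac{Z_K(-s;2)}{Z_K(1+s;2)}$, cancels the $Z_K$-factors via $\hat Z_K(s;2)=\hat Z_K(1-s;2)$ (Theorem \ref{th:b2}), reduces the resulting ratio of local factors to the stated trigonometric and $\zeta_\ve$ expression (Theorem \ref{th:ruelle-fe}), specializes to $\nu_j\in\{2,3\}$ for $D\ge 13$, and extracts $|R_K^{*}(0)|$ by expanding the functional equation at $s=0$ with order $n_0=E(X_K)+2$. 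The bookkeeping you flag (cancellation of the $\ve^{-2s}$ parabolic factor against the shift in $\zeta_\ve$, and the order count $2E(X_K)+4$ at $s=0$) works out exactly as you describe.
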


These analytic properties and functional equations of $Z_K(s;m)$ and $R_{K}(s)$ are 
obtained by using the ``differences" of the Selberg trace formula for Hilbert modular 
surfaces. The key point is considering the differences between two Selberg trace formulas with
different weights. For this we shall extend the Selberg trace formula for Hilbert modular group
$\Gamma_{K}$ with trivial weight (Cf. Efrat \cite{E} and  Zograf \cite{Z}) to that with non-trivial weights 
(Theorem \ref{th:trf}).
Based on our Selberg trace formula for $\Gamma_{K}$ with weight $(0,m)$, 
we can treat and obtain the differences and double differences of the Selberg trace formula
(Theorems \ref{th:dtrf} and \ref{th:ddtrf}).

As an application of ``Double differences of the Selberg trace formula"
(Theorem \ref{th:ddtrf}), 
we obtain a prime geodesic type theorem 
(Theorem \ref{th:pgt}) and
a generalization of Sarnak's theorem \cite{S1} on
class numbers of indefinite binary quadratic forms over $\Z$ 
to that for class numbers of indefinite binary quadratic forms 
over $\mathcal{O}_{K}$.  
Put
$ \mathcal{D}_{+-} := \{ d \in \mathcal{O}_{K} \, | \,  
\exists b \in \mathcal{O}_K \, \mbox{ s.t. } \, d \equiv b^2
\pmod{4}, \, d \mbox{ not a square in $\mathcal{O}_{K}$}, \, 
d > 0, \, d' < 0 \}$.
For each $d \in \mathcal{D}_{+-}$, let $h_{K}(d)$ denote the number of 
inequivalent primitive binary quadratic forms over $\mathcal{O}_{K}$ 
of discriminant $d$,  
and let $(x_d,y_d) \in \mathcal{O}_{K} \times \mathcal{O}_{K}$ 
be the fundamental solution of the Pellian equation $x^2-dy^2=4$. 
Put 
$\varepsilon_K(d): = (x_d+\sqrt{d} \, y_d)/2$.
\begin{theorem}[Theorem \ref{th:class-number}]
For $x \ge 2$, we have
\begin{equation*}
\begin{split}
\sum_{\substack{d \in \mathcal{D}_{+-} \\
    \varepsilon_K(d) \le x}}
h_{K}(d)
=& \, 2 \Li(x^2) 
- \sum_{1/2< s_j(2) <1}  \Li \big( x^{2 s_j(2)} \bigr) 
- \sum_{1/2< s_j(-2) <1}  \Li \big( x^{2 s_j(-2)} \bigr) \\
& +O(x^{3/2} /\log x) \quad \quad (x \to \infty).
\end{split}
\end{equation*}
Here, $s_j(2) \Bigl( 1-s_j(2) \Bigr)$ and $s_j(-2) \Bigl( 1-s_j(-2) \Bigr)$
are eigenvalues of the Laplacian $\Delta_0^{(1)}$
acting on $\Ker(\Lambda_2^{(2)})$ and $\Ker(K_{-2}^{(2)})$
respectively. See Theorem \ref{th:s3} for definition of
 $\Ker(\Lambda_2^{(2)})$ and $\Ker(K_{-2}^{(2)})$.
\end{theorem}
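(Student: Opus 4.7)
The plan is to reduce the class-number sum to a count of primitive hyperbolic-elliptic conjugacy classes in $\Gamma_K$, and then apply the prime geodesic theorem (Theorem \ref{th:pgt}) obtained from the double differences of the Selberg trace formula. This mirrors Sarnak's strategy \cite{S1} for $\mathrm{SL}_2(\Z)$, but with the extra ``elliptic at the conjugate place'' phenomenon made available precisely because $d\in\mathcal{D}_{+-}$ forces $d>0$ and $d'<0$.

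First I would set up a Gauss-type correspondence. For $d\in\mathcal{D}_{+-}$, each $\mathrm{PSL}(2,\mathcal{O}_K)$-equivalence class of primitive binary quadratic forms over $\mathcal{O}_K$ of discriminant $d$ determines a well-defined conjugacy class in $\Gamma_K$: the automorph built from the fundamental solution $(x_d,y_d)$ of $x^2-dy^2=4$. Since $d>0$ the first component has real distinct eigenvalues $\varepsilon_K(d)^{\pm 1}$, so is hyperbolic; since $d'<0$ the Galois conjugate component has complex eigenvalues of modulus one, so is elliptic with rotation angle $\omega\in(0,\pi)$ which is irrational in units of $\pi$ because $\varepsilon_K(d)$ is a non-torsion unit. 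Primitivity of the form corresponds to primitivity of the conjugacy class, and the norm in the sense of Definition \ref{def:zeta} satisfies $N(p)=\varepsilon_K(d)^2$. Hence
\[
\sum_{\substack{d \in \mathcal{D}_{+-} \\ \varepsilon_K(d)\le x}} h_K(d)
\;=\; \#\bigl\{(p,p')\text{ primitive hyp-ell in }\Gamma_K\;\big|\;N(p)\le x^2\bigr\}
\;=:\;\pi^{he}_{\Gamma_K}(x^2).
\]

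Next I would appeal to Theorem \ref{th:pgt}, which gives exactly the asymptotic for $\pi^{he}_{\Gamma_K}(Y)$ needed here. The proof of that theorem proceeds by a standard Tauberian/contour argument on $-Z_K'(s;2)/Z_K(s;2)$: the Chebyshev-type sum $\psi_K(Y)=\sum_{N(p)^k\le Y}\log N(p)$ is evaluated by shifting the contour past $s=1$ (simple pole contributing $Y$ as leading term, and the factor $2$ arises because the double-difference trace formula pairs $Z_K(s;2)$ against $Z_K(s;-2)\equiv Z_K(s;2)$ under the Laplacian shift, effectively doubling the residue), then past the non-trivial zeros in the critical strip. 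The zeros $s=s_j(2)\in(1/2,1)$ and $s=s_j(-2)\in(1/2,1)$ coming from exceptional eigenvalues $\le 1/4$ of $\Delta_0^{(1)}$ on $\Ker(\Lambda_2^{(2)})$ and $\Ker(K_{-2}^{(2)})$ are the only ones with $\RE(s)>1/2$ and produce the correction terms $-\Li(Y^{s_j(2)})$ and $-\Li(Y^{s_j(-2)})$ after partial summation turns $\psi_K$ into $\pi^{he}_{\Gamma_K}$. Substituting $Y=x^2$ yields the stated formula.

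The main technical obstacle is controlling the error term $O(x^{3/2}/\log x)$. This forces a quantitative handle on the logarithmic derivative $Z_K'(s;2)/Z_K(s;2)$ along vertical lines $\RE(s)=1/2+\delta$, in particular polynomial bounds on its growth in $|\IM(s)|$. The required density of zeros comes from Weyl's law (Theorem \ref{th:weyl2}), and the fact that no zeros sit on $\RE(s)=1$ follows from the analytic properties in Theorems \ref{th:s2}--\ref{th:s3} together with the absence of exceptional eigenvalues at $1/4$ generically. The exponent $3/2$ is the one provided by the double-difference Selberg trace formula without additional input; sharpening it (toward $5/4$ or better, as in the classical $\mathrm{SL}_2(\Z)$ case) would require mean-square bounds for the trace formula that are outside the scope here, so I would not pursue them. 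The remaining work is bookkeeping: converting the prime-geodesic count into the class-number sum via the bijection above, and checking that the non-primitive contributions $\sum_{k\ge 2} \pi^{he}_{\Gamma_K}(x^{2/k})$ are absorbed in the error.
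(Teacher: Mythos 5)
Your proposal is correct and follows essentially the same route as the paper: reduce the class-number sum to a count of primitive hyperbolic-elliptic conjugacy classes via the Gauss-type correspondence $Q\mapsto (g,g')$ with $N(p)=\varepsilon_K(d)^2$, then substitute $X=x^2$ in the prime geodesic theorem (Theorem \ref{th:pgt}). (Minor aside: the paper derives Theorem \ref{th:pgt} directly from the double-difference trace formula with Iwaniec-style test functions rather than by a contour argument on $Z_K'/Z_K$, and the factor $2$ in the main term comes from the two one-dimensional kernels $\Ker(\Lambda_0^{(2)})=\Ker(K_0^{(2)})=\C$ producing the double pole at $s=1$; but since you invoke Theorem \ref{th:pgt} as given, this does not affect your argument.)
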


\section{The Selberg trace formula 
for Hilbert modular surfaces with non-trivial weights} 

\subsection{Hilbert modular group of a real quadratic field} 
Let
$K/\Q$ be a real quadratic field with class number one and
$\mathcal{O}_K$ be the ring of integers of $K$. Put $D$ be the
discriminant of $K$ and $\varepsilon > 1 $ be the fundamental
unit of $K$. 
We denote the generator of $\mathrm{Gal}(K/\Q)$
by $\sigma$ and put $a' := \sigma(a)$ and $N(a) :=a a'$ 
for $a \in K$. 
We also put
$\gamma' =  \Bigl( 
\begin{array}{cc}
a' & b' \\
c' & d'
\end{array} \Bigr)$ for 
$\gamma =  \Bigl( 
\begin{array}{cc}
a & b \\
c & d
\end{array} \Bigr) \in \mathrm{PSL}(2, \mathcal{O}_K)$.

Let $G$ be $\mathrm{PSL}(2,\R)^2 
= \Bigl( \mathrm{SL}(2,\R) / \{ \pm I \} \Bigr)^2$ and 
$\bH^2$ be the direct product of two copies of the upper half plane 
$\bH := \{ z \in \C \, | \,  \IM(z)>0  \}$.
The group $G$ acts on $\bH^2$ by 
\[ g.z = (g_1,g_2).(z_1,z_2) = 
\biggl( \frac{a_1z_1+b_1}{c_1z_1+d_1},\frac{a_2z_2+b_2}{c_2z_2+d_2} \biggr) \in \bH^2
\]
for $g=(g_1,g_2)=
( \Bigl( 
\begin{array}{cc}
a_1 & b_1 \\
c_1 & d_1
\end{array} \Bigr),
 \Bigl( 
\begin{array}{cc}
a_2 & b_2 \\
c_2 & d_2
\end{array} \Bigr)
)$ and $z=(z_1,z_2) \in \bH^2$.

A discrete subgroup $\Gamma \subset G$ is called irreducible if 
it is not commensurable with any direct product $\Gamma_1 \times \Gamma_2$ 
of two discrete subgroups of $\mathrm{PSL}(2,\R)$. 
We have classification of the elements of irreducible $\Gamma$.
\begin{proposition}[Classification of the elements]
Let $\Gamma$ be an irreducible discrete subgroup of $G$. 
Then any element of $\Gamma$ is one of the followings. 
\begin{enumerate}
\item $\gamma=(I,I)$ is the identity 
\item $\gamma=(\gamma_1,\gamma_2)$ is hyperbolic \, $\Lra \, 
|\tr(\gamma_1)| > 2$ and $|\tr(\gamma_2)| > 2$  
\item $\gamma=(\gamma_1,\gamma_2)$ is elliptic \, $\Lra \, 
|\tr(\gamma_1)| < 2$ and $|\tr(\gamma_2)| < 2$
\item $\gamma=(\gamma_1,\gamma_2)$ is hyperbolic-elliptic \, $\Lra \, 
|\tr(\gamma_1)| > 2$ and $|\tr(\gamma_2)| < 2$  
\item $\gamma=(\gamma_1,\gamma_2)$ is elliptic-hyperbolic \, $\Lra \, 
|\tr(\gamma_1)| < 2$ and $|\tr(\gamma_2)| > 2$  
\item $\gamma=(\gamma_1,\gamma_2)$ is parabolic \, $\Lra \, 
|\tr(\gamma_1)| = |\tr(\gamma_2)| = 2$ 
\end{enumerate}
\end{proposition}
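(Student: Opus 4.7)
The plan is to combine the classical classification of a single element of $\mathrm{PSL}(2,\R)$ by its trace -- identity, elliptic with $|\tr|<2$, parabolic with $|\tr|=2$ and $\neq I$, or hyperbolic with $|\tr|>2$ -- with the product structure of $G$: a priori this gives sixteen types for $\gamma=(\gamma_1,\gamma_2)$, and the proposition amounts to saying that the ten ``mixed-parabolic'' combinations, in which exactly one of $|\tr(\gamma_1)|$ and $|\tr(\gamma_2)|$ equals $2$, are excluded by irreducibility. By the symmetry between the two factors it suffices to prove the implication $|\tr(\gamma_1)|=2 \Rightarrow |\tr(\gamma_2)|=2$.

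I would split the argument into the sub-case $\gamma_1=I$ and the sub-case where $\gamma_1$ is genuinely parabolic. For the first, put $N_1:=\Gamma\cap(\mathrm{PSL}(2,\R)\times\{I\})$ and $N_2:=\Gamma\cap(\{I\}\times\mathrm{PSL}(2,\R))$; each is discrete and normal in $\Gamma$, and $N_1\cdot N_2\subset \Gamma$ sits inside a direct product of two discrete subgroups of $\mathrm{PSL}(2,\R)$. A standard normalizer argument then shows that if $N_2$ is non-trivial then $\Gamma$ is commensurable with the product $p_1(\Gamma)\times p_2(\Gamma)$ of its projections, both of which turn out under this hypothesis to be discrete in $\mathrm{PSL}(2,\R)$, contradicting irreducibility. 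The case $\gamma_2=I$ is symmetric.

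For the second sub-case, $\gamma_1$ genuinely parabolic and $\gamma_2$ elliptic or hyperbolic, I would use the action of $\Gamma$ on the boundary $\partial\bH\times\partial\bH$. A parabolic $\gamma_1$ has a unique boundary fixed point $\xi_1$, and its powers $\gamma_1^n$ remain on a unipotent one-parameter subgroup; meanwhile $\gamma_2^n$ either escapes exponentially (hyperbolic) or traces a compact one-torus (elliptic). A conjugation-orbit argument on the cyclic group generated by $\gamma$, analogous to the one used in the first sub-case but applied to the cusp stabilizer in the first factor, again produces a direct-product decomposition of $\Gamma$ up to commensurability and hence contradicts irreducibility.

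The main obstacle is the second sub-case, where the naive normal-subgroup argument is not directly available and one must supply either a Zariski- or Borel-density input or a careful analysis of the cusp structure of $\bH^2$. In the present paper's setting $\Gamma=\Gamma_K$, however, the whole proposition admits a one-line proof via Galois theory: the identity $\tr(\gamma')=\sigma(\tr(\gamma))$, combined with the fact that $\pm 2\in\Z$ is fixed by $\sigma$, yields $|\tr(\gamma)|=2 \Leftrightarrow \tr(\gamma)=\pm 2 \Leftrightarrow \tr(\gamma')=\pm 2 \Leftrightarrow |\tr(\gamma')|=2$, and similarly $\gamma=I \Leftrightarrow \gamma'=I$. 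Thus for the Hilbert modular group the classification is immediate, and the statement in full generality can be cited from the standard literature on Hilbert modular surfaces.
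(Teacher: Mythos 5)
The paper does not actually prove this proposition; it simply refers the reader to Shimizu, so there is no internal argument to compare yours against. The valuable and fully correct part of your proposal is the Galois-theoretic argument for $\Gamma_K$ itself: since $\tr(\gamma')=\sigma(\tr(\gamma))$ and a trace of absolute value $2$ is the rational integer $\pm 2$, hence fixed by $\sigma$, one gets $|\tr(\gamma)|=2\Leftrightarrow|\tr(\gamma')|=2$, and likewise $\gamma=\pm I\Leftrightarrow\gamma'=\pm I$ because the entries of $\pm I$ are rational. This rules out exactly the ``mixed'' cases (one component parabolic or trivial, the other not) and yields the six listed types for the only group the paper ever uses. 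This is more elementary and more self-contained than the paper's citation, and I would keep it as the proof of record for $\Gamma_K$.

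Your sketch for a general irreducible discrete $\Gamma$, however, does have a genuine gap, and not only the one you acknowledge in the parabolic-versus-(elliptic or hyperbolic) sub-case. Even the first sub-case is problematic under the paper's literal definition of irreducibility (``not commensurable with a direct product of two discrete subgroups''): the normalizer argument you invoke needs the projections $p_j(\Gamma)$ to be non-discrete (in fact dense), which is a consequence of finite covolume or Borel density, not of the bare non-commensurability hypothesis. Indeed, the cyclic group generated by a single pair $(\gamma_1,\gamma_2)$ with $\gamma_1$ parabolic and $\gamma_2$ hyperbolic is discrete and is not commensurable with any direct product of discrete subgroups, yet it contains a parabolic-hyperbolic element; so the proposition, read verbatim, requires an additional hypothesis (e.g.\ that $\Gamma\backslash\bH^2$ has finite volume, as in Shimizu's theorems) before any general argument can close. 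Since $\Gamma_K$ is such a lattice, nothing in the paper is affected, but you should either restrict the general claim accordingly or rely on the $\Gamma_K$-specific Galois proof throughout.
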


Note that there are no other types in $\Gamma$. (parabolic-elliptic etc.)
(Cf. Shimizu \cite{Sh})

Let us consider the Hilbert modular group of the real quadratic field $K$,
\[ \Gamma_{K} := 
\Bigl\{ (\gamma,\gamma') =  \Bigl( 
\Bigl( 
\begin{array}{cc}
a & b \\
c & d
\end{array} \Bigr), 
\, 
\Bigl( 
\begin{array}{cc}
a' & b' \\
c' & d'
\end{array} \Bigr)
\Bigr)
\Big| \, 
\Bigl( 
\begin{array}{cc}
a & b \\
c & d
\end{array} \Bigr) \in 
\mathrm{PSL}(2,\mathcal{O}_K)
\Big\}. 
\]

It is known that $\Gamma_K$ is an irreducible discrete subgroup 
of $G=\mathrm{PSL}(2,\R)^2$ with the only one cusp 
$\infty := (\infty, \infty)$,
i.e. $\Gamma_K$-inequivalent parabolic fixed point.
$X_K = \Gamma_K \backslash \bH^2$ 
is called the Hilbert modular surface.

We can easily see that
\begin{lemma}[Stabilizer of the cusp $\infty=(\infty,\infty)$]
\label{lem:gamma-inf}
The stabilizer of $\infty=(\infty,\infty)$ in $\Gamma_{K}$ is
given by
\[ \Gamma_{\infty} := 
\Bigl\{ \Bigl( 
\Bigl( 
\begin{array}{cc}
u & \alpha \\
0 & u^{-1}
\end{array} \Bigr), 
\, 
\Bigl( 
\begin{array}{cc}
u' & \alpha' \\
0 & u'^{-1}
\end{array} \Bigr)
\Bigr)
\Big| \, 
u \in \mathcal{O}_{K}^{\times}, \, \alpha \in \mathcal{O}_{K}
\Big\}. 
\]
\end{lemma}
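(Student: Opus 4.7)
The plan is to reduce the statement to a direct computation based on the elementary fact that a fractional linear transformation $z \mapsto (az+b)/(cz+d)$ on $\bH$ fixes the cusp $\infty$ if and only if its lower-left entry $c$ vanishes. Since $\Gamma_K$ acts on $\bH^2$ componentwise, the stabilizer of $(\infty,\infty)$ is the intersection of the componentwise stabilizers.

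First I would take a general element $(\gamma,\gamma') \in \Gamma_K$ with $\gamma = \left(\begin{smallmatrix} a & b \\ c & d \end{smallmatrix}\right) \in \mathrm{PSL}(2,\mathcal{O}_K)$ and observe that $(\gamma,\gamma')\cdot(\infty,\infty) = (\infty,\infty)$ forces both $\gamma\cdot\infty = \infty$ and $\gamma'\cdot\infty = \infty$. The first of these is equivalent to $c = 0$, and the second to $c' = \sigma(c) = 0$; the latter is automatic once $c = 0$. Thus $\gamma$ must be upper triangular, so $\gamma = \left(\begin{smallmatrix} a & b \\ 0 & d \end{smallmatrix}\right)$ with $a,d \in \mathcal{O}_K$ and $b \in \mathcal{O}_K$.

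Next I would impose the determinant condition $ad = 1$ in $\mathcal{O}_K$ (well-defined modulo the sign ambiguity of $\mathrm{PSL}(2)$), which forces $a \in \mathcal{O}_K^{\times}$ and $d = a^{-1}$. Setting $u := a$ and $\alpha := b$ then recovers exactly the upper-triangular form displayed in the lemma, and applying $\sigma$ componentwise shows that $\gamma' = \left(\begin{smallmatrix} u' & \alpha' \\ 0 & u'^{-1} \end{smallmatrix}\right)$. Conversely, any pair of matrices of this shape obviously lies in $\Gamma_K$ and fixes $(\infty,\infty)$, giving the reverse inclusion.

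There is essentially no obstacle here: the argument is a transparent two-line computation once one knows the criterion for a Möbius transformation to fix $\infty$. The only small subtlety is keeping track of the quotient by $\{\pm I\}$ in $\mathrm{PSL}(2)$, which I would handle by noting that both $\pm\left(\begin{smallmatrix} u & \alpha \\ 0 & u^{-1} \end{smallmatrix}\right)$ give the same element of $\mathrm{PSL}(2,\mathcal{O}_K)$, so no ambiguity is introduced in the description of $\Gamma_\infty$.
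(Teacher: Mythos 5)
Your argument is correct and is exactly the elementary computation the paper has in mind (the paper states this lemma with only the remark ``We can easily see that'' and omits any proof): fixing $\infty$ in each factor forces $c=0$, hence $c'=0$ automatically, and the determinant condition $ad=1$ in $\mathcal{O}_K$ makes $a$ a unit with $d=a^{-1}$. Your handling of the $\pm I$ ambiguity is also fine, so nothing is missing.
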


\begin{definition}[Types of hyperbolic elements] \label{d:type}
For a hyperbolic element $\gamma$, we define that
\begin{enumerate}
\item $\gamma$ is type 1 hyperbolic $\Lra$ whose all fixed points are 
not fixed by parabolic elements.
\item $\gamma$ is type 2 hyperbolic $\Lra$ not type 1 hyperbolic.
\end{enumerate}  
\end{definition}

\begin{lemma} \label{lem:hyp2}
Any type 2 hyperbolic elements of $\Gamma_{K}$ are conjugate
to an element of 
\[
\Bigl\{ \gamma_{k,\alpha} =  
\Bigl( 
\begin{array}{cc}
\varepsilon^{k} & \alpha \\
0 & \varepsilon^{-k}
\end{array} \Bigr)
\, \Big| \, k \in \N, \, \alpha \in 
\mathcal{O}_{K} \Bigr\}
\] 
in $\Gamma_{K}$. The centralizer of $\gamma_{k,\alpha}$ in $\Gamma_{K}$
is an infinite cyclic group. 
\end{lemma}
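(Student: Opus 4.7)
The plan is to exploit two structural facts about $\Gamma_K$: that it has the unique cusp $(\infty,\infty)$, and that its stabilizer is explicitly known from Lemma~\ref{lem:gamma-inf}. Concretely, suppose $\gamma = (\gamma_1,\gamma_2)$ is type~2 hyperbolic, so by Definition~\ref{d:type} one of its fixed points $(p_1,p_2)$ is also fixed by some parabolic element of $\Gamma_K$. All parabolic fixed points form a single $\Gamma_K$-orbit, namely that of $(\infty,\infty)$. Hence I would choose $\tau\in\Gamma_K$ with $\tau.(p_1,p_2)=(\infty,\infty)$ and replace $\gamma$ by $\tau\gamma\tau^{-1}$; this puts $\gamma$ into $\Gamma_\infty$ without loss of generality.

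Next I would apply Lemma~\ref{lem:gamma-inf} to write the conjugated element as the pair associated with $\bigl(\begin{smallmatrix}u & \alpha\\0 & u^{-1}\end{smallmatrix}\bigr)$ for some $u\in\mathcal{O}_K^\times$ and $\alpha\in\mathcal{O}_K$. Since both components of $\gamma$ are hyperbolic, $|u+u^{-1}|>2$ and $|u'+u'^{-1}|>2$, so $u\neq\pm 1$. As $\mathcal{O}_K^\times=\{\pm\varepsilon^n\mid n\in\Z\}$ and in $\mathrm{PSL}$ the sign is absorbed into the $\pm I$ quotient, we obtain $u=\varepsilon^k$ for some $k\in\Z\setminus\{0\}$. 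Replacing $\gamma$ by $\gamma^{-1}$ if necessary (which sends $\gamma_{k,\alpha}$ to $\gamma_{-k,-\alpha}$) forces $k\in\N$, delivering the claimed normal form.

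For the centralizer, I would first argue that any $\delta\in\Gamma_K$ commuting with $\gamma_{k,\alpha}$ must lie in $\Gamma_\infty$. Indeed $\delta$ permutes the fixed points of $\gamma_{k,\alpha}$ in $\partial\bH\times\partial\bH$, and $\Gamma_K$ must map cusps to cusps; since $(\infty,\infty)$ is the unique cusp of $\Gamma_K$ and the other fixed point of $\gamma_{k,\alpha}$ is an ordinary (non-cuspidal) boundary point, $\delta$ cannot swap the two fixed points of $\gamma_{k,\alpha}$ and must fix $(\infty,\infty)$. Writing $\delta$ via $\bigl(\begin{smallmatrix}v & \beta\\0 & v^{-1}\end{smallmatrix}\bigr)$ and equating the $(1,2)$-entries of $\delta\gamma_{k,\alpha}$ and $\gamma_{k,\alpha}\delta$ yields
\[
\beta(\varepsilon^k-\varepsilon^{-k}) \;=\; \alpha(v-v^{-1}),
\]
so $\beta$ is determined by $v$. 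The projection $\delta\mapsto v$ thus embeds the centralizer injectively into $\mathcal{O}_K^\times/\{\pm 1\}\cong\Z$; the image is nonzero because it contains $v=\varepsilon^k$ (corresponding to $\gamma_{k,\alpha}$ itself), so it is a nontrivial subgroup of $\Z$, hence infinite cyclic.

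The main obstacle I anticipate is the geometric step that forces the centralizer into $\Gamma_\infty$, i.e.\ ruling out centralizing elements that permute the two hyperbolic fixed points of $\gamma_{k,\alpha}$ in either factor; this is precisely where the type~2 hypothesis (one fixed point is cuspidal) and the uniqueness of the cusp of $\Gamma_K$ are essential. Once that is secured, the remaining steps reduce to the explicit commutation relation in the Borel subgroup and the elementary observation that every nonzero subgroup of $\Z$ is infinite cyclic.
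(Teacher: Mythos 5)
Your outline is the natural one, and much of it is correct; note that the paper itself offers no in-text argument for this lemma (it only cites Efrat, pp.~91--93), so the comparison can only be with a correct proof in the abstract. The reduction to $\Gamma_\infty$ via uniqueness of the cusp and Lemma~\ref{lem:gamma-inf}, the identification $u=\varepsilon^k$, and the $(1,2)$-entry relation $\beta(\varepsilon^{k}-\varepsilon^{-k})=\alpha(v-v^{-1})$ giving an injection of the centralizer into $\mathcal{O}_K^{\times}/\{\pm1\}\cong\Z$ are all fine. However, the step you yourself single out as the crux is justified by a false assertion. The second fixed point of $\gamma_{k,\alpha}$ is \emph{not} an ordinary (non-cuspidal) boundary point: it is $(\xi,\xi')$ with $\xi=\alpha/(\varepsilon^{-k}-\varepsilon^{k})\in K$, hence a parabolic fixed point of $\Gamma_K$, and since the class number is one it is $\Gamma_K$-equivalent to $(\infty,\infty)$ --- this is precisely what the paper exploits later when it chooses $(\tau,\tau')\in\Gamma_K$ mapping that finite fixed point to $(\infty,\infty)$ in the type~2 hyperbolic orbital integral. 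Both fixed points of a type~2 hyperbolic element are cusps; that is essentially what ``type~2'' means. So the cusp/non-cusp dichotomy cannot rule out a centralizing $\delta$ swapping the two fixed points. The correct reason is purely dynamical and needs no cusp data: an element of $\mathrm{PSL}(2,\R)$ interchanging the attracting and repelling fixed points of a hyperbolic $\gamma_1$ conjugates $\gamma_1$ to $\gamma_1^{-1}$, hence cannot commute with $\gamma_1$ (that would force $\gamma_1^2=1$). Therefore every centralizing $\delta$ fixes both fixed points in each factor, in particular fixes $(\infty,\infty)$ and lies in $\Gamma_\infty$, after which your computation goes through.

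A second, smaller gap: replacing $\gamma$ by $\gamma^{-1}$ to force $k\in\N$ only shows that $\gamma$ \emph{or} $\gamma^{-1}$ is conjugate to some $\gamma_{k,\alpha}$ with $k\ge1$, since $\gamma$ and $\gamma^{-1}$ need not be conjugate in $\Gamma_K$ and the lemma is a statement about the conjugacy class of $\gamma$ itself. The repair uses the same observation as above: the finite fixed point $(\xi,\xi')$ is also a cusp, so if the reduction into $\Gamma_\infty$ produces $u=\varepsilon^{k}$ with $k<0$, conjugate further by an element of $\Gamma_K$ sending $(\xi,\xi')$ to $(\infty,\infty)$; this exchanges the two eigenlines and yields an upper-triangular representative with $\varepsilon^{-k}$, $-k>0$, in the upper-left corner. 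Equivalently, choose $\tau$ at the outset so that the \emph{attracting} fixed point of $\gamma$ goes to $(\infty,\infty)$.
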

\begin{proof}
See pp.91--93 in \cite{E}.
\end{proof}

By the above lemma, we may take a generator of the centralizer
$Z_{\Gamma_{K}}(\gamma_{k,\alpha})$ as $\gamma_{k_0,\beta}$
with $k_0 \in \N$ and $\beta \in \mathcal{O}_{K}$.
We also write $k_0$ as $k_0(\gamma_{k,\alpha})$.

Let $R_1,R_2,\cdots,R_{N}$ be a complete system of representatives
of the $\Gamma_K$-conjugacy classes of primitive elliptic elements of
$\Gamma_K$. $\nu_1,\nu_2, \cdots, \nu_N$ $(\nu \in \N, \, \nu \ge 2)$
denote the orders of $R_1,R_2,\cdots,R_{N}$.
We may assume that $R_j$ is conjugate 
in $\mathrm{PSL}(2,\R)^2$ to
\[ R_j \sim \Bigl( 
\Bigl( 
\begin{array}{cc}
\cos \frac{\pi}{\nu_j} & - \sin \frac{\pi}{\nu_j} \\
\sin \frac{\pi}{\nu_j} & \cos \frac{\pi}{\nu_j}
\end{array} \Bigr), 
\, 
\Bigl( 
\begin{array}{cc}
\cos \frac{t_j \pi}{\nu_j} & - \sin \frac{t_j \pi}{\nu_j} \\
\sin \frac{t_j \pi}{\nu_j} & \cos \frac{t_j \pi}{\nu_j}
\end{array} \Bigr)\Bigr), 
\quad (t_j,\nu_j)=1. 
\]
For even natural number $m \ge 2$ and 
$l \in \{0,1,\cdots, \nu_j-1\}$, we define the integers
$\alpha_l(m,j), \, \overline{\alpha_l}(m,j) \in \{0,1, \cdots, \nu_j -1 \}$
by
\begin{equation} \label{eq:alpha}
\begin{split}
& l + t_j (\frac{m-2}{2}) \equiv \alpha_l(m,j) \pmod{\nu_j} \\
& l - t_j (\frac{m-2}{2}) \equiv \overline{\alpha_l}(m,j) 
\pmod{\nu_j}\\
\end{split}
\end{equation}

We denote by $\Gamma_{\mathrm{H1}}$, 
$\Gamma_{\mathrm{E}}$, 
$\Gamma_{\mathrm{HE}}$,
$\Gamma_{\mathrm{EH}}$ and
$\Gamma_{\mathrm{H2}}$,  
type 1 hyperbolic $\Gamma_K$-conjugacy classes,
elliptic $\Gamma_K$-conjugacy classes,
hyperbolic-elliptic $\Gamma_K$-conjugacy classes, 
elliptic-hyperbolic $\Gamma_K$-conjugacy classes and
type 2 hyperbolic $\Gamma_K$-conjugacy classes
of $\Gamma_K$ respectively.

\subsection{Preliminaries for the Selberg trace formula}

Fix the weight $(m_1,m_2) \in (2 \Z)^2$. 
Set the automorphic factor $j_{\gamma}(z_j) = \frac{cz_j+d}{|cz_j+d|}$ for 
$\gamma \in \mathrm{PSL}(2,\R)$ $(j=1,2)$. 

Let $\Delta_{m_j}^{(j)} := -y_j^2(\frac{\partial^2}{\partial x_j^2}
   +\frac{\partial^2}{\partial y_j^2}) 
+ i  m_j \, y_j \frac{\partial}{\partial x_j} \quad (j=1,2)$
be the Laplacians of weight $m_j$ for the variable $z_j$.

Let us define the $L^2$-space of 
automorphic forms of weight $(m_1,m_2)$ with respect to the 
Hilbert modular group $\Gamma_{K}$.
\begin{definition}[$L^2$-space of automorphic forms of weight $(m_1,m_2)$]
\begin{eqnarray*}
 &&  L^2(\Gamma_K \backslash \bH^2 \, ; \, (m_1,m_2)) :=  
\Bigl\{ f \colon \bH^2 \to \C, \, C^{\infty} \, \Big| \, \\
&&  (i) \, 
f((\gamma,\gamma')(z_1,z_2)) 
= j_{\gamma}(z_1)^{m_1} j_{\gamma'}(z_2)^{m_2}f(z_1,z_2) 
\quad \forall (\gamma,\gamma') \in \Gamma_K \\
&&  (ii) \, 
\exists (\lambda^{(1)},\lambda^{(2)}) \in \R^2 \quad 
\Delta_{m_1}^{(1)} \, f(z_1,z_2) = \lambda^{(1)} f(z_1,z_2), \quad 
\Delta_{m_2}^{(2)} \, f(z_1,z_2) = \lambda^{(2)} f(z_1,z_2) \\
&& (iii) \, ||f ||^2 = \int_{\Gamma_K \backslash \bH^2} f(z) 
\overline{f(z)} \, 
d \mu (z) < \infty.
\Bigr\}
\end{eqnarray*}
Here, $d \mu(z) = \frac{dx_1dy_1}{y_1^2} \frac{dx_2dy_2}{y_2^2}$
for $z=(z_1,z_2) \in \bH^2$.
\end{definition}

We denote $C_{c}^{\infty}(\R^2)$ the space of 
compactly supported smooth functions on $\R^2$.  

Take $\Phi \in C_{c}^{\infty}(\R^2)$ and introduce the point-pair invariant
kernel $k(z,w)$ of weight $(m_1,m_2)$ 
for $\Phi$ (as (6.3) on \cite[p.386]{H2}):
\begin{equation} \label{def:k}
k(z,w) := \Phi \biggl[ \frac{|z_1-w_1|^2}{\IM z_1 \IM w_1},  
\frac{|z_2-w_2|^2}{\IM z_2 \IM w_2}
 \biggr]  H_{(m_1,m_2)}(z,w) 
\end{equation}
for $(z,w) = ((z_1,z_2),(w_1,w_2)) \in \bH^2 \times \bH^2$. Here, 
\[ 
H_{(m_1,m_2)}(z,w) := H_{m_1}(z_1,w_1) \, H_{m_2}(z_2,w_2) \]
with
\[
H_{m_j}(z_j,w_j) := i^{m_j} \frac{(w_j-\bar{z_j})^{m_j}}{|w_j-\bar{z_j}|^{m_j}}
=  i^{m_j} \frac{|z_j-\bar{w_j}|^{m_j}}{(z_j-\bar{w_j})^{m_j}}
\]
for $j=1,2$. The reason of the last equality is that  $m_1,m_2$ are even integers.
(See \cite[Definition 2.1, p.359]{H1} and \cite[(5.1), p.349]{H2}).

\begin{definition} \label{def:Qgh}
For $\Phi \in C_{c}^{\infty}(\R^2)$, 
define
\begin{align}
Q(w_1,w_2) :=& 
\int \! \! \! \int_{\R^2}  \Phi(w_1+v_1^2,w_2+v_2^2)
\prod_{j=1}^2 \biggl[ \frac{\sqrt{w_j+4}+iv_j}{\sqrt{w_j+4}-iv_j} \biggr] ^{m_j/2}
dv_1 dv_2 \\ 
(w_1, w_2 \ge 0)&, \nonumber \\
g(u_1, u_2) := & Q(e^{u_1}+e^{-u_1}-2, e^{u_2}+e^{-u_2}-2), \\
h(r_1, r_2)  := & \int \! \! \! \int_{\R^2}
g(u_1,u_2) \,e^{i(r_1u_1+r_2u_2)} \, du_1 du_2.
\end{align}
\end{definition}
We can easily check that 
$Q(w_1,w_2) \in C_c^{\infty}([0,\infty)^2)$, 
$g(u_1,u_2) \in C_c^{\infty}(\R^2)$ is an even function  
and $h(r_1,r_2) \in C^{\infty}(\R^2)$ is 
an even and rapidly decreasing function. 

\begin{proposition} \label{prop:phi}
\begin{align}
\Phi(x_1,x_2) =& \Bigl( -\frac{1}{\pi} \Bigr)^2 \int \! \! \! \int_{\R^2} 
\frac{\partial^2 Q}{\partial w_1\partial w_2}(x_1+t_1^2, x_2+t_2^2) 
\prod_{j=1}^2 \biggl[ \frac{\sqrt{x_j+4+t_j^2}-t_j}{\sqrt{x_j+4+t_j^2}+t_j} \biggr] ^{m_j/2}
dt_1 dt_2 \\
(x_1, x_2 \ge 0)&, \nonumber \\
g(u_1, u_2) =&  \Bigl( \frac{1}{2 \pi} \Bigr)^2 \int \! \! \! \int_{\R^2}
h(r_1,r_2) e^{-i(r_1u_1+r_2u_2)} \, dr_1 dr_2.
\end{align}
\end{proposition}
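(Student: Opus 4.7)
The plan is to treat the two inversion formulas independently. The second formula, recovering $g$ from $h$, is simply the two-dimensional Fourier inversion theorem: since $g \in C_c^\infty(\R^2)$ as noted just after its definition, $h$ is Schwartz, so the inversion integral converges absolutely and reproduces $g$ with the standard normalization $(2\pi)^{-2}$. Nothing further needs to be said.

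The real work is in the first formula. The key observation is that the kernel defining $Q$ factors across $j=1,2$: both the weighted phase and the measure $dv_1\,dv_2$ split as products of pieces depending only on $(w_j,v_j)$. Consequently $Q = T_1 T_2[\Phi]$ where the one-variable operator
$$T_j[F](w) := \int_\R F(w+v^2)\, \Bigl[\frac{\sqrt{w+4}+iv}{\sqrt{w+4}-iv}\Bigr]^{m_j/2} dv$$
is understood to act in the $j$-th argument of a function of two variables, with the other argument held fixed. Since $\Phi \in C_c^\infty(\R^2)$, Fubini lets us commute the two operators and reduces the problem to a pair of one-variable inversions.

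The operator $T_j$ admits a weighted Abel-type inversion, which is the weight-$m_j$ analogue of the classical Selberg-trace-formula pairing and is essentially due to Hejhal (see \cite{H1,H2}):
$$T_j^{-1}[R](x) = -\frac{1}{\pi}\int_\R \frac{dR}{dw}(x+t^2)\, \Bigl[\frac{\sqrt{x+4+t^2}-t}{\sqrt{x+4+t^2}+t}\Bigr]^{m_j/2} dt.$$
Evenness of $m_j$ ensures the weight factor is single-valued. Applying $T_1^{-1}$ then $T_2^{-1}$ to $Q$, and combining the two single derivatives into the mixed partial $\partial^2 Q/\partial w_1\,\partial w_2$ (legitimate because $Q \in C_c^\infty([0,\infty)^2)$ inherits smoothness and compact support from $\Phi$), yields exactly the claimed formula, including the overall factor $(-1/\pi)^2$.

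The main obstacle is verifying the one-dimensional inversion $T_j^{-1}$ itself, since the complex weight factor in $T_j^{-1}$ must cancel precisely against the one in $T_j$. This is a direct but somewhat delicate computation: one substitutes into $T_j^{-1}T_j[\Phi]$, interchanges the iterated integrals, performs the Abel-type change of variables $v=\sqrt{r-w}$ to reduce the real-modulus part to the classical Abel pair, and checks by a straightforward but careful manipulation of the resulting phase integral that it collapses to a delta kernel in the remaining variable. Once this 1D result is in hand, the 2D statement follows with no additional difficulty from the tensor-product structure noted above.
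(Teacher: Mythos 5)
Your proposal is correct and matches the paper's route: the paper's own ``proof'' is simply a citation to Hejhal \cite[p.386]{H2}, Efrat \cite[Proposition 2.2]{E} and Zograf \cite[(1.1.1)]{Z}, where exactly your argument --- Fourier inversion for the $(g,h)$ pair, and the tensor-product reduction of the $(\Phi,Q)$ pair to two copies of the one-variable weighted Abel inversion of weight $m_j$ --- is carried out. Your outline of the one-dimensional inversion (interchange of integrals, Abel change of variables, cancellation of the unimodular weight factors using that $m_j/2\in\Z$) is precisely the content of the cited sources, so nothing is missing.
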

\begin{proof}
See \cite[p.386]{H2}, 
\cite[Proposition 2.2]{E} and 
\cite[(1.1.1)]{Z}.
\end{proof}

\subsection{Eisenstein series}

Let $(m_1,m_2) \in (2 \Z)^2$, $z=(z_1,z_2) = (x_1+i y_1, x_2+i y_2) 
\in \bH^2$, and $(s_1,s_2) \in \C^2$ with $\RE(s_1), \RE(s_2) \gg 0$.   
We define, 
\[E_{(m_1,m_2)}(z,s_1,s_2)
:= \sum_{\gamma \in \Gamma_{\infty} \backslash \Gamma_{K}}
\frac{y_1^{s_1}}{|cz_1+d|^{2s_1}}
\frac{y_2^{s_2}}{|c'z_2+d'|^{2s_2}}
\frac{|cz_1+d|^{m_1}}{(cz_1+d)^{m_1}}
\frac{|c'z_2+d'|^{m_2}}{(c'z_2+d')^{m_2}}
.\]

\begin{definition}[Family of Eisenstein series]
For $(m_1,m_2) \in (2 \Z)^2$, $z=(z_1,z_2) = (x_1+i y_1, x_2+i y_2) 
\in \bH^2$, $s \in \C$ with $\RE(s) >1$ and $k \in \Z$ we define 
\begin{equation}
E_{(m_1,m_2)}(z,s;k) := 
E_{(m_1,m_2)} \Bigl( z,s +\frac{\pi i k}{2 \log \varepsilon}, 
s-\frac{\pi i k}{2 \log \varepsilon} \Bigr). 
\end{equation}
\end{definition}

\begin{proposition}
For $\RE(s)>1$, 
the Eisenstein series
$E_{(m_1,m_2)}(z,s;k)$ is absolutely convergent and
\[ E_{(m_1,m_2)}(\gamma z,s;k) = 
j_{\gamma}(z_1)^{m_1} j_{\gamma'}(z_2)^{m_2} \, E_{(m_1,m_2)}(z,s;k) \]
for any $\gamma \in \Gamma_K$. $E_{(m_1,m_2)}(z,s;k)$
is a common eigenfunction of $\Delta_{m_1}^{(1)}$
and $\Delta_{m_2}^{(2)}$. 
\end{proposition}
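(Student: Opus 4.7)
The plan is to verify the three claims --- absolute convergence, the $\Gamma_K$-automorphy with factor $j_\gamma(z_1)^{m_1} j_{\gamma'}(z_2)^{m_2}$, and the common eigenfunction property --- by analyzing a single summand and then invoking local uniform convergence to interchange the summation with transformation and with differentiation.

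First I would rewrite the generic summand intrinsically. Using $\IM(\gamma z_j) = y_j/|cz_j+d|^2$ and $j_\gamma(z_j) = (cz_j+d)/|cz_j+d|$, each term of $E_{(m_1,m_2)}(z,s;k)$ equals
\[ \IM(\gamma z_1)^{s_1} \, \IM(\gamma' z_2)^{s_2} \, j_\gamma(z_1)^{-m_1} \, j_{\gamma'}(z_2)^{-m_2}, \]
where $s_1 = s+\pi i k/(2\log\varepsilon)$ and $s_2 = s-\pi i k/(2\log\varepsilon)$. Because $\RE(s_1)=\RE(s_2)=\RE(s)$ and the $j$-factors have modulus one, the absolute value of this summand coincides with the corresponding summand of the real-weight Hilbert Eisenstein series for $\Gamma_K$ at exponent $\RE(s)$; that series converges absolutely and locally uniformly on $\bH^2$ for $\RE(s)>1$ by the classical estimate (see, e.g., Efrat \cite{E}), which yields the first assertion.

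For the automorphy I would introduce the weight-$(m_1,m_2)$ slash operator
\[ (f|_{(m_1,m_2)} \gamma)(z) := j_\gamma(z_1)^{-m_1} \, j_{\gamma'}(z_2)^{-m_2} \, f(\gamma z), \]
so that each summand is precisely $(y_1^{s_1} y_2^{s_2}) |_{(m_1,m_2)} \gamma$. The cocycle identity $j_{\gamma_1 \gamma_2}(z) = j_{\gamma_1}(\gamma_2 z)\, j_{\gamma_2}(z)$, together with $(\gamma_1\gamma_2)' = \gamma_1'\gamma_2'$, makes $|_{(m_1,m_2)}$ a right action of $\Gamma_K$. For $\gamma_0\in\Gamma_K$ the substitution $\gamma \mapsto \gamma\gamma_0$ permutes the cosets $\Gamma_\infty \backslash \Gamma_K$, whence $E_{(m_1,m_2)}(\,\cdot\, ,s;k)|_{(m_1,m_2)}\gamma_0 = E_{(m_1,m_2)}(\,\cdot\, ,s;k)$, which unpacks to the stated transformation rule.

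The eigenfunction property rests on the one-variable identity $\Delta_m y^s = s(1-s) y^s$ (the $i m y\,\partial_x$ piece annihilates functions of $y$ alone and $-y^2 \partial_y^2 y^s = -s(s-1)y^s$). Since each $\Delta_{m_j}^{(j)}$ commutes with the weight-$m_j$ slash action in the variable $z_j$ --- this is precisely why the weight is built into the operator --- every summand is a common eigenfunction with eigenvalues $s_1(1-s_1)$ and $s_2(1-s_2)$. The only genuinely technical step, and the main place I would be careful, is justifying termwise differentiation: the first and second derivatives of the summand in $z_j$ introduce only factors of the form $y_j/|cz_j+d|^2$ and bounded trigonometric-type quantities on compacta of $\bH^2$, so the same dominated convergence argument bounds the differentiated series by a locally uniformly convergent majorant for $\RE(s)>1$. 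This permits interchanging each $\Delta_{m_j}^{(j)}$ with the summation and completes the proof.
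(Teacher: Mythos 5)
Your proposal is correct and follows essentially the same route as the proof the paper delegates to Efrat (pp.~38--44 of \cite{E}): comparison of absolute values with the weight-$(0,0)$ Hilbert Eisenstein series at exponent $\RE(s)$, the cocycle/slash-operator argument for automorphy, and the identity $\Delta_{m}y^{s}=s(1-s)y^{s}$ together with the commutation of $\Delta_{m_j}^{(j)}$ with the weight-$m_j$ action and termwise differentiation. The only point worth making explicit is that the summand $(y_1^{s_1}y_2^{s_2})|_{(m_1,m_2)}\gamma$ is well defined on the cosets $\Gamma_{\infty}\backslash\Gamma_K$ precisely because $s_1-s_2=\pi i k/\log\varepsilon$ forces $\varepsilon^{2n(s_1-s_2)}=1$ for the unit part of $\Gamma_{\infty}$ (the $j$-factors of the diagonal part being trivial since $m_1,m_2$ are even).
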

\begin{proof}
See pp.38--44 in \cite{E}. 
\end{proof}

\begin{proposition}[Fourier expansion of Eisenstein series]
Put
\[ L:=\{ l = (l_1,l_2) \in K^2 \, | \,  l_1 \alpha + l_2 \alpha' \in \Z 
\quad \forall \alpha \in \mathcal{O}_K \} 
\]
and $\langle l, x \rangle :=  l_1x_1 + l_2x_2$ for $x=(x_1,x_2) \in \R^2$.
We write the Fourier coefficients as $a_l(y,s;k)$ for $l \in L$: 
\[ E_{(m_1,m_2)}(z,s;k)
 = \sum_{l \in L} a_l(y,s;k)  \, e^{2 \pi i  \langle l, x \rangle}
.\]
Then the constant term
$a_{0}(y,s;k)$ is given by
\[ y_1^{s+\frac{\pi i k}{2 \log \varepsilon}}
   y_2^{s-\frac{\pi i k}{2 \log \varepsilon}} 
 + \varphi_{(m_1,m_2)}(s,k) \, y_1^{1-s-\frac{\pi i k}{2 \log \varepsilon}}
 y_2^{1-s+\frac{\pi i k}{2 \log \varepsilon}}
\]
with 
\begin{equation} \label{eq:scattering}
\begin{split} 
\varphi_{(m_1,m_2)}(s,k) = & \frac{(-1)^{\frac{m_1+m_2}{2}} \pi}{ \sqrt{D}} \frac{L(2s-1,\chi_{-k})}{L(2s,\chi_{-k})} \frac{\Gamma(s+\frac{\pi i k}{2 \log \varepsilon}-\frac{1}{2})\Gamma(s+\frac{\pi i k}{2 \log \varepsilon})}{\Gamma(s+\frac{\pi i k}{2 \log \varepsilon}+\frac{m_1}{2})\Gamma(s+\frac{\pi i k}{2 \log \varepsilon}-\frac{m_1}{2})} 
\\
& \quad \times \frac{\Gamma(s-\frac{\pi i k}{2 \log \varepsilon}-\frac{1}{2}) \Gamma(s-\frac{\pi i k}{2 \log \varepsilon})}{\Gamma(s-\frac{\pi i k}{2 \log \varepsilon}+\frac{m_2}{2}) \Gamma(s-\frac{\pi i k}{2 \log \varepsilon} - \frac{m_2}{2})}. 
\end{split}
\end{equation}
Here, the Hecke $L$-function $L(s,\chi_{-k})$ is defined by 
$\displaystyle{L(s,\chi_{-k}) := \sum_{0 \ne (c) \subset \mathcal{O}_{K}} 
\Bigl| \frac{c}{c'} \Bigr|^{-\frac{i \pi k}{\log \varepsilon}} |N(c)|^{-s}}$
for $k \in \Z$.

For $l \ne (0,0)$, $a_{l}(y,s;k)$ is given by
\begin{align*}
&  \frac{ (-1)^{\frac{m_1+m_2}{2}} }{ \sqrt{D}} \frac{\sigma_{1-2s,-k}(l)}{L(2s,\chi_{-k})} 
\frac{ \pi^{2s}  |l_1|^{s +\frac{\pi i k}{2 \log \varepsilon} -1}  
\, |l_2|^{s -\frac{\pi i k}{2 \log \varepsilon} -1}}{\Gamma(s+\frac{\pi i k}{2 \log \varepsilon}+\sgn(l_1)\frac{m_1}{2}) \, \Gamma(s-\frac{\pi i k}{2 \log \varepsilon}+\sgn(l_2)\frac{m_2}{2})} 
\\
& \quad \times W_{\sgn(l_1) \cdot \frac{m_1}{2}, \, s+\frac{\pi i k}{2 \log \varepsilon}-\frac{1}{2}}(4 \pi |l_1|y_1) \, 
W_{\sgn(l_2) \cdot \frac{m_2}{2}, \, s-\frac{\pi i k}{2 \log \varepsilon}-\frac{1}{2}}(4 \pi |l_2| y_2).
\end{align*}

Here, $W_{\kappa,\mu}(z)$ is Whittaker's confluent hypergeometric function
$($see \cite[Chapter 16]{WW} for definition$)$. 
and 
$\displaystyle{\sigma_{1-2s,-k}(l) = \sum_{\{c\}, \, \frac{l}{c} \in \mathcal{D}_{k}^{-1}} \frac{\chi_{-k}(c)}{|N(c)|^{2s-1}}}$,
where $\mathcal{D}_{k}^{-1}$ is the inverse different of  $K$. $($See \cite[p.50]{E}$)$.
\end{proposition}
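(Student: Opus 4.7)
The plan is to carry out the classical unfolding--Poisson summation computation of the Fourier expansion, adapting the weight-zero argument of Efrat \cite[Chap.~2]{E} to non-trivial weight $(m_1,m_2)$. Write $s_1 = s+\pi ik/(2\log\varepsilon)$ and $s_2 = s-\pi ik/(2\log\varepsilon)$. First I would parametrize cosets in $\Gamma_\infty\backslash\Gamma_K$ by pairs $(c,d)\in\mathcal{O}_K\times\mathcal{O}_K$ with $(c,d)=(1)$, taken modulo the action of $\mathcal{O}_K^\times$ coming from the unit part of $\Gamma_\infty$; the unique coset with $c=0$ contributes $y_1^{s_1}y_2^{s_2}$, which is the first term of $a_0(y,s;k)$. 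For $c\ne 0$, decompose the sum as
$$\sum_{(c)\ne 0}\sum_{\substack{d\bmod c \\ (c,d)=1}}\sum_{n\in\mathcal{O}_K},$$
where the innermost sum unfolds the translations by $\Gamma_\infty$ and converts the $x$-integration over $\Gamma_\infty\backslash\bH^2$ in the definition of the Fourier coefficient into an integration over $\R^2$.

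Second, apply Poisson summation on the $\mathcal{O}_K$-lattice (covolume $\sqrt{D}$, dual lattice $L$): for each fixed $l=(l_1,l_2)\in L$ the Fourier integral factors into a product of two one-dimensional archimedean integrals. Using
$$\frac{y^s}{|cz+d|^{2s}}\cdot\frac{|cz+d|^m}{(cz+d)^m}=\frac{y^s}{(cz+d)^{s+m/2}(c\bar z+d)^{s-m/2}},$$
the local integral in the $j$-th variable becomes, after the change of variables $x\mapsto x-d/c$,
$$\int_{-\infty}^{\infty}\frac{y^{s_j}\,e^{-2\pi i l_j x}}{(x+iy)^{s_j+m_j/2}(x-iy)^{s_j-m_j/2}}\,dx,$$
up to the unit-modulus phase $e^{2\pi i l_j d/c}$. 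For $l_j\ne 0$ this is the classical integral representation evaluating to $W_{\mathrm{sgn}(l_j)m_j/2,\,s_j-1/2}(4\pi|l_j|y)$ multiplied by explicit gamma factors and a power of $|l_j|$ (cf. \cite[Chap.~16]{WW}); for $l_j=0$ it reduces to a beta-type integral yielding $y^{1-s_j}$ together with the gamma ratio visible in \eqref{eq:scattering}.

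Third, I would reassemble the global sum over $c$. The diagonal embedding and the shift $s_1\ne s_2$ combine the unit-modulus phase into $|c/c'|^{-i\pi k/\log\varepsilon}=\chi_{-k}(c)$; together with the normalization $|N(c)|^{-2s}$ arising from the product of local factors and the class-number-one hypothesis, the outer sum becomes a Hecke $L$-series. For $l=(0,0)$, the inner sum over $d\bmod c$ of $e^{2\pi i l\cdot d/c}$ is just $|N(c)|$, and standard Dirichlet-series manipulations produce the factor $L(2s-1,\chi_{-k})/L(2s,\chi_{-k})$ and hence the scattering coefficient $\varphi_{(m_1,m_2)}(s,k)$. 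For $l\ne(0,0)$, the condition $l/c\in\mathcal{D}_k^{-1}$ emerges as the constraint that $e^{2\pi i\langle l,d/c\rangle}$ be nontrivial, and the $c$-sum collapses to the generalized divisor sum $\sigma_{1-2s,-k}(l)/L(2s,\chi_{-k})$.

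The main obstacle is the careful evaluation of the local weighted Whittaker integral, particularly the appearance of $\mathrm{sgn}(l_j)m_j/2$ as the first index of $W_{\kappa,\mu}$: the two signs of $l_j$ correspond to different contour-rotation arguments and yield the asymmetric gamma prefactors $\Gamma(s_j\pm\mathrm{sgn}(l_j)m_j/2)^{-1}$ in the stated coefficient. A secondary but genuine bookkeeping issue is matching the $\mathcal{O}_K^\times$-action on $(c,d)$ with the Hecke Grossencharacter $\chi_{-k}$ --- class number one makes this manageable, but one must verify that the sum over generators of an ideal $(c)$, weighted by $|c/c'|^{-i\pi k/\log\varepsilon}$, is well-defined, which uses $\varepsilon^{-i\pi k/\log\varepsilon}=e^{-i\pi k}=(-1)^k$ and the fact that $m_1,m_2$ are even to absorb the unit ambiguity.
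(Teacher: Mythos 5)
Your proposal is correct and follows essentially the same route as the paper, which simply defers the unfolding--Poisson--summation computation to Efrat's weight-zero treatment and supplies the two new archimedean integrals $\int_{-\infty}^{\infty}|x+i|^{-(2s-m)}(x+i)^{-m}\,dx$ and $\int_{-\infty}^{\infty}e^{-2\pi ipxy}|x+i|^{-(2s-m)}(x+i)^{-m}\,dx$ (evaluating to the Gamma ratio and to $W_{\sgn(p)m/2,\,s-1/2}$ respectively) that you identify as the key local ingredients. One small bookkeeping slip: for $l=(0,0)$ the inner sum over $d\bmod c$ with $(c,d)=1$ equals the Euler totient of the ideal $(c)$, not $|N(c)|$ --- it is precisely this totient, via M\"obius inversion, that produces the ratio $L(2s-1,\chi_{-k})/L(2s,\chi_{-k})$ you correctly state.
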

\begin{proof}
The case of $(m_1,m_2)=(0,0)$ is proved in \cite{E}. 
For general case, 
we use the formulas (see \cite[p.55]{Fis} and \cite[3.384 (9)]{GR}):
\[ \int_{-\infty}^{\infty} \frac{dx}{|x+i|^{2s-m} (x+i)^m} 
 =  (-1)^{m/2} \pi^{1/2} 
   \frac{\Gamma(s-\frac{1}{2}) \Gamma(s)}{\Gamma(s+\frac{m}{2}) \Gamma(s-\frac{m}{2})  }
,\]
and 
\[ \int_{-\infty}^{\infty} \frac{e^{- 2 \pi i pxy}}{|x+i|^{2s-m} (x+i)^m} \, dx
 =  (-1)^{m/2}  \frac{\pi^s (|p|y)^{s-1}}{\Gamma(s +\sgn(p) \frac{m}{2})}
 W_{\sgn(p) \cdot \frac{m}{2}, s-\frac{1}{2}} (4 \pi |p|y)
 \]
for $0 \ne p \in \R$ .
The rest of the proof is quite the same as in pp.47--50 in \cite{E}.
\end{proof}

We can prove the following theorem and
proposition by the similar method in pp.58--64 in \cite{E}.

\begin{theorem}[Functional equation]
For any $k \in \Z$,
$E_{(m_1,m_2)}(z,s;k)$ and $\varphi(s,k)$ can be continued meromorphically
to all of $s \in \C$. Moreover, 
we have 
\[ E_{(m_1,m_2)}(z,1-s;-k)
 = \varphi_{(m_1,m_2)}(1-s,-k) \, E_{(m_1,m_2)}(z,s;k)
, \]
and
\[ \varphi_{(m_1,m_2)}(s,k) \, \varphi_{(m_1,m_2)}(1-s,-k) = 1
.\]
\end{theorem}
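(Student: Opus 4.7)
The plan is to follow the weight-zero argument of Efrat \cite[pp.~58--64]{E} and to adapt it by carefully tracking the additional Gamma and sign factors introduced by the weights $(m_1,m_2)$. All three assertions reduce, via the explicit Fourier expansion proved above, to facts about the Hecke $L$-function $L(s,\chi_{-k})$ of the real quadratic field $K$ --- its meromorphic continuation and its functional equation --- together with standard Gamma-function identities.

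The meromorphic continuation is handled term by term from the Fourier expansion. The scalar coefficient $\varphi_{(m_1,m_2)}(s,k)$ is a ratio of Gamma functions times $L(2s-1,\chi_{-k})/L(2s,\chi_{-k})$, all of which are meromorphic on $\C$, so \eqref{eq:scattering} already exhibits the meromorphic continuation of $\varphi_{(m_1,m_2)}(s,k)$. For each $l\neq 0$ the Fourier coefficient $a_l(y,s;k)$ is a product of explicit meromorphic factors and two Whittaker functions $W_{\sgn(l_j)\cdot m_j/2,\,\cdot}(4\pi|l_j|y_j)$ which decay exponentially in $|l_j|y_j$. The exponential decay is uniform on compacta in $s$ avoiding the discrete set of poles of the scalar coefficients, which gives locally uniform convergence of the nonconstant part of the Fourier expansion and hence the meromorphic extension of $E_{(m_1,m_2)}(z,s;k)$ to $\C$.

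The scalar functional equation $\varphi_{(m_1,m_2)}(s,k)\,\varphi_{(m_1,m_2)}(1-s,-k)=1$ is then verified directly from \eqref{eq:scattering}: the Hecke functional equation transforms the ratio $L(2s-1,\chi_{-k})/L(2s,\chi_{-k})$ under $(s,k)\mapsto(1-s,-k)$ in a controlled way, the four pairs of Gamma factors collapse via the reflection formula $\Gamma(w)\Gamma(1-w)=\pi/\sin(\pi w)$, and the prefactor $(-1)^{(m_1+m_2)/2}$ squares to $1$. From this, the Eisenstein-series functional equation follows Fourier-coefficient by Fourier-coefficient: the constant term reduces exactly to the scalar identity just proved, while for $l\neq 0$ the invariance $W_{\kappa,\mu}=W_{\kappa,-\mu}$ of the Whittaker function ensures that the substitution $(s,k)\mapsto(1-s,-k)$ preserves the $\mu$-parameter, so equality of the $l$th Fourier coefficients reduces once more to the same balance of $L$-factors and Gamma factors (with a short case split on $\sgn(l_1),\sgn(l_2)$).

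The principal obstacle is the bookkeeping in verifying the scalar identity for $\varphi_{(m_1,m_2)}$: the weights $m_1,m_2$ enter through the shifts $\pm m_j/2$ inside the Gamma arguments, and the archimedean factors must be combined with the Hecke-L transformation so that all contributions collapse to $1$. Once this is carried out, the rest of the theorem follows by matching Fourier expansions, so this explicit scalar calculation is the heart of the proof.
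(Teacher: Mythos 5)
Your proposal is correct and follows essentially the same route as the paper, which simply defers to the weight-zero argument of Efrat (pp.~58--64 of \cite{E}) adapted to nontrivial weight: meromorphic continuation and the functional equation are read off from the explicit Fourier expansion, with the scalar identity for $\varphi_{(m_1,m_2)}(s,k)$ coming from the Hecke functional equation plus Gamma-function identities, and the invariance $W_{\kappa,\mu}=W_{\kappa,-\mu}$ handling the nonconstant terms. No gaps to flag.
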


\begin{proposition}
$E_{(m_1,m_2)}(z,s;k)$ and $\varphi_{(m_1,m_2)}(s,k)$ have no poles
in $\RE(s)>\frac{1}{2}$, 
except for finitely many in $(\frac{1}{2},1]$ when $k=0$.
\end{proposition}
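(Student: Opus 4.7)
The plan is to extract the poles of $\varphi_{(m_1,m_2)}(s,k)$ directly from the explicit formula \eqref{eq:scattering} and to transfer the result to $E_{(m_1,m_2)}(z,s;k)$ via spectral theory. A Maass--Selberg argument along the lines of \cite[pp.58--64]{E} shows that in $\RE(s) > 1/2$ the poles of $E_{(m_1,m_2)}(z,s;k)$ are simple, real, and coincide with the poles of $\varphi_{(m_1,m_2)}(s,k)$. Thus it suffices to analyse \eqref{eq:scattering} factor by factor.

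First I would check that the Gamma ratio in \eqref{eq:scattering} is holomorphic in $\RE(s) > 1/2$. The numerator factors $\Gamma(s \pm \frac{\pi i k}{2\log\varepsilon} - \frac{1}{2})$ and $\Gamma(s \pm \frac{\pi i k}{2\log\varepsilon})$ have all their poles in $\RE(s) \le 1/2$, since the purely imaginary shift $\pm \frac{\pi i k}{2\log\varepsilon}$ does not move the real part of a pole; the denominator factors $\Gamma(s \pm \frac{\pi i k}{2\log\varepsilon} \pm \frac{m_j}{2})$ yield entire reciprocals. Hence every pole of $\varphi_{(m_1,m_2)}(s,k)$ in $\RE(s) > 1/2$ must come from the $L$-function ratio $L(2s-1,\chi_{-k})/L(2s,\chi_{-k})$. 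In this region $\RE(2s) > 1$, so $L(2s,\chi_{-k})$ is holomorphic and non-vanishing by its absolutely convergent Euler product, and the potential poles are those of $L(2s-1,\chi_{-k})$ alone. For $k \ne 0$ the Hecke character $\chi_{-k}$ is non-trivial and $L(s,\chi_{-k})$ is entire, giving no poles of $\varphi_{(m_1,m_2)}(s,k)$ in $\RE(s) > 1/2$; for $k = 0$, $L(s,\chi_0)$ agrees up to elementary factors with the Dedekind zeta function $\zeta_K(s)$, whose only singularity is a simple pole at $s=1$, producing a simple pole of $\varphi_{(m_1,m_2)}(s,0)$ at $s=1$ and no other pole in $\RE(s) > 1/2$.

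The main obstacle is the spectral transfer from $\varphi$ to $E$ inside the critical strip $1/2 < \RE(s) \le 1$: although rank-one in nature (there is a single cusp), the Maass--Selberg truncation argument must be carried out with the weight-$(m_1,m_2)$ automorphic factors and the twisted parameter $k$ in place, and one must verify that the resulting inner product inequality forces all residual poles to lie on the real axis and to coincide with the poles of the scattering function. Once this spectral input is secured, the factor-by-factor analysis above pins down both the number and the location of the poles, yielding in particular the unique pole at $s=1$ when $k=0$.
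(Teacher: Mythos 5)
Your proposal is correct and is essentially the argument the paper itself invokes: the paper gives no independent proof of this proposition but refers to the method of Efrat (pp.~58--64 of \cite{E}), which is exactly the combination you describe of a Maass--Selberg/truncation argument (Theorem \ref{th:maass}, already stated in the paper with the weight and twist in place) to locate the poles of $E_{(m_1,m_2)}(z,s;k)$ among those of $\varphi_{(m_1,m_2)}(s,k)$, followed by a factor-by-factor inspection of the explicit formula (\ref{eq:scattering}). Your analysis in fact shows slightly more than is claimed, namely that for $k=0$ the only singularity in $\RE(s)>\frac{1}{2}$ is the simple pole at $s=1$ coming from $L(2s-1,\chi_0)\sim\zeta_K(2s-1)$.
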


For $Y>1$ and $\mm=(m_1,m_2) \in (2 \Z)^2$, define
\begin{equation}
E_{\mm}^Y(z,s;k) := 
\begin{cases}
 E_{\mm}(z,s;k)  -a_0(y,s;k)  \quad \mbox{if $y_1y_2 \ge Y$,} \\
 E_{\mm}(z,s;k)  \quad \mbox{if $y_1y_2 < Y.$}
\end{cases}
\end{equation}
For $\RE(s)>1$, we note that 
$E_{\mm}^Y(z,s;k)$ is a square-integrable function on the fundamental domain 
for $\Gamma_K$.

\begin{theorem}[Maass-Selberg relation] \label{th:maass}
Let $\mm, \mm' \in (2 \Z)^2$ with $\mm+\mm'=(0,0)$. 
For $(s,k) \ne (s',k')$ and $(s,k)+(s',k') \ne (1,0)$, we have 
\begin{equation}
\begin{split}
& \int_{\Gamma_K \backslash \bH^2} E_{\mm}^Y(z,s;k) E_{\mm'}^Y(z,s';k') \, d \mu (z) \\
& = 2 \sqrt{D} \log \varepsilon \biggl[ \delta_{k,-k'}   \,  \frac{Y^{s+s'-1}  - \varphi_{\mm}(s,k) \, \varphi_{\mm'}(s',k') \, Y^{-s-s'+1}}{s+s'-1}
\\
& \qquad + \delta_{k,k'} \, 
\frac{\varphi_{\mm'}(s',k') \, Y^{s-s'}  - \varphi_{\mm}(s,k) \, Y^{s'-s}}{s-s'}
\biggr].   
\end{split}
\end{equation}
\end{theorem}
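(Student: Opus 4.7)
I would adapt the classical Maass--Selberg argument as in Efrat \cite{E} (pp.~58--64), passing from weight zero to weight $(m_1,m_2)$. Decompose the fundamental domain $F = \Gamma_K\backslash\bH^2$ as $F = F^Y \sqcup R_Y$ with $F^Y := F \cap \{y_1y_2 \le Y\}$ and $R_Y := F \cap \{y_1y_2 > Y\}$, and write
\[
\int_F E_{\mm}^Y(z,s;k)\,E_{\mm'}^Y(z,s';k')\,d\mu
= \int_{F^Y} E_{\mm}E_{\mm'}\,d\mu + \int_{R_Y}(E_{\mm}-a_0)(E_{\mm'}-a_0')\,d\mu,
\]
where $a_0(y,s;k)$ and $a_0'(y,s';k')$ are the constant Fourier coefficients.

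For the compact piece I would apply a weighted Green's identity. Since $\mm + \mm' = (0,0)$, the product $E_\mm \cdot E_{\mm'}$ is $\Gamma_K$-invariant, and the operators $\Delta_{m_j}^{(j)}$ and $\Delta_{-m_j}^{(j)}$ are formal adjoints without complex conjugation (the first-order term $im_j y_j\partial_{x_j}$ reverses sign under the adjoint). The Eisenstein series $E_{\mm}(z,s;k)$ is a common eigenfunction of $\Delta_{m_1}^{(1)}$ and $\Delta_{m_2}^{(2)}$ with eigenvalues $\lambda_j(s,k) = s_j(1-s_j)$, where $s_1 = s + \tfrac{\pi i k}{2\log\ve}$ and $s_2 = s - \tfrac{\pi i k}{2\log\ve}$, and similarly for $E_{\mm'}(z,s';k')$. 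For either $j$, Green's identity gives
\[
\bigl(\lambda_j(s,k) - \lambda_j(s',k')\bigr)\int_{F^Y} E_\mm E_{\mm'}\,d\mu = \text{flux across }\{y_1y_2 = Y\},
\]
the lateral sides of $\partial F^Y$ cancelling by automorphy.

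For the cusp integral and the horospherical flux I would use Fourier expansion along $\Gamma_\infty$ from Lemma~\ref{lem:gamma-inf}. Integration over $(x_1,x_2)$ in a fundamental domain for $\mathcal{O}_K$-translations yields the covolume factor $\sqrt{D}$ and kills all cross terms between the constant and non-constant Fourier modes. The residual unit action gives periodicity in $r := \tfrac{1}{2}\log(y_1/y_2)$ of period $2\log\ve$; integrating the oscillatory factor $e^{\pi i (k\pm k') r/\log \ve}$ over $r \in [0,2\log\ve)$ produces the Kronecker deltas $\delta_{k,\pm k'}$ together with the prefactor $2\log\ve$, assembling into the overall $2\sqrt{D}\log\ve$ in the claim. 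The four products from the two summands of $a_0 = y_1^{s_1}y_2^{s_2} + \varphi_{\mm}(s,k)\,y_1^{1-s_1}y_2^{1-s_2}$ against the analogous summands of $a_0'$, integrated against $T^{-2}\,dT$ on $T = y_1y_2 \ge Y$, give exactly the four terms $Y^{s+s'-1}$, $\varphi_{\mm}\varphi_{\mm'}Y^{1-s-s'}$, $\varphi_{\mm'}Y^{s-s'}$, $\varphi_{\mm}Y^{s'-s}$ with denominators $s+s'-1$ and $s-s'$. The non-constant Fourier contributions to the cusp integral and the boundary flux cancel, being given by the same Whittaker-function integrals with opposite signs.

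The main obstacle is the rigorous setup of the weighted Green's formula: because of the first-order term $im_j y_j\partial_{x_j}$ in $\Delta_{m_j}^{(j)}$, one must carefully verify that $u\,\Delta_{-m_j}^{(j)} v - (\Delta_{m_j}^{(j)} u)\,v$ is a pure divergence, so that Stokes' theorem produces only the horospherical flux at $y_1y_2=Y$ with the correct normal vector and boundary measure. A secondary technical point is that the identity is first established in a region of absolute convergence ($\RE s, \RE s'$ large) and then extended to the stated range $(s,k)\neq(s',k')$, $(s,k)+(s',k')\neq(1,0)$ by analytic continuation, using the meromorphy of $E_{\mm}(z,s;k)$ and $\varphi_{\mm}(s,k)$.
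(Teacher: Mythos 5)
Your plan is the standard Maass--Selberg argument (truncation, bilinear Green's identity for the weighted Laplacians using $\mm+\mm'=(0,0)$, cancellation of the non-constant Fourier modes between the flux at $y_1y_2=Y$ and the cuspidal integral, and explicit evaluation of the constant-term flux with the $x$- and unit-integrations producing $\sqrt{D}$, $2\log\ve$ and the Kronecker deltas), which is exactly the route the paper takes --- it gives no independent proof and simply cites Efrat, pp.~66--69, where this computation is carried out. The two issues you flag (verifying that $u\,\Delta^{(j)}_{-m_j}v-(\Delta^{(j)}_{m_j}u)\,v$ is a pure divergence, and the final analytic continuation from the region of absolute convergence to the stated parameter range) are indeed the only genuine technical steps, and both go through.
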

\begin{proof}
See pp.66--69 in \cite{E}.
\end{proof}

\subsection{Selberg trace formula for Hilbert modular surfaces}

We consider a certain integral operator $\mathcal{K}_{\Gamma}$
acting on $ L^2(\Gamma_K \backslash \bH^2 \, ; \, (m_1,m_2)) $.
The kernel of this integral operator is given as follows.

\begin{definition}[Automorphic kernel function]
For $(z,w) \in \bH^2 \times \bH^2$ and $\mm = (m_1,m_2) \in (2 \Z)^2$, define
\begin{equation}
\begin{split} 
K_{\Gamma}(z,w) :=& \sum_{\gamma \in \Gamma_{K}}
k(z, \gamma w) \, j_{\gamma}(w) \\
=& \sum_{(\gamma, \gamma') \in \Gamma_{K}}
k \Bigl( (z_1, z_2), (\gamma w_1, \gamma' w_2)  \Bigr)
\cdot \biggl( \frac{c w_1 + d}{|cw_1+d|} \biggr)^{m_1}
\biggl( \frac{c' w_2 + d'}{|c'w_2+d'|} \biggr)^{m_2}.
\end{split}
\end{equation}
Here, the point-pair invariant kernel $k(z,w)$ is defined in (\ref{def:k}).
\end{definition}

It is known that
\begin{proposition}
Let $L^2_{\text{dis}}(\Gamma_K \backslash \bH^2 \, ; \, (m_1,m_2))$
be the subspace of the discrete spectrum 
of $\mathcal{K}_{\Gamma}$ and 
$L^2_{\text{con}}(\Gamma_K \backslash \bH^2 \, ; \, (m_1,m_2))$
be the subspace of the continuous spectrum.
Then, 
we have a direct sum decomposition of $\mathcal{K}_{\Gamma}$-invariant subspaces :  
\[  L^2(\Gamma_K \backslash \bH^2 \, ; \, (m_1,m_2)) 
 = L^2_{\text{dis}}(\Gamma_K \backslash \bH^2 \, ; \, (m_1,m_2))
   \oplus L^2_{\text{con}}(\Gamma_K \backslash \bH^2 \, ; \, (m_1,m_2))  \]
and there is an orthonormal basis $\{ \phi_j \}_{j=0}^{\infty}$
of $L^2_{\text{dis}}(\Gamma_K \backslash \bH^2 \, ; \, (m_1,m_2))$.
\end{proposition}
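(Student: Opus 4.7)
The plan is to adapt the Selberg--Langlands spectral decomposition, as carried out by Efrat \cite{E} for trivial weight, to the $(m_1,m_2)$-weighted setting, using the Eisenstein series $E_{(m_1,m_2)}(z,s;k)$ and the Maass--Selberg relation established above.

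First, I would define the cuspidal subspace $L^2_{\text{cusp}}(\Gamma_K \backslash \bH^2; (m_1,m_2))$ as those $f \in L^2$ whose zeroth Fourier coefficient along the cusp $(\infty,\infty)$ vanishes, namely $\int_{\Gamma_\infty \cap N \backslash N} f(nz)\, dn = 0$ almost everywhere, where $N$ denotes the unipotent radical of the stabilizer of the cusp described in Lemma \ref{lem:gamma-inf}. Its orthogonal complement is the closure of the span of incomplete Eisenstein series
\[
\Theta_h(z) := \sum_{\gamma \in \Gamma_\infty \backslash \Gamma_K} h(\gamma z)\, j_\gamma(z_1)^{m_1}\, j_{\gamma'}(z_2)^{m_2}
\]
for suitably decaying test functions $h$ on $\Gamma_\infty \backslash \bH^2$. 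Since the automorphic kernel $K_\Gamma(z,w)$ is $\Gamma_K$-equivariant in both variables with the appropriate automorphic factors, both subspaces are manifestly $\mathcal{K}_\Gamma$-invariant.

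Second, to produce the discrete spectrum inside $L^2_{\text{cusp}}$, I would subtract from $K_\Gamma(z,w)$ the contribution of the $\Gamma_\infty$-translates and pass to the operator on $L^2_{\text{cusp}}$. Cusp forms decay rapidly in the cuspidal region $\{y_1 y_2 \ge Y\}$, so the resulting truncated kernel is bounded and of rapid decay, hence Hilbert--Schmidt and in particular compact on $L^2_{\text{cusp}}$. Varying $\Phi \in C_c^\infty(\R^2)$ produces a commuting family of self-adjoint compact operators which commute with both $\Delta_{m_1}^{(1)}$ and $\Delta_{m_2}^{(2)}$; simultaneous diagonalization yields the orthonormal eigenbasis $\{\phi_j\}_{j=0}^\infty$ of common joint eigenfunctions.

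Third, for the orthogonal complement, I would apply Mellin inversion in the cuspidal variable to obtain
\[
\Theta_h(z) = \frac{1}{2\pi i} \sum_{k \in \Z} \int_{\RE(s)=c} \tilde h(s,k)\, E_{(m_1,m_2)}(z,s;k)\, ds
\]
for some $c > 1$, and then shift the contour to $\RE(s)=1/2$. The contour shift picks up the finitely many residues of $E_{(m_1,m_2)}(z,s;k)$ at poles in $(\tfrac12,1]$ (which by the proposition above occur only for $k=0$); these residues are square-integrable and account for the residual contribution to $L^2_{\text{dis}}$. The integral along $\RE(s)=1/2$ then gives the continuous spectrum $L^2_{\text{con}}$. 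The main obstacle will be justifying the contour shift uniformly in $k \in \Z$: this requires polynomial bounds on vertical strips for both $E_{(m_1,m_2)}(z,s;k)$ and $\varphi_{(m_1,m_2)}(s,k)$ that are uniform in the Hecke character parameter $k$, which I would derive from the explicit formula \eqref{eq:scattering} together with standard convexity bounds for $L(s,\chi_{-k})$ and Whittaker function asymptotics, following the trivial-weight treatment in \cite{E}.
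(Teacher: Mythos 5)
Your proposal follows the same route the paper implicitly relies on: the paper states this proposition without proof, citing it as known, and the standard reference here is Efrat's spectral decomposition for $\mathrm{PSL}_2(\R)^n$, which is exactly the cuspidal/incomplete-Eisenstein splitting, compactness of the truncated kernel on cusp forms, and contour shift to $\RE(s)=1/2$ that you describe. Your outline, including the identification of the uniformity in $k$ of the Eisenstein and scattering bounds as the main technical point, is a correct reconstruction of that argument adapted to weight $(m_1,m_2)$.
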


\begin{definition}[Hilbert Maass forms of weight $(m_1,m_2)$]
\label{def:HM}
Let $(m_1,m_2) \in (2\Z)^2$.
We call   
\[ L^2_{\text{dis}}(\Gamma_K \backslash \bH^2 \, ; \, (m_1,m_2)) \]
the space of Hilbert Maass forms for $\Gamma_K$ of weight $(m_1,m_2)$.
\end{definition}

To subtract continuous spectrum on 
$L^2_{\text{con}}(\Gamma_K \backslash \bH^2 \, ; \, (m_1,m_2))$, 
we introduce (see \cite[p.79]{E} or \cite[p.1644]{Z})
\begin{definition}
For $(z,w) \in \bH^2 \times \bH^2$ and $\mm = (m_1,m_2) \in (2 \Z)^2$, define
\begin{equation}
\begin{split} 
H_{\Gamma}(z,w) := &
  \quad \frac{1}{8 \pi \sqrt{D} \, \log \ve}
\sum_{k \in \Z} \int_{- \infty}^{\infty} h \Bigl( r+\frac{\pi k}{2 \log \ve}, r-\frac{\pi k}{2 \log \ve} \Bigr)
\\ 
& \quad \quad \quad \times 
E_{\mm} \Bigl( z,\frac{1}{2}+ir;k \Bigr) \, E_{-\mm} \Bigl( w,\frac{1}{2}-ir;-k \Bigr) \, dr.
\end{split}
\end{equation}
\end{definition}


Let $\{ \phi_{j} \}_{j=0}^{\infty}$ be an orthonormal basis
of $L^2_{\text{dis}}(\Gamma_{K} \backslash \bH^2 
\, ; \, (m_1,m_2))$ and 
$(\lambda_j^{(1)},\lambda_j^{(2)}) \in \R^2$ such that
\[ \Delta_{m_1}^{(1)} \phi_j = \lambda_j^{(1)} \phi_j \quad \mbox{and} \quad
   \Delta_{m_2}^{(2)} \phi_j = \lambda_j^{(2)} \phi_j. \]

\begin{lemma} \label{lem:least-eigen}
For any $j$,
\begin{equation}  \label{eq:least-eigen}
\lambda_j^{(1)}   \ge  \frac{|m_1|}{2} \Bigl( 1-\frac{|m_1|}{2} \Bigr), \quad 
\lambda_j^{(2)}   \ge  \frac{|m_2|}{2} \Bigl( 1-\frac{|m_2|}{2} \Bigr).
\end{equation}
\end{lemma}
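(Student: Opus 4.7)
The plan is to decouple the two bounds: since $\Delta_{m_1}^{(1)}$ and $\Delta_{m_2}^{(2)}$ act on independent variables on $\bH^2 = \bH \times \bH$, I can argue the inequality for $\lambda_j^{(1)}$ and then obtain the bound for $\lambda_j^{(2)}$ by the mirror argument on the second factor. The mechanism is the standard factorization of the weight-$m$ Laplacian through Maass raising/lowering operators --- the same kind of operators $\Lambda_q^{(2)}$ and $K_{-2}^{(2)}$ featured in Theorems \ref{th:s2} and \ref{th:s3}.

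First I would introduce the weight-lowering operator $\Lambda_{m_1}^{(1)}$ and the weight-raising operator $K_{m_1}^{(1)}$ acting on the first variable of automorphic forms on $\bH^2$, and verify in local coordinates $z_1 = x_1 + i y_1$ the Casimir-type factorizations
\[
K_{m_1-2}^{(1)}\,\Lambda_{m_1}^{(1)} = \Delta_{m_1}^{(1)} - \frac{m_1}{2}\Bigl(1-\frac{m_1}{2}\Bigr), \qquad \Lambda_{m_1+2}^{(1)}\,K_{m_1}^{(1)} = \Delta_{m_1}^{(1)} - \frac{m_1}{2}\Bigl(1+\frac{m_1}{2}\Bigr),
\]
together with the $L^2$-adjoint relations $(\Lambda_{m_1}^{(1)})^{*} = K_{m_1-2}^{(1)}$ and $(K_{m_1}^{(1)})^{*} = \Lambda_{m_1+2}^{(1)}$ with respect to the inner product on $L^2(\Gamma_K \backslash \bH^2;(m_1,m_2))$. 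Both are routine one-variable computations on $\bH$, identical to the scalar weight-$m$ setting since the operators only touch the first factor.

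For $m_1 \ge 0$, I apply the first identity to $\phi_j$ and pair with $\phi_j$ in $L^2$. The adjoint relation then gives
\[
\Bigl\langle K_{m_1-2}^{(1)}\,\Lambda_{m_1}^{(1)}\phi_j,\,\phi_j\Bigr\rangle = \bigl\|\Lambda_{m_1}^{(1)}\phi_j\bigr\|^{2} \ge 0,
\]
which combined with the eigenvalue equation yields $\lambda_j^{(1)} \ge \frac{m_1}{2}(1-\frac{m_1}{2}) = \frac{|m_1|}{2}(1-\frac{|m_1|}{2})$. For $m_1 \le 0$, the second factorization together with $\|K_{m_1}^{(1)}\phi_j\|^{2} \ge 0$ gives $\lambda_j^{(1)} \ge \frac{m_1}{2}(1+\frac{m_1}{2})$, which equals $\frac{|m_1|}{2}(1-\frac{|m_1|}{2})$ since $m_1 = -|m_1|$. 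Swapping the roles of the two upper half planes and of $m_1, m_2$ delivers the bound on $\lambda_j^{(2)}$.

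The main obstacle I expect lies in justifying the adjoint relations on the full discrete spectrum rather than only on cusp forms. The space $L^2_{\text{dis}}(\Gamma_K \backslash \bH^2;(m_1,m_2))$ contains residual contributions coming from residues of Eisenstein series, so the integration-by-parts identity producing $(\Lambda_{m_1}^{(1)})^{*} = K_{m_1-2}^{(1)}$ must absorb a boundary term at the cusp $(\infty,\infty)$. The vanishing of this boundary contribution follows from the fact that $\phi_j \in L^2_{\text{dis}}$ is orthogonal to every Eisenstein series, which forces the zeroth Fourier coefficient of $\phi_j$ along $\Gamma_\infty$ to have sufficient decay; a standard truncation-and-limit argument (compatible with the Maass--Selberg machinery of Theorem \ref{th:maass}, cf.\ \cite{E}) then closes the gap.
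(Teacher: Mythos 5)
The paper offers no argument for this lemma --- it simply cites Hejhal \cite[(6.1), p.385]{H2} --- and your Maass-operator factorization together with the adjointness $(\Lambda_{m_1}^{(1)})^{*}=K_{m_1-2}^{(1)}$ is precisely the standard proof behind that citation, so your route is essentially the same and is correct (including the point that the cuspidal boundary term in the integration by parts vanishes on the discrete spectrum). The one blemish is a sign slip in your second identity: with the paper's conventions one computes $\Lambda_{m_1+2}^{(1)}K_{m_1}^{(1)}=\Delta_{m_1}^{(1)}+\tfrac{m_1}{2}\bigl(1+\tfrac{m_1}{2}\bigr)$ (not minus), so positivity of $\|K_{m_1}^{(1)}\phi_j\|^2$ gives $\lambda_j^{(1)}\ge -\tfrac{m_1}{2}\bigl(1+\tfrac{m_1}{2}\bigr)=\tfrac{|m_1|}{2}\bigl(1-\tfrac{|m_1|}{2}\bigr)$ for $m_1\le 0$; as written your two sign errors cancel and the stated conclusion is still the right one.
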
   
\begin{proof}
See \cite[(6.1), p.385]{H2}.
\end{proof}
   
Let us define the set of spectral parameters
\[ 
\mathrm{Spec}(m_1,m_2) := 
\bigl\{ (r^{(1)}_j,r^{(2)}_j) \bigr\}_{j=0}^{\infty},
\]
which is a discrete subset of 
\[ \biggl( \R \cup i\Bigl[-\frac{||m_1|-1|}{2}, \frac{||m_1|-1|}{2}\Bigr] \biggr)  
\times \biggl( \R \cup i\Bigl[-\frac{||m_2|-1|}{2}, \frac{||m_2|-1|}{2}\Bigr] \biggr) 
. \]

Here, we write $\lambda^{(l)}_j = \tfrac{1}{4}+(r^{(l)}_j)^2$ and 
$r_j^{(i)}$ are defined by
\begin{equation} \label{eq:rj}
r_j^{(l)} := 
\begin{cases}
  \sqrt{\lambda_j^{(l)}- \frac{1}{4}}     \quad \: \, \mbox{if } \lambda_j^{(l)} \ge \frac{1}{4},  \\
 i \sqrt{\frac{1}{4} - \lambda_j^{(l)}}   \quad \mbox{if } \lambda_j^{(l)} < \frac{1}{4}, 
\end{cases}
\end{equation}
for $l=1,2$.

\begin{theorem}
$K_{\Gamma}(z,w) - H_{\Gamma}(z,w)$ is a Hilbert-Schmidt integral kernel, that is
\[ \int \! \! \! \int_{(\Gamma_{K} \backslash \bH^2)^2} |K_{\Gamma}(z,w) - H_{\Gamma}(z,w)|^2 \, 
d \mu (z)  \, d \mu (w)  < \infty.  \]
\end{theorem}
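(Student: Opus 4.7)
The plan is to follow the truncation argument of Efrat \cite{E} and Zograf \cite{Z}, extending it from the trivial weight $(0,0)$ to general even weight $\mm=(m_1,m_2)$. Fix a fundamental domain $F$ for $\Gamma_K$ in $\bH^2$ and, for $Y \gg 1$, decompose $F = F_Y \sqcup F^Y$, where $F^Y := F \cap \{ y_1 y_2 \ge Y\}$ is a standard Siegel neighborhood of the cusp and $F_Y$ is the compact complement. The integral
\[
\iint_{F \times F} |K_\Gamma(z,w) - H_\Gamma(z,w)|^2 \, d\mu(z) \, d\mu(w)
\]
splits into four pieces. On any piece involving at least one factor $F_Y$, the compact support of $\Phi$ reduces the sum defining $K_\Gamma$ to a locally finite one, so $K_\Gamma$ is continuous there and decays rapidly whenever the other coordinate enters the cusp; likewise the Schwartz property of $h$ together with the polynomial growth of $E_\mm(z,\tfrac12+ir;k)$ in the spectral parameter make $H_\Gamma$ bounded and rapidly decaying on those regions, so these three pieces are square-integrable.

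The main work lies in the cuspidal-cuspidal piece $F^Y \times F^Y$. Here one expands both $K_\Gamma(z,w)$ and $H_\Gamma(z,w)$ as Fourier series in $(x_1,x_2)$ and $(u_1,u_2)$ along the lattice attached to $\Gamma_\infty$ described in Lemma~\ref{lem:gamma-inf}. The goal is to show that (i) all nonzero Fourier modes of both kernels decay exponentially as $y_1 y_2$ or $v_1 v_2 \to \infty$, and (ii) the zero-Fourier modes $K_\Gamma^{(0)}$ and $H_\Gamma^{(0)}$ differ by a remainder that is square-integrable against the invariant measure on $F^Y \times F^Y$. For (i), the compact support of $Q$ limits which $\Gamma_\infty$-cosets contribute to $K_\Gamma$ deep in the cusp, and for $H_\Gamma$ the Whittaker-function expressions for the nonzero Fourier coefficients of $E_\mm$, combined with the rapid decay of $h$, give via contour shifts the desired exponential bounds. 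For (ii), one decomposes the $\Gamma_K$-sum defining $K_\Gamma$ into $\Gamma_\infty$-double cosets in exactly the same way as in the computation of the constant term of $E_\mm(z,s;k)$, and identifies the result with the zero-Fourier piece of $H_\Gamma$ built from the scattering function $\varphi_\mm(s,k)$; the Maass--Selberg relation of Theorem~\ref{th:maass} supplies the spectral identity that makes this cancellation exact.

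The principal obstacle is step (ii) in the presence of a nontrivial weight. In the unweighted case this cancellation is carried out in detail in \cite[Ch.~4]{E} and \cite{Z}; the new ingredient here is to track the automorphic factors $\bigl((cw_j+d)/|cw_j+d|\bigr)^{m_j}$ appearing in $K_\Gamma$ and to match them against the weight-dependent $\Gamma$-factor ratios in $\varphi_\mm(s,k)$ displayed in \eqref{eq:scattering}. Substituting the explicit Whittaker-function formulas for the Fourier coefficients of $E_\mm(z,\tfrac12+ir;k)$ and recovering $\Phi$ from $h$ via Proposition~\ref{prop:phi} reduces the matching to a one-variable weighted identity analogous to the standard one, now involving the gamma factors indexed by $m_1$ and $m_2$. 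Once (i) and (ii) are established, assembling the pieces and integrating against $(y_1 y_2)^{-2}(v_1 v_2)^{-2} \, dx \, dy \, du \, dv$ over the Siegel set yields the finite Hilbert--Schmidt bound.
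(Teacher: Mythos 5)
Your outline follows the same route as the paper, whose entire proof of this theorem is a one-line reference to Efrat (Theorem 9.7 of \cite{E}) and Zograf (p.~1644 of \cite{Z}): truncate at the cusp, show the off-cusp pieces are harmless, and verify that in the cuspidal region the growing zero Fourier modes of $K_{\Gamma}$ and $H_{\Gamma}$ cancel while the nonzero modes decay, with the only new ingredient for nontrivial weight being the matching of the automorphic factors against the weighted gamma factors in $\varphi_{\mm}(s,k)$ --- exactly as you identify. One small correction: the pointwise cancellation of the zero modes comes from unfolding the $\Gamma_{\infty}$-sum and the transform pair of Proposition~\ref{prop:phi} (i.e.\ the Selberg/Harish-Chandra transform relating $\Phi$, $g$, $h$), not from the Maass--Selberg relation, which enters only later when one integrates $|E^{Y}_{\mm}|^{2}$ to compute the Eisenstein contribution to the trace.
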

\begin{proof}
Similar with the proof of Theorem 9.7 in \cite{E} or
p.1644 in \cite{Z}.
\end{proof}

We shall assume that
\[ k(z,w) = \int_{\bH^2} k^{(1)}(z,v) \, k^{(2)}(v,w) \, d\mu(v)
\]
where, $k^{(1)}$ and $k^{(2)}$ are defined as (\ref{def:k}). 
Then $K_{\Gamma} - H_{\Gamma}$ defines a integral operator of 
trace class. So, we have

\begin{theorem} \label{th:pre-trf}
\begin{equation} \label{eq:pre}
\sum_{j=0}^{\infty} \, 
h(r_j^{(1)},r_j^{(2)})
= \int_{\Gamma_{K} \backslash \bH^2}
\Bigl[ K_{\Gamma}(z,z) - H_{\Gamma}(z,z)  \Bigr] \, d \mu (z),
\end{equation}
where the left hand side is absolutely convergent.
\end{theorem}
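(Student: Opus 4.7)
The plan is to compute the trace of the integral operator with kernel $K_\Gamma - H_\Gamma$ in two different ways: via Mercer's theorem as the integral along the diagonal of its kernel, and via its spectral decomposition as a sum of eigenvalues. The hypothesis that $k = k^{(1)} * k^{(2)}$ guarantees trace-class, and the preceding theorem shows the kernel is Hilbert--Schmidt, so both computations are legitimate.

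First I would establish the Harish-Chandra-type spectral identity for $k$: if $\phi \colon \bH^2 \to \C$ transforms with weight $(m_1,m_2)$ and satisfies the joint eigenvalue equations $\Delta_{m_j}^{(j)} \phi = (\tfrac{1}{4} + r_j^2)\,\phi$, then
\[ \int_{\bH^2} k(z,w)\, j_{\gamma}(w)^{-1} \phi(w)\, d\mu(w) = h(r_1, r_2)\, \phi(z), \]
where $h$ is the transform defined in Definition \ref{def:Qgh}. In the weight-zero case this is classical (Selberg); for the mixed-weight case I would factor $\Phi(w_1, w_2)$ and reduce to the one-variable weight-$m$ formula in Hejhal \cite{H2}, Proposition 2.2 of \cite{E}, and Zograf \cite{Z}. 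Proposition \ref{prop:phi} is precisely the inversion result needed to pass from $\Phi$ to $h$ via $Q$ and $g$.

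Next I would spectrally expand $\mathcal{K}_\Gamma$. Using the point-pair invariance together with the automorphy factor in the definition of $K_\Gamma$, the operator preserves $L^2(\Gamma_K \backslash \bH^2; (m_1,m_2))$, commutes with both weighted Laplacians, and by the previous step acts on each discrete basis element $\phi_j$ by the eigenvalue $h(r_j^{(1)}, r_j^{(2)})$. Applying the same eigenvalue identity to the Eisenstein series $E_{(m_1,m_2)}(z, \tfrac{1}{2}+ir; k)$, whose Laplacian eigenvalues are $\tfrac{1}{4} + (r \pm \tfrac{\pi k}{2\log\ve})^2$, gives the eigenvalue $h(r + \tfrac{\pi k}{2\log\ve}, r - \tfrac{\pi k}{2\log\ve})$. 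The spectral expansion of the continuous part of $\mathcal{K}_\Gamma$ against the Eisenstein basis is then precisely $H_\Gamma(z,w)$, by its definition, so $K_\Gamma - H_\Gamma$ is the kernel of the restriction of $\mathcal{K}_\Gamma$ to the discrete subspace.

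Finally, since $K_\Gamma - H_\Gamma$ is trace class by the factorization hypothesis $k = k^{(1)} * k^{(2)}$, Mercer's theorem identifies its trace with the integral of its diagonal, giving the right-hand side of \eqref{eq:pre}, while the spectral side is $\sum_j h(r_j^{(1)}, r_j^{(2)})$. Absolute convergence of the sum is automatic from the trace-class property, the rapid decay of $h$ on $\R^2$ (Definition \ref{def:Qgh}), and the fact that only finitely many eigenvalues lie off the real axis in each coordinate by Lemma \ref{lem:least-eigen}. The main obstacle is the bookkeeping in the second step: one must justify interchanging the kernel sum over $\Gamma_K$ with the Eisenstein integral over the continuous spectrum, and verify that the continuous-spectrum projection of $\mathcal{K}_\Gamma$ really is $H_\Gamma$; this is handled by the Maass--Selberg relation (Theorem \ref{th:maass}) combined with a truncation argument as in \cite[\S 9]{E} and \cite[p.~1644]{Z}.
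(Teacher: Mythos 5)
Your proposal is correct and follows essentially the same route as the paper, which itself gives no argument beyond noting that the Hilbert--Schmidt property of $K_{\Gamma}-H_{\Gamma}$ together with the factorization $k = k^{(1)} * k^{(2)}$ yields a trace-class operator, deferring all details to Efrat and Zograf. One small caveat: the diagonal-integral formula for the trace is better justified directly from the factorization into a product of two Hilbert--Schmidt operators, so that $\Tr(AB) = \int\!\!\int a(z,w)\,b(w,z)\, d\mu(w)\, d\mu(z)$, rather than by Mercer's theorem, which requires positivity of the kernel.
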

Our next task is to evaluate the right hand side of 
(\ref{eq:pre}) explicitly.

Hereafter, we assume that the test functions are written as follows.
\begin{assumption} \label{assump:test}
We shall assume that the test functions are products of two separate functions 
that each involve only one independent variable. 
That is
\begin{equation}
\begin{split}
& h(r_1,r_2) = h_1(r_1) \, h_2(r_2),  \quad g(u_1,u_2) =g_1(u_1) \, g_2(u_2), \\
& \Phi(x_1,x_2) = \Phi_1(x_1) \, \Phi_2(x_2), \quad Q(w_1,w_2)=Q_1(w_1) \, Q_2(w_2). 
\end{split}
\end{equation}
Without loss of generality 
we may assume that $\Phi_1$ and $\Phi_2$ are real valued.
\end{assumption}

Now we can state the Selberg trace formula for our cases.
We give a proof of this theorem in the next section.
 
\begin{theorem}[Selberg trace formula for $L^2(\Gamma_{K} \backslash \bH^2 
\, ; \, (0,m))$ with $m \in 2 \Z$] \label{th:trf}
Let $g(u_1,u_2)$ be an even function in $C_{c}^{\infty}(\R^2)$ 
and put $h(r_1,r_2) :=
\int_{-\infty}^{\infty} \int_{-\infty}^{\infty}
g(u_1,u_2)
e^{i(r_1u_1+r_2u_2)} \, du_1 du_2$, so that $h$ is even, rapidly decreasing
and analytic.

Then we have,
\begin{equation} 
\sum_{j=0}^{\infty} \, 
h(r_j^{(1)},r_j^{(2)}) = \mathbf{I}(h)+\mathbf{II_a}(h)+\mathbf{II_b}(h)+\mathbf{III}(h).
\end{equation}

Here, 
\begin{align*}
& \mathbf{I}(h) 
:= \ \frac{\mathrm{vol}(\Gamma_K \backslash \bH^2)}{16 \pi^2}
\int \! \! \! \! \int_{\R^2}
\frac{\frac{\partial^2}{\partial u_1 \partial u_2} g(u_1,u_2)}
{\sinh(u_1/2) \sinh(u_2/2)}
e^{-\frac{{m}}{2}u_2} \, du_1 du_2 \\
& \quad + \sum_{(\gamma,\gamma') \in \Gamma_{\mathrm{H1}}}
\frac{\mathrm{vol}(\Gamma_{\gamma} \backslash G_{\gamma}) \, g(\log N(\gamma),\log N(\gamma'))}
{(N(\gamma)^{1/2}-N(\gamma)^{-1/2})(N(\gamma')^{1/2}-N(\gamma')^{-1/2})} \\
& \quad + \sum_{R(\theta_1,\theta_2) \in \Gamma_{\mathrm{E}}} 
\frac{-e^{-i \theta_1+i(m-1)\theta_2}}{16 \nu_{R} \sin \theta_1 \sin \theta_2}  \! \! \int \! \! \! \! \int_{\R^2} \! \! g(u_1,u_2) 
e^{-\frac{u_1}{2}+\frac{(m-1)}{2}u_2}
 \prod _{j=1}^{2} \Bigl[ \frac{e^{u_j}-e^{2 i \theta_j}}{\cosh u_j - \cos 2 \theta_j} \Bigr]
du_1 du_2
\\
& \quad + \sum_{(\gamma,\omega) \in \Gamma_{\mathrm{HE}}} \! \! \!
\frac{\log N(\gamma_0)}{N(\gamma)^{1/2}-N(\gamma)^{-1/2}}
\frac{ie^{i(m-1)\omega}}{4 \sin \omega}
\int_{-\infty}^{\infty} \! \! g(\log N(\gamma),u)
e^{\frac{m-1}{2}u}
\Bigl[ \frac{e^{u}-e^{2 i \omega}}{\cosh u - \cos 2 \omega} \Bigr]
du \\    
& \quad + \sum_{(\omega', \gamma') \in \Gamma_{\mathrm{EH}}} \! \! \!
\frac{\log N(\gamma'_0)}{N(\gamma')^{1/2}-N(\gamma')^{-1/2}}
\frac{ie^{-i\omega'}}{4 \sin \omega'}
\int_{-\infty}^{\infty} \! \! g(u, \log N(\gamma'))
e^{\frac{-1}{2}u}
\Bigl[ \frac{e^{u}-e^{2 i \omega'}}{\cosh u - \cos 2 \omega'} \Bigr]
du,  
\end{align*}

\begin{align*}
& \mathbf{II_a}(h) := 
\Bigl[ \sqrt{D} A_0 - 4 \log \varepsilon (\log 2 +C_{E}) \Bigr] g(0,0)
+ \log \varepsilon \int_{0}^{\infty} \! \! [g(u,0)+g(0,u)] du  \\
& \quad - \frac{\log \varepsilon}{2 \pi^2} \int \! \! \! \! \int_{\R^2} 
\Bigl[ \frac{\Gamma'}{\Gamma}(1+ir_1) + \frac{\Gamma'}{\Gamma}(1+ir_2)
\Bigr] h(r_1,r_2) \, dr_1 dr_2 \\
& \quad + 2 \log \varepsilon 
\int_{0}^{\infty} \frac{g(0,u)}{e^{u/2}-e^{-u/2}}
\Bigl[1-\cosh \frac{m}{2}u \Bigr] du 
\end{align*}

\begin{align*}
& \mathbf{II_b}(h) :=
- 4 \log \varepsilon \sum_{k=1}^{\infty} 
\sum_{\gamma_{k,\alpha} \in \Gamma_{\mathrm{H2}}}
\frac{k_0(\gamma_{k,\alpha}) \log(N(\alpha, \varepsilon^{k}-\varepsilon^{-k}))}{|N(\varepsilon^{k}-\varepsilon^{-k})|} g(2k \log \varepsilon, 2k \log \varepsilon) \\
& \quad + 4 \log \varepsilon \sum_{k=1}^{\infty}
\log(\varepsilon^k -\varepsilon^{-k} ) \, g(2k \log \varepsilon, 2k \log \varepsilon) \\
& \quad + 2 \log \varepsilon \sum_{k=1}^{\infty}
\int_{2k \log \varepsilon}^{\infty} 
\Bigl[ g(u,2k \log \varepsilon)+g(2k \log \varepsilon,u) \Bigr] 
\frac{\cosh(u/2)}{\sinh(u/2)+\sinh(k \log \varepsilon)} \, du \\
& \quad + 2 \log \varepsilon \sum_{k=1}^{\infty}
\int_{2k \log \varepsilon}^{\infty} g(2k \log \varepsilon,u) 
\frac{1 - \cosh \bigl( m( u/2 - k \log \varepsilon) \bigr)}{\sinh(u/2 - k \log \varepsilon)} \, du,
\end{align*}
and
\begin{align*}
& \mathbf{III}(h) :=
\frac{1}{4 \pi} \sum_{k \in \Z} \int_{\R} 
h \Bigl( r+\frac{\pi k}{2 \log \varepsilon},r-\frac{\pi k}{2 \log \varepsilon} 
\Bigr) \,
\frac{\varphi'_{(0,m)}}{\varphi_{(0,m)}} \bigl( \frac{1}{2}+ir, k \bigr) \, dr \\
& - \frac{1}{4} h(0,0) \, \varphi_{(0,m)} \bigl( \frac{1}{2},0 \bigr).
\end{align*}
\end{theorem}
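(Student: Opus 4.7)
The starting point is the pre-trace formula of Theorem \ref{th:pre-trf}:
\[
\sum_{j=0}^{\infty} h(r_j^{(1)},r_j^{(2)}) = \int_{\Gamma_K \backslash \bH^2} \bigl[K_{\Gamma}(z,z) - H_{\Gamma}(z,z)\bigr]\, d\mu(z).
\]
The plan is to partition the sum defining $K_\Gamma(z,z)$ according to $\Gamma_K$-conjugacy class, integrate termwise over a (truncated) fundamental domain, and then match the parabolic / type 2 hyperbolic divergences with those of $\int H_\Gamma(z,z)\, d\mu(z)$.

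First I would handle the ``geometric'' classes that lead to $\mathbf{I}(h)$. The identity contribution is evaluated by a Selberg transform computation: unfolding the integral over $\bH^2$, using the weight $(0,m)$ point-pair invariant $k(z,z)$ and the inversion formula of Proposition \ref{prop:phi} reduces the integral to the stated $(\partial^2 g/\partial u_1\partial u_2)/(\sinh(u_1/2)\sinh(u_2/2))$ expression with the weight factor $e^{-(m/2)u_2}$ coming from $H_{m}$ on the second factor. Type 1 hyperbolic terms are treated by conjugating $(\gamma,\gamma')$ to a diagonal pair, unfolding against the fundamental domain of the centralizer, and recognizing the resulting double integral as $g(\log N(\gamma),\log N(\gamma'))$ divided by the standard sinh factors. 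The purely elliptic, hyperbolic-elliptic and elliptic-hyperbolic orbital integrals are each separable; after diagonalizing the elliptic part of each class one of the factors becomes a residue-type integral which produces the kernel $(e^u-e^{2i\theta})/(\cosh u-\cos 2\theta)$ with the extra $e^{i(m-1)\theta}$ (or $e^{i(m-1)\omega}$) phase coming from $H_m(z_j,\gamma z_j)$ at a fixed point --- this is where the non-trivial weight genuinely enters.

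The central technical step is the joint treatment of the parabolic class, the type 2 hyperbolic classes, and the Eisenstein subtraction $H_\Gamma$; these produce $\mathbf{II_a}(h)$, $\mathbf{II_b}(h)$ and $\mathbf{III}(h)$. I would work on a truncated fundamental domain $\mathcal{F}_Y$ (cutting off at $y_1y_2=Y$ above the cusp), decompose $\Gamma_\infty$ using Lemma \ref{lem:gamma-inf} as a semidirect product of the unit group $\mathcal{O}_K^\times$ and the translation lattice $\mathcal{O}_K$, and separate the translations $\alpha=0$ (pure scalar/unit orbits) from $\alpha\ne 0$ (which give the type 2 hyperbolic $\gamma_{k,\alpha}$ via Lemma \ref{lem:hyp2}). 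The $\alpha\ne 0$ sum, unfolded using the centralizer structure $k_0(\gamma_{k,\alpha})$, reproduces the weighted sum $-4\log\varepsilon\sum_k\sum_{\gamma_{k,\alpha}}\cdots$ in $\mathbf{II_b}$, while careful expansion of the weighted kernel $H_m$ around the fixed axis of $\gamma_{k,0}$ yields the correction integrals $\int_{2k\log\varepsilon}^\infty\cdots$ and the $1-\cosh(m(u/2-k\log\varepsilon))$ weight term. Simultaneously, the $\alpha=0$ parabolic contribution is divergent in $Y$, but the $Y\to\infty$ divergence is cancelled by $\int_{\mathcal{F}_Y} H_\Gamma(z,z)\, d\mu(z)$, which I would evaluate using Maass--Selberg (Theorem \ref{th:maass}) applied to $E_{(0,m)}(\cdot,\tfrac{1}{2}+ir;k)$ and $E_{(0,-m)}(\cdot,\tfrac{1}{2}-ir;-k)$, shifting the resulting $\log Y$ boundary terms against the parabolic piece and converting the remainder into the contour integral of $\varphi'_{(0,m)}/\varphi_{(0,m)}$ that defines $\mathbf{III}(h)$. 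Contour-shifting picks up the residue at $s=1/2$ giving the $-\tfrac{1}{4}h(0,0)\varphi_{(0,m)}(1/2,0)$ boundary term; the Fourier coefficient formulas produce the explicit $\Gamma'/\Gamma$ and constant terms in $\mathbf{II_a}$ (the constants $A_0$, $C_E$, $\log 2$ arising exactly as in \cite{E,Z} from the functional-equation factors of the Dedekind zeta of $K$).

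The hardest point is the bookkeeping of the three-way cancellation between $\alpha=0$ parabolic terms, the $H_\Gamma$ integral, and the edge contributions of the type 2 hyperbolic sum, particularly in the presence of the non-trivial weight $m$: the weight introduces the extra factor $1-\cosh(mu/2)$ in $\mathbf{II_a}$ and $1-\cosh(m(u/2-k\log\varepsilon))$ in $\mathbf{II_b}$, and one must verify that these are the exact difference between the weighted kernel and the weight-zero kernel integrated along the cusp. Once this accounting is in place, collecting the surviving finite pieces and letting $Y\to\infty$ gives the asserted identity $\mathbf{I}+\mathbf{II_a}+\mathbf{II_b}+\mathbf{III}$. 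Finally, the general $h$ satisfying the hypotheses is obtained from the compactly supported case by the standard density argument in the Paley--Wiener topology, using the rapid decay of $h$ to justify termwise convergence of the spectral side.
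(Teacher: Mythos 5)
Your plan follows the paper's proof essentially step for step: start from the pre-trace formula of Theorem \ref{th:pre-trf}, evaluate the convergent orbital integrals (identity, type 1 hyperbolic, elliptic, mixed) to get $\mathbf{I}(h)$, truncate at $y_1y_2=Y$ to handle the parabolic and type 2 hyperbolic classes together with the Eisenstein subtraction via Maass--Selberg, and cancel the $\log Y$ divergences among $P^Y$, $H_2^Y$ and $EI^Y$ before letting $Y\to\infty$. One slip to fix: you have the classification inside $\Gamma_\infty$ backwards --- the parabolic elements are the translations ($u=\pm1$, $\alpha\neq0$), whereas the type 2 hyperbolic representatives $\gamma_{k,\alpha}$ of Lemma \ref{lem:hyp2} have $u=\varepsilon^k$ with $k\geq1$ and \emph{arbitrary} $\alpha$ (including $\alpha=0$); your subsequent attribution of the resulting terms to $\mathbf{II_a}$ and $\mathbf{II_b}$ is nevertheless correct, so this is a labeling error rather than a structural one.
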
 
The series and integrals converges absolutely.
Here, $A_0$ is the constant term of the Laurent expansion of $\zeta_{K}(s)$
at $s=1$ and $C_{E}$ is the Euler constant. 
The case of $(0,m)=(0,0)$ is proved by Zograf \cite{Z} and Efrat \cite{E}.

\section{Proof of the Selberg trace formula for Hilbert modular surfaces}

\subsection{Orbital integrals and the fundamental domain}
In this section we prove Theorem \ref{th:trf}. 
We recall Theorem \ref{th:pre-trf}: 
\[
\sum_{j=0}^{\infty} \, 
h(r_j^{(1)},r_j^{(2)})
= \int_{\Gamma_{K} \backslash \bH^2}
\Bigl[ K_{\Gamma}(z,z) - H_{\Gamma}(z,z)  \Bigr] \, d \mu (z).
\]
Formally, we have
\[ \int_{\Gamma_K \backslash \bH^2}
K_{\Gamma}(z,z) \, d \mu(z) 
= \sum_{\gamma \in \mathrm{Conj}(\Gamma_K)} 
\int_{\Gamma_\gamma \backslash \bH^2} 
k(z, \gamma z) \, j_{\gamma}(z) \, d \mu(z).
\]
Here, we put $\Gamma_{\gamma}$ be the centralizer of $\gamma$ in $\Gamma_{K}$. 
To prove Theorem \ref{th:trf}, we calculate the orbital integral
\[  \int_{\Gamma_\gamma \backslash \bH^2} 
k(z, \gamma z) \, j_{\gamma}(z) \, d \mu(z)
\]
explicitly for each $\Gamma_{K}$-conjugacy classes $[\gamma]$ of $\Gamma_{K}$.
However, $\gamma$ is parabolic or type 2 hyperbolic, the above
orbital integral does not converge.  

Therefore, we introduce the truncated fundamental domain 
for $\Gamma_{K}$. 
First we construct the fundamental domain $F_{\infty}$ of the group
$\Gamma_{\infty}$. (See Lemma \ref{lem:gamma-inf}).
By direct calculation, we have,
\begin{lemma}[Fundamental domain of $\Gamma_{\infty}$]
Let $D$ be the discriminant of the quadratic field $K$. 
We write $(z_1,z_2)=(x_1+iy_1,x_2+iy_2) \in \bH^2$. 
\begin{enumerate}
\item If $D \equiv 1 \pmod{4}$, put
\begin{equation*}
\begin{split}
F_{\infty} :=   \bigl\{ & (x_1+iy_1, x_2+iy_2) \, | \,  
0 \le \bigl( 1-\tfrac{1}{\sqrt{D}} \bigr) \, x_1+ \bigl( 1+\tfrac{1}{\sqrt{D}} \bigr) \, x_2 < 2, \,   0 \le x_1 - x_2 < 2 \sqrt{D}, \,  \\
& \, \ve^{-2} \le y_1/y_2  < \ve^2  \bigr\}.  
\end{split}
\end{equation*}
\item Otherwise, put
\[  F_{\infty} :=  \bigl\{ (x_1+iy_1, x_2+iy_2) \, | \,  
0 \le x_1+x_2 < 2, \,   0 \le x_1 - x_2 < 2 \sqrt{D}, \,  \ve^{-2} \le y_1/y_2  < \ve^2  \bigr\}.  \]
\end{enumerate}
Then $F_{\infty}$ is a fundamental domain
for the group $\Gamma_{\infty}$ acting on $\bH^2$.
\end{lemma}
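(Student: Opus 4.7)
The plan is to exploit the semidirect-product structure of $\Gamma_\infty$: a normal translation subgroup $N_\infty = \{T_\alpha : \alpha \in \mathcal{O}_K\}$ acting by $(z_1, z_2) \mapsto (z_1 + \alpha, z_2 + \alpha')$, and the unit subgroup, which after collapsing the kernel $\{\pm I\}$ of the quotient to $\mathrm{PSL}$ becomes the infinite cyclic group $\langle \varepsilon \rangle$. A general element, parametrized by $u = \varepsilon^k$ and $\alpha \in \mathcal{O}_K$, acts on $\bH^2$ by $(z_1, z_2) \mapsto (\varepsilon^{2k} z_1 + \varepsilon^k \alpha, (\varepsilon')^{2k} z_2 + (\varepsilon')^k \alpha')$. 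I would first use the unit part to fix the $y$-coordinates, and then use a translation to fix the $x$-coordinates.

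For the unit step, since $(\varepsilon')^2 = \varepsilon^{-2}$ regardless of whether $N(\varepsilon) = +1$ or $-1$, a unit $u = \varepsilon^k$ sends $y_1/y_2 \mapsto \varepsilon^{4k}(y_1/y_2)$, so for each $z \in \bH^2$ there is a unique $k \in \Z$ placing $\varepsilon^{4k}(y_1/y_2)$ into $[\varepsilon^{-2}, \varepsilon^2)$; this accounts for the $y$-condition of $F_\infty$. For the translation step, once the unit has been applied, an additional translation by $\alpha \in \mathcal{O}_K$ shifts $(x_1, x_2)$ by $(\beta, \beta')$ where $\beta := \varepsilon^k \alpha$ ranges freely over $\varepsilon^k \mathcal{O}_K = \mathcal{O}_K$. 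Hence the residual action on $(x_1, x_2)$ is by the lattice $\Lambda := \{(\beta, \beta') : \beta \in \mathcal{O}_K\} \subset \R^2$, and the task reduces to matching the stated linear inequalities with the coordinate ranges picking out an appropriate parallelogram for $\Lambda$.

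The two cases of the lemma correspond to the two forms of an integral basis of $\mathcal{O}_K$: if $D \equiv 1 \pmod 4$, a $\Z$-basis is $\{1, \omega\}$ with $\omega = (1+\sqrt{D})/2$, so $\Lambda$ has generators $v_1 = (1,1)$ and $v_2 = ((1+\sqrt{D})/2, (1-\sqrt{D})/2)$; otherwise $D = 4d$ with $d$ squarefree and the basis is $\{1, \sqrt{d}\}$, giving $v_1 = (1,1)$ and $v_2 = (\sqrt{d}, -\sqrt{d})$. Expanding $(x_1, x_2) = s v_1 + t v_2$ and inverting, the linear forms in the lemma are engineered to read off $s$ and $t$ cleanly: in case 1, the form $(1 - 1/\sqrt{D})x_1 + (1 + 1/\sqrt{D})x_2$ evaluates to $2$ on $v_1$ and $0$ on $v_2$, while $x_1 - x_2$ evaluates to $0$ on $v_1$ and $\sqrt{D}$ on $v_2$; a similar short computation in case 2 shows that $x_1 + x_2$ and $x_1 - x_2$ play the analogous roles. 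Combined uniqueness of $k$ at the unit step and of $\alpha$ at the translation step then yields that every $\Gamma_\infty$-orbit meets $F_\infty$ as claimed. The main subtlety I foresee is $\mathrm{PSL}$-bookkeeping when $N(\varepsilon) = -1$, where $(\varepsilon')^k$ flips sign with the parity of $k$ and the $\mathrm{PSL}$-representative of the matrix requires a consistent choice; however, passing to $\beta = \varepsilon^k \alpha \in \mathcal{O}_K$ collapses this sign ambiguity at the lattice level, so both signature cases reduce to the same translation-lattice analysis.
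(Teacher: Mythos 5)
Your reduction is the right one, and it is surely what the paper's ``by direct calculation'' refers to (no proof is printed): split $\Gamma_{\infty}$ into the translation lattice $\{(z_1,z_2)\mapsto(z_1+\alpha,z_2+\alpha')\}$ extended by the units, use $(\varepsilon')^{2}=\varepsilon^{-2}$ to see that the unit part multiplies $y_1/y_2$ by $\varepsilon^{4k}$ and is therefore fixed uniquely by the band $\varepsilon^{-2}\le y_1/y_2<\varepsilon^{2}$, and observe that the residual freedom is exactly translation by the Minkowski lattice $\Lambda=\{(\beta,\beta'):\beta\in\mathcal{O}_{K}\}$ because $\varepsilon^{k}\mathcal{O}_{K}=\mathcal{O}_{K}$. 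All of this is correct, as are your evaluations of the two linear forms on the basis vectors $v_1,v_2$.

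The gap is in the final step, where the actual content of the lemma is asserted rather than checked. Writing $(x_1,x_2)=s\,v_1+t\,v_2$, your own numbers give (in both cases) that the first linear form equals $2s$ and that $x_1-x_2=\sqrt{D}\,t$; hence the stated inequalities $0\le\cdot<2$ and $0\le x_1-x_2<2\sqrt{D}$ cut out $(s,t)\in[0,1)\times[0,2)$, which is the union of \emph{two} fundamental parallelograms of $\Lambda$, i.e.\ a fundamental domain for the index-two sublattice $\Z v_1\oplus 2\Z v_2$. Since the translations are precisely the elements of $\Gamma_{\infty}$ that preserve the $y$-band (any element with unit part $\varepsilon^{k}$, $k\neq 0$, moves $y_1/y_2$ off the band), there is no further identification available to halve this region, so the argument as written does not close: you must either exhibit the missing identification (there is none under this analysis) or conclude that the bound should read $0\le x_1-x_2<\sqrt{D}$. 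The latter reading is corroborated elsewhere in the paper: the constant $2\sqrt{D}\log\varepsilon$ in the Maass--Selberg relation (Theorem \ref{th:maass}) equals the area of the $x$-cross-section times $2\log\varepsilon$, which forces that area to be $\sqrt{D}=\mathrm{covol}(\Lambda)$, whereas the region printed in the lemma has area $2\sqrt{D}$. In short: right method, but the decisive verification was skipped, and carrying it out shows the printed region is twice a fundamental parallelogram.
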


We define the standard truncated fundamental domain for $\Gamma_K$.
\begin{definition}[Standard fundamental domain] \label{def:std}
Let $Y>1$. 
\begin{enumerate}
\item The fundamental domain $F$ of  $\Gamma_{K}$, which is contained in 
$F_{\infty}$, is called the standard fundamental domain for $\Gamma_{K}$.
\item $F^{Y}:=\{ (z_1,z_2) \in F \, |\,  y_1y_2 < Y\}$ is called the 
truncated standard fundamental domain for $\Gamma_{K}$.
\item Let $\gamma$ be a parabolic or type 2 hyperbolic element of $\Gamma_{K}$.
\[ F_{\gamma}^Y := \bigcup_{\delta \in \Gamma_{\gamma} \backslash \Gamma} \delta(F^Y)
\]
is called the truncated standard fundamental domain for the centralizer of $\gamma$
in $\Gamma_{K}$. 
\end{enumerate}
\end{definition}

\subsection{Contribution of the identity, type 1 hyperbolic, elliptic 
and mixed elements}
In this subsection, we compute the orbital integral
\[  \int_{\Gamma_\gamma \backslash \bH^2} 
k(z, \gamma z) \, j_{\gamma}(z) \,  d \mu(z)
\]
explicitly, when $\gamma$ is the identity, 
an elliptic, a type 1 hyperbolic, a hyperbolic-elliptic, 
or an elliptic-hyperbolic element. 
We note that all the integrals are convergent
for these elements.
Let $(m_1, m_2) \in (2 \Z)^2$. 

$\bullet$ Identity term: By definition, we have
\begin{align}
& I(m_1,m_2) := \int_{\Gamma_K \backslash \bH^2}
k(z,z) \, d \mu (z) 
=   \int_{\Gamma_K \backslash \bH^2}
H_{(m_1,m_2)}(z,z) \, \Phi(0,0) \, d \mu (z)  \label{term:id} \\
& = (-1)^{m_1+m_2} \mathrm{vol}(\Gamma_K \backslash \bH^2) \, \Phi(0,0).
\nonumber
\end{align}
And $\Phi(0,0)$ is given by (see p.396 in \cite{H2})
\begin{align*}
\Phi(0,0) &= \frac{1}{\pi^2} \int \! \! \! \int_{\R^2} 
\frac{\partial^2 Q}{\partial w_1 \partial w_2}
(t_1^2,t_2^2)
\biggl[ \frac{\sqrt{4+t_1^2}-t_1}{\sqrt{4+t_1^2}+t_1} \biggr]^{m_1/2}
\biggl[ \frac{\sqrt{4+t_2^2}-t_2}{\sqrt{4+t_2^2}+t_2} \biggr]^{m_2/2}
 dt_1 dt_2 \\
&=\frac{1}{4 \pi^2} \int \! \! \! \int_{\R^2}
\frac{\frac{\partial^2}{\partial u_1 \partial u_2} g(u_1,u_2)}
{(e^{u_1/2} - e^{-u_1/2}) (e^{u_2/2} - e^{-u_2/2}) }
e^{-\frac{{m_1}}{2}u_1} e^{-\frac{{m_2}}{2}u_2} 
\, du_1 du_2. 
\end{align*}

$\bullet$ Type 1 hyperbolic terms: 
For type 1 hyperbolic element $(\gamma,\gamma') \in \Gamma_{K}$, 
we denote it by $\gamma$ for simplicity. 
It is known that the centralizer of $\gamma$ in $\Gamma_{K}$ 
is a free abelian group of rank two. (See Theorem 5.7 in \cite[p.26]{E}).
We can easily compute (see also \cite[p.31]{E})
\begin{align}
& H_{1}(m_1,m_2;\gamma)  := \int_{\Gamma_{\gamma} \backslash \bH^2}
k(z,\gamma z) j_{\gamma}(z) \, d \mu(z) \label{term:hyp1} \\
&=\frac{\mathrm{vol}(\Gamma_{\gamma} \backslash \bH^2) \, g(\log N(\gamma),\log N(\gamma'))}
{(N(\gamma)^{1/2}-N(\gamma)^{-1/2})(N(\gamma')^{1/2}-N(\gamma')^{-1/2})}. 
\nonumber
\end{align}

$\bullet$ Elliptic terms: 
Let $R \in \Gamma_K$ be an elliptic element. 
We may assume that
$R$ is conjugate in $G$ to the element
\[ 
R(\theta_1,\theta_2)
=
\Bigl( 
\Bigl( 
\begin{array}{cc}
\cos \theta_1 & - \sin \theta_1 \\
\sin \theta_1 &   \cos \theta_1
\end{array} \Bigr), 
\, 
\Bigl( 
\begin{array}{cc}
\cos \theta_2 & - \sin \theta_2 \\
\sin \theta_2 &   \cos \theta_2
\end{array} \Bigr) \Bigr) .
\]

Let $R_0$ be a generator 
of the centralizer of $R$ in $\Gamma_{K}$ and denote the order of $R_0$ by $\nu_R$.
Then $R_0$ is conjugate in $G$ to the element
\[  \Bigl( 
\Bigl( 
\begin{array}{cc}
\cos (\pi / \nu_R) & - \sin (\pi / \nu_R) \\
\sin (\pi / \nu_R) &   \cos (\pi / \nu_R)
\end{array} \Bigr), 
\, 
\Bigl( 
\begin{array}{cc}
\cos (t \pi / \nu_R) & - \sin (t \pi / \nu_R) \\
\sin (t \pi / \nu_R) &   \cos (t \pi / \nu_R)
\end{array} \Bigr) \Bigr) ,  
\quad (\nu, \, \exists t) = 1.
\]

We write $R=R_0^k$ with $(1 \le k \le \nu_R-1)$, 
and put $(\alpha_j, \, \beta_j) = (\cos \theta_j, \, \sin \theta_j)$
for $j=1,2$.
Using the formulas at pp.389--394 in \cite{H2} 
(see also p.1647 in \cite{Z}), we have
\begin{align}
& E(m_1,m_2;R)  := \int_{<R_0> \backslash \bH^2}
k(z,Rz) j_{R}(z) \, d \mu(z)  \label{term:ell} \\
& = \frac{1}{\nu_{R}}
\int_{\bH^2} k \bigl( (z_1,z_2),(r(\theta_1)z_1,r(\theta_2)z_2) \bigr)
\frac{(\beta_1 z_1+\alpha_1)^{m_1}}{|\beta_1 z_1+\alpha_1|^{m_1}}
\frac{(\beta_2 z_2+\alpha_2)^{m_2}}{|\beta_2 z_2+\alpha_2|^{m_2}}
\, \frac{dx_1dy_1}{y_1^2} \frac{dx_2dy_2}{y_2^2}  \nonumber \\
& = \frac{\pi^2}{4 \nu_{R} \beta_1 \beta_2}
\int_{0}^{\infty} \! \! \int_{0}^{\infty}
e^{i m_1 \arg [2 \alpha_1+i\sqrt{t_1+4 \beta_1^2}]} 
e^{i m_2 \arg [2 \alpha_2+i\sqrt{t_2+4 \beta_2^2}]} 
\frac{\Phi(t_1,t_2)}{\sqrt{t_1^2+4 \beta_1^2} \sqrt{t_2^2+4 \beta_2^2}} 
\, dt_1 dt_2  \nonumber \\
& = \frac{1}{16 \nu_R \beta_1 \beta_2}
\bigl\{ i e^{i(m_1-1)\theta_1} \bigr\}
\bigl\{ i e^{i(m_2-1)\theta_2} \bigr\}
\int \! \! \! \! \int_{\R^2}
g(u_1,u_2) \prod_{j=1}^{2}
\Bigl[ e^{\frac{(m_j-1)u_j}{2}} \frac{e^{u_j}-e^{2 i \theta_j}}{\cosh u_j - \cos 2 \theta_j} \Bigr]
du_1 du_2. \nonumber
\end{align}

$\bullet$ Hyperbolic-elliptic terms: 
Let $\gamma = (\gamma,\gamma') \in G$ be a hyperbolic-elliptic element. 
The group $\Gamma_{\gamma}$ is infinite cyclic and there exists
a generator $\gamma_0 = (\gamma_0, \gamma_0')$ such that 
$ \gamma = \gamma_0^{k}$ with $k \ge 1$. 
We may assume that $(\gamma_0,\gamma_0')$ is conjugate in $G$ 
to the element
\[
\Bigl( 
\Bigl( 
\begin{array}{cc}
N(\gamma_0)^{1/2} & 0 \\
0 &  N(\gamma_0)^{-1/2}
\end{array} \Bigr), 
\, 
\Bigl( 
\begin{array}{cc}
\cos \omega_0 & - \sin \omega_0 \\
\sin \omega_0 &   \cos \omega_0
\end{array} \Bigr) \Bigr). 
\]
Here, $N(\gamma_0)>1$, $\omega_0 \in (0,\pi)$ and $\omega_0 \notin \pi \Q$.
Using the formulas at pp.389 -- 394 in \cite{H2} 
(see also p.1647 in \cite{Z}), we have
\begin{align}
& HE(m_1,m_2;\gamma)  := \int_{\Gamma_{\gamma} \backslash \bH^2}
k(z,\gamma z) j_{\gamma}(z) \, d \mu(z) \label{term:hyp-ell} \\
& = 
\int_{<\gamma_0> \backslash \bH^2} k \bigl( (z_1,z_2),(N(\gamma) z_1,r(\omega)z_2) \bigr)
\frac{(N(\gamma)^{-1/2})^{m_1}}{|N(\gamma)^{-1/2}|^{m_1}}
\frac{(z_2 \sin \omega + \cos \omega)^{m_2}}{|z_2 \sin \omega +\cos \omega|^{m_2}}
\, \frac{dx_1dy_1}{y_1^2} \frac{dx_2dy_2}{y_2^2} \nonumber \\
& = \frac{\log N(\gamma_0)}{N(\gamma)^{1/2}-N(\gamma)^{-1/2}}
\frac{\pi}{2 \sin \omega}
\int_{- \infty}^{\infty}  \Phi_1\Bigl( N(\gamma)+N(\gamma)^{-1}-2+v_1^2 
\Bigr)  \nonumber \\ 
& \quad \Bigl[ 
\frac{N(\gamma)^{1/2}+N(\gamma)^{-1/2}+iv_1}{N(\gamma)^{1/2}+N(\gamma)^{-1/2}
-iv_1} \Bigr]^{m_1/2} \, dv_1
\int_{0}^{\infty}  e^{i m_2 \arg [2 \cos \omega+i\sqrt{t_2+4 \sin^2 \omega}]} \frac{\Phi_2(t_2)}{\sqrt{t_2^2+4 \sin^2 \omega}} \, dt_2
\nonumber \\
&=\frac{\log N(\gamma_0)}{N(\gamma)^{1/2}-N(\gamma)^{-1/2}}
\frac{ie^{i(m_2-1)\omega}}{4 \sin \omega}
\int_{-\infty}^{\infty} \! \! g(\log N(\gamma),u_2)
e^{\frac{m_2-1}{2}u_2}
\Bigl[ \frac{e^{u_2}-e^{2 i \omega}}{\cosh u_2 - \cos 2 \omega} \Bigr]
du_2. \nonumber   
\end{align}

$\bullet$ Elliptic-hyperbolic terms: 
Let $\gamma = (\gamma,\gamma') \in G$ be an elliptic-hyperbolic element. 
The group $\Gamma_{\gamma}$ is infinite cyclic and there exists
a generator $\gamma_0 = (\gamma_0, \gamma_0')$ such that 
$ \gamma = \gamma_0^{l}$ with $l \ge 1$. 
We may assume that $(\gamma_0,\gamma_0')$ is conjugate in $G$ 
to the element
\[
\Bigl( 
\Bigl( 
\begin{array}{cc}
\cos \omega_0' & - \sin \omega_0' \\
\sin \omega_0' &   \cos \omega_0'
\end{array} \Bigr), 
\, 
\Bigl( 
\begin{array}{cc}
N(\gamma_0')^{1/2} & 0 \\
0 &  N(\gamma_0')^{-1/2}
\end{array} \Bigr) \Bigr). 
\]
Here, $N(\gamma_0')>1$, $\omega_0' \in (0,\pi)$ and $\omega_0' \notin \pi \Q$.
Then we have
\begin{align}
& EH(m_1,m_2;\gamma) := \int_{\Gamma_{\gamma} \backslash \bH^2}
k(z,\gamma z) j_{\gamma}(z) \, d \mu(z) \label{term:ell-hyp} \\
&=\frac{\log N(\gamma_0')}{N(\gamma')^{1/2}-N(\gamma')^{-1/2}}
\frac{ie^{i(m_1-1)\omega'}}{4 \sin \omega'}
\int_{-\infty}^{\infty} \! \! g(u_1,\log N(\gamma'))
e^{\frac{m_1-1}{2}u_1}
\Bigl[ \frac{e^{u_1}-e^{2 i \omega'}}{\cosh u_1 - \cos 2 \omega'} \Bigr]
du_1. \nonumber     
\end{align}

Putting together with the all results in this subsection,
we obtain the term $\mathbf{I}(h)$ in Theorem \ref{th:trf}.

\subsection{Parabolic contribution}

Let $\Gamma_{\text{P}}$ be the set of $\Gamma_K$-conjugacy classes of
parabolic elements in $\Gamma_K$. Let $(m_1,m_2) \in (2 \Z)^2$ and $Y>1$.
We consider the parabolic contribution to the trace formula
with the truncation parameter $Y$:
\[ P^{Y}(m_1,m_2) := \sum_{\gamma \in \Gamma_{\text{P}}}
\int_{F_{\gamma}^{Y}} k(z,\gamma z) j_{\gamma}(z) \, d \mu(z).
\]
Here, $F_{\gamma}^{Y} = \bigcup_{\delta \in \Gamma_{\gamma} \backslash \Gamma}
\delta(F^{Y})$ and $F^{Y} = \{(z_1,z_2) \in F \, | \, 
\IM(z_1) \IM(z_2) < Y \}$ is the truncated standard fundamental domain
for $\Gamma_K$, which is defined in Definition \ref{def:std}. 

Then we have
\begin{proposition} \label{prop:py}
For $m \in 2 \Z$ and $Y>1$, we have
\begin{align*}
P^{Y}(0,m) =& 2 \log \ve \, \log Y \, g(0,0) \\ 
& +\Bigl[ \sqrt{D} A_0 - 4 \log \varepsilon (\log 2 +C_{E}) \Bigr] g(0,0)
+ \log \varepsilon \int_{0}^{\infty} \! \! [g(u,0)+g(0,u)] du  \\
& - \frac{\log \varepsilon}{2 \pi^2} \int \! \! \! \! \int_{\R^2} 
\Bigl[ \frac{\Gamma'}{\Gamma}(1+ir_1) + \frac{\Gamma'}{\Gamma}(1+ir_2)
\Bigr] h(r_1,r_2) \, dr_1 dr_2 \\
& + 2 \log \varepsilon 
\int_{0}^{\infty} \frac{g(0,u)}{e^{u/2}-e^{-u/2}}
\Bigl[1-\cosh \frac{m}{2}u \Bigr] du + o(1) \quad (Y \to \infty).
\end{align*}
Here, $A_0$ is the constant term of the Laurent
expansion of $\zeta_{K}(s)$, the Dedekind zeta function of $K$, 
at $s=1$ and $C_{E}:=\lim_{n \to \infty}(1+\frac{1}{2}+\dots+\frac{1}{n} -\log n)$ is the Euler constant.
\end{proposition}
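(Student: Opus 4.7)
The strategy is to adapt the trivial-weight argument of Efrat \cite{E} and Zograf \cite{Z} to the weight $(0,m)$ setting, isolating the new contribution of the twist factor $H_{(0,m)}$ in the second coordinate. Since $\Gamma_K$ has the unique cusp $\infty = (\infty,\infty)$, every parabolic element of $\Gamma_K$ is $\Gamma_K$-conjugate to a translation $T_\alpha := \bigl(\bigl(\begin{smallmatrix} 1 & \alpha \\ 0 & 1 \end{smallmatrix}\bigr), \bigl(\begin{smallmatrix} 1 & \alpha' \\ 0 & 1 \end{smallmatrix}\bigr)\bigr)$ with $0 \ne \alpha \in \mathcal{O}_K$, by Lemma \ref{lem:gamma-inf}. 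Conjugation inside $\Gamma_\infty$ sends $T_\alpha \mapsto T_{u^2 \alpha}$, so parabolic $\Gamma_K$-conjugacy classes are parametrized by $(\mathcal{O}_K \setminus \{0\})/\langle \varepsilon^2 \rangle$, and the centralizer of $T_\alpha$ in $\Gamma_K$ is the translation lattice $\{T_\beta : \beta \in \mathcal{O}_K\}$. The standard unfolding, combined with $j_{T_\alpha} \equiv 1$, collapses $P^Y(0,m)$ into an integral of $\sum_{0 \ne \alpha \in \mathcal{O}_K} k(z, T_\alpha z)$ over (a suitable truncation of) the fundamental domain $F_\infty^Y$ for $\Gamma_\infty$.

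By definition (\ref{def:k}), one has
\begin{equation*}
k(z, T_\alpha z) = \Phi_1\!\left(\tfrac{|\alpha|^2}{y_1^2}\right) \Phi_2\!\left(\tfrac{|\alpha'|^2}{y_2^2}\right) \cdot i^m \frac{(\alpha' + 2 i y_2)^m}{|\alpha' + 2 i y_2|^m}.
\end{equation*}
I would then apply Poisson summation over the lattice $\mathcal{O}_K \subset \R^2$ (with dual lattice scaled by $\sqrt{D}^{-1}$) to the $\alpha$-sum and integrate against $y_1 y_2 < Y$. The zero-frequency dual term produces the divergence $2\log \varepsilon \cdot \log Y \cdot g(0,0)$ together with the constant $\sqrt{D} A_0 \cdot g(0,0)$ coming from the Laurent expansion of $\zeta_K(s)$ at $s=1$; the non-zero Fourier frequencies yield $\log \varepsilon \int_0^\infty [g(u,0) + g(0,u)] du$ and, through an inverse Mellin transform applied to the Fourier coefficients of $\Phi_j$, the double integral involving $\frac{\Gamma'}{\Gamma}(1 + i r_j)$. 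The term $-4\log\varepsilon(\log 2 + C_E) g(0,0)$ arises from regularizing the near-diagonal $\alpha \to 0$ contribution, exactly as in the trivial weight case of \cite{E,Z}.

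The only genuinely new piece is the twist $(\alpha' + 2iy_2)^m/|\alpha' + 2iy_2|^m$, which equals $1$ when $m = 0$ and otherwise contributes a bounded oscillatory correction. A direct computation, using the substitution $y_2 = e^{u/2}$ and the identity $i^m (\alpha' + 2iy_2)^m/|\alpha' + 2iy_2|^m \to (-1)^{m/2}$ as $\alpha' \to 0$, shows that the difference between the $(0,m)$ and $(0,0)$ orbital integrals, coming from the $\alpha = 0$ piece after Poisson summation, equals precisely
\begin{equation*}
2 \log \varepsilon \int_0^\infty \frac{g(0,u)}{e^{u/2} - e^{-u/2}} \Bigl[1 - \cosh \tfrac{m}{2} u \Bigr] du.
\end{equation*}
The main obstacle is verifying that this twist introduces no additional divergence as $Y \to \infty$ beyond the $\log Y$ term already present in the trivial weight case: one must confirm, by using $|(\alpha' + 2iy_2)^m/|\alpha' + 2iy_2|^m| = 1$ together with rapid decay of the Poisson dual of $\Phi_2$, that all remaining $\alpha \ne 0$ contributions of the twist remain $o(1)$. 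Once this uniform boundedness is in hand, the stated formula follows by collecting the individual pieces.
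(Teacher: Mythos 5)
Your overall skeleton is the right one and matches the paper's: unfold the parabolic classes over the single cusp, reduce to a lattice sum over $\mathcal{O}_K$ whose asymptotics are governed by the Laurent expansion $\zeta_K(s)=A_{-1}(s-1)^{-1}+A_0+\cdots$ (giving the $2\log\ve\,\log Y\,g(0,0)$ and $\sqrt{D}A_0\,g(0,0)$ terms), and then evaluate the residual one-variable integrals. The paper does exactly this by quoting Zograf's reduction of $P^Y(m_1,m_2)$ to
$P_0+8\log\ve\,(P_1+P_2)$, where $P_j$ involves $\int_0^\infty\log(u_j)\bigl[(2+iu_j)/|2+iu_j|\bigr]^{m_j}\Phi(u_1^2,u_2^2)\,du_1du_2$, and then evaluating these by Hejhal's weight-$m$ parabolic computation.

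However, there is a genuine gap at the one step that is actually new in the weighted case. You attribute the correction term $2\log\ve\int_0^\infty\frac{g(0,u)}{e^{u/2}-e^{-u/2}}\bigl[1-\cosh\frac{m}{2}u\bigr]du$ to "the $\alpha=0$ piece after Poisson summation," justified by the limit of the twist $i^m(\alpha'+2iy_2)^m/|\alpha'+2iy_2|^m$ as $\alpha'\to 0$. This cannot work: that limit is the constant $i^{2m}=1$ for even $m$ (not $(-1)^{m/2}$; you have dropped the $i^m$ prefactor from $H_m$), and in any case a constant limiting value of the phase cannot generate a $u$-dependent factor $1-\cosh\frac{m}{2}u$ inside an integral against $g(0,u)$. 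Moreover, the zero-frequency (main) term of the lattice sum is exactly $Q(0,0)=g(0,0)$ \emph{independently of} $m$ — one checks from Definition \ref{def:Qgh} that $\bigl[(2+iv)/(2-iv)\bigr]^{m/2}=\bigl[(2+iv)/|2+iv|\bigr]^{m}$, so the weighted main term collapses to $g(0,0)$ just as in the trivial-weight case — so the $m$-dependence does not live there at all. It lives in the subleading, $\log(u_2)$-weighted integral $\int_0^\infty\log(u_2)\bigl[(2+iu_2)/|2+iu_2|\bigr]^{m}\Phi_2(u_2^2)\,du_2$, whose evaluation (producing simultaneously the $\log 2+C_E$, the $\Gamma'/\Gamma(1+ir_2)$, the $\tfrac18 h_2(0)$, and the $[1-\cosh\frac m2 u]$ terms) is a nontrivial identity occupying several pages of Hejhal, Vol.~2, pp.~406--411. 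Your proposal asserts the outcome of this computation without performing it or citing it, and the heuristic offered in its place is incorrect; this is the missing core of the proof.
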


\begin{proof}
We recall that the test function $\Phi$ is written as 
$\Phi(x_1,x_2)=\Phi_1(x_1) \, \Phi_2(x_2)$ 
with real valued $\Phi_1$ and $\Phi_2$ 
by Assumption \ref{assump:test}.
By course of the same procedure at p.1648 in \cite{Z}
(note that Zograf's $2 \sqrt{D}$ in \cite{Z} is $\sqrt{D}$ in our notation),
we have
\begin{align*}
 & P^{Y}(m_1,m_2) \\
 & = 4 \sqrt{D}(A_{-1} \log Y + A_{0})
\RE \biggl\{ \int_{0}^{\infty} \! \! \int_{0}^{\infty}
     \Bigl[  \frac{(2+iu_1)}{|2+iu_1|} \Bigr]^{m_1}
     \Bigl[  \frac{(2+iu_2)}{|2+iu_2|} \Bigr]^{m_2}
\Phi(u_1^2,u_2^2) \, du_1 du_2 \biggr\} \\
& \quad + 4 \sqrt{D} A_{-1} 
\RE \biggl\{ \int_{0}^{\infty} \! \! \int_{0}^{\infty} 
     \log (u_1 u_2)
     \Bigl[  \frac{(2+iu_1)}{|2+iu_1|} \Bigr]^{m_1}
     \Bigl[  \frac{(2+iu_2)}{|2+iu_2|} \Bigr]^{m_2}
\Phi(u_1^2,u_2^2) \, du_1 du_2 \biggr\} \\
& \quad + o(1) \quad (Y \to \infty). 
\end{align*}
Here, $A_{-1},A_{0}$ are the coefficients of the Laurent
expansion of $\zeta_{K}(s)$, the Dedekind zeta function of $K$, 
at $s=1$. In particular, $A_{-1} = \frac{2 \log \varepsilon}{\sqrt{D}}$. 
Put 
\begin{align*}
 & P_0(m_1,m_2) \\
 & := 4  \sqrt{D}(A_{-1} \log Y + A_{0})
\RE \biggl\{ \int_{0}^{\infty} \! \! \int_{0}^{\infty}
     \Bigl[  \frac{(2+iu_1)}{|2+iu_1|} \Bigr]^{m_1}
     \Bigl[  \frac{(2+iu_2)}{|2+iu_2|} \Bigr]^{m_2}
\Phi(u_1^2,u_2^2) \, du_1 du_2 \biggr\} \\
& = \sqrt{D}(A_{-1} \log Y + A_{0}) \, g(0,0).  
\end{align*}
Here, the last equality is derived from Definition \ref{def:Qgh}.

For $j=1,2$, put
\[ P_{j}(m_1,m_2) :=  
\RE \biggl\{ \int_{0}^{\infty} \! \! \int_{0}^{\infty} 
     \log (u_j)
     \Bigl[  \frac{(2+iu_1)}{|2+iu_1|} \Bigr]^{m_1}
     \Bigl[  \frac{(2+iu_2)}{|2+iu_2|} \Bigr]^{m_2}
\Phi(u_1^2,u_2^2) \, du_1 du_2 \biggr\}.
\]
We note that
\[ P^{Y}(m_1,m_2) = P_0(m_1,m_2) +8 \log \ve \Bigl\{
 P_1(m_1,m_2)+P_2(m_1,m_2) \Bigr\} +o(1). 
\]
We calculate the case of $(m_1,m_2)=(0,m)$.
\begin{align*}
P_{2}(0,m) &=
\RE \biggl\{ \int_{0}^{\infty} \! \! \int_{0}^{\infty} 
     \log (u_2)
     \Bigl[  \frac{(2+iu_2)}{|2+iu_2|} \Bigr]^{m}
\Phi(u_1^2,u_2^2) \, du_1 du_2 \biggr\} \\
 &=\RE \biggl\{
\int_{0}^{\infty} \Phi_1(u_1^2) \, du_1
\int_{0}^{\infty} \log (u_2) 
\Bigl[  \frac{(2+iu_2)}{|2+iu_2|} \Bigr]^{m}
\Phi_2(u_2^2) \, du_2
\biggr\} \\
& = \frac{1}{2}g_{1}(0) 
\biggl\{
- \frac{1}{2}(\log 2 +C_{E}) \, g_2(0) + \frac{1}{8} h_2(0)
- \frac{1}{4 \pi} \int_{\R} h_2(r_2) \, \frac{\Gamma'}{\Gamma}
(1+ir_2) \, dr_2 \\
& \quad + \frac{1}{2} \int_{0}^{\infty} 
\frac{g_2(u_2)}{e^{u_2/2}-e^{-u_2/2}} 
\Bigl[ 1 - \cosh \frac{m}{2}u_2 \Bigr]
du_2
\biggr\}.
\end{align*}
We refer to pp.406 -- 411 in \cite{H2} for the last equality.
Thus, we obtain
\begin{align*}
P_{2}(0,m) & =
- \frac{1}{4}(\log 2 +C_{E}) \, g(0,0) + \frac{1}{8} \int_{0}^{\infty} g(0,u) \, du
- \frac{1}{16 \pi^2} \int \! \! \! \int_{\R^2} h(r_1,r_2) \, \frac{\Gamma'}{\Gamma}
(1+ir_2) \, dr_2 \\
& \quad + \frac{1}{4} \int_{0}^{\infty} 
\frac{g(0, u)}{e^{u/2}-e^{-u/2}} 
\Bigl[ 1 - \cosh \frac{m}{2}u \Bigr]
du.
\end{align*}
Similarly, we obtain
\[
P_{1}(0,m)  =
- \frac{1}{4}(\log 2 +C_{E}) \, g(0,0) + \frac{1}{8} \int_{0}^{\infty} g(u,0) \, du
- \frac{1}{16 \pi^2} \int \! \! \! \int_{\R^2} h(r_1,r_2) \, \frac{\Gamma'}{\Gamma}
(1+ir_1) \, dr_1.
\]
The proof is finished.
\end{proof}

\subsection{Type 2 hyperbolic contribution}
Let $(m_1,m_2) \in (2 \Z)^2$ and $Y>1$. 
We consider the type 2 hyperbolic contribution to the trace formula 
with the truncation parameter $Y$:
\[ H_2^{Y}(m_1,m_2) := \sum_{k=1}^{\infty} \sum_{\gamma_{k,\alpha} \in \Gamma_{\text{H2}}}
\int_{S^{Y}} k(z, \gamma_{k,\alpha} z) \, j_{\gamma_{k,\alpha}}(z) \, d \mu(z).
\]
Here, 
$\gamma_{k,\alpha} =  
\Bigl( 
\begin{array}{cc}
\varepsilon^{k} & \alpha \\
0 & \varepsilon^{-k}
\end{array} \Bigr)$ with  $k \in \N, \, \alpha \in 
\mathcal{O}_{K}$ are representatives of type 2 hyperbolic conjugacy classes 
of $\Gamma_k$, 
given in Lemma \ref{lem:hyp2}, 
\[ S^{Y} := \{(z_1,z_2) \in F_{\gamma_{k,\alpha}} \, | \, 
\IM(z_1) \IM(z_2) < Y, \,  \IM( \tau(z_1)) \IM( \tau'(z_2)) < Y \}
\] and 
$(\tau,\tau')$ is a element of $\Gamma_K$ such that
\[ (\tau,\tau') \Bigl( \alpha/(\ve^k - \ve^{-k}), \, \alpha'/((\ve')^k - (\ve')^{-k}) \Bigr)=(\infty,\infty)
.\]
We can show that (see \cite[p.1650]{Z})
\[   \int_{F_{\gamma_{k,\alpha}}^{Y}} k(z, \gamma_{k,\alpha} z) j_{\gamma_{k,\alpha}}(z) \, d \mu(z)  
    = \int_{S^{Y}} k(z, \gamma_{k,\alpha} z) j_{\gamma_{k, \alpha}}(z) 
\, d \mu(z) +o(1)   \quad  (Y \to \infty). 
\]

Put $\eta_k := \ve^{2k}+\ve^{-2k}-2$ and 
recall that $k_0=k_0(\gamma_{k,\alpha})$
was defined after Lemma \ref{lem:hyp2}. 
We can compute (see  \cite[pp.91--97]{E} or \cite[(4.5.1), p.1650]{Z})
\begin{align*}
& \int_{S^{Y}} k(z, \gamma_{k,\alpha} z) \, j_{\gamma}(z) \, d \mu(z) \\
= & \frac{k_0 \cdot 2 \log \varepsilon}{|N(\varepsilon^k-\varepsilon^{-k})|}
\int \! \! \! \int_{\R^2} \Phi(x_1^2+\eta_k,  x_2^2+\eta_k)  \\
& \quad \quad \times
\Bigl[  2 \log Y  - \log N(\Lambda)^2 + \log(x_1^2 + \eta_k)   + \log(x_2^2 + \eta_k)   \Bigr] \\
& \quad \quad \quad \times
\Big(  \frac{\sqrt{\eta_k +4} +i x_1}{\sqrt{\eta_k +4} -i x_1}  \Big)^{m_1/2}
\Big(  \frac{\sqrt{\eta_k +4} +i x_2}{\sqrt{\eta_k +4} -i x_2}  \Big)^{m_2/2}
d x_1 d x_2  \\
= & \frac{k_0 \cdot 2 \log \varepsilon}{|N(\varepsilon^k-\varepsilon^{-k})|}
\Bigl\{  R_0(m_1,m_2)  + R_1(m_1,m_2) + R_2(m_1, m_2) \Bigr\}.
\end{align*}

Here, we put
\begin{align*}
& R_0(m_1,m_2) := \int \! \! \! \int_{\R^2} \Phi(x_1^2+\eta_k,  x_2^2+\eta_k)
\Bigl[  2 \log Y  - \log N(\Lambda)^2 \Bigr] \\
& \quad \quad \times \Big(  \frac{\sqrt{\eta_k +4} +i x_1}{\sqrt{\eta_k +4} -i x_1}  \Big)^{m_1/2}
\Big(  \frac{\sqrt{\eta_k +4} +i x_2}{\sqrt{\eta_k +4} -i x_2}  \Big)^{m_2/2}
d x_1 d x_2,  \\
& R_j (m_1,m_2) := \int \! \! \! \int_{\R^2} \Phi(x_1^2+\eta_k,  x_2^2+\eta_k)
\log (x_j^2 +\eta_k) \\
& \quad \quad \times \Big(  \frac{\sqrt{\eta_k +4} +i x_1}{\sqrt{\eta_k +4} -i x_1}  \Big)^{m_1/2}
\Big(  \frac{\sqrt{\eta_k +4} +i x_2}{\sqrt{\eta_k +4} -i x_2}  \Big)^{m_2/2}
d x_1 d x_2  \quad (j=1,2),
\end{align*}
and $\Lambda \in \mathcal{O}_{K}$ such that
the ideal $(\Lambda)=(\alpha, \ve^{k}-\ve^{-k})$.

Firstly we note that
\begin{equation}
\begin{split}
& R_0(m_1,m_2) = \Bigl[  2 \log Y  - \log N(\Lambda)^2 \Bigr] Q(\eta_k, \eta_k)  \nonumber \\
& = 2 \Bigl(  2 \log Y  - \log N(\Lambda) \Bigr) g(2k \log \varepsilon, 2k \log \varepsilon)
\end{split}
\end{equation}
by using Proposition \ref{prop:phi} and the formula 
$g(u_1,u_2) = Q(e^{u_1}+e^{-u_1}-2, e^{u_2}+e^{-u_2}-2)$.

Hereafter let us compute the case of $(m_1,m_2)=(0,m)$.

\begin{proposition}
Let $m \in 2 \Z$. We have
\begin{equation}
\begin{split}
R_1(0,m) & = 2 \log (\varepsilon^k - \varepsilon^{-k} ) \, g(2k \log \varepsilon, 2k \log \varepsilon) 
\\
            & + \int_{2k \log \varepsilon}^{\infty} g(u, 2k \log \varepsilon) 
           \frac{\cosh (u/2)}{\sinh(u/2)  + \sinh(k \log \varepsilon)} \, du.  
\end{split}
\end{equation}
\end{proposition}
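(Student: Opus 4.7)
The proof exploits the fact that $m_1=0$ causes the integrand in $R_1(0,m)$ to carry no phase factor in $x_1$, so the double integral separates cleanly. My plan is: (i) factor out the second coordinate as $g_2(2k\log\ve)$; (ii) split the logarithm to peel off the first term of the claim; (iii) evaluate the remaining one-dimensional integral by Fubini and a closed-form computation.

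First, by Assumption \ref{assump:test}, $\Phi=\Phi_1\Phi_2$. With $m_1=0$, the first phase factor equals $1$, and
\[
R_1(0,m)=I(\eta_k)\cdot Q_2(\eta_k), \qquad I(\eta_k):=\int_{\R}\Phi_1(x^2+\eta_k)\,\log(x^2+\eta_k)\,dx,
\]
with $Q_2(\eta_k)=g_2(2k\log\ve)$ by Definition \ref{def:Qgh}. Substituting $s=x^2+\eta_k$ and splitting $\log s=\log\eta_k+\log(s/\eta_k)$, the $\log\eta_k$ piece yields $\log(\eta_k)\,Q_1(\eta_k)=2\log(\ve^k-\ve^{-k})\,g_1(2k\log\ve)$, since $\eta_k=(\ve^k-\ve^{-k})^2$. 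Multiplied by $g_2(2k\log\ve)$ this is precisely the first term in the claimed identity.

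The remaining contribution is $J:=\int_{\eta_k}^{\infty}\Phi_1(s)\log(s/\eta_k)(s-\eta_k)^{-1/2}\,ds$, and I must show that $J\cdot g_2(2k\log\ve)$ equals the integral on the right-hand side of the claim. My plan is to substitute the Abel-type identity $g_1(u)=\int_{4\sinh^2(u/2)}^{\infty}\Phi_1(s)(s-4\sinh^2(u/2))^{-1/2}\,ds$ into that target integral and interchange the order of integration (justified by the compact support of $\Phi_1$). This recasts the target as $\int_{\eta_k}^{\infty}\Phi_1(s)\,K(s)\,ds$, where
\[
K(s)=\int_{2k\log\ve}^{u(s)}\frac{\cosh(u/2)\,du}{(\sinh(u/2)+\sinh(k\log\ve))\sqrt{s-4\sinh^2(u/2)}}, \qquad u(s):=2\sinh^{-1}(\sqrt{s}/2).
\]
The substitution $r=\sinh(u/2)$ then reduces $K(s)$ to the elementary integral $\int_{b}^{c}dr/[(r+b)\sqrt{c^2-r^2}]$ with $b=\sinh(k\log\ve)$ and $c=\sqrt{s}/2$.

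The main (and essentially only) technical obstacle is the closed-form evaluation of this last integral. I would handle it by the Euler substitution $w=\sqrt{(c-r)/(c+r)}$, which rationalizes the integrand to $-2\,dw/[(c+b)-(c-b)w^2]$ and, after evaluation at the endpoints ($w=0$ when $r=c$, $w=\sqrt{(c-b)/(c+b)}$ when $r=b$), produces $\log(c/b)/\sqrt{c^2-b^2}$. Translating back, $c^2-b^2=(s-\eta_k)/4$ and $c/b=\sqrt{s/\eta_k}$, so $K(s)=\log(s/\eta_k)(s-\eta_k)^{-1/2}$, which matches the integrand of $J$. This identifies the two integrals and completes the proof.
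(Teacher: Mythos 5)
Your proof is correct, and it takes a genuinely different route from the paper's. The paper works \emph{backward} from $\Phi_1$ to $Q_1$: it inserts the inversion formula $\Phi_1(x)=-\tfrac1\pi\int_{\R}Q_1'(x+t^2)\,dt$ of Proposition \ref{prop:phi}, evaluates the resulting inner integral via the tabulated formula for $\int_0^{\pi/2}\log(1+a\cos^2\theta)\,d\theta$ to get $2\pi\log\bigl((\sqrt{\eta}+\sqrt{y})/2\bigr)$, and then integrates by parts in $u$ against $Q_1'(y)\,dy=g_1'(u)\,du$ to produce both the boundary term $2\log(\ve^k-\ve^{-k})g_1(2k\log\ve)$ and the kernel $\cosh(u/2)/(\sinh(u/2)+\sinh(k\log\ve))$. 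You instead work \emph{forward}: you peel off the $\log\eta_k$ piece directly (giving the boundary term at once), and then verify that the remaining contribution and the target integral are the same Abel-type transform of $\Phi_1$, by substituting $g_1(u)=\int_{4\sinh^2(u/2)}^{\infty}\Phi_1(s)(s-4\sinh^2(u/2))^{-1/2}ds$, applying Fubini, and evaluating the kernel $\int_b^c dr/[(r+b)\sqrt{c^2-r^2}]=\log(c/b)/\sqrt{c^2-b^2}$ in closed form; with $b=\sinh(k\log\ve)$, $c=\sqrt{s}/2$ this indeed reproduces $\log(s/\eta_k)(s-\eta_k)^{-1/2}$ (I checked the Euler substitution and the endpoint values). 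Your version avoids the inversion formula and the integration by parts entirely and needs only one elementary definite integral, at the cost of being a verification that presupposes the answer rather than a derivation of it; the paper's version discovers the kernel but leans on a table formula and a slightly more delicate bookkeeping of boundary terms. Both are complete and correct.
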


\begin{proof}
We recall that the test function $\Phi$ is written as 
$\Phi(x_1,x_2)=\Phi_1(x_1) \, \Phi_2(x_2)$ 
by Assumption \ref{assump:test}.
Therefore, 
\begin{align*}
& R_1(0,m) = \int_{\R} \Phi_1(x_1^2 + \eta_k) \log(x_1^2 + \eta_k) \, dx_1 
\int_{\R} \Phi_2(x_2^2 + \eta_k) 
\Big(  \frac{\sqrt{\eta_k +4} +i x_2}{\sqrt{\eta_k +4} -i x_2}  \Big)^{m/2} dx_2 \\
& = I_1(\eta_k) \cdot Q_2(\eta_k)
   = I_1(\eta_k) \cdot g_2(2k \log \varepsilon).
\end{align*}
Here, $I_1(\eta) = I_1(\eta_k)$ is given by
\begin{align*}
I_1(\eta) &= 
- \frac{2}{\pi}
\int_{0}^{\infty} \! \! \int_{-\infty}^{\infty} \log (x^2 +\eta) \, Q_1'(x^2+\eta+t^2) \, dt dx \\
& = - \frac{1}{\pi}
\int_{\eta}^{\infty}  \Bigl( \int_{-\sqrt{y-\eta}}^{\sqrt{y-\eta}}
\frac{\log(y-t^2)}{\sqrt{y-\eta-t^2}}  \, dt \Bigr) \, 
Q_1'(y) \, dy.
\end{align*}
The inner integral is evaluated as
\begin{align*}
& \int_{-\sqrt{y-\eta}}^{\sqrt{y-\eta}}
\frac{\log(y-t^2)}{\sqrt{y-\eta-t^2}}  \, dt 
= 2 \int_{0}^{\pi/2} \log (y \cos^2 \theta + \eta \sin^2 \theta) \, d \theta \\
&  =   2 \int_{0}^{\pi/2} \log \eta \, d \theta
      + 2 \int_{0}^{\pi/2} \log \Bigl( 1 + \frac{y-\eta}{\eta} \cos^2 \theta \Bigr) \, d \theta \\
& = 2 \pi \log \sqrt{\eta} + 2 \pi \log \frac{1+\sqrt{(y-\eta)/\eta+1}}{2} \\
& = 2 \pi \log \Bigl(  \frac{\sqrt{\eta}+\sqrt{y}}{2} \Bigr).
\end{align*}
Here, we used the formula: (see \cite[4.399]{GR})
\[ \int_{0}^{\pi/2} \log (1+ a \sin^2 x) \, dx 
  = \int_{0}^{\pi/2} \log (1+ a \cos^2 x) \, dx
  = \pi \log \Bigl( \frac{1+\sqrt{1+a}}{2} \Bigr)
\quad \mbox{for $a >-1$.} 
\]
Thus, we have
\begin{align*}
I_1(\eta) &= -2 \int_{\eta}^{\infty}  
\Bigl\{   \log(\sqrt{y}+\sqrt{\eta})  - \log 2 \Bigr\}  
Q_1'(y) \, dy \\
&= 2 \log 2 \int_{2k \log \varepsilon}^{\infty}  g_1'(u) \, du
   -2  \int_{2k \log \varepsilon}^{\infty}  
   \log(e^{u/2} - e^{-u/2}   + \varepsilon^k  -\varepsilon^{-k})
\, g_1'(u) \, du \\
& = - 2 \log 2 \, g_1(2k \log \ve)  +2 g_1(2k \log \ve) \log(2(\ve^k -\ve^{-k})) \\
& \quad  + \int_{2k \log \ve}^{\infty} g_1(u) \frac{\cosh(u/2)}{\sinh(u/2)+\sinh(k \log \ve)} \, du \\
& = 2 \log(\ve^k -\ve^{-k}) \, g_1(2k \log \ve)
+ \int_{2k \log \ve}^{\infty} g_1(u) \frac{\cosh(u/2)}{\sinh(u/2)+\sinh(k \log \ve)} \, du. 
\end{align*}
The rest is clear.
\end{proof}

\begin{proposition}
Let $m \in 2 \Z$. We have
\begin{equation}
\begin{split}
R_2(0,m) & = 2 \log (\varepsilon^k - \varepsilon^{-k} ) \, g(2k \log \varepsilon, 2k \log \varepsilon) 
+ \int_{2k \log \varepsilon}^{\infty} 
       \Bigl[        
     \frac{\cosh (u/2)}{\sinh(u/2)+ \sinh(k \log \varepsilon)} \\ 
& \quad \quad  + \frac{1-\cosh(m(u/2 - k \log \varepsilon))}{\sinh(u/2 - k \log \varepsilon)}
     \Bigr]
g(2k \log \varepsilon,u) \, du.
\end{split}  
\end{equation}
\end{proposition}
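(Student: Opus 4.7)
My proof plan mirrors the preceding proposition for $R_1(0,m)$, with the genuine new difficulty being that the logarithm $\log(x_2^2+\eta_k)$ and the weight factor $\bigl(\frac{\sqrt{\eta_k+4}+ix_2}{\sqrt{\eta_k+4}-ix_2}\bigr)^{m/2}$ now act on the same variable $x_2$. By Assumption \ref{assump:test} and the product structure of $\Phi$ and $Q$, I would first factor
\[
R_2(0,m) \;=\; Q_1(\eta_k)\cdot I_2(\eta_k) \;=\; g_1(2k\log\ve)\cdot I_2(\eta_k),
\]
where
\[
I_2(\eta_k) \;=\; \int_{\R} \Phi_2(x^2+\eta_k)\,\log(x^2+\eta_k)\,\Bigl(\tfrac{\sqrt{\eta_k+4}+ix}{\sqrt{\eta_k+4}-ix}\Bigr)^{m/2}dx.
\]
The goal is then to evaluate the one-variable integral $I_2(\eta_k)$; multiplying by $g_1(2k\log\ve)$ at the very end will reassemble $g(2k\log\ve,u)$ and give the claimed formula.

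Next I would substitute the inversion formula from Proposition \ref{prop:phi} to express $\Phi_2(z)$ as an integral against $Q_2'(z+t^2)$ carrying its own weight $\bigl(\tfrac{\sqrt{z+4+t^2}-t}{\sqrt{z+4+t^2}+t}\bigr)^{m/2}$. The crucial simplification is the hyperbolic substitution $t=\sqrt{x^2+\eta_k+4}\sinh\psi$, which turns this auxiliary weight into $e^{-m\psi}$, gives $\sqrt{x^2+\eta_k+4+t^2}=\sqrt{x^2+\eta_k+4}\cosh\psi$, and a clean Jacobian. After exchanging the order of integration and using the $x\to-x$ symmetry (so that the outer weight only contributes through its real part $\cos(m\phi)$ with $\cos\phi=\sqrt{\eta_k+4}/\sqrt{x^2+\eta_k+4}$), I would pass to a new variable $u$ determined by requiring the argument of $Q_2'$ to equal $4\sinh^2(u/2)$, so that $Q_2'(y)\,dy=dg_2(u)$. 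Integrating by parts in $u$ produces a boundary term at $u=2k\log\ve$, accounting for the coefficient $2\log(\ve^k-\ve^{-k})$ of $g(2k\log\ve,2k\log\ve)$, together with an interior integral.

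I would then split the interior integral into two contributions. The first is a \emph{geometric} part, structurally identical to the computation of $I_1(\eta_k)$ in the preceding proposition (with the roles of $x_1$ and $x_2$ interchanged); its kernel is $\cosh(u/2)/[\sinh(u/2)+\sinh(k\log\ve)]$, obtained by computing exactly the logarithmic-derivative integral solved there. The second is a \emph{weighted correction} arising purely from the factor $e^{-m\psi}$ integrated against its $\psi$-Jacobian. The main obstacle is to show that this $\psi$-integral evaluates to the closed form
\[
\frac{1-\cosh\!\bigl(m(u/2-k\log\ve)\bigr)}{\sinh(u/2-k\log\ve)}.
\]
This is a one-variable identity of precisely the type worked out in \cite[pp.~406--411]{H2}, and already invoked in the proof of Proposition \ref{prop:py} to produce the parabolic kernel $[1-\cosh(mu/2)]/(e^{u/2}-e^{-u/2})$; the present instance is the same identity shifted from $u=0$ to $u=2k\log\ve$, the shift being exactly what puts $u/2-k\log\ve$ inside the hyperbolic cosine. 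Once this identity is in hand, recombining both kernels under the integral and multiplying by $g_1(2k\log\ve)$ yields the stated formula.
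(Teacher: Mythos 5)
Your skeleton matches the paper's: the factorization $R_2(0,m)=g_1(2k\log\ve)\,I_2(\eta_k)$, the substitution of the inversion formula for $\Phi_2$, the passage to the level-set variable of the argument of $Q_2'$, and the integration by parts whose boundary term at $y=\eta_k$ (where the weight degenerates to $1$ and the angular integral of $\log\eta_k$ gives $\pi\log\eta_k$) produces the coefficient $2\log(\ve^k-\ve^{-k})$ of $g(2k\log\ve,2k\log\ve)$ --- all of this is exactly what the paper does, and your description of the final kernel as an unweighted part plus an $m$-dependent correction is an accurate description of the answer.

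The gap is in the step you yourself flag as ``the main obstacle'' and then dispose of by citation. After the change of variables, the interior quantity to be differentiated is
\[
J(y)=\RE\int_{-\pi/2}^{\pi/2}\log\bigl((y-\eta)\cos^2\varphi+\eta\bigr)\Bigl(\frac{\sinh w+i\cos\varphi}{\cosh w+\sin\varphi}\Bigr)^{m}\,d\varphi ,
\]
in which the logarithm and the $m$-th power weight are coupled through $\varphi$; the correction term is \emph{not} ``purely the factor $e^{-m\psi}$ integrated against its $\psi$-Jacobian'', because on each level set of the argument of $Q_2'$ the logarithm still varies and multiplies $W^m-1$. The identity from \cite[pp.~406--411]{H2} that you invoke is the parabolic degeneration $\eta=0$, where the argument of the logarithm vanishes at $\varphi=\pm\pi/2$; for $\eta=\eta_k>0$ the branch structure is genuinely different (the relevant branch points sit at $z=\pm i\epsilon_0$ with $\epsilon_0=\sqrt{\xi/(\xi-\eta)}-\sqrt{\eta/(\xi-\eta)}$ strictly inside the unit circle), and the assertion that the result is ``the same identity shifted from $u=0$ to $u=2k\log\ve$'' is precisely what has to be proved. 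The paper establishes it by a dedicated contour integration of $f(z)=\bigl(\tfrac{ie^{w}-z}{ie^{w}+z}\bigr)^{m}\,\Log\bigl((\xi-\eta)(\tfrac{z+z^{-1}}{2})^{2}+\eta\bigr)/z$ around a keyhole-type contour between the circles $|z|=1$ and $|z|=\epsilon$, tracking the jump of the principal logarithm across $(\epsilon,\epsilon_0)$ and then letting $\epsilon\to 0$; only after this does one obtain the closed form for $J(\xi)$ whose derivative, multiplied by the Jacobian $e^{u}-e^{-u}$, yields $\frac{1-\cosh(m(u/2-k\log\ve))}{\sinh(u/2-k\log\ve)}$. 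Without supplying this computation or an equivalent one, the proof is incomplete at its central step.
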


\begin{proof}
We recall that the test function $\Phi$ is written as 
$\Phi(x_1,x_2)=\Phi_1(x_1) \, \Phi_2(x_2)$ 
with real valued $\Phi_1$ and $\Phi_2$ 
by Assumption \ref{assump:test}.
Therefore, 
\begin{align*}
& R_2(0,m) = \int_{\R} \Phi_1(x_1^2 + \eta_k) \, dx_1 
\int_{\R} \Phi_2(x_2^2 + \eta_k) \log(x_2^2 + \eta_k)
\Big(  \frac{\sqrt{\eta_k +4} +i x_2}{\sqrt{\eta_k +4} -i x_2}  \Big)^{m/2} dx_2 \\
& = Q_1(\eta_k) \cdot I_2(\eta_k)
   = g_1(2k \log \varepsilon) \cdot I_2(\eta_k).
\end{align*}
Here, $I_2(\eta) = I_2(\eta_k)$ is given by
\begin{align*}
I_2(\eta) &= 
- \frac{2}{\pi} \RE \biggl[ 
\int_{0}^{\infty} \! \! \int_{-\infty}^{\infty} Q_2'(x_2^2+\eta+t^2) \, \log (x_2^2 +\eta)\\
& \times \biggl(  \frac{\sqrt{x_2^2+\eta+4+t^2}-t}{\sqrt{x_2^2+\eta+4+t^2}+t} \biggr)^{m/2}
\biggl(  \frac{\sqrt{\eta+4}+ix}{\sqrt{\eta+4}-ix} \biggr)^{m/2}
\, dt dx_2 \biggr]\\
& =  - \frac{1}{\pi} \RE \biggl[ 
\int_{\eta}^{\infty}  
\biggl( 
\int_{-\sqrt{y-\eta}}^{\sqrt{y-\eta}}
\frac{\log(y-\xi^2)}{\sqrt{y-\eta-\xi^2}}
\biggl(  \frac{\sqrt{\eta+4} + i \sqrt{y-\eta-\xi^2}}{\sqrt{y+4}+\xi} \biggr)^{m}
\, d \xi
\biggr)
Q_2'(y) \, dy \biggr],
\end{align*}
by changing the variables
$y=x_2^2+\eta+t^2$ and $\xi=t$.
Next changing the variable 
$\xi = \sqrt{y-\xi} \sin \varphi$, we have
\begin{align*}
& I_2(\eta) \\
& =  
- \frac{1}{\pi} \RE \biggl[ 
\int_{\eta}^{\infty}  \! \! 
\int_{- \pi/2}^{\pi/2}
\log \bigl( (y-\eta) \cos^2 \varphi  + \eta \bigr)
\biggl(  \frac{\sqrt{\eta+4} + i \sqrt{y-\eta} \cos \varphi}{\sqrt{y+4}+ \sqrt{y-\eta} \sin \varphi} \biggr)^{m}
\, d \varphi \,
Q_2'(y) \, dy \biggr] \\
& = 
- \frac{1}{\pi} \RE \biggl[ 
\int_{\eta}^{\infty}  \! \! 
\int_{- \pi/2}^{\pi/2}
\log \bigl( (y-\eta) \cos^2 \varphi  + \eta \bigr)
\biggl(  \frac{\sinh w + i \cos \varphi}{ \cosh w + \sin \varphi} \biggr)^{m}
\, d \varphi \, 
Q_2'(y) \, dy \biggr] 
\end{align*}
with 
\begin{equation}
\cosh w =  \sqrt{\frac{y+4}{y-\xi}}, \quad 
\sinh w =  \sqrt{\frac{\eta+4}{y-\xi}}.
\end{equation}

Let us consider the integral 
\[ J(y) :=  \RE \biggl[ 
\int_{- \pi/2}^{\pi/2}
\log \bigl( (y-\eta) \cos^2 \varphi  + \eta \bigr)
\biggl(  \frac{\sinh w + i \cos \varphi}{ \cosh w + \sin \varphi} \biggr)^{m}
\, d \varphi \biggr].
\]

Then we see that

\begin{align*}
I_2 &= -\frac{1}{\pi} \int_{\eta}^{\infty} J(y) Q_2'(y) \, dy 
= - \frac{1}{\pi} [J(y)Q_2(y)]_{\eta}^{\infty}  +\frac{1}{\pi} \int_{\eta}^{\infty}  J'(y) Q_2(y) \, dy \\
& =  \frac{1}{\pi} Q_2(\eta) \int_{- \pi/2}^{\pi/2} \log \eta \, d \varphi 
 + \frac{1}{\pi} \int_{\eta}^{\infty}  J'(y) Q_2(y) \, dy \\
& =  2 \log (\ve^k - \ve^{-k}) \, g_2(2k \log \ve) 
+\frac{1}{\pi} \int_{\eta}^{\infty}  J'(y) Q_2(y) \, dy. 
\end{align*}

Let us consider the function $f(z)$ defined by
\[ f(z) := \biggl(  \frac{i e^{w}-z}{i e^{w}+z} \biggr)^m  \, 
    \frac{\Log \Bigl( (\xi-\zeta) \bigl( \frac{z+z^{-1}}{2} \bigr)^2 +\eta \Bigr)}{z}
\]
to evaluate the derivative of $J(\xi)$.
Here, $\Log(z)$ is the principal value logarithm whose imaginary part lies in 
$(-\pi, \pi]$.

Let $\epsilon, \,  \delta > 0$ be two sufficiently small real numbers, 
and define the closed curve $C$ in the complex plane, which is
made up of two semi-circular arcs starting from $\varphi = -\frac{\pi}{2}+\delta$ to
$\varphi=\frac{\pi}{2}-\delta$ of the radii $1$ and $\epsilon$, 
and besides they are 
joined along by the straight lines $\varphi=\pm(\frac{\pi}{2} - \delta)$. 

Considering the counterclockwise contour integral of $f(z)$ along the curve $C$,
by Cauchy's integral theorem, we have

\begin{align}
&  \int_{-\pi/2}^{\pi/2}  \biggl( \frac{\sinh w + i \cos \varphi}{\cosh w + \sin \varphi } \biggr)^m
    \log \Bigl( (\xi - \eta) \cos^2 \varphi +\eta \Bigr) \, i d \varphi  \label{eq:j1} \\
& + \int_{i}^{i \epsilon}  \biggl( \frac{i e^w -z}{i e^w +z} \biggr)^m
    \Log \Bigl( (\xi - \eta) \Bigl( \frac{z+z^{-1}}{2} \Bigr)^2 +\eta \Bigr) \, \frac{dz}{z}  \label{eq:j2} \\
& +  \int_{- \pi/2}^{\pi/2}  \biggl( \frac{i e^w - \epsilon e^{i \varphi} }{i e^w + \epsilon e^{i \varphi}} \biggr)^m
    \Log \Bigl( (\xi - \eta) \Bigl( \frac{ \epsilon e^{i \varphi} + \epsilon^{-1} e^{- i \varphi} }{2} \Bigr)^2 +\eta \Bigr) 
\, i d \varphi  \label{eq:j3} \\
& +  \int_{-i \epsilon}^{-i }  \biggl( \frac{i e^w -z}{i e^w +z} \biggr)^m
    \Log \Bigl( (\xi - \eta) \Bigl( \frac{z+z^{-1}}{2} \Bigr)^2 +\eta \Bigr) \, \frac{dz}{z} \label{eq:j4} \\
& = 0 \nonumber
\end{align}

Put $\epsilon_0 :=  \sqrt{\frac{\xi}{\xi-\eta}} -\sqrt{\frac{\eta}{\xi-\eta}}$, then
we see that $\epsilon_0$ satisfies $(\xi-\eta)((\epsilon_0^{-1}-\epsilon_0)/2)^2=\eta$
and  (\ref{eq:j2}) and (\ref{eq:j4}) are written as follows.
\begin{align*}
(\ref{eq:j2}) 
=& \int_{\epsilon_0}^{\epsilon}  \biggl( \frac{e^w -y}{e^w +y} \biggr)^m
\biggl\{ \log \Bigl( (\xi - \eta) \Bigl( \frac{y^{-1}-y}{2}  \Bigr)^2  
- \eta \Bigr)  - i \pi  \biggr\} \frac{dy}{y} \\
& + \int_{1}^{\epsilon_0} \biggl( \frac{e^w -y}{e^w +y} \biggr)^m
\biggl\{ \log \Bigl( \eta -  (\xi - \eta) \Bigl( \frac{y^{-1}-y}{2}  \Bigr)^2  
\Bigr)  \biggr\} \frac{dy}{y}, 
\end{align*}
and
\begin{align*}
(\ref{eq:j4}) 
=& \int_{\epsilon_0}^{1} \biggl( \frac{e^w +y}{e^w -y} \biggr)^m
\biggl\{ \log \Bigl( \eta -  (\xi - \eta) \Bigl( \frac{y^{-1}-y}{2}  \Bigr)^2  
\Bigr)  \biggr\} \frac{dy}{y} \\
&+
\int_{\epsilon}^{\epsilon_0}  \biggl( \frac{e^w +y}{e^w -y} \biggr)^m
\biggl\{ \log \Bigl( (\xi - \eta) \Bigl( \frac{y^{-1}-y}{2}  \Bigr)^2  
- \eta \Bigr)  + i \pi  \biggr\} \frac{dy}{y}.
\end{align*}

While, (\ref{eq:j3}) is evaluated as
\[ 
(\ref{eq:j3})
=
\int_{-\pi/2}^{\pi/2}
\Bigl[ 1+O(\epsilon) \Bigr]
\Bigl[ \log(1/\epsilon^2) + \log \Bigl(  \frac{\xi-\eta}{4} \Bigr) 
  -2 i \varphi +O(\epsilon^2) \Bigr] \, i d \varphi
.\]

Take the real part of 
\[ (-i) \times \Bigl\{ (\ref{eq:j1})  +  (\ref{eq:j2})  +  (\ref{eq:j3}) +  (\ref{eq:j4})  \Bigr\}, 
\] 
and we obtain, 
\begin{align*}
& J(\xi) + \pi  \int_{\epsilon}^{\epsilon_0}   \biggl( \frac{e^w-y}{e^w+y}  \biggr)^m \frac{dy}{y}  
+ \pi  \int_{\epsilon}^{\epsilon_0}   \biggl( \frac{e^w+y}{e^w-y}  \biggr)^m \frac{dy}{y} 
- \pi \biggl\{ \log \Bigl( \frac{1}{\epsilon^2} \Bigr)  
               + \log \Bigl( \frac{\xi-\eta}{4} \Bigr)  \biggr\} \\
&=  O \biggl( \epsilon \log  \Bigl( \frac{1}{\epsilon^2} \Bigr) \biggr).             
\end{align*}

Therefore, we can rewrite the above formula as follows.
\begin{align*}
& J(\xi) + \pi  \int_{\epsilon}^{\epsilon_0}  \biggl[ \biggl( \frac{e^w-y}{e^w+y}  \biggr)^m -1 \biggr]  \frac{dy}{y}  
+ \pi  \int_{\epsilon}^{\epsilon_0}  \biggl[ \biggl( \frac{e^w+y}{e^w-y}  \biggr)^m  -1 \biggr]  \frac{dy}{y} \\
& \quad - \pi \log \Bigl( \frac{\xi-\eta}{4} \Bigr)  + 2 \pi \log \epsilon_0 \\
&=  O \biggl( \epsilon \log  \Bigl( \frac{1}{\epsilon^2} \Bigr) \biggr).             
\end{align*}

Letting $\epsilon \to +0$, 
we have an expression for $J(\xi)$.
Changing the variable $y=e^{-u}$ in the integral, we have
\begin{align*}
J(\xi) =& - \pi \int_{u_0}^{\infty} \biggl[ \biggl( \frac{e^w-e^{-u}}{e^w+e^{-u}} \biggr)^m  -1 \biggr] du
- \pi \int_{u_0}^{\infty} \biggl[ \biggl( \frac{e^w+e^{-u}}{e^w-e^{-u}} \biggr)^m  -1 \biggr] du \\
& + 2 \pi \log \Bigl( \frac{\sqrt{\xi}+\sqrt{\eta}}{2} \Bigr)
\end{align*}
with $u_0 := \log \epsilon_0^{-1}$.
Then we obtain an explicit formula for the derivative of $J(\xi)$.

\begin{align*}
\frac{d J(\xi)}{d \xi} &=  
\frac{\pi}{\sqrt{\xi}(\sqrt{\xi}+\sqrt{\eta})}  
- 2 \pi  \biggl(  \frac{\partial w}{\partial \xi}  + \frac{\partial u_0}{\partial \xi}  \biggr)  \\
& \quad + \pi \biggl(  \frac{\partial w}{\partial \xi}  + \frac{\partial u_0}{\partial \xi}  \biggr)
\biggl[  \biggl( \frac{e^w-e^{-u_0}}{e^w+e^{-u_0}} \biggr)^m  + \biggl( \frac{e^w+e^{-u_0}}{e^w-e^{-u_0}} \biggr)^m  \biggr].
\end{align*}

By noting that
\begin{align*}
& \frac{e^w-e^{-u_0}}{e^w+e^{-u_0}} 
= \frac{\sqrt{\xi+4}+\sqrt{\eta+4}-\sqrt{\xi}+\sqrt{\eta}}{\sqrt{\xi+4}+\sqrt{\eta+4}+\sqrt{\xi}-\sqrt{\eta}}
= \frac{\sqrt{\eta+4}+\sqrt{\eta}}{\sqrt{\xi+4}+\sqrt{\xi}} 
= \frac{\ve^{k}}{e^{u/2}}, \\
& \frac{\partial w}{\partial \xi}
= - \frac{1}{2(\xi-\eta)} \frac{\sqrt{\eta+4}}{\sqrt{\xi+4}}, \quad \quad 
\frac{\partial u_0}{\partial \xi}
= - \frac{1}{2(\xi-\eta)} \frac{\sqrt{\eta}}{\sqrt{\xi}}, \\
&  \frac{\partial w}{\partial \xi} + \frac{\partial u_0}{\partial \xi}
= - \frac{1}{\xi-\eta} \frac{\ve^k e^{u/2}  -\ve^{-k}e^{-u/2}}{e^u-e^{-u}},
\end{align*}
with $\xi = e^u+e^{-u}-2$.

We obtain
\begin{equation*} \label{eq:dj} \\
\begin{split}
\frac{d J(\xi)}{d \xi} &=  
\frac{\pi}{\sqrt{\xi}(\sqrt{\xi}+\sqrt{\eta})}  
+ \frac{2 \pi}{\xi-\eta} \frac{\ve^k e^{u/2} - \ve^{-k}e^{-u/2}}{e^u-e^{-u}}  \\
& \quad - \frac{\pi}{\xi-\eta} \frac{\ve^k e^{u/2} - \ve^{-k}e^{-u/2}}{e^u-e^{-u}} 
\Bigl[ (e^{u/2} \ve^{-k}) ^m + (e^{u/2} \ve^{-k})^{-m}  \Bigr]. 
\end{split}
\end{equation*}

Then, we have
\begin{align*}
& J'(e^u+e^{-u}-2) \cdot (e^u-e^{-u}) \\
& = \frac{\pi(e^{u/2}+e^{-u/2})}{e^{u/2}-e^{-u/2}+\ve^k-\ve^{-k}}
   +\frac{\pi (\ve^k e^{u/2} - \ve^{-k} e^{-u/2} )}{\xi-\eta}
   \Bigl\{ 2 -  \bigl( e^{u/2} \ve^{-k} \bigr)^m   - \bigl(e^{u/2} \ve^{-k} \bigr)^{-m}  \Bigr\} \\
& = \frac{\pi \cosh(u/2)}{\sinh(u/2)+\sinh(k \log \ve)}
   +\frac{\pi}{\ve^{-k}e^{u/2}  - \ve^ke^{-u/2}} 
\Bigl\{ 2 -  \bigl( e^{u/2} \ve^{-k} \bigr)^m   - \bigl(e^{u/2} \ve^{-k} \bigr)^{-m}  \Bigr\} \\
& = \frac{\pi \cosh(u/2)}{\sinh(u/2)+\sinh(k \log \ve)}
   +\frac{\pi}{ \sinh(u/2-k \log \ve)} 
\Bigl\{ 1 -  \cosh \bigl( m(u/2 -k \log \ve) \bigr) \Bigr\}. 
\end{align*}

Substituting the above equality into the following, the proof is completed.
\begin{align*}
I_2 &= 2 \log (\ve^k - \ve^{-k}) \, g_2(2k \log \ve)
       + \frac{1}{\pi} \int_{\eta}^{\infty} J'(\xi) Q_2(\xi) \, d \xi \\
&= 2 \log (\ve^k - \ve^{-k}) \, g_2(2k \log \ve)
       + \frac{1}{\pi} \int_{2k \log \ve}^{\infty} J'(e^u+e^{-u}-2) 
\, g_2(u) \, (e^u-e^{-u}) \, du. 
\end{align*}

\end{proof}

Putting together with the results in this subsection,
we obtain
\begin{proposition} \label{prop:h2y}
For $m \in 2 \Z$ and $Y>1$, we have
\begin{align*}
  H_2^{Y}(0, m) 
& = 4 \log \varepsilon \, \log Y 
\sum_{k=1}^{\infty} g(2k \log \varepsilon, 2k \log \varepsilon) \\
&  - 4 \log \varepsilon \sum_{k=1}^{\infty} 
\sum_{\gamma_{k,\alpha} \in \Gamma_{\mathrm{H2}}}
\frac{k_0(\gamma_{k,\alpha}) \log(N(\alpha, \varepsilon^{k}-\varepsilon^{-k}))}{|N(\varepsilon^{k}-\varepsilon^{-k})|} g(2k \log \varepsilon, 2k \log \varepsilon) \\
&  + 4 \log \varepsilon \sum_{k=1}^{\infty}
\log(\varepsilon^k -\varepsilon^{-k} ) \, g(2k \log \varepsilon, 2k \log \varepsilon) \\
&  + 2 \log \varepsilon \sum_{k=1}^{\infty}
\int_{2k \log \varepsilon}^{\infty} 
\Bigl[ g(u,2k \log \varepsilon)+g(2k \log \varepsilon,u) \Bigr] 
\frac{\cosh(u/2)}{\sinh(u/2)+\sinh(k \log \varepsilon)} \, du \\
&  + 2 \log \varepsilon \sum_{k=1}^{\infty}
\int_{2k \log \varepsilon}^{\infty} g(2k \log \varepsilon,u) 
\frac{1 - \cosh \bigl( m( u/2 - k \log \varepsilon) \bigr)}{\sinh(u/2 - k \log \varepsilon)} \, du
\\
& + o(1) \quad (Y \to \infty).
\end{align*}
\end{proposition}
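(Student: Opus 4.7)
The plan is to assemble the three orbital-integral pieces $R_0$, $R_1(0,m)$, $R_2(0,m)$ just computed and carry out the resulting double sum in $k$ and $\alpha$. Concretely, I start from the decomposition
\[
H_2^Y(0,m) = \sum_{k=1}^{\infty} \sum_{\gamma_{k,\alpha} \in \Gamma_{\mathrm{H2}}} \frac{k_0(\gamma_{k,\alpha}) \cdot 2 \log \varepsilon}{|N(\varepsilon^k - \varepsilon^{-k})|} \bigl\{ R_0 + R_1(0,m) + R_2(0,m) \bigr\} + o(1),
\]
established at the outset of this subsection, and substitute the formula $R_0 = \bigl[2\log Y - \log N(\Lambda)^2\bigr] \, g(2k\log\varepsilon, 2k\log\varepsilon)$, where $(\Lambda) = (\alpha, \varepsilon^k - \varepsilon^{-k})$, together with the closed forms for $R_1(0,m)$ and $R_2(0,m)$ obtained in the two preceding propositions.

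The structural observation driving the cleanup is that $R_1(0,m)$ and $R_2(0,m)$ depend on $\gamma_{k,\alpha}$ only through $k$ (via $\eta_k = \varepsilon^{2k} + \varepsilon^{-2k} - 2$), and that the $\log Y$-part of $R_0$ is likewise $\alpha$-independent; only the term $\log N(\Lambda)$ appearing in $R_0$ retains genuine $\alpha$-dependence. I would therefore split $R_0$ into its $\log Y$-part and its $\log N(\Lambda)$-part, and apply the orbit--stabilizer counting identity
\[
\sum_{\substack{\gamma_{k,\alpha} \in \Gamma_{\mathrm{H2}} \\ k \text{ fixed}}} \frac{k_0(\gamma_{k,\alpha})}{|N(\varepsilon^k - \varepsilon^{-k})|} = 1,
\]
which records the action of the infinite cyclic centralizer (Lemma \ref{lem:hyp2}) on a complete set of $\alpha$-representatives modulo the lattice $(\varepsilon^k - \varepsilon^{-k}) \mathcal{O}_K$. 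This identity collapses the $\log Y$-piece of $R_0$ and the entire $R_1+R_2$ contribution into single sums over $k$, producing the leading $4\log\varepsilon \log Y \sum_k g(2k\log\varepsilon, 2k\log\varepsilon)$ term, the $\log(\varepsilon^k - \varepsilon^{-k})$ term, and the two integral sums exactly as displayed. The $\log N(\Lambda)$-piece of $R_0$ does not simplify and is preserved verbatim as the stated double sum over $k$ and $\gamma_{k,\alpha}$.

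To conclude, I would verify absolute convergence of the resulting expression: this is automatic because $g \in C_c^{\infty}(\R^2)$, so $g(2k\log\varepsilon,\cdot)$ and the integrands $g(\cdot,2k\log\varepsilon)$ vanish for all sufficiently large $k$, truncating every $k$-sum and every $u$-integral to a finite range. The principal potential obstacle is pinning down the correct form of the counting identity and the matching numerical constants; I would resolve this by closely mirroring the weight-zero treatment in \cite[pp.~91--97]{E} and \cite[(4.5.1), p.~1650]{Z}, where the corresponding identity and the $\log N(\Lambda)$ double sum are carried out in detail, and by tracking how the weight-$m$ corrections (which enter only through the factor $\Phi_2$ and hence only modify $R_2$) propagate into the final coefficients.
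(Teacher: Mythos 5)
Your proposal is correct and follows essentially the same route as the paper: the paper's own proof simply substitutes the computed $R_0$, $R_1(0,m)$, $R_2(0,m)$ into the prefactored double sum and invokes the counting identity $\sum_{\gamma \in \Gamma_{\mathrm{H2}},\, N(\gamma)=\varepsilon^{2k}} k_0(\gamma) = |N(\varepsilon^{k}-\varepsilon^{-k})|$ (cited from Efrat and Zograf), which is exactly your normalized identity, to collapse the $\alpha$-sum for every $\alpha$-independent piece while leaving the $\log N(\Lambda)$ piece as a genuine double sum. Your observations about the $\alpha$-independence of $R_1$, $R_2$, and the $\log Y$ part, and about compact support of $g$ truncating the $k$-sums, are precisely the points the paper leaves implicit in ``the rest is clear.''
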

\begin{proof}
By noting the fact (see  \cite[Proposition 3.3, p.97]{E} or \cite[p.1650]{Z}) 
\[ \sum_{ \gamma \in \Gamma_\text{H2},  \, N(\gamma) = \ve^{2k}} k_0(\gamma)
 = |N(\ve^{k}-\ve^{-k})|
,\]
the rest is clear.
\end{proof}

\subsection{Contribution from Eisenstein series}
Let $m \in 2\Z$ and $Y>1$. 
Define the contribution from the Eisenstein series 
with the truncation parameter $Y$ by
\[ EI^{Y}(0,m) := \int_{F^{Y}} H_{\Gamma}(z,z) \, d \mu(z)
.\]

By using the Maass-Selberg relation (Theorem \ref{th:maass} ), we obtain
\begin{proposition} \label{prop:eiy}
For $m \in 2 \Z$, we have
\begin{align*}
EI^{Y}(0,m) =&  
2 \log \varepsilon \, \log Y 
\sum_{k \in \Z}  g(2k \log \varepsilon, 2k \log \varepsilon) 
\\
& - \frac{1}{4 \pi} \sum_{k \in \Z} \int_{\R} 
h \Bigl( r+\frac{\pi k}{2 \log \varepsilon},r-\frac{\pi k}{2 \log \varepsilon} \Bigr) \,
\frac{\varphi'_{(0,m)}}{\varphi_{(0,m)}} \Bigl( \frac{1}{2}+ir,k \Bigr) \, dr \\
& + \frac{1}{4} h(0,0) \, \varphi_{(0,m)} \Bigl( \frac{1}{2},0 \Bigr) + o(1) \quad (Y \to \infty).
\end{align*}
\end{proposition}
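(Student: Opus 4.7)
\textbf{Proof proposal for Proposition \ref{prop:eiy}.}
The plan is to unfold $H_{\Gamma}(z,z)$ on $F^{Y}$ against the Eisenstein series, reduce the interior integral to the Maass--Selberg inner product on the full fundamental domain $F$, and then invoke Theorem \ref{th:maass} in a limiting form. More precisely, substituting the definition of $H_{\Gamma}(z,z)$ and interchanging the $(r,k)$-integration with the $z$-integration (justified because $h$ is rapidly decreasing and the truncation makes everything absolutely convergent), one obtains
\[
EI^{Y}(0,m)=\frac{1}{8\pi\sqrt{D}\log\varepsilon}\sum_{k\in\Z}\int_{\R}h\!\Bigl(r+\tfrac{\pi k}{2\log\varepsilon},\,r-\tfrac{\pi k}{2\log\varepsilon}\Bigr)\,I_{Y}(r,k)\,dr,
\]
where $I_{Y}(r,k):=\int_{F^{Y}}E_{(0,m)}(z,\tfrac12+ir;k)\,E_{(0,-m)}(z,\tfrac12-ir;-k)\,d\mu(z)$. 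Since $E^{Y}=E$ on $\{y_{1}y_{2}<Y\}$ and $E^{Y}=E-a_{0}$ on $\{y_{1}y_{2}\ge Y\}$, we can rewrite $I_{Y}(r,k)=\int_{F}E^{Y}E^{Y}\,d\mu-\int_{y_{1}y_{2}\ge Y}(E-a_{0})(E-a_{0})\,d\mu$, and the second integral is $O(e^{-cY})$ by the exponential decay of Whittaker functions in the Fourier expansion.

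Next I would apply the Maass--Selberg relation (Theorem \ref{th:maass}) with $\mm=(0,m)$, $\mm'=(0,-m)$, $s=\tfrac12+ir+\epsilon$, $s'=\tfrac12-ir$ and $k'=-k$, and let $\epsilon\to 0$; the perturbation is necessary because $(s,k)+(s',k')=(1,0)$ identically in our setup. The $\delta_{k,-k'}$ piece is $\bigl(Y^{\epsilon}-\varphi_{(0,m)}(s,k)\varphi_{(0,-m)}(s',-k)Y^{-\epsilon}\bigr)/\epsilon$, which by the functional equation $\varphi_{(0,m)}(s,k)\varphi_{(0,-m)}(1-s,-k)=1$ is $0/0$ at $\epsilon=0$; l'H\^{o}pital gives the limit $2\log Y-\frac{\varphi'_{(0,m)}}{\varphi_{(0,m)}}(\tfrac12+ir,k)$. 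The $\delta_{k,k'}$ piece survives only when $k=0$ and contributes $\bigl(\varphi_{(0,m)}(\tfrac12-ir,0)Y^{2ir}-\varphi_{(0,m)}(\tfrac12+ir,0)Y^{-2ir}\bigr)/(2ir)$. Altogether,
\[
I_{Y}(r,k)=2\sqrt{D}\log\varepsilon\Bigl[2\log Y-\tfrac{\varphi'_{(0,m)}}{\varphi_{(0,m)}}(\tfrac12+ir,k)\Bigr]+\delta_{k,0}\cdot 2\sqrt{D}\log\varepsilon\cdot\tfrac{\varphi(\tfrac12-ir,0)Y^{2ir}-\varphi(\tfrac12+ir,0)Y^{-2ir}}{2ir}+o(1).
\]

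Substituting back, the constant $2\log Y$ contribution needs
\[
\sum_{k\in\Z}\int_{\R}h\!\Bigl(r+\tfrac{\pi k}{2\log\varepsilon},r-\tfrac{\pi k}{2\log\varepsilon}\Bigr)\,dr=2\pi\sum_{k\in\Z}\int_{\R}g(u,-u)e^{i\pi ku/\log\varepsilon}\,du,
\]
obtained by Fourier inversion, and Poisson summation $\sum_{k}e^{i\pi ku/\log\varepsilon}=2\log\varepsilon\sum_{n}\delta(u-2n\log\varepsilon)$ together with the evenness $g(u,-u)=g(u,u)$ turns this sum into $4\pi\log\varepsilon\sum_{n}g(2n\log\varepsilon,2n\log\varepsilon)$. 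Multiplied by $\tfrac{2\log Y}{4\pi}$, this yields the $\log Y$ main term $2\log\varepsilon\log Y\sum_{k}g(2k\log\varepsilon,2k\log\varepsilon)$. The $-\tfrac{\varphi'}{\varphi}$ piece is transcribed directly. For the $k=0$ residual term, split $\tfrac{\varphi(\tfrac12-ir,0)Y^{2ir}-\varphi(\tfrac12+ir,0)Y^{-2ir}}{2ir}$ as $\varphi(\tfrac12+ir,0)\tfrac{\sin(2r\log Y)}{r}$ plus a bounded difference-quotient times an oscillatory factor $Y^{2ir}$; Riemann--Lebesgue kills the latter, while the Dirichlet-kernel limit $\int f(r)\tfrac{\sin(Tr)}{r}\,dr\to\pi f(0)$ applied with $T=2\log Y$ and $f(r)=h(r,r)\varphi(\tfrac12+ir,0)$ gives precisely $\tfrac{1}{4}h(0,0)\varphi_{(0,m)}(\tfrac12,0)$.

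The main obstacle is the analysis of the $k=0$ exceptional term. Establishing that the Dirichlet-kernel limit and the Riemann--Lebesgue decay give exactly the stated boundary value (rather than an additional principal-value artifact) requires that $\varphi_{(0,m)}(\tfrac12+ir,0)$ be holomorphic and of moderate growth along $\RE(s)=\tfrac12$, which follows from the earlier propositions on the absence of poles of $\varphi$ in $\RE(s)>\tfrac12$ and from the explicit gamma-factor form of $\varphi_{(0,m)}(s,k)$ together with convexity bounds for the Hecke $L$-function $L(s,\chi_{0})$. Once these analytic inputs are in place, the identification of the three explicit terms is essentially bookkeeping.
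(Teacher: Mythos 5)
Your proposal is correct and follows essentially the same route as the paper: replace the integral of $H_{\Gamma}(z,z)$ over $F^{Y}$ by the $L^2$-norm of the truncated Eisenstein series $E^{Y}_{(0,m)}$ over $F$ up to $o(1)$, apply the Maass--Selberg relation in the limiting (l'H\^{o}pital) form to produce $2\log Y-\frac{\varphi'}{\varphi}$ plus the $\delta_{0,k}$ oscillatory term, and then extract the three stated contributions via Poisson summation, direct transcription, and the Dirichlet-kernel/Riemann--Lebesgue argument. The paper merely cites Efrat for this last bookkeeping step, which you have correctly reconstructed.
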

\begin{proof}
By definition of the kernel function $H_{\Gamma}(z,z)$, we can check that
\begin{align*}
& \int_{F^{Y}}  H_{\Gamma}(z,z) \, d \mu(z)  \\
&= \frac{1}{8 \pi \sqrt{D} \log \ve} \int_{F} d \mu(z)
\biggl[  \sum_{k \in \Z}   \int_{\R}    
h \Bigl( r+\frac{\pi k}{2 \log \varepsilon}, r-\frac{\pi k}{2 \log \varepsilon} \Bigr) 
\biggl| E^{Y}_{(0,m)}  \Bigl( z,\frac{1}{2}+ir, k \Bigr)   \biggr|^2
dr
\biggr] \\
& \quad +o(1)  \quad  (Y \to \infty).
\end{align*}
Next we use the following special case of Theorem \ref{th:maass}:
\begin{align*}
& \int_{F} \biggl| E^{Y}_{(0,m)}  \Bigl( z,\frac{1}{2}+ir, k \Bigr)   \biggr|^2
\, d \mu(z) \\
&    = 
2 \sqrt{D} \log \ve
\biggl[ 2 \log Y 
-  \frac{\varphi'_{(0,m)}}{\varphi_{(0,m)}} \Bigl( \frac{1}{2}+ir,k \Bigr) \\
& \quad \quad \quad \quad \quad \quad 
+\delta_{0,k}  \frac{\varphi_{(0,m)}(\frac{1}{2}-ir, 0) \, Y^{2 ir} 
 - \varphi_{(0,m)}(\frac{1}{2}+ir, 0) \, Y^{-2ir} }{2 i r}
\biggr].
\end{align*}
Finally we obtain the desired formula
as in the proof of Proposition 1.1 in \cite[p.85]{E}.
\end{proof}

\subsection{Cancellation of the $\log Y$ terms}
Let us complete the proof of Theorem \ref{th:trf}.
Let $m \in 2 \Z$. 
By Propositions \ref{prop:py}, \ref{prop:h2y} and \ref{prop:eiy}, 
the $\log Y$ terms are canceled out  and
we have 
\begin{equation} \label{eq:ph2sc}
\begin{split}
& \lim_{Y \to \infty}
\biggl\{ 
\sum_{\gamma \in \Gamma_\text{P} \cup \Gamma_\text{H2}}
\int_{F_{\gamma}^{Y}} k(z,\gamma z) \, j_{\gamma}(z) \, d \mu(z)
-\int_{F^{Y}} H_{\Gamma}(z,z) \, d \mu(z) 
\biggr\} \\
& = \lim_{Y \to \infty} 
\bigg\{ P^{Y}(0,m) + H_2^{Y}(0,m) -EI^Y(0,m) + o(1)  \biggr\} \\
& = P^Y(0,m) \Big|_{Y=1} + H_2^Y(0,m) \Big|_{Y=1} - EI^{Y}(0,m) \Big|_{Y=1} \\
& =: P(0,m) + H_2(0,m) + SC(0,m). 
\end{split}
\end{equation}
We see that $P(0,m)$, $H_2(0,m)$ and $SC(0,m)$ are identified with 
$\mathbf{II}_a(h)$, $\mathbf{II}_b(h)$ and $\mathbf{III}(h)$
in Theorem \ref{th:trf} respectively.
The series and integrals appearing in these terms are absolutely 
convergent by the assumption on the test functions $h$ in this theorem.
Thus we complete the proof.  

\section{Differences of the Selberg trace formula 
for Hilbert modular surfaces} 

\subsection{Differences of the Selberg trace formula}

Let $m \in 2 \Z$. We introduce Maass operators 
$\Lambda_{m}^{(2)}$ and $K_{m}^{(2)}$, which play important roles in
considering the ``differences" of the Selberg trace formulas.
We refer to \cite[Proposition 5.13, p.381]{H2} and \cite[pp.305--307]{Ro}
for basic properties of these Maass operators.

Firstly we consider the following ``weight down" Maass operator
\[ \Lambda_{m}^{(2)} := iy_2 \frac{\partial}{\partial x_2} - y_2 \frac{\partial}{\partial y_2} +\frac{m}{2} \colon
L^2_\text{dis}(\Gamma_{K} \backslash \bH^2 
\, ; \, (0,m)) \to L^2_\text{dis}(\Gamma_{K} \backslash \bH^2 
\, ; \, (0,m-2)).\]
Recall that
\[ \Ker(\Lambda_m^{(2)}) =
\biggl\{  f \in  L^2_\text{dis} \Bigl( \Gamma_{K} \backslash \bH^2 
\, ; \, (0,m) \Bigr)  \, \Bigl| \,  \Delta_{m}^{(2)} f  = \frac{m}{2} \Bigl( 1-\frac{m}{2} \Bigr) \, f  
\biggr\}, \]
i.e. $\lambda^{(2)}=\frac{m}{2}(1-\frac{m}{2})$-eigenspace.

Let $\{\frac{1}{4}+\rho_j(m)^2 \}_{j=0}^{\infty}$ be the set of eigenvalues of 
$\Delta_{0}^{(1)}$ acting on $\Ker(\Lambda_m^{(2)})$, 
then we have a direct sum decomposition into eigenspaces of the Laplacians
\begin{equation} \label{eq:ker1}
\Ker(\Lambda_m^{(2)}) = 
\bigoplus_{j=0}^{\infty}
L^2_\text{dis} \Bigl( \Gamma_{K} \backslash \bH^2 
\, ; \, \Bigl( \frac{1}{4}+\rho_j(m)^2, \, \frac{m}{2} \bigl( 1-\frac{m}{2} \bigr) \Bigr),(0,m) \Bigr)
.
\end{equation}

Secondly we consider the following ``weight up" Maass operator
\[ K_{m-2}^{(2)} := iy_2 \frac{\partial}{\partial x_2} + y_2 \frac{\partial}{\partial y_2} +\frac{m-2}{2} \colon
L^2_\text{dis}(\Gamma_{K} \backslash \bH^2 
\, ; \, (0,m-2)) \to L^2_\text{dis}(\Gamma_{K} \backslash \bH^2 
\, ; \, (0,m)).\]
Recall that
\[ 
\Ker(K_{m-2}^{(2)}) =
\biggl\{  f \in  L^2_\text{dis} \Bigl( \Gamma_{K} \backslash \bH^2 
\, ; \, (0,m-2) \Bigr)  \, \Bigl| \,  \Delta_{m}^{(2)} f  = \frac{m}{2} \Bigl( 1-\frac{m}{2} \Bigr) \, f  
\biggr\}, 
\]
i.e. $\lambda^{(2)}=\frac{m}{2}(1-\frac{m}{2})$-eigenspace.

Let $\{\frac{1}{4}+\mu_j(m-2)^2 \}_{j=0}^{\infty}$ be the set of eigenvalues of 
$\Delta_{0}^{(1)}$ acting on $\Ker(K_{m-2}^{(2)})$, 
then we have a direct sum decomposition into eigenspaces of the Laplacian
\begin{equation} \label{eq:ker2}
\Ker(K_{m-2}^{(2)}) = 
\bigoplus_{j=0}^{\infty}
L^2_\text{dis} \Bigl( \Gamma_{K} \backslash \bH^2 
\, ; \, \Bigl( \frac{1}{4}+\mu_j(m-2)^2, \, \frac{m}{2} \bigl( 1-\frac{m}{2} \bigr) \Bigr),(0,m-2) \Bigr)
.
\end{equation}

By considering the two kernel spaces (\ref{eq:ker1}) and (\ref{eq:ker2}), 
we {\it subtract} the Selberg trace formula for $L^2(\Gamma_{K} \backslash \bH^2 
\, ; \, (0,m-2))$ from the one associated 
with $L^2(\Gamma_{K} \backslash \bH^2 
\, ; \, (0,m))$. Then we obtain (we give a proof in the next subsection)

\begin{theorem}[Differences of STF 
for 
$L^2(\Gamma_{K} \backslash \bH^2 
\, ; \, (0,m)) - L^2(\Gamma_{K} \backslash \bH^2 
\, ; \, (0,m-2))$] \label{th:dtrf}
Let $m \in 2 \Z$. We have
\begin{align*}
& \sum_{j=0}^{\infty} 
h_1 \Bigl( \rho_j(m) \Bigr) \,  h_2(\tfrac{i(m-1)}{2}) 
 - \sum_{j=0}^{\infty} 
h_1 \Bigl( \mu_j(m-2) \Bigr) \,  h_2(\tfrac{i(m-1)}{2}) 
\\
&= \ (m-1) h_2(\tfrac{i(m-1)}{2}) 
\frac{\mathrm{vol}(\Gamma_K \backslash \bH^2)}{16 \pi^2}
\int_{-\infty}^{\infty}
  r_1 h_1(r_1) \tanh (\pi r_1)  \, dr_1 \\
& \quad + \sum_{R(\theta_1,\theta_2) \in \Gamma_{\mathrm{E}}} 
\frac{ - e^{-i \theta_1 + i(m-1) \theta_2} }{8 \nu_R \sin \theta_1 \sin \theta_2}
\, h_2(\tfrac{i(m-1)}{2})
\int_{\R} g_1(u_1) \, e^{-u_1/2} 
\Bigl[
\frac{e^{u_1}-e^{2 i \theta_1}}{\cosh u_1 - \cos 2 \theta_1} \Bigr] du_1
\\
& \quad 
 + \sum_{(\gamma,\omega) \in \Gamma_{\mathrm{HE}}} 
\frac{\log N(\gamma_0)}{N(\gamma)^{1/2}-N(\gamma)^{-1/2}} g_1(\log N(\gamma)) 
\, \frac{i e^{i(m-1)\omega}}{2 \sin \omega} h_2(\tfrac{i(m-1)}{2})
\\
& \quad - \sgn (m-1)  \log \varepsilon \, g_1(0) \, h_2(\tfrac{i(m-1)}{2})
\\
& \quad 
- 2  \sgn(m-1)  \log \varepsilon \, h_2(\tfrac{i(m-1)}{2}) 
\sum_{k=1}^{\infty} g_1(2k \log \varepsilon) \, \varepsilon^{-k|m-1|}.
\end{align*}
\end{theorem}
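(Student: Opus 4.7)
The plan is to apply the Selberg trace formula of Theorem \ref{th:trf} twice, once with weight $(0,m)$ and once with weight $(0,m-2)$, using the product test function $h(r_1,r_2)=h_1(r_1)h_2(r_2)$, and then form the difference of the two identities. On the geometric side, the type~$1$ hyperbolic and elliptic-hyperbolic contributions in $\mathbf{I}(h)$ carry no dependence on the second weight and cancel outright; every other term (identity, elliptic, hyperbolic-elliptic, parabolic, type~$2$ hyperbolic, and Eisenstein scattering) must be computed explicitly. On the spectral side, Maass operator theory reorganises $S(h;m)-S(h;m-2)$ into a sum over the two relevant kernel spaces.

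For the spectral side, the key input is the pair of operator identities $K_{m-2}^{(2)}\Lambda_m^{(2)}=\Delta_m^{(2)}+\tfrac{m(m-2)}{4}$ and $\Lambda_m^{(2)}K_{m-2}^{(2)}=\Delta_{m-2}^{(2)}+\tfrac{m(m-2)}{4}$, which show that $\Lambda_m^{(2)}$ and $K_{m-2}^{(2)}$ are (up to scalars) mutual adjoints commuting with $\Delta_0^{(1)}$ and intertwining the joint spectra. Consequently they give a bijection between the orthogonal complements of $\Ker(\Lambda_m^{(2)})$ in $L^2_{\mathrm{dis}}(\Gamma_K\backslash\bH^2;(0,m))$ and of $\Ker(K_{m-2}^{(2)})$ in $L^2_{\mathrm{dis}}(\Gamma_K\backslash\bH^2;(0,m-2))$, preserving the joint spectral parameter $(r^{(1)},r^{(2)})$. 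Hence in $S(h;m)-S(h;m-2)$ all non-kernel contributions cancel pairwise and only the two kernel contributions remain. Both operator identities force $\lambda^{(2)}=(m/2)(1-m/2)$ on the respective kernels, i.e.\ $r^{(2)}=i(m-1)/2$, producing precisely the left-hand side of the theorem.

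For the geometric side I would process the surviving terms one by one. The identity contribution uses $e^{-\tfrac{m}{2}u_2}-e^{-\tfrac{m-2}{2}u_2}=-2\,e^{-\tfrac{m-1}{2}u_2}\sinh(u_2/2)$, which cancels the $1/\sinh(u_2/2)$ factor in $\mathbf{I}(h)$ and, after Fourier inversion in $u_2$, yields $h_2(i(m-1)/2)$ multiplied by the Plancherel density $r_1\tanh(\pi r_1)$ in $r_1$. The elliptic and hyperbolic-elliptic terms collapse via the identity $(e^u-e^{2i\theta})/(\cosh u-\cos2\theta)=e^{u/2+i\theta}/\sinh(u/2+i\theta)$; the difference of the exponentials $e^{(m-1)u_2/2}-e^{(m-3)u_2/2}$ kills the $\sinh(u_2/2+i\theta_2)$ in the denominator, leaving a $u_1$-integral that matches the stated form and a $u_2$-integral equal to $h_2(i(m-1)/2)$. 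The parabolic term contributes through its $\cosh(\tfrac{m}{2}u)$ piece, the type~$2$ hyperbolic term through its $\cosh(m(u/2-k\log\varepsilon))$ piece, and the scattering term through $\varphi_{(0,m)}'/\varphi_{(0,m)}$; these three combine to produce the geometric tail $-2\,\mathrm{sgn}(m-1)\log\varepsilon\,h_2(i(m-1)/2)\sum_{k\ge1}g_1(2k\log\varepsilon)\varepsilon^{-k|m-1|}$ together with the single term $-\mathrm{sgn}(m-1)\log\varepsilon\,g_1(0)\,h_2(i(m-1)/2)$.

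The principal obstacle will be this last joint computation. The explicit form of $\varphi_{(0,m)}(s,k)$ in \eqref{eq:scattering} is a ratio of four Gamma factors with weight-dependent shifts, so $\varphi_{(0,m)}'/\varphi_{(0,m)}-\varphi_{(0,m-2)}'/\varphi_{(0,m-2)}$ is a finite sum of digamma differences at half-integer shifts; these must telescope against the $\cosh$-difference integrals arising from $\mathbf{II_a}(h)$ and $\mathbf{II_b}(h)$ (each evaluated against $h_2$ integrated over $r_2\in\R$ and summed over $k\in\Z$) so as to collapse to the stated compact geometric series. Controlling the sign $\mathrm{sgn}(m-1)$ uniformly across all even integers $m$, in particular reconciling the cases $m\ge2$ and $m\le0$ and handling the boundary case $m=2$ where $\Ker(K_0^{(2)})$ contains the constants, will require a careful branch analysis of these Gamma-function identities.
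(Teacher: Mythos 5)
Your overall strategy is exactly the paper's: subtract the two instances of Theorem \ref{th:trf} term by term, with the spectral side reorganized by the Maass operators $\Lambda_m^{(2)}$ and $K_{m-2}^{(2)}$ so that only the two kernel contributions survive (with $r^{(2)}=i(m-1)/2$ there), and with the identity, elliptic and hyperbolic--elliptic differences collapsing via the same algebraic identities you state. The piece you rightly flag as the principal obstacle, however, is also the one place where your proposed mechanism would not go through as described. The difference of scattering terms is not a matter of digamma values telescoping against the $\cosh$-difference integrals of $\mathbf{II_a}(h)$ and $\mathbf{II_b}(h)$: those integrals live on the geometric ($u$) side, while $\varphi'_{(0,m)}/\varphi_{(0,m)}-\varphi'_{(0,m-2)}/\varphi_{(0,m-2)}$ sits inside an integral over $r$ and a sum over $k\in\Z$. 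From \eqref{eq:scattering} one finds that $\varphi_{(0,m)}(s,k)/\varphi_{(0,m-2)}(s,k)$ is the single ratio $\bigl(s-\tfrac{\pi i k}{2\log\ve}-\tfrac m2\bigr)\bigl(s-\tfrac{\pi i k}{2\log\ve}+\tfrac m2-1\bigr)^{-1}$, so the difference of logarithmic derivatives at $s=\tfrac12+ir$ is the Lorentzian $-(m-1)\bigl[(r-\tfrac{\pi k}{2\log\ve})^2+(\tfrac{m-1}{2})^2\bigr]^{-1}$. The missing device is Poisson summation in $k$: writing $\sum_{k}h_1\bigl(r+\tfrac{\pi k}{\log\ve}\bigr)=2\log\ve\sum_{k}g_1(2k\log\ve)\,e^{2ikr\log\ve}$ transports the scattering contribution to $u$-space, and the elementary transform $\int_0^\infty\cos(ru)\bigl[r^2+(\tfrac{m-1}{2})^2\bigr]^{-1}dr=\tfrac{\pi}{|m-1|}e^{-\tfrac{|m-1|}{2}|u|}$ then produces exactly the exponentials $\ve^{\pm k|m-1|}\int g_2(u)e^{\mp|m-1|u/2}du$ that cancel against $\overline{P(m)}$ and $\overline{H_2(m)}$, leaving the two $\sgn(m-1)$ terms of the statement. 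Until you insert this step, the asserted collapse to $-\sgn(m-1)\log\ve\,g_1(0)h_2(\tfrac{i(m-1)}{2})-2\sgn(m-1)\log\ve\,h_2(\tfrac{i(m-1)}{2})\sum_{k\ge1}g_1(2k\log\ve)\ve^{-k|m-1|}$ cannot be verified; once it is inserted, your argument coincides with the paper's. (Your worry about $\Ker(K_0^{(2)})$ containing the constants is immaterial here: it only enters when one specializes to the double differences at $m=2$, not in Theorem \ref{th:dtrf} itself.)
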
 

\subsection{Proof of the differences of the Selberg trace formula}
We prove Theorem \ref{th:dtrf} in this subsection. 
(Basic strategy is the same as the case of the trace formulas for $\mathrm{PSL}(2,\R)$, 
see \cite[pp.481--485]{H2}).
\\
$\bullet$ Spectral side: \\
By (\ref{eq:ker1}) and (\ref{eq:ker2}), the difference between
the spectral sides of $L^2(\Gamma_{K} \backslash \bH^2 
\, ; \, (0,m))$ and $L^2(\Gamma_{K} \backslash \bH^2 
\, ; \, (0,m-2))$ is given by
\[ 
\sum_{j=0}^{\infty} 
h_1 \Bigl( \rho_j(m) \Bigr) \,  h_2(\tfrac{i(m-1)}{2}) 
 - \sum_{j=0}^{\infty} 
h_1 \Bigl( \mu_j(m-2) \Bigr) \,  h_2(\tfrac{i(m-1)}{2}) 
.\]
$\bullet$ Identity term: \\
Put $\overline{I(m)}:= I(0,m) - I(0,m-2)$. Here, $I(m_1,m_2)$ is defined in (\ref{term:id}).
Then, we have (see pp.396--397 in \cite{H2})
\begin{align*}
& \overline{I(m)} \\
& =  \frac{ \mathrm{vol}(\Gamma_K \backslash \bH^2)}{4 \pi^2} \int_{\R} \! \int_{\R} 
\frac{g_1'(u_1) \, g_2'(u_2)}{(e^{u_1/2} - e^{-u_1/2}) (e^{u_2/2} - e^{-u_2/2})}
\bigl\{ e^{-mu_2/2}  - e^{-(m-2)u_2/2} \bigr\}  \, du_1 du_2 \\
& = - \frac{ \mathrm{vol}(\Gamma_K \backslash \bH^2)}{4 \pi^2} \int_{\R}
  \frac{g_1'(u_1)}{e^{u_1/2} - e^{-u_1/2}} \, du_1 
  \int_{\R}  g_2'(u_2) \, e^{-(m-1)u_2/2} \, du_2 \\
& = \frac{ \mathrm{vol}(\Gamma_K \backslash \bH^2)}{8 \pi^2} 
\int_{\R} r_1 h_1(r_1) \tanh (\pi r_1) \, dr_1 
\int_{\R} g_2(u_2) e^{-(m-1)u_2/2} \, du_2 \times \frac{(m-1)}{2} \\
& = (m-1) \frac{ \mathrm{vol}(\Gamma_K \backslash \bH^2)}{16 \pi^2}
\, h_2(\tfrac{i(m-1)}{2})
 \int_{\R} r_1 h_1(r_1) \tanh (\pi r_1) \, dr_1. 
\end{align*}

$\bullet$ Elliptic terms: \\
Let $R$ be an elliptic element.
Put $\overline{E(m;R)}:= E(0,m;R) - E(0,m-2;R)$. Here, $E(m_1,m_2;R)$ is defined in (\ref{term:ell}).
Then, we have 
\begin{align*}
& \overline{E(m;R)} 
  = \frac{ - e^{-i \theta_1 + i(m-1) \theta_2}}{16 \nu_R \sin \theta_1 \sin \theta_2}
\int \! \! \! \! \int_{\R^2}
g(u_1,u_2) 
e^{\frac{-u_1}{2}} 
\Bigl\{ e^{\frac{(m-1)u_2}{2}}   -  e^{-2i \theta_2} \, e^{\frac{(m-3)u_2}{2}} \Bigr\} \\
& \quad \quad \quad \quad \quad \quad 
\times \prod_{j=1}^{2}
\Bigl[
\frac{e^{u_j}-e^{2 i \theta_j}}{\cosh u_j - \cos 2 \theta_j} \Bigr]
du_1 du_2 \\
& =  \frac{ - e^{-i \theta_1 + i(m-1) \theta_2} }{8 \nu_R \sin \theta_1 \sin \theta_2}
\int_{\R} g_1(u_1) \, e^{-u_1/2} 
\Bigl[
\frac{e^{u_1}-e^{2 i \theta_1}}{\cosh u_1 - \cos 2 \theta_1} \Bigr]
du_1
\int_{\R} g_2(u_2) e^{\frac{m-1}{2} u_2} \, du_2 \\
& =  \frac{ - e^{-i \theta_1 + i(m-1) \theta_2} }{8 \nu_R \sin \theta_1 \sin \theta_2}
\, h_2(\tfrac{i(m-1)}{2})
\int_{\R} g_1(u_1) \, e^{-u/2} 
\Bigl[
\frac{e^{u_1}-e^{2 i \theta_1}}{\cosh u_1 - \cos 2 \theta_1} \Bigr] du_1. 
\end{align*}

$\bullet$ Hyperbolic-elliptic terms: \\ 
Let $\gamma$ be a hyperbolic-elliptic element.
Put $\overline{HE(m;\gamma)}:= HE(0,m;\gamma) - HE(0,m-2;\gamma)$. 
Here, $HE(m_1,m_2;\gamma)$ is defined in (\ref{term:hyp-ell}).
Then, we obtain, 

\begin{align*}
& \overline{HE(m;\gamma)} 
=\frac{\log N(\gamma_0)}{N(\gamma)^{1/2}-N(\gamma)^{-1/2}}
\frac{ie^{i(m-1)\omega}}{4 \sin \omega}
\int_{-\infty}^{\infty} \! \! g(\log N(\gamma),u)
\, e^{\frac{m-1}{2}u} \Bigl\{ 1- e^{-2i \omega}  e^{-u} \Bigr\} \\
& \quad \quad \quad \quad \quad \quad \quad \times 
\Bigl[ \frac{e^{u}-e^{2 i \omega}}{\cosh u - \cos 2 \omega} \Bigr]
du \\
& = \frac{\log N(\gamma_0)}{N(\gamma)^{1/2}-N(\gamma)^{-1/2}}
\frac{ie^{i(m-1)\omega}}{2 \sin \omega}
\int_{-\infty}^{\infty} \! \! g(\log N(\gamma),u) \, 
e^{\frac{m-1}{2}u} \, du \\
& = \frac{\log N(\gamma_0)}{N(\gamma)^{1/2}-N(\gamma)^{-1/2}}
\, g_1(\log N(\gamma)) \, 
\frac{ie^{i(m-1)\omega}}{2 \sin \omega} \, 
h_2(\tfrac{i(m-1)}{2}).
\end{align*}

$\bullet$ Parabolic contribution: \\ 
Put $\overline{P(m)}:= P(0,m) - P(0,m-2)$. 
Here, $P(0,m)$ is defined in (\ref{eq:ph2sc}).
Then we have,
\begin{equation} \label{d:pm}
\overline{P(m)} 
= - \log \ve \, g_1(0)
\biggl[ \int_{0}^{\infty} g_2(u) \, e^{\frac{m-1}{2}u}  \, du 
-\int_{0}^{\infty} g_2(u) \, e^{-\frac{m-1}{2}u}  \, du 
\biggr].
\end{equation}

$\bullet$ Type 2 hyperbolic contribution: \\ 
Put $\overline{H_2(m)}:= H_2(0,m) - H_2(0,m-2)$. 
Here, $H_2(0,m)$ is defined in (\ref{eq:ph2sc}).
Then we have,
\begin{equation*} 
\begin{split}
\overline{H_2(m)}  &= 2 \log \ve \sum_{k=1}^{\infty} \int_{2k \log \ve}^{\infty}
g(2k \log \ve, u) \\
& \quad  
\times \frac{\cosh \bigl( (m-2)(u/2-k \log \ve) \bigr)  
- \cosh \bigl( m(u/2-k \log \ve) \bigr) }{\sinh(u/2-k \log \ve)}
\, du \\
& = 
\log \ve \sum_{k=1}^{\infty} \int_{2k \log \ve}^{\infty}
g(2k \log \ve, u) 
\Bigl\{  
e^{-(m-1)(u/2-k \log \ve)}   - e^{(m-1)(u/2-k \log \ve)} 
\Bigr\}
\, du. 
\end{split}
\end{equation*}
Therefore, we have
\begin{equation} \label{d:h2m}
\begin{split}
& \overline{H_2(m)} = 2 \log \ve 
\sum_{k=1}^{\infty} 
g_1(2k \log \ve)
\biggl[
    \ve^{k(m-1)} \int_{2k \log \ve}^{\infty} g_2(u) \, e^{-\frac{m-1}{2} u} \, du
    \\
& \quad \quad \quad \quad 
-  \ve^{-k(m-1)} \int_{2k \log \ve}^{\infty} g_2(u) \, e^{\frac{m-1}{2} u} \, du
\biggr].
\end{split}
\end{equation}

Finally, we calculate the scattering contribution to the differences of the trace formula 
for Hilbert modular surfaces. Put $\overline{SC(m)} := SC(0,m)-SC(0,m-2)$.
Here, $SC(0,m)$ is defined in (\ref{eq:ph2sc}).

\begin{proposition}[Scattering contribution]
\begin{equation} \label{d:eim}
\begin{split}
& \overline{SC(m)} = -2 \sgn(m-1) \log \ve 
\sum_{k=1}^{\infty} g_1(2k \log \ve)
\biggl[ \ve^{-k|m-1|}  \int_{0}^{\infty} g_2(u) \, e^{-\frac{|m-1|}{2}u} \, du    
\\ 
& \quad \quad \quad \quad 
+ \ve^{k|m-1|}  \int_{2k \log \ve}^{\infty} g_2(u) \, e^{-\frac{|m-1|}{2}u} \, du 
+ \ve^{-k|m-1|}  \int_{0}^{2k \log \ve} g_2(u) \, e^{\frac{|m-1|}{2}u} \, du 
\biggr] \\
& \quad \quad \quad \quad 
- 2 \sgn(m-1) \log \ve \, g_1(0) \int_{0}^{\infty} g_2(u) \, e^{-\frac{|m-1|}{2}u} \, du.
\end{split}
\end{equation}
\end{proposition}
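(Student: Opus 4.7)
The plan is to express $\overline{SC(m)}$ purely in terms of the logarithmic derivative of the ratio $\varphi_{(0,m)}(s,k)/\varphi_{(0,m-2)}(s,k)$. From the explicit formula (\ref{eq:scattering}), the only $m$-dependent factor of $\varphi_{(0,m)}(s,k)$ is the last Gamma quotient, and since $m_1=0$ the sign $(-1)^{(m_1+m_2)/2}$ flips. Applying $\Gamma(z+1)=z\Gamma(z)$ twice gives the elementary identity
\[ \frac{\varphi_{(0,m)}(s,k)}{\varphi_{(0,m-2)}(s,k)} = -\frac{s-A-m/2}{s-A+m/2-1}, \qquad A:=\frac{\pi ik}{2\log\varepsilon}. \]
Evaluating the logarithmic $s$-derivative at $s=\tfrac{1}{2}+ir$ and writing $\alpha_k:=\pi k/(2\log\varepsilon)$ so that $s-A=\tfrac{1}{2}+i(r-\alpha_k)$, a short simplification yields
\[ \Big(\tfrac{\varphi'_{(0,m)}}{\varphi_{(0,m)}}-\tfrac{\varphi'_{(0,m-2)}}{\varphi_{(0,m-2)}}\Big)\bigl(\tfrac12+ir,k\bigr) = -\frac{m-1}{((m-1)/2)^2+(r-\alpha_k)^2}. \]
The constant term $-\tfrac14 h(0,0)\bigl(\varphi_{(0,m)}-\varphi_{(0,m-2)}\bigr)(\tfrac12,0)$ vanishes because the ratio equals $1$ at $(s,k)=(\tfrac12,0)$, so only the integral over $r$ and sum over $k$ survives in $\overline{SC(m)}$.

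Next I would substitute this into $\frac{1}{4\pi}\sum_k\int h(r+\alpha_k,r-\alpha_k)(\cdots)\,dr$, shift $r\mapsto r+\alpha_k$, and use Assumption \ref{assump:test} to write $h(r+2\alpha_k,r)=h_1(r+2\alpha_k)h_2(r)$. Inserting the Fourier representation $h_1(r+2\alpha_k)=\int g_1(u)e^{2i\alpha_k u}e^{iru}\,du$ and applying Poisson summation
\[ \sum_{k\in\Z} e^{2i\alpha_k u} \;=\; 2\log\varepsilon\sum_{n\in\Z}\delta(u-2n\log\varepsilon) \]
collapses the $k$-sum to $2\log\varepsilon\sum_{n}g_1(2n\log\varepsilon)e^{2inr\log\varepsilon}$. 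Combined with the standard Fourier transform $\int\frac{|m-1|}{((m-1)/2)^2+r^2}e^{-irt}\,dr=2\pi e^{-|m-1||t|/2}$ and the definition $h_2(r)=\int g_2(u)e^{iru}\,du$, the inner $r$-integral evaluates to $2\pi\int g_2(u)e^{-|m-1|\,|u+2n\log\varepsilon|/2}\,du$. Collecting constants yields
\[ \overline{SC(m)}=-\sgn(m-1)\log\varepsilon\sum_{n\in\Z}g_1(2n\log\varepsilon)\int_{\R}g_2(u)e^{-\tfrac{|m-1|}{2}|u+2n\log\varepsilon|}\,du. \]

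The final step is unwinding the absolute values using the evenness of $g_1,g_2$. The $n=0$ summand contributes $-\sgn(m-1)\log\varepsilon\,g_1(0)\int_\R g_2(u)e^{-|m-1||u|/2}du$, which equals $-2\sgn(m-1)\log\varepsilon\,g_1(0)\int_0^\infty g_2(u)e^{-|m-1|u/2}du$ and matches the last line of (\ref{d:eim}). For $n\ne 0$, the contributions from $n$ and $-n$ are equal (after $u\mapsto -u$), so the sum collapses to $2\sum_{n\ge 1}$; splitting the integral $\int_{-\infty}^\infty g_2(u)e^{-|m-1||u+2n\log\varepsilon|/2}du$ at $u=-2n\log\varepsilon$ and then at $u=0$, and applying $g_2(-u)=g_2(u)$ to the segments of negative $u$, decomposes it into precisely the three pieces $\varepsilon^{-n|m-1|}\int_0^\infty$, $\varepsilon^{n|m-1|}\int_{2n\log\varepsilon}^\infty$, and $\varepsilon^{-n|m-1|}\int_0^{2n\log\varepsilon}$ displayed in the proposition.

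The main obstacle is the careful case analysis with signs and regions: one must track $\sgn(m-1)$ throughout so that $m\ge 2$ and $m\le 0$ are treated uniformly, justify the Poisson summation via the Schwartz decay of $h_1$ implied by $g_1\in C_c^\infty(\R)$, and match the three resulting integration ranges exactly with those on the right-hand side of (\ref{d:eim}). Everything else is bookkeeping.
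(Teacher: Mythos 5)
Your proof is correct and follows essentially the same route as the paper: compute the ratio $\varphi_{(0,m)}/\varphi_{(0,m-2)}$ explicitly from (\ref{eq:scattering}), take the logarithmic derivative to get the Lorentzian $-\tfrac{m-1}{(r-\alpha_k)^2+((m-1)/2)^2}$, shift $r$, apply Poisson summation in $k$ to produce $2\log\ve\sum_n g_1(2n\log\ve)e^{2inr\log\ve}$, and evaluate the remaining $r$-integral via the Fourier transform of the Lorentzian before unwinding $|u+2n\log\ve|$ using evenness. The only cosmetic deviations are the overall minus sign in your ratio (which is in fact what (\ref{eq:scattering}) literally gives and is killed by the logarithmic derivative) and your distributional phrasing of Poisson summation; neither changes the argument.
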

\begin{proof}
Firstly, we can easily check that
\[
\varphi_{(0,m)}(s,0) 
=  \frac{(-1)^{m/2}}{D^{1-2s} \, \pi^{4s-2}} \, 
\frac{\Gamma(s)^2 \, \Gamma(s-\frac{1}{2})^2}{\Gamma(s+\frac{m}{2}) \, \Gamma(s-\frac{m}{2})
\, \Gamma(\frac{1}{2}-s)^2} \, \frac{\zeta_{K}(2s-1)}{\zeta_{K}(1-2s)},
\]
by using the functional equation of the Dedekind zeta function $\zeta_{K}(s)$. 
Thus we have $\varphi_{(0,m)}(\frac{1}{2},0)=1$. 

Secondly, by the explicit formula for $\varphi_{(0,m)}(s,k)$, (see (\ref{eq:scattering}))
we see that
\[ \varphi_{(0,m)}(s,k) \, \varphi_{(0,m-2)}(s,k)^{-1}
  = \Bigl( s - \tfrac{\pi i k}{2 \log \ve} - \tfrac{m}{2} \Bigr)
 \Bigl( s - \tfrac{\pi i k}{2 \log \ve} + \tfrac{m}{2} -1\Bigr)^{-1}.   
\]
So we have
\begin{align*}
&   \biggl( \frac{\varphi_{(0,m)}'}{\varphi_{(0,m)}}  
- \frac{\varphi_{(0,m-2)}'}{\varphi_{(0,m-2)}}  
\biggr) \bigl( \tfrac{1}{2}+ir, k \bigr) = 
\frac{1}{\frac{1}{2}-\frac{m}{2} + i (r - \frac{\pi k }{2 \log \ve})}   
- \frac{1}{-\frac{1}{2}+\frac{m}{2} + i (r - \frac{\pi k }{2 \log \ve})} \\
& \quad \quad = - \frac{m-1}{(r-\frac{\pi k}{2 \log \ve})^2+(\frac{m-1}{2})^2}.
\end{align*}
Therefore, we have
\begin{equation} \label{eq:eim2}
\begin{split}
& \overline{SC(m)}
 = - \frac{1}{4 \pi}  \sum_{k \in \Z}
\int_{-\infty}^{\infty} h \Bigl( r+\frac{\pi k}{2 \log \ve}, r-\frac{\pi k}{2 \log \ve} \Bigr)
\frac{m-1}{(r-\frac{\pi k}{2 \log \ve})^2+(\frac{m-1}{2})^2} \, dr \\
& = - \frac{1}{4 \pi}  \sum_{k \in \Z}
\int_{-\infty}^{\infty} h \Bigl( r+\frac{\pi k}{ \log \ve}, r \Bigr)
\frac{m-1}{r^2+(\frac{m-1}{2})^2} \, dr. 
\end{split}
\end{equation}
Thirdly, we use the Poisson summation formula to calculate 
(\ref{eq:eim2}) further. 
Let us determine the sequence $\{ a_k \}$ such that
\[ \sum_{k \in \Z} h_1\Bigl( r+\frac{\pi k}{ \log \ve} \Bigr)
 = \sum_{k \in \Z} a_k \, \exp \Bigl( 2 \pi i k r \cdot \frac{\log \ve }{\pi} \Bigr).
 \]
Then 
\begin{align*}
a_k =& \int_{0}^{\pi /\log \ve}
\sum_{k \in \Z} h_1\Bigl( r+\frac{\pi k}{ \log \ve} \Bigr) e^{- 2 k \log \ve \cdot ir}
\, dr 
= \frac{\log \ve}{\pi} \int_{- \infty}^{\infty} h_1(r) \, e^{- 2 k \log \ve \cdot ir} 
\, dr \\
= & \frac{\log \ve}{\pi} \, (2 \pi) \, g_1(2k \log \ve)
= 2 \log \ve \,  g_1(2k \log \ve).
\end{align*}
So (\ref{eq:eim2}) is written as
\begin{equation} \label{eq:eim3}
\overline{SC(m)}
 = - \frac{2 \log \ve}{4 \pi}  \sum_{k \in \Z} g_1(2k \log \ve) 
\int_{-\infty}^{\infty} h_2(r) \, e^{2k \log \ve \, ir}
\frac{m-1}{r^2+(\frac{m-1}{2})^2} \, dr. 
\end{equation}
Finally, let us evaluate the following integral
\[ I_0 := \frac{1}{4 \pi} \int_{-\infty}^{\infty} h_2(r) \, \frac{m-1}{r^2+(\frac{m-1}{2})^2} \, dr
, \]
and 
\[ I_{k} := \frac{1}{4 \pi} \int_{-\infty}^{\infty} h_2(r) 
\Bigl(  
e^{2k \log \ve \, ir}   
+e^{-2k \log \ve \, ir}
\Bigr)
\frac{m-1}{r^2+(\frac{m-1}{2})^2} \, dr
\]
for $k \in \Z$ and $k>0$.
Recall that
\[ h_2(r) = 2 \int_{0}^{\infty} g_2(u) \cos(ru) \, du \]
and (see \cite[3.723 (2)]{GR})
\[ \int_{0}^{\infty} \frac{\cos(ru)}{r^2+(\frac{m-1}{2})^2} \, dr
=  \frac{\pi }{|m-1|} e^{-\frac{|m-1|}{2}|u|}.
\] 
for $m \ne 1$ (we assumed that $m \in 2 \Z$).
Then we obtain
\[ I_0 = \sgn(m-1) \int_{0}^{\infty} g_2(u) \, e^{-\frac{|m-1|}{2} u} \, du
.\]
While, we have
\begin{align*}
I_k =& \frac{1}{\pi } \int_{0} ^{\infty} \! \! \int_{0}^{\infty}
g_2(u) \cdot 2 \cos (r u) \, \cos(r \cdot  2k \log \ve)
\frac{m-1}{r^2+(\frac{m-1}{2})^2} \, dr du \\
=&
\frac{1}{\pi } \int_{0} ^{\infty} \! \! \int_{0}^{\infty}
g_2(u) \, \cos \bigl( r (u+2k \log \ve) \bigr)
\frac{m-1}{r^2+(\frac{m-1}{2})^2} \, dr du \\
& +\frac{1}{\pi } \int_{0} ^{\infty} \! \! \int_{0}^{\infty}
g_2(u) \, \cos \bigl( r (u-2k \log \ve) \bigr)
\frac{m-1}{r^2+(\frac{m-1}{2})^2} \, dr du. 
\end{align*}
Therefore, we have
\begin{align*}
I_k = & \sgn(m-1) \int_{0}^{\infty} g_2(u) e^{-\frac{|m-1|}{2}(u+2k \log \ve)}
\, du 
+ 
\sgn(m-1) \int_{0}^{\infty} g_2(u) e^{-\frac{|m-1|}{2}|u-2k \log \ve|}
\, du \\
=& \sgn(m-1) \ve^{-k|m-1|} \int_{0}^{\infty} g_2(u) \, e^{-\frac{|m-1|}{2}u} \, du
+ \sgn(m-1) \ve^{k|m-1|} \int_{2k \log \ve}^{\infty} g_2(u) \, e^{-\frac{|m-1|}{2}u} \, du
\\
& \quad 
+ \sgn(m-1) \ve^{-k|m-1|} \int_{0}^{2k \log \ve} g_2(u) \, e^{\frac{|m-1|}{2}u} \, du
\end{align*}
for $k \in \N$.
We complete the proof.
\end{proof}

We can now put together with, the parabolic contribution
(\ref{d:pm}),  the type 2 hyperbolic contribution (\ref{d:h2m}) and 
the scattering contribution (\ref{d:eim}), then we obtain
\begin{proposition}
\begin{equation}
\begin{split}
\overline{P(m)}+\overline{H_2(m)}+\overline{SC(m)}  
=& 
- \sgn(m-1) \log \ve \, g_1(0) \, h_2(\tfrac{i(m-1)}{2}) \\
& \quad -2 \sgn(m-1) \log \ve \sum_{k=1}^{\infty} g_1(2k \log \ve) \, \ve^{-k|m-1|}
\, h_2(\tfrac{i(m-1)}{2}).
\end{split}
\end{equation}
\end{proposition}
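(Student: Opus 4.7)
The plan is to simply add the three displayed formulas (\ref{d:pm}), (\ref{d:h2m}), and (\ref{d:eim}), collect the pieces according to whether they contain $g_1(0)$ or $g_1(2k \log \ve)$, and recognize the resulting $u$-integrals as values of $h_2$ at $i(m-1)/2$. The key preliminary observation is that, since $g_2$ is even,
\[ h_2\bigl(\tfrac{i(m-1)}{2}\bigr) = \int_{-\infty}^{\infty} g_2(u) \, e^{-(m-1)u/2} \, du = \int_{0}^{\infty} g_2(u) \bigl[ e^{(m-1)u/2} + e^{-(m-1)u/2} \bigr] du, \]
so in particular this value is unchanged if $m-1$ is replaced by $|m-1|$. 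By this symmetry, I treat only the case $m \ge 2$ (hence $\sgn(m-1)=1$ and $|m-1|=m-1$); the case $m \le 0$ is handled identically after swapping the roles of $e^{\pm (m-1)u/2}$.

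For the sum over $k \ge 1$, I first note the direct cancellation: the term
\[ +2 \log \ve \sum_{k \ge 1} g_1(2k \log \ve) \, \ve^{k(m-1)} \!\! \int_{2k \log \ve}^{\infty} \!\! g_2(u) \, e^{-(m-1)u/2} \, du \]
inside $\overline{H_2(m)}$ is killed by the matching $\ve^{k(m-1)}$ piece of $\overline{SC(m)}$. What remains in the $g_1(2k \log \ve)$ block is the coefficient
\[ -2 \log \ve \sum_{k \ge 1} g_1(2k \log \ve) \, \ve^{-k(m-1)} \!\! \left[ \int_{0}^{\infty} \!\! g_2 e^{-(m-1)u/2} du + \int_{0}^{2k \log \ve} \!\! g_2 e^{(m-1)u/2} du + \int_{2k \log \ve}^{\infty} \!\! g_2 e^{(m-1)u/2} du \right], \]
where the first bracket comes from $\overline{SC(m)}$, the second from the decomposition of $\int_0^\infty$ in $\overline{SC(m)}$, and the third from the surviving $\ve^{-k(m-1)}$ term of $\overline{H_2(m)}$. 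Combining the last two integrals recovers $\int_0^\infty g_2 e^{(m-1)u/2} du$, and by the evenness observation the bracket becomes $h_2(i(m-1)/2)$, yielding exactly $-2 \log \ve \sum_{k \ge 1} g_1(2k\log\ve) \, \ve^{-k(m-1)} h_2(i(m-1)/2)$.

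For the $g_1(0)$ block, the contribution from $\overline{P(m)}$ has the difference $\int_0^\infty g_2 e^{(m-1)u/2} du - \int_0^\infty g_2 e^{-(m-1)u/2} du$ (with coefficient $-\log\ve$), and $\overline{SC(m)}$ contributes $-2\log\ve \, g_1(0) \int_0^\infty g_2 e^{-(m-1)u/2} du$. Adding these produces the sum $\int_0^\infty g_2 e^{(m-1)u/2} du + \int_0^\infty g_2 e^{-(m-1)u/2} du$ with coefficient $-\log \ve \, g_1(0)$, and evenness of $g_2$ identifies this with $-\log\ve \, g_1(0)\, h_2(i(m-1)/2)$. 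The only real obstacle is bookkeeping: the three expressions have non-matching integration ranges (some over $[0,\infty)$, some over $[2k\log\ve,\infty)$, some over $[0,2k\log\ve]$) and different sign patterns, so care with splitting and reassembling the $u$-integrals is essential. Once the cases are organized, the computation reduces to the two elementary combinations above, and the $\sgn(m-1)$ factor in the target formula arises naturally by repeating the argument with $m \le 0$.
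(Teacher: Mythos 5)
Your computation is correct and follows essentially the same route as the paper: rewrite $\overline{P(m)}$ and $\overline{H_2(m)}$ with $\sgn(m-1)$ and $|m-1|$, cancel the $\ve^{k|m-1|}\int_{2k\log\ve}^{\infty}g_2e^{-|m-1|u/2}\,du$ terms between $\overline{H_2(m)}$ and $\overline{SC(m)}$, reassemble the complementary ranges $[0,2k\log\ve]$ and $[2k\log\ve,\infty)$ into $\int_0^\infty$, and use the evenness of $g_2$ to identify $\int_0^\infty g_2(u)\bigl[e^{|m-1|u/2}+e^{-|m-1|u/2}\bigr]du$ with $h_2\bigl(\tfrac{i(m-1)}{2}\bigr)$. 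The paper compresses exactly these steps into ``the rest is clear,'' so your write-up just makes the bookkeeping explicit.
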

\begin{proof}
By (\ref{d:pm}) and (\ref{d:h2m}), we see that
\[ \overline{P(m)} 
= - \sgn(m-1) \log \ve \, g_1(0)
\biggl[ \int_{0}^{\infty} g_2(u) \, e^{\frac{|m-1|}{2}u}  \, du 
-\int_{0}^{\infty} g_2(u) \, e^{-\frac{|m-1|}{2}u}  \, du 
\biggr],
\]
and 
\begin{align*}
& \overline{H_2(m)} = 2 \sgn(m-1) \log \ve 
\sum_{k=1}^{\infty} 
g_1(2k \log \ve)
\biggl[
    \ve^{k|m-1|} \int_{2k \log \ve}^{\infty} g_2(u) \, e^{-\frac{|m-1|}{2} u} \, du
    \\
& \quad \quad \quad \quad 
-  \ve^{-k|m-1|} \int_{2k \log \ve}^{\infty} g_2(u) \, e^{\frac{|m-1|}{2} u} \, du
\biggr].
\end{align*}
Thus we have
\begin{align*}
& \overline{P(m)}+\overline{H_2(m)}+\overline{SC(m)}  \\
& = 
- \sgn(m-1) \log \ve \, g_1(0) \, 
\biggl[  
\int_{0}^{\infty} g_2(u) e^{\frac{|m-1|}{2}u} \, du
+\int_{0}^{\infty} g_2(u) e^{-\frac{|m-1|}{2}u} \, du
\biggr]
\\
& \quad -2 \sgn(m-1) \log \ve \sum_{k=1}^{\infty} g_1(2k \log \ve) 
\biggl[
    \ve^{-k|m-1|} \int_{0}^{\infty} g_2(u) \, e^{-\frac{|m-1|}{2} u} \, du
    \\
& \quad \quad \quad \quad 
+  \ve^{-k|m-1|} \int_{2k \log \ve}^{\infty} g_2(u) \, e^{\frac{|m-1|}{2} u} \, du
+  \ve^{-k|m-1|} \int_{0}^{2k \log \ve} g_2(u) \, e^{\frac{|m-1|}{2} u} \, du
\biggr].
\end{align*}
The rest is clear.
\end{proof}

By using the above proposition, we complete the proof
of Theorem \ref{th:dtrf}.

\subsection{Double differences of the Selberg trace formula}

We wrote down the differences of the Selberg trace formulas 
for $L^2(\Gamma_{K} \backslash \bH^2 \, ; \, (0,m))$
and $L^2(\Gamma_{K} \backslash \bH^2 \, ; \, (0,m-2))$
in Theorem \ref{th:dtrf}. 
Let us denote the above differences formulas  
as $L(m) - L(m-2)$. 
Next we assume that $h_2(\frac{i(m-1)}{2}) \ne 0$ and 
$h_2(\frac{i(m-3)}{2}) \ne 0$, then consider
the ``double differences":
\[ \bigl( L(m) - L(m-2) \bigr)  h_2(\tfrac{i(m-1)}{2}) ^{-1}
   - \bigl( L(m-2) - L(m-4) \bigr)  h_2(\tfrac{i(m-3)}{2}) ^{-1}.
\]
Then we have, 

\begin{theorem}[Double differences of STF 
for $L^2 \bigl( \Gamma_{K} \backslash \bH^2 \, ; \, (0,m) \bigr)$]
\label{th:ddtrf}
Let $m \in 2 \Z$. We have 
\begin{align*}
& \sum_{j=0}^{\infty} h_1 \Bigl( \rho_j(m) \Bigr)  
   - \sum_{j=0}^{\infty} h_1 \Bigl( \mu_j(m-2) \Bigr) 
   - \sum_{j=0}^{\infty} h_1 \Bigl( \rho_j(m-2) \Bigr)  
   + \sum_{j=0}^{\infty} h_1 \Bigl( \mu_j(m-4) \Bigr) \\
 & \quad =  \:
\frac{\mathrm{vol}(\Gamma_K \backslash \bH^2)}{8 \pi^2}
\int_{-\infty}^{\infty}
  r h_1(r) \tanh (\pi r)  \, dr \\
& \quad \quad \quad - \sum_{R(\theta_1,\theta_2) \in 
\Gamma_{\mathrm{E}}} 
\frac{i e^{ - i \theta_1} \, e^{i(m-2) \theta_2}}{4 \nu_{R} \sin \theta_1}
\int_{-\infty}^{\infty} g_1(u) \, e^{-u/2} \biggl[  \frac{e^u - e^{2 i \theta_1}}{\cosh u - \cos 2 \theta_1} \biggr] 
du \\
& \quad \quad \quad
 - \sum_{(\gamma,\omega) \in \Gamma_{\mathrm{HE}}} 
\frac{\log N(\gamma_0)}{N(\gamma)^{1/2}-N(\gamma)^{-1/2}} \, g_1(\log N(\gamma)) 
\, e^{i(m-2)\omega}
\\
& \quad \quad \quad - \log \varepsilon \, g_1(0)
 \Bigl( \sgn (m-1)  - \sgn (m-3) \Bigr) 
\\
& \quad \quad \quad - 2 \log \varepsilon \, 
\sum_{k=1}^{\infty} g_1(2k \log \varepsilon) \, 
 \Bigl( \sgn (m-1) \, \varepsilon^{-k|m-1|}  - \sgn (m-3) \, \varepsilon^{-k|m-3|} \Bigr). 
\end{align*}
\end{theorem}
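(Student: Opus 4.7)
The plan is to apply Theorem~\ref{th:dtrf} twice — once with weight index $m$ and once with $m-2$ — divide each formula by the (assumed nonzero) factors $h_2(\tfrac{i(m-1)}{2})$ and $h_2(\tfrac{i(m-3)}{2})$ respectively, then subtract. The crucial preliminary observation is that every term on the right-hand side of Theorem~\ref{th:dtrf} carries an overall factor of $h_2(\tfrac{i(m-1)}{2})$, so division produces a clean formula in $h_1$ and $g_1$ alone. Once the two normalized formulas are lined up, the left-hand side is exactly the four-term spectral expression in Theorem~\ref{th:ddtrf}, and the right-hand side reduces term by term via short trigonometric identities.

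The geometric terms simplify as follows. For the identity contribution, the coefficient collapses via $(m-1)-(m-3)=2$, yielding $\mathrm{vol}(\Gamma_K\backslash\bH^2)/(8\pi^2)$. For the elliptic contribution, the key identity is
\[ e^{i(m-1)\theta_2} - e^{i(m-3)\theta_2} = 2i\,e^{i(m-2)\theta_2}\sin\theta_2, \]
which cancels the $\sin\theta_2$ in the denominator and produces the factor $-ie^{-i\theta_1}e^{i(m-2)\theta_2}/(4\nu_R\sin\theta_1)$ displayed in the theorem. For the hyperbolic-elliptic contribution, the analogous identity $e^{i(m-1)\omega}-e^{i(m-3)\omega}=2i\,e^{i(m-2)\omega}\sin\omega$ cancels the $\sin\omega$ and turns the overall $i$ into $-1$, producing the factor $-e^{i(m-2)\omega}$.

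The parabolic, type~2 hyperbolic, and scattering pieces have already been consolidated in Theorem~\ref{th:dtrf} into the two sign-weighted contributions $-\sgn(m-1)\log\varepsilon\,g_1(0)\,h_2(\tfrac{i(m-1)}{2})$ and $-2\sgn(m-1)\log\varepsilon\,h_2(\tfrac{i(m-1)}{2})\sum_{k\ge 1}g_1(2k\log\varepsilon)\,\varepsilon^{-k|m-1|}$. Under the double-difference these simply give the displayed differences $\sgn(m-1)-\sgn(m-3)$ and $\sgn(m-1)\varepsilon^{-k|m-1|}-\sgn(m-3)\varepsilon^{-k|m-3|}$, with no further manipulation required; note in particular that these are the terms where the possible sign changes at $m=2$ and $m=4$ are hidden, which is why the final formula remains valid for all $m\in 2\Z$.

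The main obstacle is not difficulty but bookkeeping: one must verify that the factorization performed in the proof of Theorem~\ref{th:dtrf} does isolate $h_2(\tfrac{i(m-1)}{2})$ as a common factor across every class (identity, elliptic, hyperbolic-elliptic, parabolic, type~2 hyperbolic, scattering), so that dividing through is legitimate. This follows by inspection of that proof, where at each step the $u_2$-integral is evaluated in closed form and yields precisely $h_2(\tfrac{i(m-1)}{2})$ as the $u_2$-dependent output. Once this common-factor structure is in hand, Theorem~\ref{th:ddtrf} is a direct algebraic consequence of two applications of Theorem~\ref{th:dtrf}, and absolute convergence of all series and integrals is inherited from that theorem.
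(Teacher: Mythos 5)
Your proposal is correct and is essentially the paper's own argument: the paper defines the double difference as $\bigl(L(m)-L(m-2)\bigr)h_2(\tfrac{i(m-1)}{2})^{-1}-\bigl(L(m-2)-L(m-4)\bigr)h_2(\tfrac{i(m-3)}{2})^{-1}$ in the paragraph preceding the theorem and then proves it ``by direct computation,'' which is exactly your divide-and-subtract scheme. Your term-by-term simplifications (the factor $2$ for the identity term, the identities $e^{i(m-1)\theta}-e^{i(m-3)\theta}=2i\,e^{i(m-2)\theta}\sin\theta$ for the elliptic and hyperbolic-elliptic terms, and the sign-weighted differences for the remaining terms) all check out against Theorem~\ref{th:dtrf}.
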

\begin{proof}
By direct computation.
\end{proof}

\section{Selberg type zeta functions for Hilbert modular surfaces} 

\subsection{Selberg type zeta functions}

Let $(\gamma, \gamma') \in \Gamma_{K}$ be hyperbolic-elliptic,
i.e, $|\tr(\gamma)|>2$ and $|\tr(\gamma')|<2$. 
Then the centralizer of hyperbolic-elliptic 
$(\gamma, \gamma')$ in $\Gamma_K$ is infinite cyclic.

\begin{definition}[Selberg type zeta function for $\Gamma_{K}$ with the weight $(0,m)$]
Let $m \ge 2$ be an even integer.
The Selberg type zeta function for $\Gamma_K$ with the weight 
$(0,m)$ is defined by the following Euler product:
\label{def:szeta}
\[
Z_{K}(s;m) := \prod_{(p,p')} 
\prod_{k=0}^{\infty} \Bigl( 1-e^{i (m-2) \omega_0} \, N(p)^{-(k+s)} 
\Bigr)^{-1} 
\quad \mbox{for $\RE(s) > 1$} .
\]
Here, $(p,p')$ run through the set of primitive hyperbolic-elliptic 
$\Gamma_K$-conjugacy classes of $\Gamma_K$, and $(p,p')$ is conjugate 
in $\mathrm{PSL}(2,\R)^2$ to
\[ (p, p') \sim \Bigl( 
\Bigl( 
\begin{array}{cc}
N(p)^{1/2} & 0 \\
0 & N(p)^{-1/2}
\end{array} \Bigr), 
\, 
\Bigl( 
\begin{array}{cc}
\cos \omega_0 & - \sin \omega_0 \\
\sin \omega_0 & \cos \omega_0
\end{array} \Bigr)\Bigr),
\]
where, $N(p)>1$, $\omega_0 \in (0,\pi)$ and $\omega_0 \notin \pi \Q$.
\end{definition}
Theorem \ref{th:pgt}, which we prove in the next section by using Theorem 
\ref{th:ddtrf}, ensures that the Euler product is absolutely convergent
for $\RE(s)>1$. Therefore, $Z_{K}(s;m)$ represents a  holomorphic
function on the half plane $\RE(s)>1$.
We remark that the exponent is $-1$ in the definition,
which differs from the original one.

For an even integer $m \le 2$, we see that
\[ Z_{K}(s;m) =  \overline{Z_{K}(\overline{s};4-m)}.
\]
Thus, it is sufficient to consider $Z_{K}(s;m)$ for an even integer $m \ge 2$
by the above relation.

We show that $Z_{K}(s;m)$ has a meromorphic extension to the whole complex plane
by using Theorem \ref{th:ddtrf} (double differences of the Selberg trace formula).

\subsection{Test functions}
Let us consider the logarithmic derivative of $Z_{K}(s;m)$. For $\RE(s)>1$, 
\begin{equation} \label{eq:logzeta}
\begin{split}
\frac{d}{ds} \log Z_{K}(s;m)
=& - \sum_{(p,p')} \sum_{k=0}^{\infty} \log N(p) \frac{e^{i(m-2)\omega_0} N(p)^{-(k+s)}}{1-e^{i(m-2) \omega_0} N(p)^{-(k+s)}} 
\\
=& - \sum_{(p,p')} \sum_{k=0}^{\infty} \sum_{l=1}^{\infty}
\log N(p) \, N(p)^{-kl} N(p)^{-ls} e^{i(m-2) l\omega_0} \\
=&   - \sum_{(p,p')} \sum_{l=1}^{\infty}
\frac{\log N(p)}{1-N(p^l)^{-1}} N(p^l)^{-s} e^{i(m-2) l \omega_0}.
\end{split}
\end{equation}
Usually, we introduce a certain test function $h(r_1,r_2)$ 
to get a meromorphic extension of the logarithmic derivative
of the Selberg type zeta functions.

We can check that the Selberg trace formula (Theorem \ref{th:trf})
holds for the test function $h(r_1,r_2)$ which satisfies the following 
condition (see \cite[p.105]{E} or \cite[p.1651]{Z}):
\begin{enumerate}
\item $h(\pm r_1, \pm r_2)=h(r_1,r_2)$, 
\item $h$ is analytic in the domain $|\IM (r_1)| < \frac{1}{2} +\delta$, 
$|\IM (r_2)| < \frac{||m|-1|}{2} +\delta$ for some $\delta>0$, 
\item $h(r_1,r_2) = O \bigl( (1+|r_1|^2+|r_2|^2 \bigr)^{-2-\delta})$ for 
some $\delta>0$ in this domain.
\item $g_2(u_2) \in C_c^{\infty}(\R)$. 
\end{enumerate}
We remark that the last condition assures the
absolute convergence of the geometric side of  Theorem \ref{th:trf},
in particular that of $\mathbf{II}_b(h)$.

Let us consider the following test function:
Firstly, we fix real numbers $\beta_1, \beta_2 \ge 2$, $\beta_1 \ne \beta_2$. 
For $s \in \C$, $\RE(s)>1$, 
We set
\begin{equation} \label{eq:test}
h_1(r) : = \frac{\bigl( (\beta_1^2-(s-\frac{1}{2})^2 \bigr) \bigl( (\beta_2^2-(s-\frac{1}{2})^2 \bigr)}{\bigl( r^2+(s-\frac{1}{2})^2 \bigr) (r^2+\beta_1^2)(r^2+\beta_2^2)} \\
 = \frac{1}{r^2+(s-\frac{1}{2})^2} 
    +\frac{c_1(s)}{r^2+\beta_1^2}
    +\frac{c_2(s)}{r^2+\beta_2^2}
\end{equation}
with
\[ c_1(s) = \frac{(s-\frac{1}{2})^2-\beta_2^2}{\beta_2^2 - \beta_1^2}, 
\quad c_2(s) = - \frac{(s-\frac{1}{2})^2-\beta_1^2}{\beta_2^2 - \beta_1^2}.
 \] 
(See Parnovskii \cite{Par} for this type test functions).
The Fourier transform of $h_1$ is given by
\[ g_1(u) = \frac{1}{2 \pi} \int_{-\infty}^{\infty} h_1(r) e^{-i ru} \, dr
= \frac{1}{2s-1} e^{-(s-\frac{1}{2})|u|}
+ \frac{c_1(s)}{2\beta_1} e^{-\beta_1 \, |u|}
+ \frac{c_2(s)}{2\beta_2} e^{-\beta_2 \, |u|}.
\]
Secondly, we take $g_2(u) \in C_{c}^{\infty}(\R)$ such that
its Fourier inverse transform $h_2(r)$ satisfies 
$h_2(\frac{i(m-1)}{2}) \ne 0$ and 
$h_2(\frac{i(m-3)}{2}) \ne 0$. Then we can easily check that 
our test function $h(r_1,r_2) := h_1(r_1) \, h_2(r_2)$
satisfies the above sufficient condition for Theorem \ref{th:trf}.

Thirdly, 
let us assume that $m \ge 4$ for simplicity. 
We will treat the case of $m=2$ in the next subsection.
Let $m \ge 4$ be an even integer. 
Then we have
\begin{equation} \label{eq:m=4}
\Bigl( \Ker(K_{m-2}^{(2)}), \,  \Ker(K_{m-4}^{(2)}) \Bigr)
= 
\begin{cases}
(\{ 0 \}, \, \C)   \quad \mbox{if $m = 4$}, \\
(\{ 0\}, \, \{ 0\}) \quad \mbox{if $m \ge 6$}.
\end{cases}
\end{equation}
Here, $K_{m-2}^{(2)}$, $K_{m-4}^{(2)}$
are the weight up Maass operators and
their kernel are given by
\[ \Ker(K_{q-2}^{(2)}) =
\biggl\{  f \in  L^2_\text{dis} \Bigl( \Gamma_{K} \backslash \bH^2 
\, ; \, (0,q-2) \Bigr)  \, \Bigl| \,  \Delta_{q-2}^{(2)} f  = \frac{q}{2} \Bigl( 1-\frac{q}{2} \Bigr) \, f  
\biggr\} \]
for $q=m, m-2$. The fact (\ref{eq:m=4}) is deduced from
Lemma \ref{lem:least-eigen}
and that the Hilbert modular group $\Gamma_K$ is an irreducible
discrete subgroup.
We also recall that $\{\frac{1}{4}+\rho_j(m)^2 \}_{j=0}^{\infty}$ 
and 
 $\{\frac{1}{4}+\rho_j(m-2)^2 \}_{j=0}^{\infty}$ 
are the set of eigenvalues of  $\Delta_{0}^{(1)}$ acting 
on $\Ker(\Lambda_{m}^{(2)})$
and  $\Ker(\Lambda_{m-2}^{(2)})$ respectively.
Here, $\Lambda_{m}^{(2)}$, $\Lambda_{m-2}^{(2)}$
are the weight down Maass operators and
their kernel are given by
\[ \Ker(\Lambda_{q}^{(2)}) =
\biggl\{  f \in  L^2_\text{dis} \Bigl( \Gamma_{K} \backslash \bH^2 
\, ; \, (0,q) \Bigr)  \, \Bigl| \,  \Delta_{q}^{(2)} f  = \frac{q}{2} \Bigl( 1-\frac{q}{2} \Bigr) \, f  
\biggr\} \]
for $q=m,m-2$.
If we set $\lambda_j(q) := \frac{1}{4}+\rho_j(q)^2$ for $q=m,m-2$, we note that
\begin{equation} \label{eq:non-zero}
0 < \lambda_0(q) \le \lambda_1(q) \le \lambda_2(q) \le \dots
\end{equation}
since $\Gamma_K$ is irreducible and $m \ge4$. 

Finally, we consider Theorem \ref{th:ddtrf}, the double difference of 
the Selberg trace formula, for the above test function 
(\ref{eq:test}). Then we have

\begin{theorem}[Double differences of STF for the above test function $h_1$ 
and $h_2$]  \label{th:ddtrfa}
Let $m \ge 4$ be an even integer. 
For $\RE(s)>1$, we have
\begin{align*}
& \sum_{j=0}^{\infty} \Bigl[ \frac{1}{\rho_j(m)^2+(s-\frac{1}{2})^2} 
 + \sum_{h=1}^{2}\frac{c_h(s)}{\rho_j(m)^2 + \beta_h^2} \Bigr] \\
& \quad - \sum_{j=0}^{\infty} \Bigl[ \frac{1}{\rho_j(m-2)^2+(s-\frac{1}{2})^2} 
 + \sum_{h=1}^{2} \frac{c_h(s)}{\rho_j({m-2})^2 + \beta_h^2} \Bigr] 
+ \delta_{m,4} \Bigl[ \frac{1}{s(s-1)} 
 + \sum_{h=1}^{2}\frac{c_h(s)}{\beta_h^2-\frac{1}{4}} \Bigr] \\
& = 2 \zeta_{K}(-1) \sum_{k=0}^{\infty}
\Bigl[ \frac{1}{s+k} 
+ \sum_{h=1}^{2} \frac{c_h(s)}{\beta_h+\frac{1}{2}+k} \Bigr] 
+ \frac{1}{2s-1} {\frac{Z_{K}'(s;m)}{Z_{K}(s;m)}}
+\sum_{h=1}^{2} \frac{c_h(s)}{2 \beta_h} 
\frac{Z_{K}'(\frac{1}{2}+\beta_h;m)}{Z_{K}(\frac{1}{2}+\beta_h;m)}
\\
& \quad + \frac{1}{2s-1} 
\sum_{j=1}^{N} \sum_{l=0}^{\nu_j-1} \frac{\nu_j-1-\alpha_l(m,j)-\overline{\alpha_l}(m,j)}{\nu_j^2}
\, \psi \Bigl( \frac{s+l}{\nu_j} \Bigr) \\
& \quad 
+ \sum_{h=1}^{2} \frac{c_h(s)}{2 \beta_h} 
\sum_{j=1}^{N} \sum_{l=0}^{\nu_j-1} \frac{\nu_j-1-\alpha_l(m,j)-\overline{\alpha_l}(m,j)}{\nu_j^2}
\, \psi \Bigl( \frac{\frac{1}{2}+\beta_h+l}{\nu_j} \Bigr) 
\\
& \quad + \frac{1}{2s-1} \frac{d}{ds} 
\log \biggl\{ \frac{(1-\varepsilon^{-(2s+m-4)})}
{(1-\varepsilon^{-(2s+m-2)})} \biggr\} 
+ \sum_{h=1}^{2} \frac{c_h(s)}{2 \beta_h} \frac{d}{d \beta_h} 
\log \biggl\{ 
\frac{(1-\varepsilon^{-(2 \beta _h +m-3)})}
{(1-\varepsilon^{-(2 \beta_h +m-1)})}
\biggr\}. 
\end{align*}
Here, $\psi(z)=\frac{d}{dz} \log \Gamma(z)$ is the digamma function.
\end{theorem}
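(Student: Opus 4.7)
The plan is to substitute the test function $h(r_1, r_2) = h_1(r_1)\, h_2(r_2)$ of (\ref{eq:test}) into Theorem \ref{th:ddtrf} and evaluate each geometric term explicitly for this choice. The function $h_1$ is admissible for the trace formula since $1 + c_1(s) + c_2(s) = 0$ forces $h_1(r) = O(r^{-6})$ at infinity, and we may take $g_2 \in C_c^\infty(\R)$ with both $h_2(i(m-1)/2)$ and $h_2(i(m-3)/2)$ nonzero. On the spectral side, (\ref{eq:m=4}) combined with Lemma \ref{lem:least-eigen} shows that $\Ker(K^{(2)}_{m-2})$ is trivial for every $m \ge 4$ and $\Ker(K^{(2)}_{m-4})$ is trivial for $m \ge 6$. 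For $m = 4$, $\Ker(K^{(2)}_0)$ is one-dimensional (the constants), whose $\Delta_0^{(1)}$-eigenvalue is $0$, so that $\mu_0(0) = i/2$ and $h_1(i/2) = \frac{1}{s(s-1)} + \sum_{h=1}^2 \frac{c_h(s)}{\beta_h^2 - 1/4}$, producing the $\delta_{m,4}$ correction on the left-hand side of Theorem \ref{th:ddtrfa}.

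For the standard geometric terms: the identity contribution uses the Mittag--Leffler expansion $\tanh(\pi r) = \frac{2r}{\pi}\sum_{k \ge 0}[r^2 + (k+\tfrac{1}{2})^2]^{-1}$ together with $\int_{-\infty}^{\infty} r^2\, dr / [(r^2 + a^2)(r^2 + b^2)] = \pi/(a+b)$ to yield
\[
\int_{-\infty}^{\infty} r\, h_1(r) \tanh(\pi r)\, dr = 2 \sum_{k=0}^{\infty} \Bigl[ \frac{1}{s+k} + \frac{c_1(s)}{\beta_1 + \tfrac{1}{2} + k} + \frac{c_2(s)}{\beta_2 + \tfrac{1}{2} + k} \Bigr],
\]
with convergence guaranteed by the cancellation $1 + c_1(s) + c_2(s) = 0$; Siegel's volume formula in the normalization of Efrat \cite{E} then converts $\mathrm{vol}(\Gamma_K\backslash\bH^2)/(8\pi^2)$ into $2\zeta_K(-1)$. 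For the hyperbolic-elliptic term, substituting $g_1(\log N(\gamma))$ as a sum of three $N(\gamma)^{-\alpha}$ pieces and using $N(\gamma)^{-(s-1/2)}/(N(\gamma)^{1/2} - N(\gamma)^{-1/2}) = N(\gamma)^{-s}/(1 - N(\gamma)^{-1})$ together with identity (\ref{eq:logzeta}) recognizes the resulting sums as $\frac{1}{2s-1}\frac{Z_K'(s;m)}{Z_K(s;m)}$ and its $\beta_h$-analogues. The boundary factor $g_1(0)(\sgn(m-1) - \sgn(m-3))$ vanishes for $m \ge 4$, and the type-2 hyperbolic series, after substituting $g_1(2k\log\ve)$ and summing the resulting geometric series in $k$, collapses to $\frac{1}{2s-1}\frac{d}{ds}\log\bigl((1 - \ve^{-(2s+m-4)})/(1 - \ve^{-(2s+m-2)})\bigr)$ together with its $\beta_h$-analogues.

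The main obstacle is the elliptic contribution. Using the factorization
\[
\cosh u - \cos 2\theta_1 = \frac{(e^u - e^{2i\theta_1})(e^u - e^{-2i\theta_1})}{2 e^u},
\]
the elliptic integrand reduces to $2\, g_1(u)\, e^{u/2}/(e^u - e^{-2i\theta_1})$. Substituting each exponential piece of $g_1$, splitting the integral at $u = 0$, and applying the substitutions $v = e^{\mp u}$ with geometric expansion of $1/(1 - e^{\mp 2i\theta_1} v)$ reduces each summand to a Lerch-type series $\sum_{l \ge 0}(e^{-2il\theta_1} - e^{2i(l+1)\theta_1})/(s+l)$ and its $\beta_h$-analogues. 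For a primitive elliptic $R_j$ of order $\nu_j$, writing $R = R_j^k$ gives $\theta_1 = k\pi/\nu_j$ and $\theta_2 = k t_j \pi/\nu_j$; summation over $k = 1, \ldots, \nu_j - 1$ via the orthogonality $\sum_{k=0}^{\nu_j - 1} e^{2\pi i k a/\nu_j} = \nu_j\, \delta_{\nu_j \mid a}$ collapses the Lerch sums to Hurwitz sums over residues modulo $\nu_j$, which are repackaged as digamma values $\psi((s + l)/\nu_j)$. The delicate bookkeeping, and the main obstacle, is to track the shifts $l \mapsto l \pm t_j(m-2)/2$ induced by the factor $e^{i(m-2)\theta_2} = e^{i k t_j (m-2) \pi/\nu_j}$ coming from the double differencing, match them against the definitions (\ref{eq:alpha}) of $\alpha_l(m,j)$ and $\overline{\alpha_l}(m,j)$, and verify that the symmetrized coefficients assemble into the weight $(\nu_j - 1 - \alpha_l(m,j) - \overline{\alpha_l}(m,j))/\nu_j^2$ attached to $\psi((s+l)/\nu_j)$ in Theorem \ref{th:ddtrfa}. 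This computation parallels the weight-$m$ elliptic calculation of Hejhal \cite[pp.\,481--485]{H2}, adapted to the two-parameter setting of $\Gamma_K$.
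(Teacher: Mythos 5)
Your proposal is correct and follows essentially the same route as the paper: substitute the test function (\ref{eq:test}) into Theorem \ref{th:ddtrf} and evaluate each geometric term, handling the identity term via the Mittag--Leffler expansion of $\tanh$ and the elliptic term via geometric expansion plus orthogonality, where the paper instead cites Hejhal's Proposition 4.9, Iwaniec's (10.29) and (10.31), and Fischer's exponential-sum identity --- equivalent computations in substance. One small correction: Siegel's formula (\ref{eq:volume-zeta}) gives $\mathrm{vol}(\Gamma_K \backslash \bH^2)/(8\pi^2)=\zeta_K(-1)$, not $2\zeta_K(-1)$; the factor $2$ in front of $\zeta_K(-1)$ in the theorem comes from your (correct) evaluation $\int_{-\infty}^{\infty} r\,h_1(r)\tanh(\pi r)\,dr = 2\sum_{k\ge 0}[\cdots]$, so the normalization as you literally stated it would double the identity term.
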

\begin{proof}
We compute each terms appearing in Theorem \ref{th:ddtrf} for the test function
$h_1(r)$ given by (\ref{eq:test}). \\

$\bullet$ Discrete spectrum: 
We denote the spectral side of Theorem \ref{th:ddtrf} by $A_\text{spec}(s;m)$.
By the fact (\ref{eq:m=4}), we see that
\[ A_\text{spec}(s;m) = 
   \sum_{j=0}^{\infty} h_1 \bigl( \rho_j(m) \bigr) 
- \sum_{j=0}^{\infty} h_1 \bigl( \rho_j(m-2) \bigr) 
+ \delta_{m,4} \, h_1(i/2). \]
 
$\bullet$ Identity term: We denote the identity contribution by $A_\text{id}(s)$. 
By the proof of Proposition 4.9 in \cite{H1}, 
\begin{align*}
A_\text{id}(s)
 =& \frac{\mathrm{vol}(\Gamma_K \backslash \bH^2)}{8 \pi^2}
 \biggl\{ \pi \tan \Bigl( \pi \Bigl( s-\frac{1}{2} \Bigr) \Bigr) 
+ \sum_{h=1}^{2}  c_h(s) \, \pi \tan(\pi \beta_h)
 \\
\quad & +\sum_{k=0}^{\infty}
\Bigl[ \frac{1}{s+k} -\frac{1}{s-1-k}
+ \sum_{h=1}^{2} \frac{c_h(s)}{\beta_h+\frac{1}{2}+k} 
- \sum_{h=1}^{2} \frac{c_h(s)}{\beta_h-\frac{1}{2}-k} 
\Bigr] 
\biggr\}
\\
 =&  2 \zeta_{K}(-1) \sum_{k=0}^{\infty}
\Bigl[ \frac{1}{s+k} 
+ \sum_{h=1}^{2} \frac{c_h(s)}{\beta_h+\frac{1}{2}+k} \Bigr] .
\end{align*}
Here, we used the partial fractional expansion of $\cot(\pi z)$, 
the fact $c_1(s)+c_2(s) = -1$ and the formula by Siegel
(see Theorem (1.1) in \cite{Ge} or Proposition 5.1 in \cite{F}):
\begin{equation} \label{eq:volume-zeta}
\frac{\mathrm{vol}(\Gamma_K \backslash \bH^2)}{4 \pi^2}
   = 2 \zeta_{K}(-1).
\end{equation}

$\bullet$ Elliptic term: 
We denote the elliptic contribution by $A_\text{ell}(s;m)$. 
This is given by 
\begin{align*}
A_\text{ell}(s;m)
 =& 
- \sum_{R(\theta_1,\theta_2) \in 
\Gamma_{\mathrm{E}}} 
\frac{i e^{ - i \theta_1} \, e^{i(m-2) \theta_2}}{4 \nu_{R} \sin \theta_1}
\int_{-\infty}^{\infty} g_1(u) \, e^{-u/2} \biggl[  \frac{e^u - e^{2 i \theta_1}}{\cosh u - \cos 2 \theta_1} \biggr] 
du \\
=& 
- \sum_{R(\theta_1,\theta_2) \in 
\Gamma_{\mathrm{E}}} 
\frac{e^{i(m-2) \theta_2}}{\nu_{R}}
\int_{0}^{\infty} \biggl[  \frac{g_1(u) \, \cosh(u/2)}{\cosh u - \cos 2 \theta_1} \biggr] 
du.
\end{align*}
By noting (10.29) and (10.31) in \cite{Iw}, we have
\begin{align*}
A_\text{ell}(s;m) 
=& \frac{1}{2s-1} 
\sum_{j=1}^{N} \sum_{k=1}^{\nu_j-1} \sum_{l=0}^{\nu_j-1}
\frac{ \exp \bigl( i(m-2) (\pi i k t_j )/\nu_j \bigr)}{\nu_j^2}
\frac{\sin \bigl( (2l+1) \, \pi k /\nu_j \bigr)}{\sin (\pi k/\nu_j)}
\, \psi \Bigl( \frac{s+l}{\nu_j} \Bigr) \\
&+  \sum_{h=1}^{2} c_h(s) \cdot \biggl\{  \mbox{the same for $s=\frac{1}{2}+\beta_h$} \biggr\}
.
\end{align*}
Next we use the following equality 
\begin{align*}
&\sum_{k=1}^{\nu_j-1} 
e^{i(m-2) (\pi i k t_j )/\nu_j }
\frac{\sin \bigl( (2l+1) \, \pi k /\nu_j \bigr)}{\sin (\pi k/\nu_j)} 
\\
& = - \frac{1}{2} 
\biggl( 
\sum_{k=1}^{\nu_j-1} \frac{ i e^{i (2 \alpha_l(m,j) +1) \pi k /\nu_j }}{\sin(\pi k /\nu_j)}
- 
\sum_{k=1}^{\nu_j-1} \frac{ i e^{-i (2 \overline{\alpha_l}(m,j) +1) \pi k /\nu_j}}{\sin(\pi k /\nu_j)}
\biggr)  \\
& = - \frac{1}{2} \Bigl\{  -\bigl( \nu_j-1-2 \alpha_l(j,m) \bigr)  
  - \bigl( \nu_j-1-2 \overline{\alpha_l}(j,m) \bigr) \Bigr\}  
\\
& = \nu_j-1-\alpha_l(j,m)-\overline{\alpha_l}(j,m). 
\end{align*}
Here, the integers $\alpha_l(j,m), \, \overline{\alpha_l}(j,m) \in \{ 0,1, \dots \nu_j-1 \}$
are defined in (\ref{eq:alpha}). 
The above equality is deduced 
from (see \cite[p.67]{Fis}).
\[ \sum_{k=1}^{\nu-1} \frac{i e^{-i(2a+1)\pi k/\nu}}{\sin(\pi k/\nu)}
= \nu-1 -2a \quad (a \in \{ 0,1, \dots \nu_j-1 \}).
\]

Therefore, we have
\begin{align*}
A_\text{ell}(s;m)
=& \frac{1}{2s-1} 
\sum_{j=1}^{N} \sum_{l=0}^{\nu_j-1} \frac{\nu_j-1-\alpha_l(m,j)-\overline{\alpha_l}(m,j)}{\nu_j^2}
\, \psi \Bigl( \frac{s+l}{\nu_j} \Bigr) \\
&  
+ \sum_{h=1}^{2} \frac{c_h(s)}{2 \beta_h} 
\sum_{j=1}^{N} \sum_{l=0}^{\nu_j-1} \frac{\nu_j-1-\alpha_l(m,j)-\overline{\alpha_l}(m,j)}{\nu_j^2}
\, \psi \Bigl( \frac{\frac{1}{2}+\beta_h+l}{\nu_j} \Bigr).
\end{align*}

$\bullet$ Hyperbolic-elliptic term: 
We denote the hyperbolic-elliptic contribution by $A_\text{hyp-ell}(s)$.
This is given by
\begin{align*}
A_\text{hyp-ell}(s;m)
=& - \frac{1}{2s-1} \sum_{(\gamma,\omega) \in \Gamma_{\mathrm{HE}}} 
\frac{\log N(\gamma_0)}{N(\gamma)^{1/2}-N(\gamma)^{-1/2}} 
N(\gamma)^{-(s-1/2)}
\, e^{i(m-2)\omega} \\
& \quad - \sum_{h=1}^{2} c_h(s) \cdot \biggl\{  \mbox{the same for $s=\frac{1}{2}+\beta_h$} \biggr\} \\
=& \frac{1}{2s-1} {\frac{Z_{K}'(s;m)}{Z_{K}(s;m)}}
+\sum_{h=1}^{2} \frac{c_h(s)}{2 \beta_h} 
\frac{Z_{K}'(\frac{1}{2}+\beta_h;m)}{Z_{K}(\frac{1}{2}+\beta_h;m)}.
\end{align*}
The last equality is derived from (\ref{eq:logzeta}).

$\bullet$ Parabolic plus scattering term: 
Since $m \ge 4$, $\sgn(m-1)-\sgn(m-3)$ vanishes
and this term contributes zero. So, $A_\text{par/sct}(s;m) \equiv 0$. \\

$\bullet$ Type 2 hyperbolic plus scattering term: 
We denote the type 2 plus scattering contribution by $A_\text{hyp2/sct}(s;m)$.
Then, 
\begin{align*}
A_\text{hyp2/sct}(s;m) &= - \frac{2 \log \ve}{2s-1} \sum_{k=1}^{\infty}
\ve^{-k(2s-1)} (\ve^{-k(m-1)}  -\ve^{-k(m-3)}) \\
& \quad - \sum_{h=1}^{2} c_h(s) \cdot \biggl\{  \mbox{the same for $s=\frac{1}{2}+\beta_h$} \biggr\}
.
\end{align*}
Nothing that
\[ \sum_{k=1}^{\infty} \ve^{-k(2s-1)} \ve^{-k(m-1)}
=  \frac{\ve^{-(2s+m-2)}}{1-\ve^{-(2s+m-2)}}
= \frac{-1}{2 \log \ve} \frac{d}{ds} \log \Bigl(  1-\ve^{-(2s+m-2)}   \Bigr)^{-1}
\]
for $\RE(s) >1-m/2 $. For  $\RE(s)>2-m/2$, therefore, we have
\[ A_\text{hyp2/sct}(s;m) = 
\frac{1}{2s-1} \frac{d}{ds} 
\log \biggl\{ \frac{(1-\varepsilon^{-(2s+m-4)})}
{(1-\varepsilon^{-(2s+m-2)})} \biggr\} 
+ \sum_{h=1}^{2} \frac{c_h(s)}{2 \beta_h} \frac{d}{d \beta_h} 
\log \biggl\{ 
\frac{(1-\varepsilon^{-(2 \beta _h +m-3)})}
{(1-\varepsilon^{-(2 \beta_h +m-1)})}
\biggr\}
\]
The proof is finished.
\end{proof}

\subsection{Analytic continuation of Selberg type zeta functions}
We prove
\begin{theorem} \label{th:a1}
For an even integer $m \ge 4$, the Selberg zeta function
$Z_K(s;m)$, originally defined for 
$\RE(s) > 1$, has an analytic continuation to the whole 
complex plane as a meromorphic function.  
\begin{enumerate}
\item $Z_{K}(s;m)$ has zeros at  \\
$s=\frac{1}{2} \pm i \rho_j(m)$ of order equal to the multiplicity 
of the eigenvalue $\frac{1}{4}+\rho_j(m)^2$ of $\Delta_0^{(1)}$ acting
on $\Ker(\Lambda_m^{(2)})$, \\
$s=1-\frac{m}{2}  + \frac{\pi i k}{\log \ve}$ 
of order $1$ for $k \in \Z$. 
\item $Z_{K}(s;m)$ has poles at  \\
$s=\frac{1}{2} \pm i \rho_j(m-2)$ of order equal to the multiplicity 
of the eigenvalue $\frac{1}{4}+\rho_j(m-2)^2$ of $\Delta_0^{(1)}$ acting
on $\Ker(\Lambda_{m-2}^{(2)})$, \\
$s=2-\frac{m}{2} + \frac{\pi i k}{\log \ve}$ 
of order $1$ for $k \in \Z$. 
\item $Z_{K}(s;m)$ has zeros or poles 
(according to their orders are positive or negative) at  \\
$s=-k$ $(k \in \N\cup\{0\})$ of order $(2k+1)E(X_K)+2\sum_{j=1}^{N} [k/\nu_j] - \sum_{j=1}^{N} \beta_{k,j}(m)$. 
\item If $m=4$, $Z_{K}(s,m)$ has additional simple zeros at 
$s=0$ and $s=1$.
\end{enumerate}
Here, 
\[ \Ker(\Lambda_{q}^{(2)}) =
\biggl\{  f \in  L^2_\text{dis} \Bigl( \Gamma_{K} \backslash \bH^2 
\, ; \, (0,q) \Bigr)  \, \Bigl| \,  \Delta_{q}^{(2)} f  = \frac{q}{2} \Bigl( 1-\frac{q}{2} \Bigr) \, f  
\biggr\} \]
for $q=m$ or $m-2$, and
$E(X_{K})$ denotes the Euler characteristic of the Hilbert modular surface $X_{K}$ and
the definition of the integers $\beta_{j,k}(m)$ will be given in 
$($\ref{eq:beta}$)$. 
When the location of two zeros or poles coincide, 
the orders of them are added.
\end{theorem}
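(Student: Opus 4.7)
The strategy is to use Theorem \ref{th:ddtrfa} as the key engine. That identity, established for $\RE(s) > 1$, expresses a discrete spectral sum (involving $\rho_j(m)$ and $\rho_j(m-2)$) as a sum of geometric pieces, one of which is $\frac{1}{2s-1}\frac{Z_K'(s;m)}{Z_K(s;m)}$. The plan is to solve for $\frac{Z_K'(s;m)}{Z_K(s;m)}$ and then verify, term by term, that each other piece admits a meromorphic continuation to all of $\C$. Since $\beta_1,\beta_2$ are fixed with $\beta_1\ne\beta_2$ and $\beta_1,\beta_2\ge 2$, the auxiliary ``correction'' terms evaluated at $s=\tfrac12+\beta_h$ are constants, so they do not affect the analytic structure in $s$.

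I would then check meromorphicity of each ingredient separately: (i) the spectral side converges for $s$ outside the discrete set $\{\tfrac12\pm i\rho_j(m)\}\cup\{\tfrac12\pm i\rho_j(m-2)\}$ by the decay $h_1(r)=O(r^{-4})$ combined with Weyl's law (Theorem 1.6 of the introduction); (ii) the identity contribution is manifestly meromorphic, producing simple poles at $s=-k$ with a coefficient proportional to $\zeta_K(-1)$; (iii) the elliptic contribution is a finite $\C$-linear combination of digamma functions $\psi((s+l)/\nu_j)$, hence meromorphic with simple poles at negative rationals; (iv) the type 2 hyperbolic/scattering contribution is the logarithmic derivative of $(1-\ve^{-(2s+m-4)})/(1-\ve^{-(2s+m-2)})$, which is meromorphic with simple zeros/poles along vertical arithmetic progressions. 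Combining these, $\frac{Z_K'(s;m)}{Z_K(s;m)}$ extends meromorphically to $\C$, and integration followed by exponentiation yields the meromorphic extension of $Z_K(s;m)$ itself.

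To pin down the zeros and poles with their orders, I would compute the residue of $\frac{Z_K'(s;m)}{Z_K(s;m)}$ at each candidate point. The nontrivial zeros at $s=\tfrac12\pm i\rho_j(m)$ and nontrivial poles at $s=\tfrac12\pm i\rho_j(m-2)$ come directly from the spectral side; when $m=4$ the extra $\delta_{m,4}/(s(s-1))$ produces the additional simple zeros at $s=0,1$. The trivial zeros and poles at $s=1-m/2+\pi ik/\log\ve$ and $s=2-m/2+\pi ik/\log\ve$ come from the type 2 hyperbolic/scattering logarithmic derivative. At $s=-k$ ($k\ge 0$) the identity contribution yields order $(2k+1)\cdot 2\zeta_K(-1)=(2k+1)E(X_K)$ by the Gauss--Bonnet/Siegel identity $E(X_K)=2\zeta_K(-1)$ embedded in \eqref{eq:volume-zeta}, while the elliptic digamma terms $\psi((s+l)/\nu_j)$ contribute poles whose number and weighted count over $l\in\{0,\ldots,\nu_j-1\}$ assemble into the stated correction $2\sum_j[k/\nu_j]-\sum_j\beta_{k,j}(m)$, with $\beta_{k,j}(m)$ defined through the integers $\alpha_l(m,j),\overline{\alpha_l}(m,j)$ of \eqref{eq:alpha}.

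The main obstacle, as expected, is the bookkeeping at the trivial zeros $s=-k$: several digamma poles from different elliptic conjugacy classes pile up simultaneously, and one must carefully compute the residue at $s=-k$ of $\psi((s+l)/\nu_j)$ for each admissible pair $(j,l)$ and sum the coefficients $(\nu_j-1-\alpha_l(m,j)-\overline{\alpha_l}(m,j))/\nu_j^2$ appearing in Theorem \ref{th:ddtrfa}, balancing against the $(2k+1)E(X_K)$ from the identity term. A secondary technical point is justifying the interchange of limits needed to conclude that a formally termwise-meromorphic right-hand side really gives the logarithmic derivative of a single meromorphic function on $\C$; this is handled by standard Phragm\'en--Lindel\"of type bounds together with the Weyl-law bound on $\rho_j(m)$, ensuring absolute convergence on compact subsets avoiding the spectral parameters.
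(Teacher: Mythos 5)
Your overall strategy is exactly the paper's: take Theorem \ref{th:ddtrfa}, solve for $\frac{1}{2s-1}\frac{Z_K'(s;m)}{Z_K(s;m)}$, continue each remaining term meromorphically, read off the nontrivial zeros/poles from the spectral side, the $\delta_{m,4}$ zeros at $s=0,1$, the vertical families from the type~2 hyperbolic/scattering factor, and the trivial zeros/poles at $s=-k$ from the identity plus elliptic terms. One point you should make explicit rather than leave implicit: to exponentiate the continued logarithmic derivative into a single-valued meromorphic function you must verify that \emph{all} its poles are simple with \emph{integer} residues; this is where the congruence $\alpha_l(m,j)+\overline{\alpha_l}(m,j)\equiv 2l \pmod{\nu_j}$ from (\ref{eq:alpha}) is needed to show $\beta_{k,j}(m)\in\Z$, and it is the real content of the continuation argument, not a Phragm\'en--Lindel\"of issue.

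There is, however, a concrete error in your residue bookkeeping at $s=-k$. You assert that the identity contribution alone yields order $(2k+1)\cdot 2\zeta_K(-1)=(2k+1)E(X_K)$ ``by the Gauss--Bonnet/Siegel identity $E(X_K)=2\zeta_K(-1)$.'' That identity is false: equation (\ref{eq:volume-zeta}) says $\mathrm{vol}(\Gamma_K\backslash\bH^2)/(4\pi^2)=2\zeta_K(-1)$, while the Euler characteristic satisfies $E(X_K)=2\zeta_K(-1)+\sum_{j=1}^{N}\frac{\nu_j-1}{\nu_j}$, the extra sum being the defect of the elliptic fixed points. Consequently the identity term does \emph{not} produce $(2k+1)E(X_K)$ by itself; the missing $(2k+1)\sum_j\frac{\nu_j-1}{\nu_j}$ must be extracted from the elliptic contribution. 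The paper does this by splitting each digamma block via $\frac{1}{\nu_j}\sum_{l=0}^{\nu_j-1}\psi\bigl(\frac{s+l}{\nu_j}\bigr)=\psi(s)-\log\nu_j$, so that a $\psi(s)$ piece (with residue contributions at every $s=-k$) combines with the identity term to assemble $E(X_K)$, while the remainder gives the $2\sum_j[k/\nu_j]$ and $\beta_{k,j}(m)$ corrections. If you follow your decomposition literally you will get the wrong integer at $s=-k$, and since these residues are the orders of the trivial zeros/poles, the error propagates into assertion (3) of the theorem.
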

\begin{proof}
To get a meromorphic extension of  $Z_{K}(s;m)$,
we show that the logarithmic derivative of $Z_{K}(s;m)$ has a 
meromorphic extension to the whole complex plane 
and its poles are all simple with integral residues. 
By Theorem \ref{th:ddtrfa}, 
it is easy to see that $(2s-1) \, A_\text{spec}(s;m)$ and 
$-(2s-1) \, A_\text{hyp2/sct}(s;m)$ are meromorphic over the complex plane 
and their poles are all simple with integral residues.
So, we consider the function:
\[ g(s;m) := -(2s-1) \Bigl( A_\text{id}(s) + A_\text{ell}(s;m) \Bigr)
\]
We see that $g(s;m)$ is also meromorphic and only have simple poles at $s=-k$ 
for $k \in \N\cup\{0\}$.
By the identity
\[ \frac{1}{\nu} \sum_{l=0}^{\nu-1} \psi \Bigl( \frac{s+l}{\nu} \Bigr)
= \psi(s) - \log \nu,
\]
we have
\begin{align*}
E_j(s) :=& \frac{1}{2s-1} \sum_{l=0}^{\nu_j-1} \frac{\nu_j-1-\alpha_l(m,j)-\overline{\alpha}_l(m,j)}{\nu_j^2}
\psi \Bigl( \frac{s+l}{\nu_j} \Bigr) \\
=& 
\frac{1}{\nu_j} \Bigl\{  \psi(s)  - \log \nu_j \Bigr\}
+\frac{1}{2s-1} \sum_{l=0}^{\nu_j-1}
\frac{\nu_j -2s-\alpha_l(m,j) - \overline{\alpha}_l(m,j)}{\nu_j^2} \psi \Bigl( \frac{s+l}{\nu_j} \Bigr). 
\end{align*}
Thus, for $k \in \N \cup \{ 0 \}$, (we write $k= l + \nu_j n$ with $l=0,1,\dots ,\nu_j-1$)
\[ 
- \Res_{s=-k} (2s-1) \, E_j(s)  =
\frac{2k+1}{\nu_j}  + 2 \Bigl[ \frac{k}{\nu_j} \Bigr] + 1- \frac{\alpha_l(m,j) + \overline{\alpha}_l(m,j) - 2l}{\nu_j},
\]
with $l=k-\nu_j[k/\nu_j]$.
Put 
\begin{equation} \label{eq:beta}
\beta_{k,j}(m) := \frac{\alpha_l(m,j) + \overline{\alpha}_l(m,j) - 2l}{\nu_j}
\quad \mbox{ with } \quad  l =  k-\nu_j \Bigl[ \frac{k}{\nu_j} \Bigr].
\end{equation}
We see that $\beta_{k,j}(m) \in \Z$ since $\alpha_l(m,j) + \overline{\alpha}_l(m,j)  \equiv 2l \pmod{\nu_j}$
by (\ref{eq:alpha}).

Therefore, we have
\begin{align*}
\Res_{s=-k} g(s;m) =&
(2k+1) \zeta_{K}(-1) - (2k+1) \sum_{j=1}^{N} \frac{1}{\nu_j} 
+ \sum_{j=1}^{N} (2 \Bigl[ \frac{k}{\nu_j} \Bigr]+1) - \sum_{j=1}^{N} \beta_{k,j}(m) \\
=& (2k+1) \, E(X_K) + 2 \sum_{j=1}^{N} \Bigl[ \frac{k}{\nu_j} \Bigr] - 2kN - \sum_{j=1}^{N} \beta_{k,j}(m).
\end{align*}
Here, $E(X_{K})$ is the Euler characteristic of the Hilbert modular surface $X_K$ and 
we used the formula (see \cite[Theorem (1.2), p.60]{Ge}):
\[  E(X_K) = 2 \zeta_{K}(-1)+\sum_{j=1}^{N} \frac{\nu_j-1}{\nu_j}. \]
Hence the residues of $g(s;m)$ are all integers.
The rest of proof is clear.  
 \end{proof}

\subsection{Functional equation of Selberg type zeta functions}

\begin{theorem} \label{th:a2}
Let $m \ge 4$ be an even integer.
The function $Z_K(s;m)$ satisfies the 
following functional equation
\[ \hat{Z}_K(s;m) = \hat{Z}_K(1-s;m). \]

Here the completed zeta function $\hat{Z}_K(s,m)$ is given by
\[ \hat{Z}_K(s;m) := Z_K(s;m)  
\, Z_{\mathrm{id}}(s) \, Z_{\mathrm{ell}}(s;m) 
\, Z_{\mathrm{hyp2/sct}}(s;m)  \]
with
\begin{align*}
Z_{\mathrm{id}}(s) &:= \Bigl( \Gamma_2(s) \Gamma_2(s+1) \Bigr)^{2 \, \zeta_{K}(-1)} \\ 
Z_{\mathrm{ell}}(s;m) &:= \prod_{j=1}^{N} \prod_{l=0}^{\nu_j -1}
\Gamma \bigl( \tfrac{s+l}{\nu_j} \bigr)^{\frac{\nu_j -1 
- \alpha_l(m,j) - \overline{\alpha_l}(m,j) }{\nu_j}}\\ 
Z_{\mathrm{hyp2/sct}}(s;m) &:= \zeta_{\varepsilon}(s + \tfrac{m}{2}-1) \, 
\zeta_{\varepsilon}(s+\tfrac{m}{2}-2)^{-1},
\end{align*}
where, 
$\Gamma_2(z)$ is the double Gamma function 
$($for definition, we refer to \cite{KK} or
\cite[Definition 4.10, p.751]{GP2}$)$,   
$\nu_1,\nu_2,\cdots,\nu_{N} $ are the orders  
of the elliptic fixed points
in $X_K$ and the integers
$\alpha_l(m,j), \, \overline{\alpha_l}(m,j) \in \{0,1, \cdots, \nu_j -1 \}$ 
was defined in $($\ref{eq:alpha}$)$, 
$\zeta_{\varepsilon}(s) := (1 - \varepsilon^{-2s} )^{-1}$ and
$\varepsilon$ is the fundamental unit of $K$.
\end{theorem}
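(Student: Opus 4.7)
The plan is to read the functional equation off the identity in Theorem \ref{th:ddtrfa} by showing that its right-hand side, after multiplication by $(2s-1)$, coincides with $\frac{d}{ds}\log\hat{Z}_K(s;m)$ up to a remainder that is manifestly invariant under $s \mapsto 1-s$. First I would observe that the test function satisfies $h_1(r)|_s = h_1(r)|_{1-s}$, since it depends on $s$ only through $(s-\frac{1}{2})^2$; the same is true of $c_1(s)$ and $c_2(s)$. Hence the spectral side $A_{\mathrm{spec}}(s;m)$ of Theorem \ref{th:ddtrfa} satisfies $A_{\mathrm{spec}}(s;m) = A_{\mathrm{spec}}(1-s;m)$, and every term on the right-hand side carrying the prefactor $\frac{c_h(s)}{2\beta_h}$ with $\beta_h$ fixed is automatically $s\mapsto 1-s$ invariant. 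These symmetric pieces are set aside for later.

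Next I would identify each of the remaining $s$-dependent pieces with $\frac{1}{2s-1}$ times the logarithmic derivative of the corresponding factor of $\hat{Z}_K(s;m)$. The hyperbolic-elliptic contribution is already of the form $\frac{1}{2s-1}\frac{Z_K'(s;m)}{Z_K(s;m)}$ by construction and formula (\ref{eq:logzeta}). The elliptic contribution equals $\frac{1}{2s-1}\frac{Z_{\mathrm{ell}}'(s;m)}{Z_{\mathrm{ell}}(s;m)}$ since $\psi(z)=\frac{d}{dz}\log\Gamma(z)$ together with the exponents $\frac{\nu_j-1-\alpha_l(m,j)-\overline{\alpha_l}(m,j)}{\nu_j}$ built into $Z_{\mathrm{ell}}(s;m)$. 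The type-2/scattering contribution equals $\frac{1}{2s-1}\frac{Z_{\mathrm{hyp2/sct}}'(s;m)}{Z_{\mathrm{hyp2/sct}}(s;m)}$ directly from $\zeta_{\varepsilon}(s)=(1-\varepsilon^{-2s})^{-1}$. Finally, the identity contribution equals $\frac{1}{2s-1}\frac{Z_{\mathrm{id}}'(s)}{Z_{\mathrm{id}}(s)}$ plus a symmetric remainder: using the partial-fraction expansion of $\pi\tan(\pi(s-\frac{1}{2}))$ already invoked in the proof of Theorem \ref{th:a1}, and the regularized Lerch-type series for $\frac{d}{ds}\log\Gamma_2(s)$ together with the shift identity between $\Gamma_2(s+1)$ and $\Gamma_2(s)\Gamma(s)$, the formal sum $2\zeta_K(-1)\sum_{k=0}^\infty\frac{1}{s+k}$ is matched, modulo an $s\mapsto 1-s$ invariant piece, with the log derivative of $(\Gamma_2(s)\Gamma_2(s+1))^{2\zeta_K(-1)}$.

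Combining these four identifications I obtain
\[ (2s-1)\,A_{\mathrm{spec}}(s;m) \;=\; \frac{d}{ds}\log\hat{Z}_K(s;m) \;+\; (2s-1)\,T(s), \]
where $T(s)=T(1-s)$. Replacing $s$ by $1-s$, using the symmetry of $A_{\mathrm{spec}}$ and of $T$, and adding the two equations yields
\[ \frac{d}{ds}\log\hat{Z}_K(s;m) \;+\; \frac{d}{ds}\log\hat{Z}_K(1-s;m) \;=\; 0, \]
whence $\hat{Z}_K(s;m) = c\,\hat{Z}_K(1-s;m)$ for some constant $c$. Evaluating at $s=\frac{1}{2}$ (or, if $\hat{Z}_K(\frac{1}{2};m)=0$, comparing the leading Laurent coefficients of the two sides at $s=\frac{1}{2}$) forces $c=1$.

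The hard part will be the precise matching for the identity contribution. The sum $\sum_{k\ge 0}\frac{1}{s+k}$ is only formally defined and must be interpreted as the regularized logarithmic derivative of the Barnes double Gamma function; the bookkeeping of which pieces of the $\pi\tan(\pi(s-\frac{1}{2}))$-contribution are $s \mapsto 1-s$ symmetric and which assemble into $\frac{Z_{\mathrm{id}}'(s)}{Z_{\mathrm{id}}(s)}$ must be carried out carefully. The remaining three contributions are routine once their corresponding $Z_X$-factors have been read off.
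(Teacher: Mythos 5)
Your overall strategy is the paper's: exploit the $s\mapsto 1-s$ invariance of $h_1$ and of $c_1,c_2$ (hence of the spectral side of Theorem \ref{th:ddtrfa} and of all $\beta_h$-terms), identify each remaining geometric term with $\tfrac{1}{2s-1}$ times the logarithmic derivative of a factor of $\hat Z_K$, and integrate. The elliptic, hyperbolic--elliptic and hyp2/sct identifications are indeed exact, term-for-term, as you claim. The gap is exactly where you flag it, and it is more than bookkeeping: the identity contribution in Theorem \ref{th:ddtrfa} is $2\zeta_K(-1)\sum_{k\ge0}\bigl[\tfrac{1}{s+k}+\sum_h\tfrac{c_h(s)}{\beta_h+\frac12+k}\bigr]$, which converges only because $c_1(s)+c_2(s)=-1$ cancels the $1/k$ tails. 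Splitting off the piece $\sum_k\tfrac{1}{s+k}$ to be matched with $\tfrac{Z'_{\mathrm{id}}}{Z_{\mathrm{id}}}(s)$, and ``setting aside'' the $\beta_h$-piece as a symmetric remainder, separates a convergent sum into two divergent ones; the regularization you defer is therefore the entire content of the step and must be performed compatibly on both pieces. As written, the decomposition $(2s-1)A_{\mathrm{spec}}=\frac{d}{ds}\log\hat Z_K+(2s-1)T(s)$ is not yet established.

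The paper sidesteps this by antisymmetrizing \emph{before} identifying anything: subtracting the identity of Theorem \ref{th:ddtrfa} at $1-s$ from the one at $s$ and multiplying by $2s-1$ kills every $\beta_h$-dependent term exactly (since $c_h(s)=c_h(1-s)$) and collapses the identity term to the \emph{convergent} expansion $2\zeta_K(-1)(2s-1)\sum_{k\ge0}\bigl[\tfrac{1}{s+k}-\tfrac{1}{1-s+k}\bigr]=2\zeta_K(-1)(2s-1)\pi\cot(\pi s)$, which is recognized as $\tfrac{Z'_{\mathrm{id}}}{Z_{\mathrm{id}}}(s)+\tfrac{Z'_{\mathrm{id}}}{Z_{\mathrm{id}}}(1-s)$ via the double sine function $S_2(z)=\Gamma_2(2-z)\Gamma_2(z)^{-1}$ and its differential equation $\frac{d}{dz}\log S_2(z)=-\pi(z-1)\cot(\pi z)$. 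Adopting this order of operations turns your ``hard part'' into a two-line computation and yields directly $\tfrac{\hat Z'_K}{\hat Z_K}(s)+\tfrac{\hat Z'_K}{\hat Z_K}(1-s)=0$. Two smaller points: be careful that your displayed relation is the sum of the log-derivative \emph{evaluated at} $s$ and at $1-s$ (not the $s$-derivative of the composite $\log\hat Z_K(1-s)$, which would integrate to $\hat Z_K(s)\hat Z_K(1-s)=\mathrm{const}$); and in fixing the constant, note that $c=(-1)^n$ with $n$ the order of $\hat Z_K$ at $s=\tfrac12$, so one should observe that $n$ is even because the potential zeros and poles at $s=\tfrac12\pm i\rho_j$ with $\rho_j=0$ coincide in pairs.
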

\begin{proof}
Starting from the formula in Theorem \ref{th:ddtrfa}, we compute the difference of the both sides 
at $s$ and $1-s$. We see that 
\begin{align*}
& 2 \zeta_{K}(-1) \cdot (2s-1) \,   \pi \cot (\pi s)
+
\biggl( {\frac{Z_{K}'(s;m)}{Z_{K}(s;m)}}
+        {\frac{Z_{K}'(1-s;m)}{Z_{K}(1-s;m)}}
\biggr) \\
& \quad + 
\biggl( {\frac{Z_\text{ell}'(s)}{Z_\text{ell}(s)}}
+        {\frac{Z_\text{ell}'(1-s)}{Z_\text{ell}(1-s)}}
\biggr) 
+  
\biggl( {\frac{Z_\text{hyp2/sct}'(s)}{Z_\text{hyp2/sct}(s)}}
+        {\frac{Z_\text{hyp2/sct}'(1-s)}{Z_\text{hyp2/sct}(1-s)}}
\biggr) 
\\
&=0,
\end{align*}
by the partial fractional expansion: 
$ \pi \cot (\pi s)  =  \sum_{k=0}^{\infty} \bigl[ \frac{1}{s+k}  - \frac{1}{1-s+k} \bigr]
$.
Let $I(s) := 2 \zeta_{K}(-1) \cdot (2s-1) \,   \pi \cot (\pi s)$.
It is known that the double sine function $S_2(z):= \Gamma_2(2-z) \, \Gamma_2(z)^{-1}$
satisfies the differential equation (see \cite[Theorem 2.15, p.860]{KK}):
\[ \frac{d}{dz} \log S_2(z) = - \pi (z-1) \cot (\pi z). \] 
Therefore, 
\begin{align*}
I(s) =& - 2\zeta_{K}(-1) \, \frac{d}{ds} \log \Bigl( S_2(s) S_2(s+1) \Bigr) 
     =  - 2\zeta_{K}(-1) \, \frac{d}{ds} \log \Bigl( 
\frac{\Gamma_2(2-s)}{\Gamma_2(s)} \frac{\Gamma_2(1-s)}{\Gamma_2(s+1)}
\Bigr) \\
=&  {\frac{Z_\text{id}'(s)}{Z_\text{id}(s)}}
+        {\frac{Z_\text{id}'(1-s)}{Z_\text{id}(1-s)}}.
\end{align*}
Integrating and exponentiating, we obtain the desired functional equation.
\end{proof}

\section{Ruelle type zeta functions and applications} 

\subsection{Ruelle type zeta functions}

We consider the following Ruelle type zeta function
\begin{definition}[Ruelle type zeta function for $\Gamma_{K}$]
\label{def:ruellezeta}
For $\RE(s)>1$, the Ruelle type zeta function for 
$\Gamma_K$ is defined by the following absolutely convergent
Euler product:
\[
R_{K}(s) := \prod_{(p,p')} 
\bigl( 1 - N(p)^{-s} \bigr)^{-1}. 
\]
Here, $(p,p')$ run through the set of primitive hyperbolic-elliptic 
$\Gamma_K$-conjugacy classes of $\Gamma_K$, and $(p,p')$ is conjugate 
in $\mathrm{PSL}(2,\R)^2$ to
\[ (p, p') \sim \Bigl( 
\Bigl( 
\begin{array}{cc}
N(p)^{1/2} & 0 \\
0 & N(p)^{-1/2}
\end{array} \Bigr), 
\, 
\Bigl( 
\begin{array}{cc}
\cos \omega & - \sin \omega \\
\sin \omega & \cos \omega
\end{array} \Bigr)\Bigr).
\]
Here, $N(p)>1$, $\omega \in (0,\pi)$ and $\omega \notin \pi \Q$.
\end{definition}

We note that the following relation between
the Ruelle type zeta function and the Selberg type zeta function
for $\Gamma_K$.
\begin{lemma} \label{lem:ruelle}
For $\RE(s)>1$, we have
\[ R_{K}(s) = \frac{Z_{K}(s;2)}{Z_{K}(s+1;2)} .\]
\end{lemma}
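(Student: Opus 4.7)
The plan is essentially a direct telescoping computation, so the work is in observing why the factors line up rather than in any deep argument. First I would substitute $m=2$ in Definition \ref{def:szeta}: the elliptic twist $e^{i(m-2)\omega}$ collapses to $e^{0}=1$, so for $\RE(s)>1$
\[
Z_{K}(s;2) = \prod_{(p,p')} \prod_{k=0}^{\infty}\bigl(1 - N(p)^{-(k+s)}\bigr)^{-1},
\]
where $(p,p')$ runs over primitive hyperbolic-elliptic $\Gamma_K$-conjugacy classes. Since the Euler product is absolutely convergent for $\RE(s)>1$, I may freely rearrange the double product.

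Next I would shift the inner index. Replacing $s$ by $s+1$ and relabeling $k\mapsto k-1$ yields
\[
Z_{K}(s+1;2) = \prod_{(p,p')} \prod_{k=0}^{\infty}\bigl(1 - N(p)^{-(k+s+1)}\bigr)^{-1}
= \prod_{(p,p')} \prod_{k=1}^{\infty}\bigl(1 - N(p)^{-(k+s)}\bigr)^{-1},
\]
again valid for $\RE(s)>1$ (and indeed in the slightly larger region $\RE(s)>0$). Taking the quotient telescopes the inner product, leaving only the $k=0$ factor for each $(p,p')$:
\[
\frac{Z_{K}(s;2)}{Z_{K}(s+1;2)} = \prod_{(p,p')} \bigl(1 - N(p)^{-s}\bigr)^{-1} = R_{K}(s),
\]
which matches Definition \ref{def:ruellezeta}.

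The only genuinely substantive point is the vanishing of the elliptic rotation angle in the factor $e^{i(m-2)\omega}$ when $m=2$, which is what removes the obstruction to telescoping; if $m\ne 2$ one obtains at best a ratio involving $1-e^{i(m-2)\omega}N(p)^{-s}$, not a clean Ruelle factor. There is no real obstacle here, and absolute convergence of both products on $\RE(s)>1$ (guaranteed by the Euler product convergence statement following Definition \ref{def:szeta}) justifies the rearrangement.
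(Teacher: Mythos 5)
Your proposal is correct and is essentially identical to the paper's own proof, which also sets $m=2$ so that the factor $e^{i(m-2)\omega}$ becomes $1$ and then telescopes the quotient of the two absolutely convergent double products to leave only the $k=0$ factors. No gaps.
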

\begin{proof}
For $\RE(s)>1$, we have
\[ \frac{Z_{K}(s;2)}{Z_{K}(s+1;2)} 
 = \frac{\prod_{(p,p')} 
\prod_{k=0}^{\infty} ( 1 - N(p)^{-(s+k)} )^{-1} }
{\prod_{(p,p')}\prod_{k=0}^{\infty} ( 1 - N(p)^{-(s+k+1)} )^{-1}}
=R_{K}(s).
\]
\end{proof}
To get a meromorphic extension of $R_K(s)$, 
we consider meromorphic extension of the
Selberg type zeta function $Z_{K}(s;2)$.
For this, we recall Theorem \ref{th:ddtrf}, 
the double differences of the trace formula
for the weight $(0,2)$.

\begin{corollary}[Double differences of STF 
for $L^2 \bigl( \Gamma_{K} \backslash \bH^2 \, ; \, (0,2) \bigr)$]
\label{cor:ddtrf2}
Let $m = 2$. We have 
\begin{align*}
& \sum_{j=0}^{\infty} h_1 \Bigl( \rho_j(2) \Bigr)  
   + \sum_{j=0}^{\infty} h_1 \Bigl( \mu_j(-2) \Bigr)  
- 2 h_1 \Bigl( \frac{i}{2} \Bigr) \\
 & \quad =  \:
\frac{\mathrm{vol}(\Gamma_K \backslash \bH^2)}{8 \pi^2}
\int_{-\infty}^{\infty}
  r h_1(r) \tanh (\pi r)  \, dr  \\
& \quad \quad \quad - \sum_{R(\theta_1,\theta_2) \in 
\Gamma_{\mathrm{E}}} 
\frac{i e^{ - i \theta_1}}{4 \nu_{R} \sin \theta_1}
\int_{-\infty}^{\infty} g_1(u) \, e^{-u/2} \biggl[  \frac{e^u - e^{2 i \theta_1}}{\cosh u - \cos 2 \theta_1} \biggr] 
du \\
& \quad \quad \quad
 - \sum_{(\gamma,\omega) \in \Gamma_{\mathrm{HE}}} 
\frac{\log N(\gamma_0) \, g_1(\log N(\gamma))   }{N(\gamma)^{1/2}-N(\gamma)^{-1/2}} 
- 2 \log \varepsilon \, g_1(0)
\, - 4 \log \varepsilon \, 
\sum_{k=1}^{\infty} g_1(2k \log \varepsilon) \, \varepsilon^{-k}. 
\end{align*}
Here,  $\{ 1/4+ \rho_j(2)^2 \}_{j=0}^{\infty}$ and
 $\{ 1/4+ \mu_j(-2)^2 \}_{j=0}^{\infty}$ are the sets of 
eigenvalues of the Laplacian $\Delta_{0}^{(1)}$ acting 
on $\Ker(\Lambda_{2}^{(2)})$ and $\Ker(K_{-2}^{(2)})$
respectively.
These kernel spaces are given by
\[ \Ker(\Lambda_{2}^{(2)}) =
\biggl\{  f \in  L^2_\text{dis} \Bigl( \Gamma_{K} \backslash \bH^2 
\, ; \, (0,2) \Bigr)  \, \Bigl| \,  \Delta_{2}^{(2)}  f  = 0
\biggr\}, \]
\[ \Ker(K_{-2}^{(2)}) =
\biggl\{  f \in  L^2_\text{dis} \Bigl( \Gamma_{K} \backslash \bH^2 
\, ; \, (0,-2) \Bigr)  \, \Bigl| \,  \Delta_{-2}^{(2)} \, f  = 0
\biggr\}. \]
\end{corollary}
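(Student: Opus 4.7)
The plan is to specialize the double-differences trace formula of Theorem~\ref{th:ddtrf} to $m = 2$ and then evaluate the two middle kernel contributions that appear on the spectral side.

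On the geometric side, substituting $m=2$ in the right-hand side of Theorem~\ref{th:ddtrf} is immediate: the factor $e^{i(m-2)\theta_2}$ in the elliptic sum and $e^{i(m-2)\omega}$ in the hyperbolic-elliptic sum both collapse to $1$; the parabolic coefficient $\sgn(m-1)-\sgn(m-3) = 1-(-1) = 2$ yields the displayed $-2\log\varepsilon\, g_1(0)$; and the scattering/type~2 hyperbolic coefficient $\sgn(m-1)\varepsilon^{-k|m-1|}-\sgn(m-3)\varepsilon^{-k|m-3|} = 2\varepsilon^{-k}$ yields $-4\log\varepsilon\sum_k g_1(2k\log\varepsilon)\varepsilon^{-k}$. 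The identity term is already independent of $m$ and carries through verbatim. Together these match the right-hand side of the corollary exactly.

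On the spectral side, Theorem~\ref{th:ddtrf} at $m=2$ gives
\[ \sum_{j} h_1\bigl(\rho_j(2)\bigr) - \sum_{j} h_1\bigl(\mu_j(0)\bigr) - \sum_{j} h_1\bigl(\rho_j(0)\bigr) + \sum_{j} h_1\bigl(\mu_j(-2)\bigr). \]
The outer two sums match the corollary verbatim; the remaining task is to show that each middle sum equals $h_1(i/2)$. By the definitions preceding Theorem~\ref{th:dtrf}, both $\Ker(\Lambda_0^{(2)})$ and $\Ker(K_0^{(2)})$ coincide with the subspace of $L^2_{\text{dis}}(\Gamma_K\backslash\bH^2;(0,0))$ on which $\Delta_0^{(2)}$ vanishes. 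Since $\Delta_0^{(2)}$ is non-negative and self-adjoint, any such form is harmonic in $z_2$; together with the discrete-spectrum (cuspidal) condition, which eliminates any Eisenstein-type growth in the cusp, and the irreducibility of $\Gamma_K$ inside $\mathrm{PSL}(2,\R)^2$, this forces the form to be an absolute constant on $\bH^2$. Hence both kernels are one-dimensional, spanned by the constant function, whose $\Delta_0^{(1)}$-eigenvalue $0$ corresponds to the spectral parameter $i/2$ via $0 = 1/4 + (i/2)^2$. Each middle sum therefore contributes exactly $h_1(i/2)$, producing the $-2h_1(i/2)$ term in the corollary.

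The main subtlety is the kernel identification $\Ker(\Lambda_0^{(2)}) = \Ker(K_0^{(2)}) = \C \cdot 1$: one must rigorously exclude non-constant $z_2$-harmonic weight-$(0,0)$ Maass forms in $L^2_{\text{dis}}$, which requires combining the cuspidal decay condition at the unique cusp $(\infty,\infty)$ with the irreducibility property of the Hilbert modular group to rule out functions that depend only on $z_1$ but are not globally constant.
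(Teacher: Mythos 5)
Your proposal is correct and follows essentially the same route as the paper: specialize Theorem \ref{th:ddtrf} to $m=2$ (where the geometric side simplifies exactly as you compute) and observe that the two middle spectral sums, indexed by $\Ker(K_0^{(2)})$ and $\Ker(\Lambda_0^{(2)})$, each collapse to the single term $h_1(i/2)$ coming from the constant function, since both kernels equal $\C$. The paper states the identification $\Ker(\Lambda_0^{(2)})=\Ker(K_0^{(2)})=\C$ without proof, whereas you sketch a justification (harmonicity in $z_2$ plus square-integrability forces independence of $z_2$, and irreducibility of $\Gamma_K$ then forces constancy), which is the standard argument and consistent with how the paper handles the analogous fact \eqref{eq:m=4} for $m\ge 4$.
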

\begin{proof}
By noting that $\Ker(\Lambda_{0}^{(2)}) = \Ker(K_{0}^{(2)})=\C$, the rest is clear 
by Theorem \ref{th:ddtrf}.
\end{proof}

\begin{theorem}[Double differences of STF for the test function $h_1$ 
(see (\ref{eq:test}))
with the weight $(0,2)$] \label{th:b0}
\begin{align*}
& \sum_{j=0}^{\infty} \Bigl[ \frac{1}{\rho_j(2)^2+(s-\frac{1}{2})^2} 
 + \sum_{h=1}^{2}\frac{c_h(s)}{\rho_j(2)^2 + \beta_h^2} \Bigr]
 + \sum_{j=0}^{\infty} \Bigl[ \frac{1}{\mu_j(-2)^2+(s-\frac{1}{2})^2} 
 + \sum_{h=1}^{2} \frac{c_h(s)}{\mu_j(-2)^2 + \beta_h^2} \Bigr] \\
& \quad - 
2 \Bigl[ \frac{1}{s(s-1)} 
 + \sum_{h=1}^{2}\frac{c_h(s)}{\beta_h^2-\frac{1}{4}} \Bigr] \\
& = 2 \zeta_{K}(-1) \sum_{k=0}^{\infty}
\Bigl[ \frac{1}{s+k} 
+ \sum_{h=1}^{2} \frac{c_h(s)}{\beta_h+\frac{1}{2}+k} \Bigr]  
+ \frac{1}{2s-1} {\frac{Z_{K}'(s;2)}{Z_{K}(s;2)}}
+\sum_{h=1}^{2} \frac{c_h(s)}{2 \beta_h} 
\frac{Z_{K}'(\frac{1}{2}+\beta_h;2)}{Z_{K}(\frac{1}{2}+\beta_h;2)}
\\
& \quad + \frac{1}{2s-1} 
\sum_{j=1}^{N} \sum_{l=0}^{\nu_j-1} \frac{\nu_j-1-2l}{\nu_j^2}
\, \psi \Bigl( \frac{s+l}{\nu_j} \Bigr)
+ \sum_{h=1}^{2} \frac{c_h(s)}{2 \beta_h} 
\sum_{j=1}^{N} \sum_{l=0}^{\nu_j-1} \frac{\nu_j-1-2l}{\nu_j^2}
\, \psi \Bigl( \frac{\frac{1}{2}+\beta_h+l}{\nu_j} \Bigr)
\\
& \quad + \frac{1}{2s-1} \frac{d}{ds} 
\log \bigl(  \varepsilon^{-2s}   \bigr) 
+ \sum_{h=1}^{2} \frac{c_h(s)}{2 \beta_h} \frac{d}{d \beta_h} 
\log \bigl( 
\varepsilon^{-(2 \beta_h +1)}
\bigr) \\
& \quad + \frac{1}{2s-1} \frac{d}{ds} 
\log \biggl\{ \frac{1}
{(1-\varepsilon^{-2s})^2} \biggr\} 
+ \sum_{h=1}^{2} \frac{c_h(s)}{2 \beta_h} \frac{d}{d \beta_h} 
\log \biggl\{ 
\frac{1}
{(1-\varepsilon^{-(2 \beta_h +1)})^2}
\biggr\}.
\end{align*}
\end{theorem}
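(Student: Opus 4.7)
The plan is to specialize Corollary \ref{cor:ddtrf2} to the explicit test function $h_1(r)$ of \eqref{eq:test}, whose Fourier transform
\[ g_1(u) = \frac{1}{2s-1}e^{-(s-\frac{1}{2})|u|} + \frac{c_1(s)}{2\beta_1}e^{-\beta_1|u|} + \frac{c_2(s)}{2\beta_2}e^{-\beta_2|u|} \]
has the exponential form that converts each geometric sum into a closed-form logarithmic derivative. The left-hand side of the target formula can be read off directly from the spectral side of the corollary once one observes $h_1(i/2) = 1/(s(s-1)) + \sum_{h=1}^2 c_h(s)/(\beta_h^2-\tfrac{1}{4})$, which reshapes the $-2 h_1(i/2)$ contribution into the bracketed subtractive term. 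The remaining work is to evaluate the five geometric terms on the right-hand side of Corollary \ref{cor:ddtrf2}, in the same style as the proof of Theorem \ref{th:ddtrfa}.

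First, the identity contribution reduces via the partial fraction expansion $\pi\cot(\pi s) = \sum_{k\ge 0}\bigl[\tfrac{1}{s+k} - \tfrac{1}{s-1-k}\bigr]$ together with Siegel's formula \eqref{eq:volume-zeta} to the series $2\zeta_K(-1)\sum_{k\ge 0}[\cdots]$ appearing in the statement. Second, at $m=2$ the twist $e^{i(m-2)\theta_2}$ equals $1$, and \eqref{eq:alpha} forces $\alpha_l(2,j) = \overline{\alpha_l}(2,j) = l$, so the combinatorial factor $\nu_j - 1 - \alpha_l(2,j) - \overline{\alpha_l}(2,j)$ collapses to $\nu_j - 1 - 2l$; applying formulas (10.29) and (10.31) of Iwaniec \cite{Iw} together with the sine identity recalled in the proof of Theorem \ref{th:ddtrfa} produces the two digamma sums. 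Third, the hyperbolic-elliptic term is read off directly from \eqref{eq:logzeta} specialized to $m=2$, yielding the logarithmic derivatives of $Z_K(s;2)$ at $s$ and at $\tfrac{1}{2}+\beta_h$.

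The fourth and fifth terms are the genuinely new feature relative to the case $m\ge 4$. The parabolic contribution, which vanished in Theorem \ref{th:ddtrfa} because $\sgn(m-1)=\sgn(m-3)$, now survives as $-2\log\varepsilon\,g_1(0)$ because $\sgn(1)-\sgn(-1)=2$. Writing $g_1(0) = 1/(2s-1) + \sum_h c_h(s)/(2\beta_h)$ expresses this contribution as $\frac{1}{2s-1}\frac{d}{ds}\log(\varepsilon^{-2s})$ plus its $\beta_h$ companions. The type 2 hyperbolic/scattering term $-4\log\varepsilon\sum_{k\ge 1} g_1(2k\log\varepsilon)\varepsilon^{-k}$ becomes, after summing the geometric series $\sum_{k\ge 1}\varepsilon^{-2ks} = \varepsilon^{-2s}/(1-\varepsilon^{-2s})$ (valid for $\RE(s)$ sufficiently large and then extended meromorphically in the final statement), precisely $\frac{1}{2s-1}\frac{d}{ds}\log(1-\varepsilon^{-2s})^{-2}$ together with the corresponding expressions in $\beta_h$.

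The main bookkeeping challenge is keeping careful track of sign conventions and of the splitting of the non-elliptic contribution into two separate logarithmic derivatives, one producing $\varepsilon^{-2s}$ and the other $(1-\varepsilon^{-2s})^{-2}$, since both pieces will be required independently for the functional equation of $Z_K(s;2)$ proved later. No genuinely new analytic difficulty arises beyond what was already handled for $m\ge 4$; the computation is parallel, with the $m=2$ case made slightly more delicate only by the surviving parabolic contribution and by the need to verify meromorphic continuation of the geometric series in the scattering piece.
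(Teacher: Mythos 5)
Your proposal is correct and follows exactly the route the paper takes: the paper's proof of Theorem \ref{th:b0} is the one-line remark ``By Corollary \ref{cor:ddtrf2} and the same computation in Theorem \ref{th:ddtrfa},'' and your write-up simply carries out that computation explicitly, with all the key specializations (the collapse $\alpha_l(2,j)=\overline{\alpha_l}(2,j)=l$, the surviving parabolic term from $\sgn(1)-\sgn(-1)=2$, and the geometric-series evaluation of the type 2 hyperbolic/scattering piece) checking out.
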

\begin{proof}
By Corollary \ref{cor:ddtrf2} and the same computation in Theorem \ref{th:ddtrfa}.
\end{proof}

\begin{theorem} \label{th:b1}
The Selberg zeta function
$Z_K(s;2)$, originally defined for 
$\RE(s) > 1$, has an analytic continuation to the whole 
complex plane as a meromorphic function. 
\begin{enumerate}
\item $Z_{K}(s;2)$ has a double pole at $s=1$. 
\item $Z_{K}(s;2)$ has zeros at  \\
$s=\frac{1}{2} \pm i \rho_j(2)$ of order equal to the multiplicity 
of the eigenvalue $\frac{1}{4}+\rho_j(2)^2$ of $\Delta_0^{(1)}$ acting
on $\Ker(\Lambda_{2}^{(2)})=
\Bigl\{  f \in  L^2_\text{dis} \Bigl( \Gamma_{K} \backslash \bH^2 
\, ; \, (0,2) \Bigr)  \, \Bigl| \,  \Delta_{2}^{(2)}  f  = 0
\Bigr\}$, \\
$s=\frac{1}{2} \pm i \mu_j(-2)$ of order equal to the multiplicity 
of the eigenvalue $\frac{1}{4}+\mu_j(-2)^2$ of $\Delta_0^{(1)}$ acting
on $\Ker(K_{-2}^{(2)})=
\Bigl\{  f \in  L^2_\text{dis} \Bigl( \Gamma_{K} \backslash \bH^2 
\, ; \, (0,-2) \Bigr)  \, \Bigl| \,  \Delta_{-2}^{(2)} \, f  = 0
\Bigr\}$.
\item $Z_{K}(s;2)$ has zeros at  
$s = \pm \frac{k \pi i}{\log \varepsilon}$ $(k \in \N)$ of order 2.
\item $Z_{K}(s;2)$ has a zero at  
$s=0$ of order $E(X_K)$.  
\item $Z_{K}(s;2)$ has zeros or poles 
(according to their orders are positive or negative) 
at  
$s=-k$ $(k \in \N)$ of order $(2k+1)E(X_K)+2\sum_{j=1}^{N}[k/\nu_j]-2kN$.
\end{enumerate}
Here, 
$E(X_{K})$ denotes the Euler characteristic of the Hilbert modular surface $X_{K}$.
When the location of two zeros or poles coincide, 
the orders of them are added.
\end{theorem}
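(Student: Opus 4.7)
My plan is to mirror the argument used for Theorem \ref{th:a1}, taking Theorem \ref{th:b0} as the starting point. First I would rearrange that identity to isolate $\frac{1}{2s-1}\frac{Z_K'(s;2)}{Z_K(s;2)}$, multiply through by $(2s-1)$, and verify that every other term admits a meromorphic continuation to all of $\C$ whose singularities are simple poles with integer residues. Integration and exponentiation then deliver the meromorphic extension of $Z_K(s;2)$ together with its complete divisor.

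Three features distinguish the $m=2$ case from the $m\ge 4$ case treated in Theorem \ref{th:a1}. First, the anomalous term $-2h_1(i/2)$ on the left-hand side of Theorem \ref{th:b0}, produced by the constants sitting in $\Ker(\Lambda_0^{(2)})=\Ker(K_0^{(2)})=\C$, equals $-2/[s(s-1)]$ modulo an entire function of $s$; after multiplication by $(2s-1)$ this contributes residue $-2$ to $Z_K'(s;2)/Z_K(s;2)$ at $s=1$, accounting for the double pole of item~(1). Second, the type-$2$ hyperbolic and scattering contribution now carries the factor $\frac{d}{ds}\log[1/(1-\varepsilon^{-2s})^2]$, which has a simple pole of residue $-2$ at each $s=\pm k\pi i/\log\varepsilon$ with $k\in\N$; moving it to the left-hand side and multiplying by $(2s-1)$ produces residue $+2$ there, giving the double zeros of item~(3). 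Third, the spectral side carries both $\sum_j h_1(\rho_j(2))$ and $\sum_j h_1(\mu_j(-2))$ with the same positive sign, so each eigenvalue $\tfrac14+\rho_j(2)^2$ or $\tfrac14+\mu_j(-2)^2$ contributes a simple zero of $Z_K(s;2)$ at $s=\tfrac12\pm i\rho_j(2)$ or $s=\tfrac12\pm i\mu_j(-2)$ of the stated multiplicity, which is item~(2).

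The bulk of the remaining work is the residue arithmetic at $s=-k$ for $k\in\N\cup\{0\}$. At these points the identity term $2\zeta_K(-1)(2s-1)\sum_{n\ge 0}(s+n)^{-1}$ and the elliptic term $\sum_j\sum_l\frac{\nu_j-1-2l}{\nu_j^2}\psi(\frac{s+l}{\nu_j})$ both produce simple poles; here I use $\alpha_l(2,j)=\overline{\alpha_l}(2,j)=l$ from (\ref{eq:alpha}). Writing $k=\nu_j[k/\nu_j]+l_j$ with $0\le l_j<\nu_j$, collecting the residues of each piece, and invoking Siegel's formula $\mathrm{vol}(\Gamma_K\backslash\bH^2)/(4\pi^2)=2\zeta_K(-1)$ together with the Euler-characteristic identity $E(X_K)=2\zeta_K(-1)+\sum_j(\nu_j-1)/\nu_j$, the total residue for $k\ge 1$ collapses to $(2k+1)E(X_K)+2\sum_j[k/\nu_j]-2kN$, which is item~(5). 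At $k=0$ the residue $-2$ from $-2/[s(s-1)]$ and the residue $+2$ from the logarithmic derivative of $(1-\varepsilon^{-2s})^{-2}$ exactly cancel, leaving only the identity and elliptic contributions, which telescope to $2\zeta_K(-1)+\sum_j(\nu_j-1)/\nu_j=E(X_K)$, giving item~(4).

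The principal obstacle is precisely this accounting at $s=0$: four different pieces of the rearranged formula each produce simple poles there with signs that look mutually conflicting at first sight, and it is only through the cancellation between the $1/[s(s-1)]$ piece and the logarithmic derivative of $(1-\varepsilon^{-2s})^{-2}$, combined with Siegel's formula and the Euler-characteristic identity, that the residue emerges as the clean integer $E(X_K)$. Once this pointwise verification is completed, integrality of all other residues and the divisor description follow by the routine log-derivative argument already used in Theorem~\ref{th:a1}.
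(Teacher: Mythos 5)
Your proposal is correct and follows exactly the route the paper takes: its proof of Theorem \ref{th:b1} is literally ``By Theorem \ref{th:b0} and the same proof of Theorem \ref{th:a1},'' i.e.\ isolate $(2s-1)^{-1}Z_K'(s;2)/Z_K(s;2)$ in Theorem \ref{th:b0}, multiply by $2s-1$, and read off simple poles with integer residues. Your residue bookkeeping --- the $-2$ at $s=1$ from $-2h_1(i/2)$, the $+2$ at $s=\pm k\pi i/\log\varepsilon$ from $\frac{d}{ds}\log(1-\varepsilon^{-2s})^{-2}$, the cancellation at $s=0$ leaving $2\zeta_K(-1)+\sum_j(\nu_j-1)/\nu_j=E(X_K)$, and the use of $\alpha_l(2,j)=\overline{\alpha_l}(2,j)=l$ at $s=-k$ --- checks out and in fact supplies details the paper leaves implicit.
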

\begin{proof}
By Theorem \ref{th:b0} and the same proof of Theorem \ref{th:a1}.
\end{proof}

\begin{theorem} \label{th:b2}
The Selberg type zeta function $Z_K(s;m)$ satisfies the 
following functional equation
\[ \hat{Z}_K(s;2) = \hat{Z}_K(1-s;2). \]
Here the completed zeta function $\hat{Z}_K(s,m)$ is given by
\[ \hat{Z}_K(s;2) := Z_K(s;2) \, 
 Z_{\mathrm{id}}(s) \, Z_{\mathrm{ell}}(s;2) 
\, Z_{\mathrm{par/sct}}(s;2) \, Z_{\mathrm{hyp2/sct}}(s;2) \]
with
\begin{align*}
& Z_{\mathrm{id}}(s) := \Bigl( \Gamma_2(s) \Gamma_2(s+1) \Bigr)^{2 \, \zeta_{K}(-1)}, 
\quad
Z_{\mathrm{ell}}(s;2) := \prod_{j=1}^{N} \prod_{l=0}^{\nu_j -1}
\Gamma \bigl( \tfrac{s+l}{\nu_j} \bigr)^{\frac{\nu_j -1 -2l }{\nu_j}}, \\ 
& \quad Z_{\mathrm{par/sct}}(s;2) := \varepsilon^{-2s}, \quad  
Z_{\mathrm{hyp2/sct}}(s;2) := \zeta_{\varepsilon}(s)^2 
= (1-\varepsilon^{-2s})^{-2}.
\end{align*}
\end{theorem}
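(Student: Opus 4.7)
The plan is to mimic the proof of Theorem \ref{th:a2}, now starting from Theorem \ref{th:b0} (the $m=2$ specialization of the double-difference Selberg trace formula for the test function (\ref{eq:test})) and taking the difference of both sides evaluated at $s$ and at $1-s$. The key observation is that the spectral side of Theorem \ref{th:b0} depends on $s$ only through the combinations $(s-\tfrac12)^2$, $c_h(s)$ (which itself depends only on $(s-\tfrac12)^2$), and $s(s-1)$; all three are manifestly invariant under $s\mapsto 1-s$. Consequently, the spectral side cancels in the difference, and we obtain an identity satisfied by the geometric terms alone.

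After multiplying this identity through by $(2s-1)$---so that each prefactor $\tfrac{1}{2s-1}$ combines its values at $s$ and at $1-s$ additively---each resulting contribution matches $\tfrac{f'(s)}{f(s)}+\tfrac{f'(1-s)}{f(1-s)}$ for a corresponding factor $f$ of $\hat{Z}_K(s;2)$. The identity term contributes $2\zeta_K(-1)(2s-1)\pi\cot(\pi s)$; via the partial-fraction expansion $\pi\cot(\pi s)=\sum_{k\ge 0}\bigl[\tfrac{1}{s+k}-\tfrac{1}{1-s+k}\bigr]$ together with the differential equation $\tfrac{d}{dz}\log S_2(z)=-\pi(z-1)\cot(\pi z)$ for the double sine function $S_2$, this integrates to $Z_{\mathrm{id}}(s)=(\Gamma_2(s)\Gamma_2(s+1))^{2\zeta_K(-1)}$. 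The elliptic contribution (with coefficients $\tfrac{\nu_j-1-2l}{\nu_j^2}$ arising from $\alpha_l(2,j)=\overline{\alpha_l}(2,j)=l$) matches $Z_{\mathrm{ell}}(s;2)$. The hyperbolic-elliptic contribution produces directly $\tfrac{Z_K'(s;2)}{Z_K(s;2)}+\tfrac{Z_K'(1-s;2)}{Z_K(1-s;2)}$. The new parabolic/scattering term $\tfrac{1}{2s-1}\tfrac{d}{ds}\log\varepsilon^{-2s}$, upon symmetrization, yields $-4\log\varepsilon$, exactly matching the logarithmic derivative sum for $Z_{\mathrm{par/sct}}(s;2)=\varepsilon^{-2s}$. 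Finally, the type 2 hyperbolic/scattering term produces the logarithmic derivative of $Z_{\mathrm{hyp2/sct}}(s;2)=(1-\varepsilon^{-2s})^{-2}$ (the exponent $-2$ coming from the square in $1/(1-\varepsilon^{-2s})^2$ inside the logarithm in Theorem \ref{th:b0}).

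Summing all contributions yields $\tfrac{\hat{Z}_K'(s;2)}{\hat{Z}_K(s;2)}+\tfrac{\hat{Z}_K'(1-s;2)}{\hat{Z}_K(1-s;2)}=0$, which upon integration (the additive constant being pinned down by symmetry at $s=\tfrac12$) gives $\hat{Z}_K(s;2)=\hat{Z}_K(1-s;2)$. The main obstacle, more bookkeeping than conceptual, is to verify that the two scattering-related factors $Z_{\mathrm{par/sct}}$ and $Z_{\mathrm{hyp2/sct}}$ take precisely the stated forms; these are specific to the weight $(0,2)$ setting and have no analogue in Theorem \ref{th:a2}, because the corresponding contributions for $m\ge 4$ vanish on account of $\sgn(m-1)-\sgn(m-3)=0$. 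The auxiliary parameters $\beta_1,\beta_2$ in the test function (\ref{eq:test}) enter only through the even function $c_h(s)$ of $(s-\tfrac12)^2$ and hence cancel symmetrically, leaving a functional equation independent of them.
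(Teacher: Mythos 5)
Your proposal is correct and follows essentially the same route as the paper, which simply cites Theorem \ref{th:b0} and repeats the argument of Theorem \ref{th:a2}: cancel the $s\mapsto 1-s$ invariant spectral side, symmetrize each geometric term into $\tfrac{f'(s)}{f(s)}+\tfrac{f'(1-s)}{f(1-s)}$ for the corresponding completing factor, and integrate. Your identifications of the two weight-$(0,2)$-specific factors $Z_{\mathrm{par/sct}}$ and $Z_{\mathrm{hyp2/sct}}$, and of the elliptic exponents via $\alpha_l(2,j)=\overline{\alpha_l}(2,j)=l$, are all consistent with the paper.
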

\begin{proof}
By using Theorem \ref{th:b0}, the proof is the same 
as in Theorem \ref{th:a1}.
\end{proof}

\begin{theorem} \label{th:Ruelle}
The function $R_{K}(s)$ has a meromorphic continuation to the 
whole $\C$. $R_{K}(s)$ has double pole at $s=1$ and nonzero for 
$\RE(s) \ge 1$.
\end{theorem}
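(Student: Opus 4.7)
The plan is to reduce the statement to what has already been established for the Selberg type zeta function $Z_K(s;2)$. By Lemma \ref{lem:ruelle}, we have the factorization
\[ R_K(s) = \frac{Z_K(s;2)}{Z_K(s+1;2)}, \]
valid for $\RE(s) > 1$. Since Theorem \ref{th:b1} provides the meromorphic continuation of $Z_K(s;2)$ to all of $\C$, the quotient inherits a meromorphic continuation of $R_K(s)$ at once; no further trace-formula input is needed. The remaining two assertions (the double pole at $s=1$ and the nonvanishing for $\RE(s) \geq 1$) will follow from tracking the zeros and poles of numerator and denominator under this factorization.

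For the double pole at $s=1$, I would observe that the denominator $Z_K(s+1;2)$ evaluated near $s=1$ corresponds to $w = s+1$ with $\RE(w)$ near $2$, which lies strictly inside the half-plane of absolute convergence of the defining Euler product of $Z_K(w;2)$. Hence $Z_K(2;2)$ is finite and nonzero, so $Z_K(s+1;2)$ is holomorphic and nonvanishing in a neighborhood of $s=1$. Combined with Theorem \ref{th:b1}(1), which asserts that $Z_K(s;2)$ has a double pole at $s=1$, this gives the double pole of $R_K(s)$ at $s=1$.

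For the nonvanishing on $\{\RE(s) \ge 1\}$, I would split into cases. In the open half-plane $\RE(s) > 1$, the Euler product for $R_K(s)$ converges absolutely and each factor $(1-N(p)^{-s})^{-1}$ is holomorphic and nonzero, so $R_K(s) \neq 0$. On the vertical line $\RE(s) = 1$ with $s \neq 1$, the denominator $Z_K(s+1;2)$ is again in the absolute convergence region (at $\RE(w) = 2$), hence finite and nonzero, so it suffices to show that the numerator $Z_K(s;2)$ has no zero on $\RE(s)=1 \setminus \{1\}$. Theorem \ref{th:b1} catalogues every zero of $Z_K(s;2)$: non-trivial zeros at $s = \tfrac{1}{2} \pm i\rho_j(2)$ and $s = \tfrac{1}{2} \pm i\mu_j(-2)$, trivial zeros of order $2$ at $s = \pm k\pi i/\log\varepsilon$ for $k \in \N$, a zero of order $E(X_K)$ at $s=0$, and zeros/poles at negative integers. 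None of the last three families intersect the line $\RE(s)=1$, so the only issue is whether a non-trivial zero can lie on $\RE(s) = 1$ with $s \neq 1$.

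The main obstacle is therefore to rule out non-trivial zeros of $Z_K(s;2)$ on $\RE(s) = 1$ other than $s=1$. A non-trivial zero at $s = \tfrac{1}{2} \pm i\rho$ has $\RE(s) = 1$ precisely when $\rho \in \pm i/2$, corresponding to a vanishing eigenvalue $\tfrac{1}{4} + \rho^2 = 0$ of $\Delta_0^{(1)}$ on $\Ker(\Lambda_2^{(2)})$ or $\Ker(K_{-2}^{(2)})$. Such an eigenfunction $f$ would be locally constant in $z_1$ and, since it lies in $\Ker(\Lambda_2^{(2)})$ or $\Ker(K_{-2}^{(2)})$, would satisfy a first-order holomorphy-type condition in $z_2$; by the irreducibility of $\Gamma_K$ in $\PSL(2,\R)^2$, no such nonzero $L^2$ automorphic function exists. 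Hence the only possible zero on $\RE(s) = 1$ is $s = 1$ itself, which is subsumed into the double-pole calculation above, and $R_K(s)$ is nonvanishing on $\RE(s) \ge 1$.
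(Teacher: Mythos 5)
Your proof is correct and follows essentially the same route as the paper, whose own proof is just the citation of Lemma \ref{lem:ruelle} together with Theorem \ref{th:b1}. The one point where you go beyond the paper is in ruling out an eigenvalue-$0$ eigenfunction of $\Delta_0^{(1)}$ in $\Ker(\Lambda_2^{(2)})$ or $\Ker(K_{-2}^{(2)})$ (such an eigenfunction would produce a non-trivial zero at $s=1$ and lower the order of the pole of $R_K$ there); the paper leaves this implicit, and your argument --- constancy in $z_1$ forced by $\Delta_0^{(1)}f=0$ for $f\in L^2$, then nonexistence of a nonzero weight-$(0,\pm2)$ function independent of $z_1$ by irreducibility of $\Gamma_K$ --- is the right way to close that gap.
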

\begin{proof}
By Theorem \ref{th:b1} and Lemma \ref{lem:ruelle}.
\end{proof}

\begin{theorem} \label{th:ruelle-fe}
The function $R_{K}(s)$ has the following functional equation
\begin{equation}
\begin{split}
R_{K}(s) \, R_{K}(-s) =& (-1)^{E(X_K)} \, 
\bigl( 2 \sin (\pi s) \bigr)^{2 E(X_K)} 
\prod_{j=1}^{N} \biggl( \frac{\sin (\pi s/\nu_j)}{\sin(\pi s)}\biggr)^2 \\
& \cdot 
\biggl( \frac{\zeta_{\ve}(s-1) \, \zeta_{\ve} (s+1)}{\zeta_{\ve}(s)^2} \biggr)^2,
\end{split}
\end{equation}
where, 
$\zeta_\ve(s)=(1-\ve^{-2s})^{-1}$, $N$ is the number of elliptic fixed points 
in $X_K$.
\end{theorem}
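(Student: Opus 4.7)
The plan is to reduce the desired functional equation for $R_K(s)$ to the functional equation $\hat{Z}_K(s;2) = \hat{Z}_K(1-s;2)$ established in Theorem \ref{th:b2}, and then explicitly compute the contributions of each local factor. By Lemma \ref{lem:ruelle}, $R_K(s) = Z_K(s;2)/Z_K(s+1;2)$, so
\[
R_K(s) R_K(-s) = \frac{Z_K(s;2) Z_K(-s;2)}{Z_K(s+1;2) Z_K(1-s;2)}.
\]
Setting $F(s) := Z_\mathrm{id}(s) Z_\mathrm{ell}(s;2) Z_\mathrm{par/sct}(s;2) Z_\mathrm{hyp2/sct}(s;2)$ and applying $\hat{Z}_K(s;2) = \hat{Z}_K(1-s;2)$ both at $s$ and at $s+1$ to express $Z_K(1-s;2)$ and $Z_K(s+1;2)$, the $Z_K$ terms cancel on the right and the identity reduces to
\[
R_K(s) R_K(-s) = \frac{F(s+1) F(1-s)}{F(s) F(-s)}.
\]

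From here I would work through each type of local factor separately. For the piece $Z_\mathrm{par/sct}(s;2) Z_\mathrm{hyp2/sct}(s;2) = \varepsilon^{-2s}(1-\varepsilon^{-2s})^{-2}$, an elementary manipulation using $1-\varepsilon^{2s} = -\varepsilon^{2s}(1-\varepsilon^{-2s})$ rewrites the corresponding ratio directly as $\bigl(\zeta_\varepsilon(s-1) \zeta_\varepsilon(s+1)/\zeta_\varepsilon(s)^2\bigr)^2$, producing exactly the last factor in the claimed formula. For the identity factor $Z_\mathrm{id}(s) = (\Gamma_2(s) \Gamma_2(s+1))^{2\zeta_K(-1)}$, I would use the relation $\Gamma_2(2-z)/\Gamma_2(z) = S_2(z)$ to reduce the ratio to $(S_2(s) S_2(s+1) S_2(1-s) S_2(-s))^{2\zeta_K(-1)}$, and then collapse it via the double-sine reflection $S_2(z+1) S_2(1-z) = 1$ to $(S_2(s) S_2(-s))^{2\zeta_K(-1)}$. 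The differential equation $\frac{d}{dz}\log S_2(z) = -\pi(z-1)\cot(\pi z)$ yields
\[
\frac{d}{ds}\log\bigl(S_2(s) S_2(-s)\bigr) = 2\pi \cot(\pi s),
\]
so that $S_2(s) S_2(-s) = C \sin^2(\pi s)$ for a constant $C$ to be fixed by a convenient boundary/asymptotic evaluation; this should produce the contribution $(2\sin(\pi s))^{4\zeta_K(-1)}$ (up to a sign).

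The elliptic factor is the delicate piece. Here $Z_\mathrm{ell}(s;2) = \prod_{j=1}^{N} \prod_{l=0}^{\nu_j-1} \Gamma((s+l)/\nu_j)^{(\nu_j-1-2l)/\nu_j}$, and the weights $a_{j,l} := (\nu_j-1-2l)/\nu_j$ enjoy the crucial antisymmetry $a_{j,\nu_j-1-l} = -a_{j,l}$. Pairing the index $l$ with $\nu_j-1-l$ converts each of the eight groups of $\Gamma$-factors appearing in $F(s+1) F(1-s)/(F(s) F(-s))$ into expressions to which the reflection formula $\Gamma(z)\Gamma(1-z) = \pi/\sin(\pi z)$ applies; the Gauss multiplication formula $\prod_{l=0}^{\nu-1}\Gamma((z+l)/\nu) = (2\pi)^{(\nu-1)/2}\nu^{1/2-z}\Gamma(z)$ then condenses the resulting products of $\sin(\pi(s+l)/\nu_j)$ into ratios involving only $\sin(\pi s)$ and $\sin(\pi s/\nu_j)$. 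The outcome should be a product over $j$ of the shape $(2\sin(\pi s))^{2(1-1/\nu_j)}\bigl(\sin(\pi s/\nu_j)/\sin(\pi s)\bigr)^2$ (again, modulo signs).

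Combining the three contributions and invoking Siegel's identity $E(X_K) = 2\zeta_K(-1) + \sum_j (1-1/\nu_j)$ (used inside the proof of Theorem \ref{th:a1}) to merge the exponents of $2\sin(\pi s)$ into $2E(X_K)$ gives the right-hand side of the theorem; the global sign $(-1)^{E(X_K)}$ emerges from carefully bookkeeping the $-1$'s picked up from the reflection steps. The main obstacle will be the elliptic factor: the weights $a_{j,l}$ do not vanish pointwise, so one cannot simply pair $\Gamma$-factors by naive reflection, and the sign/sine tracking must be done with care. A secondary but necessary technical step is pinning down the constant $C$ in the identity $S_2(s) S_2(-s) = C\sin^2(\pi s)$, which I would handle by evaluating both sides at a special value such as $s=1/2$ or by comparing residues as $s \to 0$.
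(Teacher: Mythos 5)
Your reduction to $R_K(s)R_K(-s)=F(s+1)F(1-s)/\bigl(F(s)F(-s)\bigr)$ via the functional equation of $\hat{Z}_K(s;2)$, the factor-by-factor evaluation, and the final merging of exponents through $E(X_K)=2\zeta_K(-1)+\sum_j(\nu_j-1)/\nu_j$ is exactly the paper's proof, and your $\zeta_\ve$-factor computation is correct as stated. The two items you leave open close as you expect: the paper fixes your constant $C$ by computing $\Xi(s)=\Gamma_2(s+2)\Gamma_2(2-s)/(\Gamma_2(s)\Gamma_2(-s))=-4\sin^2(\pi s)$ directly from $\Gamma_2(z+1)/\Gamma_2(z)=\sqrt{2\pi}\,\Gamma(z)^{-1}$ (so $C=-4$, which is where $(-1)^{E(X_K)}$ comes from), and it handles the elliptic factor by telescoping $G_{\nu}(1+s)/G_{\nu}(s)$ under the shift $l\mapsto l+1$ together with the Gauss multiplication formula, rather than your $l\leftrightarrow\nu_j-1-l$ pairing, arriving at the same $(\sin(\pi s/\nu_j)/\sin(\pi s))^2$.
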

\begin{proof}
By Theorem \ref{th:b2}, we have
\begin{align*}
R_{K}(s) R_{K}(-s)
 & = \frac{Z_{K}(s;2)}{Z_{K}(s+1;2)} \frac{Z_{K}(-s;2)}{Z_{K}(-s+1;2)} 
    = \frac{Z_{K}(s;2)}{Z_{K}(1-s;2)} \frac{Z_{K}(-s;2)}{Z_{K}(1+s;2)}  \\
 & =  B_{K}(s) \, C_{K}(s). 
\end{align*}
The functions $B_{K}(s)$ and $C_{K}(s)$ are given as follows.
\[
B_{K}(s) :=
\frac{Z_\text{id}(1+s) \, Z_\text{id}(1-s)}{ Z_\text{id}(s) \, Z_\text{id}(-s)}
\frac{Z_\text{ell}(1+s;2) \, Z_\text{ell}(1-s;2)}{ Z_\text{ell}(s;2) \, Z_\text{ell}(-s;2)},
\]
\[
C_{K}(s) := 
\frac{Z_\text{par/sct}(1+s;2) \, Z_\text{par/sct}(1-s;2)}{ Z_\text{par/sct}(s;2) \, Z_\text{par/sct}(-s;2)}
\frac{Z_\text{hyp2/sct}(1+s;2) \, Z_\text{hyp2/sct}(1-s;2)}{ Z_\text{hyp2/sct}(s;2) \, Z_\text{hyp2/sct}(-s;2)}. 
\]
We can easily check that
\[ C_{K}(s) =  \biggl( \frac{\zeta_{\ve}(s-1) \, \zeta_{\ve} (s+1)}{\zeta_{\ve}(s)^2} \biggr)^2
.\]
Let us compute $B_{K}(s)$. 
Put
\[ \Xi(s) := \frac{\Gamma_2(s+1)\Gamma_2(s+2) \Gamma_2(1-s)\Gamma_2(2-s)}
{\Gamma_2(s)\Gamma_2(s+1) \Gamma_2(-s)\Gamma_2(1-s)}
\]
and
\[ G_{\nu}(s) := \prod_{j=0}^{\nu -1} \Gamma \Bigl( \frac{s+l}{\nu} \Bigr)^{\frac{\nu-1-2l}{\nu}}.
\]
Then we see that
\begin{equation} \label{eq:bk1}
B_{K}(s) = \Xi(s)^{E(X_{K})} 
  \prod_{j=1}^{N} \bigg[ \frac{G_{\nu_j}(1+s) \, G_{\nu_j}(1-s)}{G_{\nu_j}(s) \, G_{\nu_j}(-s)}  
   \, \Xi(s)^{-\frac{\nu_j -1}{\nu_j}}  \biggr].
\end{equation}
By using $\Gamma_2(s+1)/\Gamma_2(s) = \sqrt{2 \pi} \, \Gamma(s)^{-1}$ 
(see \cite{KK} or \cite[Proposition 4.11]{GP2}), we have
\begin{equation} \label{eq:bk2}
\Xi(s) = (2 \pi)^2 \bigl( \Gamma(s) \, \Gamma(s+1) \, \Gamma(-s) \, \Gamma(-s+1) \bigr)^{-1}
= -4 \sin^2(\pi s). 
\end{equation} 
By using the multiplication formula for the Gamma function
(see \cite[8.335]{GR}), we have 
\begin{align*}
G_{\nu}(1+s) \, G_{\nu}(s)^{-1}
 &=  \Gamma \Bigl( \frac{s}{\nu} \Bigr)^{-\frac{\nu-1}{\nu}}
\biggl[  \prod_{l=1}^{\nu-1} \Gamma \Bigl( \frac{s+l}{\nu} \Bigr)^{\frac{2}{\nu}}
\biggr]  
  \Gamma \Bigl( \frac{s+\nu}{\nu} \Bigr)^{\frac{1-\nu}{\nu}}  \\
 &= \Gamma \Bigl( \frac{s}{\nu} \Bigr)^{\frac{1-\nu}{\nu}}
 \Gamma \Bigl( 1+\frac{s}{\nu} \Bigr)^{\frac{1-\nu}{\nu}}
 \biggl[ \Gamma \Bigl( \frac{s}{\nu} \Bigr)^{-1}
(2 \pi)^{\frac{\nu-1}{2}} \nu^{1/2-s} \, \Gamma (s) 
\biggr]^{\frac{2}{\nu}}.
\end{align*}
Hence, we have
\begin{equation} \label{eq:bk3}
\begin{split}
&\frac{G_{\nu}(1+s)}{G_{\nu}(s)} \frac{G_{\nu}(1-s)}{G_{\nu}(-s)}
\, \Xi(s)^{-\frac{\nu-1}{\nu}} \\
&=\Gamma \Bigl( \frac{s}{\nu} \Bigr)^{\frac{-1-\nu}{\nu}}
 \Gamma \Bigl( 1+\frac{s}{\nu} \Bigr)^{\frac{1-\nu}{\nu}}
 \Gamma \Bigl( \frac{-s}{\nu} \Bigr)^{\frac{-1-\nu}{\nu}}
 \Gamma \Bigl( 1+\frac{-s}{\nu} \Bigr)^{\frac{1-\nu}{\nu}}
 \biggl[ \nu \, \Gamma (s) \, \Gamma(-s) \biggr]^{\frac{2}{\nu}} \\
 & \quad 
 \times \bigl( \Gamma(s) \, \Gamma(s+1) \, \Gamma(-s) \, \Gamma(-s+1) \bigr)^{\frac{\nu-1}{\nu}}
 \\
 &=\biggl[ \frac{\Gamma(s) \, \Gamma(1-s)}{\Gamma(s/\nu) \, \Gamma(1-\nu/s)} \biggr]^2 
   = \biggl( \frac{\sin(\pi s/\nu)}{\sin(\pi s)} \biggr)^2.
\end{split}
\end{equation}
Substituting (\ref{eq:bk2}) and (\ref{eq:bk3}) into (\ref{eq:bk1}), 
we complete the proof.
\end{proof}

We can obtain an explicit formula of the leading term of $R_{K}(s)$
at $s=0$. Let $n_0$ denote an integer such that 
$\lim_{s \to 0} s^{-n_0}R_{K}(s)$ is a nonzero finite value and
\[ R_{K}^{*}(0) := \lim_{s \to 0} s^{-n_0}R_{K}(s).
\]

\begin{theorem}
The following equalities hold.
\[ n_0 =  E(X_K) + 2 \]  
and
\[ |R_{K}^{*}(0)| = (2 \pi)^{E(X_{K})} \, \prod_{j=1}^{N} \nu_j^{-1} \, 
\frac{(2 \ve \log \ve)^2}{(\ve^2-1)^2}.
\]
\end{theorem}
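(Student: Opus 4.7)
The strategy is to determine both the order $n_0$ of the zero of $R_K(s)$ at $s=0$ and the leading coefficient $R_K^*(0)$ by matching the Laurent expansions of the two sides of the functional equation in Theorem \ref{th:ruelle-fe} at $s=0$. Since $R_K(s) = R_K^*(0)\, s^{n_0} + O(s^{n_0+1})$ and $R_K(-s) = (-1)^{n_0} R_K^*(0)\, s^{n_0} + O(s^{n_0+1})$, the left hand side has the expansion
\[
R_K(s)\, R_K(-s) = (-1)^{n_0} \, \bigl(R_K^*(0)\bigr)^2 \, s^{2 n_0} + O(s^{2n_0+1}).
\]

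Next, I would expand each factor on the right hand side. The factor $(2\sin(\pi s))^{2 E(X_K)}$ contributes $(2\pi)^{2E(X_K)} s^{2E(X_K)}$ to leading order, while $\prod_{j=1}^{N} \bigl(\sin(\pi s/\nu_j)/\sin(\pi s)\bigr)^2$ tends to $\prod_{j=1}^{N} \nu_j^{-2}$ as $s \to 0$. For the Eisenstein factor, recall $\zeta_{\varepsilon}(s) = (1-\varepsilon^{-2s})^{-1}$, so
\[
\zeta_\varepsilon(s) \sim \frac{1}{2 s \log\varepsilon}, \qquad
\zeta_\varepsilon(s-1) \to \frac{-1}{\varepsilon^2 - 1}, \qquad
\zeta_\varepsilon(s+1) \to \frac{\varepsilon^2}{\varepsilon^2 - 1}
\]
as $s \to 0$. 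Consequently,
\[
\biggl( \frac{\zeta_\varepsilon(s-1)\zeta_\varepsilon(s+1)}{\zeta_\varepsilon(s)^2} \biggr)^2
= \frac{\varepsilon^{4}\,(2s\log\varepsilon)^{4}}{(\varepsilon^2-1)^{4}} + O(s^{5}),
\]
contributing an additional factor of $s^{4}$.

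Multiplying everything together, the right hand side equals
\[
(-1)^{E(X_K)} \, (2\pi)^{2 E(X_K)} \, \prod_{j=1}^{N} \nu_j^{-2} \, \frac{(2\varepsilon\log\varepsilon)^{4}}{(\varepsilon^2-1)^{4}} \, s^{\,2 E(X_K) + 4} + O\bigl(s^{2E(X_K)+5}\bigr).
\]
Comparing orders, $2 n_0 = 2 E(X_K) + 4$, giving $n_0 = E(X_K) + 2$. Since $E(X_K) - n_0 = -2$, the sign $(-1)^{E(X_K)-n_0} = 1$, so matching the leading coefficients yields
\[
\bigl(R_K^*(0)\bigr)^2 = (2\pi)^{2 E(X_K)} \prod_{j=1}^{N} \nu_j^{-2} \cdot \frac{(2\varepsilon\log\varepsilon)^{4}}{(\varepsilon^2-1)^{4}},
\]
and taking square roots gives the claimed formula for $|R_K^*(0)|$.

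There is no serious obstacle: the argument is essentially bookkeeping of leading terms in the functional equation. The only points requiring care are the sign $(-1)^{E(X_K)}$ (which disappears because $n_0 - E(X_K)$ is even) and correctly identifying the order of vanishing of $(1-\varepsilon^{-2s})^{2}$ at $s=0$, which accounts for the extra factor $s^{4}$ on the right hand side that raises $n_0$ above $E(X_K)$ by exactly $2$.
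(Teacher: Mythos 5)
Your proof is correct and follows essentially the same route as the paper: both determine $n_0$ and $|R_K^*(0)|$ by computing the leading term of $R_K(s)R_K(-s)$ at $s=0$ from the functional equation of Theorem \ref{th:ruelle-fe}, where the factor $\zeta_\ve(s)^{-2}$ supplies the extra $s^4$ and the sign $(-1)^{E(X_K)}$ cancels against $(-1)^{n_0}$. Your expansion of the Eisenstein factor agrees with the paper's (note $(1-\ve^2)(1-\ve^{-2})=-(\ve^2-1)^2/\ve^2$), so the two computations coincide.
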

\begin{proof}
By Theorem \ref{th:ruelle-fe}, we can compute
\[
\lim_{s \to 0} \frac{R_{K}(s)R_{K}(-s)}{s^{2(E(X_K)+2)}} 
= (-1)^{E(X_{K})} (2 \pi)^{2E(X_{K})} \, \prod_{j=1}^{N}  \nu_j^{-2} \, 
\biggl( \frac{(2 \log \ve)^2}{(1-\ve^2)(1-\ve^{-2})} \biggr)^2.
\]
The rest is clear.
\end{proof}

\begin{corollary} \label{cor:ruelle}
Let $D$ be the discriminant of $K$ and $D \ge 13$. Then,   
the function $R_{K}(s)$ satisfy the functional equation
\begin{equation}
\begin{split}
R_{K}(s) \, R_{K}(-s)=&(-1)^{E(X_K)} \, 2^{2E(X_K)}
\sin (\pi s)^{2E(X_K) -2a_2(\Gamma) -2 a_3(\Gamma)} \\
& \cdot \sin \Bigl( \frac{\pi s}{2} \Bigr)^{2a_2(\Gamma)} 
 \sin \Bigl( \frac{\pi s}{3} \Bigr)^{2 a_3(\Gamma)} 
 \biggl( \frac{\zeta_{\ve}(s-1) \, \zeta_{\ve} (s+1)}{\zeta_{\ve}(s)^2} \biggr)^2
\end{split}
\end{equation}
and the absolute value of the coefficient of the leading term of $R_{K}(s)$ at $s=0$ is 
given by
\[ |R_{K}^{*}(0)|
  = \frac{(2 \pi)^{E(X_{K})}}{2^{a_2(\Gamma)} \, 3^{a_3(\Gamma)}} 
\frac{(2 \ve \log \ve)^2}{(\ve^2-1)^2}.
\]
Here, $a_r(\Gamma)$ is the number of elliptic fixed points
in $X_K$ for which  corresponding points have isotropy  groups
of order $r$.  
\end{corollary}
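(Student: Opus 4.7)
The plan is to derive Corollary \ref{cor:ruelle} as a direct specialization of Theorem \ref{th:ruelle-fe} and the preceding theorem on $|R_K^{*}(0)|$, using the classical fact that for real quadratic fields with discriminant $D \ge 13$, the only possible orders of isotropy groups at elliptic fixed points on the Hilbert modular surface $X_K$ are $2$ and $3$.

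First, I would recall from the theory of Hilbert modular surfaces (cf.\ Prestel's classification; see also Freitag or Geer) that for a real quadratic field $K$ of discriminant $D \ge 13$, every elliptic fixed point of $\Gamma_K$ has isotropy group of order $2$ or $3$. In other words, among the integers $\nu_1,\dots,\nu_N$ appearing in Theorem \ref{th:ruelle-fe}, every $\nu_j \in \{2,3\}$, and by definition of $a_r(\Gamma)$ we have
\[
\#\{ j \mid \nu_j = 2\} = a_2(\Gamma), \qquad \#\{ j \mid \nu_j = 3\} = a_3(\Gamma), \qquad N = a_2(\Gamma)+a_3(\Gamma).
\]

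Next, I would substitute this into the functional equation of Theorem \ref{th:ruelle-fe}. The elliptic product factors as
\[
\prod_{j=1}^{N} \biggl(\frac{\sin(\pi s/\nu_j)}{\sin(\pi s)}\biggr)^{2}
= \biggl(\frac{\sin(\pi s/2)}{\sin(\pi s)}\biggr)^{2a_2(\Gamma)}
  \biggl(\frac{\sin(\pi s/3)}{\sin(\pi s)}\biggr)^{2a_3(\Gamma)},
\]
and combining with the factor $(2\sin(\pi s))^{2E(X_K)}$ in Theorem \ref{th:ruelle-fe} and separating the exponents of $\sin(\pi s)$ as $2E(X_K) - 2a_2(\Gamma) - 2a_3(\Gamma)$ yields precisely the stated functional equation for $R_K(s) R_K(-s)$.

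For the second assertion, I would apply the theorem immediately preceding the corollary, which gives
\[
|R_K^{*}(0)| = (2\pi)^{E(X_K)} \prod_{j=1}^{N} \nu_j^{-1} \cdot \frac{(2\varepsilon \log \varepsilon)^{2}}{(\varepsilon^{2}-1)^{2}}.
\]
Under the restriction $D \ge 13$ the product $\prod_{j=1}^{N} \nu_j^{-1}$ collapses to $2^{-a_2(\Gamma)} \, 3^{-a_3(\Gamma)}$, which gives the claimed formula. The only real ingredient beyond Theorem \ref{th:ruelle-fe} and the leading coefficient theorem is the cited classification of orders of elliptic fixed points, so there is no substantive obstacle; the main thing to be careful about is that the $D=5,8,12$ cases (where fixed points of other orders can appear) are genuinely excluded and are handled separately by the sequel Corollary \ref{cor:5812}.
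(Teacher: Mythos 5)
Your proposal is correct and follows essentially the same route as the paper: the paper's proof is just the observation (citing van der Geer, p.~16) that for $D \neq 5,8,12$ the numbers $a_r(\Gamma)$ vanish for $r>3$, after which the functional equation and the formula for $|R_K^{*}(0)|$ follow by substituting $\nu_j \in \{2,3\}$ into Theorem \ref{th:ruelle-fe} and the preceding leading-coefficient theorem, exactly as you do.
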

We remark that $a_2(\Gamma)$ and $a_3(\Gamma)$ are described by 
the class numbers of certain imaginary quadratic fields. (Cf. Prestel \cite{Pre})
\begin{proof}
By the fact that
unless $D=5,8,12$, then the $a_r(\Gamma)$ vanish for $r>3$.
(Cf. \cite[p.16]{Ge})
\end{proof}

\begin{corollary} \label{cor:5812}
Let $D=5,8$ or $12$. 
Then the class number of the real quadratic field 
$K$ with the given discriminant $D$ is one and 
the absolute value of the coefficient of the leading term of $R_{K}(s)$ at $s=0$ is 
\begin{itemize}
\item  \, $(2 \pi)^4 (2^23^25^2)^{-1} (2 \ve \log \ve)^2/(\ve^2-1)^2$ with \, $\ve=(1+\sqrt{5})/2$ \, if $D=5$,
\item  \, $(2 \pi)^4 (2^23^24^2)^{-1} (2 \ve \log \ve)^2/(\ve^2-1)^2$ with \, \, $\ve= 1+\sqrt{2}$ \, \,  if $D=8$,
\item  \, $(2 \pi)^4 (2^33^26^1)^{-1} (2 \ve \log \ve)^2/(\ve^2-1)^2$ with \, \, $\ve=2+\sqrt{3}$ \, if $D=12$.
\end{itemize}
\end{corollary}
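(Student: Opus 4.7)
The plan is to specialize the general formula
\[ |R_{K}^{*}(0)| = (2 \pi)^{E(X_{K})} \, \prod_{j=1}^{N} \nu_j^{-1} \, \frac{(2 \ve \log \ve)^2}{(\ve^2-1)^2} \]
established in the preceding theorem to the three exceptional discriminants. The simplification in Corollary \ref{cor:ruelle} (valid for $D \ge 13$) used only that $a_r(\Gamma) = 0$ for $r > 3$. For $D \in \{5,8,12\}$ this fails, and we must instead substitute the actual orders of the elliptic fixed points. Since the formula $\prod_{j=1}^{N} \nu_j^{-1} = \prod_{r \ge 2} r^{-a_r(\Gamma)}$ is a tautology, the substance of the proof is to determine the multiset $\{\nu_1,\dots,\nu_N\}$ for each case.

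First I would record the arithmetic data for each field. For $D = 5, 8, 12$ the real quadratic field $K = \Q(\sqrt{5}), \Q(\sqrt{2}), \Q(\sqrt{3})$ has class number one, with fundamental units $\ve = (1+\sqrt{5})/2, \, 1+\sqrt{2}, \, 2+\sqrt{3}$ respectively; these are classical (see Borevich--Shafarevich). Then I would read off the numbers $a_r(\Gamma_K)$ from the tables of Prestel~\cite{Pre} (cf. also van der Geer \cite{Ge} and Hirzebruch--Zagier). The expected values, consistent with the answer stated in the corollary, are: $a_2 = a_3 = a_5 = 2$ for $D = 5$; $a_2 = a_3 = a_4 = 2$ for $D = 8$; and $a_2 = 3,\ a_3 = 2,\ a_6 = 1$ for $D = 12$, with all other $a_r = 0$.

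With this data in hand, the computation is mechanical: $\prod_j \nu_j^{-1}$ equals $2^{-2} 3^{-2} 5^{-2}$, $2^{-2} 3^{-2} 4^{-2}$, and $2^{-3} 3^{-2} 6^{-1}$ in the three cases. Multiplying by $(2\pi)^{E(X_K)}$ and by the $\varepsilon$-factor $(2\ve\log\ve)^2/(\ve^2-1)^2$, and inserting the appropriate fundamental unit, yields the three formulas displayed in the statement.

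The one genuine obstacle is the identification of the $a_r(\Gamma)$, which hinges on counting the $\Gamma_K$-conjugacy classes of elements of each finite order. Via the standard dictionary between elliptic conjugacy classes in $\mathrm{PSL}(2,\mathcal{O}_K)$ and classes of orders in imaginary quadratic extensions of $K$, these counts reduce to class number computations in (relative) imaginary quadratic orders, which for these three small discriminants can be carried out by hand and match Prestel's tables; I would simply cite his results rather than reproduce them.
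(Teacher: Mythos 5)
Your proposal is correct and follows essentially the same route as the paper, which simply substitutes the known orders of the elliptic fixed points for $D=5,8,12$ into the general leading-term formula and cites the table of $a_r(\Gamma)$ (the paper uses van der Geer, p.~268; you cite Prestel, which is the same data). The values you list, $a_2=a_3=a_5=2$ for $D=5$, $a_2=a_3=a_4=2$ for $D=8$, and $a_2=3$, $a_3=2$, $a_6=1$ for $D=12$, are exactly those used, and they are consistent with $E(X_K)=4$ in all three cases.
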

\begin{proof}
Use the table of $a_r(\Gamma)$ for $D=5,8$ or $12$ on \cite[p.268]{Ge}.
\end{proof}

\subsection{Weyl's law} 

As an application of the double difference of the trace formula for $\Gamma_K$ with the weight
$(0,2)$ (Corollary \ref{cor:ddtrf2}), we have the following ``Weyl's law".

\begin{proposition}[Weyl's law I] \label{prop:weyl}
Let $T>0$. 
We consider the following counting function:
\[ N(T) := \# \bigl\{ j \, \big| \,  1/4 + \rho_j(2)^2 \le T \bigr\}
+\# \bigl\{ j \, \big| \,   1/4+ \mu_j(-2)^2 \le T  \bigr\}
.\]
Then we have
\begin{equation}
N(T) \sim \frac{\mathrm{vol}(\Gamma_K \backslash \bH^2)}{8 \pi^2}
\, T \quad (T \to \infty). 
\end{equation}
\end{proposition}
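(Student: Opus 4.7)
The plan is to prove the Weyl's law by applying the double-difference trace formula of Corollary~\ref{cor:ddtrf2} with the heat-kernel test function $h_1(r) = e^{-tr^2}$ for small $t > 0$ and then invoking Karamata's Tauberian theorem. The function $h_1$ is entire, even, and decays like a Gaussian in any horizontal strip, while its Fourier inverse $g_1(u) = (4\pi t)^{-1/2}e^{-u^2/(4t)}$ decays rapidly enough to ensure absolute convergence of all geometric sums in Corollary~\ref{cor:ddtrf2}, using standard polynomial lattice-point bounds on the number of hyperbolic-elliptic conjugacy classes with $\log N(\gamma)$ in a bounded interval. The auxiliary factor $h_2$ used to derive the double-difference formula can be chosen with $g_2 \in C_c^\infty(\R)$ and $h_2(i/2)\ne 0$ independently of $h_1$.

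Next I would determine the behaviour of each term of Corollary~\ref{cor:ddtrf2} as $t \to 0^+$. The identity contribution gives the leading term,
\[
\frac{\mathrm{vol}(\Gamma_K \backslash \bH^2)}{8\pi^2}\int_{-\infty}^\infty r\tanh(\pi r)\,e^{-tr^2}\,dr
\ =\ \frac{\mathrm{vol}(\Gamma_K \backslash \bH^2)}{8\pi^2\, t}\ +\ O(1),
\]
since $\tanh(\pi r)\to 1$ at infinity while $2r\bigl(\tanh(\pi r)-1\bigr)$ is integrable uniformly in $t$. The finite elliptic sum is $O(1)$ because each integral $\int g_1(u)e^{-u/2}\,\frac{e^u - e^{2i\theta_1}}{\cosh u - \cos 2\theta_1}\,du$ converges to the value $-ie^{i\theta_1}/\sin\theta_1$ of its integrand at $u=0$. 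The hyperbolic-elliptic sum is exponentially small in $1/t$ because $\log N(\gamma)$ is bounded below and $g_1(\log N(\gamma))$ decays like $t^{-1/2}\exp\bigl(-(\log N_0)^2/(4t)\bigr)$. The parabolic contribution $-2\log\varepsilon\,g_1(0)$ is of order $t^{-1/2}$; the type~2 hyperbolic sum is again exponentially small; and the spectral-side correction $-2h_1(i/2) = -2e^{t/4}$ is $O(1)$.

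By Lemma~\ref{lem:least-eigen} the eigenvalues $1/4 + \rho_j(2)^2$ and $1/4 + \mu_j(-2)^2$ of $\Delta_0^{(1)}$ are all non-negative, so writing $\{\lambda_n\}_n$ for their concatenation (with multiplicities) I obtain
\[
\sum_n e^{-t\lambda_n} \ =\ e^{-t/4}\Bigl(\sum_j e^{-t\rho_j(2)^2}+\sum_j e^{-t\mu_j(-2)^2}\Bigr)
\ \sim\ \frac{\mathrm{vol}(\Gamma_K \backslash \bH^2)}{8\pi^2\, t}
\qquad (t\to 0^+).
\]
Karamata's Tauberian theorem now delivers $N(T) = \#\{n:\lambda_n\le T\} \sim \frac{\mathrm{vol}(\Gamma_K \backslash \bH^2)}{8\pi^2}\,T$ as $T\to\infty$.

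The main obstacle will be the uniform-in-$t$ control of the elliptic and hyperbolic-elliptic contributions: confirming that the elliptic integrals stay bounded across the finitely many conjugacy classes (with the $(\cosh u - \cos 2\theta_1)^{-1}$ factor uniformly behaved away from its two zeros in $u$) and verifying absolute convergence of the hyperbolic-elliptic series uniformly for small $t$. Both are standard but delicate; once secured, the Tauberian step is routine.
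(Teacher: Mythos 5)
Your proposal is correct and follows essentially the same route as the paper: the heat-kernel test function $h_1(r)=e^{-\beta r^2}$ in Corollary \ref{cor:ddtrf2}, term-by-term asymptotics as $\beta\to 0^{+}$ (identity term giving $\mathrm{vol}(\Gamma_K\backslash\bH^2)/(8\pi^2\beta)$, parabolic term of order $\beta^{-1/2}$, everything else $O(1)$ or smaller), and a Tauberian theorem to conclude. The only cosmetic difference is that you name Karamata and spell out the elliptic and hyperbolic-elliptic estimates a bit more explicitly than the paper does.
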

\begin{proof}
For any $\beta>0$, the test function $h_1(r) = e^{- \beta r^2}$
is admissible in Corollary \ref{cor:ddtrf2}. The Fourier transform is
\[ g_1(u) = \frac{e^{-u^2/(4 \beta)}}{\sqrt{4 \pi \beta}}
,\]
so we have
\begin{align*}
 & \sum_{j=0}^{\infty}e^{-\beta(1/4+\rho_j(2)^2)}
+\sum_{j=0}^{\infty}e^{-\beta(1/4+\mu_j(-2)^2)}
\, -2 \\
& = \frac{\mathrm{vol}(\Gamma_K \backslash \bH^2)}{8 \pi^2}
\int_{-\infty}^{\infty} e^{-\beta(1/4+r^2)} r \tanh(\pi r) \, dr \\
& \quad   -  \frac{e^{-\beta/4}}{\sqrt{4 \pi \beta}} \sum_{R(\theta_1,\theta_2) \in 
\Gamma_{\mathrm{E}}} 
\frac{i e^{ - i \theta_1}}{4 \nu_{R} \sin \theta_1}
\int_{-\infty}^{\infty} e^{-u^2/(4 \beta)} \, e^{-u/2} \biggl[  \frac{e^u - e^{2 i \theta_1}}{\cosh u - \cos 2 \theta_1} \biggr] 
du \\
& \quad 
 - \frac{e^{-\beta/4}}{\sqrt{4 \pi \beta}} \sum_{(\gamma,\omega) \in \Gamma_{\mathrm{HE}}} 
\frac{\log N(\gamma_0) }{N(\gamma)^{1/2}-N(\gamma)^{-1/2}} 
\,  e^{ - (\log N(\gamma))^2/(4 \beta) }  \\
& \quad - 
\frac{e^{-\beta/4}}{\sqrt{4 \pi \beta}}
\biggl\{
2 \log \varepsilon 
\, + 4 \log \varepsilon \, 
\sum_{k=1}^{\infty} e^{-(2k \log \varepsilon)^2/(4 \beta)} \, \varepsilon^{-k} \biggr\}. 
\end{align*}
Since $\tanh(\pi r)=1+O(e^{-2 \pi|r|})$ for any $r \in \R$,
we obtain
\[ \sum_{j=0}^{\infty}e^{-\beta(1/4+\rho_j(2)^2)}
+\sum_{j=0}^{\infty}e^{-\beta(1/4+\mu_j(-2)^2)}
= \frac{\mathrm{vol}(\Gamma_K \backslash \bH^2)}{8 \pi^2 \, \beta}
- \frac{2 \log \ve}{\sqrt{4 \pi \beta}} +O(1)
\quad (\beta \to +0).
\]
By a classical Tauberian theorem, we complete the proof.
\end{proof}
We remark that the above proposition  is enough to prove ``a prime geodesic type theorem"  
for the set of primitive hyperbolic-elliptic conjugacy classes of $\Gamma_K$
in the next subsection. 

Besides we can prove a more strong Weyl's law by using 
Theorem \ref{th:dtrf},
the differences (not double differences) of the trace formula
for $\Gamma_{K}$ with the weight $(0,m)$.

\begin{theorem}[Weyl's law II] \label{th:weyl2}
Let $m \in 2\Z$ and $T>0$.
We consider the following two counting functions:
\begin{align*}
& N_m^{+}(T) := \# \bigl\{ j \, \big| \,  1/4 + \rho_j(m)^2 \le T \bigr\}, \\
& N_m^{-}(T) := \# \bigl\{ j \, \big| \,   1/4+ \mu_j(m-2)^2 \le T  \bigr\}.
\end{align*}

Then we have
\begin{align}
\mbox{If $m \ge 2$, then} \quad & N_m^{+}(T) \sim (m-1) \frac{\mathrm{vol}(\Gamma_K \backslash \bH^2)}{16 \pi^2}
\, T \quad (T \to \infty), \\
\mbox{If $m\le 0$, then} \quad & N_m^{-}(T) \sim (1-m) \frac{\mathrm{vol}(\Gamma_K \backslash \bH^2)}{16 \pi^2}
\, T \quad (T \to \infty). 
\end{align}
\end{theorem}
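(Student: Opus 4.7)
The plan is to apply Theorem \ref{th:dtrf} with the Gaussian test function $h_1(r) = e^{-\beta r^2}$ (letting $\beta \to 0^+$) and a fixed admissible auxiliary $h_2$ chosen so that $g_2 \in C_c^{\infty}(\R)$ is even, non-negative and not identically zero --- this guarantees $h_2\bigl(\tfrac{i(m-1)}{2}\bigr) > 0$. A Karamata Tauberian argument will then convert the resulting heat-kernel asymptotic into the Weyl asymptotic for $N_m^{\pm}(T)$. The key simplification is that in each of the two regimes of the statement one of the two spectral sums on the left-hand side of Theorem \ref{th:dtrf} is actually empty.

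First I would verify the vanishing. For $m \ge 2$ the space $\Ker(K_{m-2}^{(2)})$ demands an eigenvalue $\tfrac{m}{2}(1-\tfrac{m}{2}) \le 0$ of $\Delta_{m-2}^{(2)}$: when $m \ge 4$ this value lies strictly below the lower bound $\tfrac{|m-2|}{2}(1-\tfrac{|m-2|}{2})$ given by Lemma \ref{lem:least-eigen}, while for $m=2$ the target eigenvalue is $0$ and would force a constant eigenfunction, which is impossible in $L^2_{\text{dis}}(\Gamma_K \backslash \bH^2; (0,0))$ because $\Gamma_K \backslash \bH^2$ is non-compact. Hence $\sum_j h_1(\mu_j(m-2)) = 0$ for all even $m \ge 2$. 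A symmetric argument (exchanging the roles of $\Lambda$ and $K$) yields $\Ker(\Lambda_m^{(2)}) = \{0\}$ for $m \le 0$, so $\sum_j h_1(\rho_j(m)) = 0$ in that regime.

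Next I would analyse the geometric side of Theorem \ref{th:dtrf} asymptotically as $\beta \to 0^+$. With $g_1(u) = (4\pi\beta)^{-1/2} e^{-u^2/(4\beta)}$, the identity term dominates: using $\tanh(\pi r) = \sgn(r) + O(e^{-2\pi|r|})$ one obtains $\int_{-\infty}^{\infty} r\, e^{-\beta r^2} \tanh(\pi r)\, dr = \beta^{-1} + O(1)$, so the identity contributes $(m-1)\, h_2(\tfrac{i(m-1)}{2}) \cdot \tfrac{\mathrm{vol}(\Gamma_K \backslash \bH^2)}{16\pi^2}\,\beta^{-1}(1+o(1))$. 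All remaining geometric contributions are subleading: the finite elliptic sum has continuous bounded integrands against the approximate delta $g_1$ and gives $O(1)$; the parabolic/scattering piece $-\sgn(m-1)\log\varepsilon\, g_1(0)\, h_2(\tfrac{i(m-1)}{2})$ is $O(\beta^{-1/2})$; and the hyperbolic-elliptic and type 2 hyperbolic-plus-scattering sums involve $g_1$ evaluated at strictly positive arguments $\log N(\gamma)$ and $2k\log \varepsilon$ (bounded below), so each term decays like $e^{-c/\beta}$ and the overall contribution is $O(e^{-c/\beta})$.

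Putting this together gives, for $m \ge 2$,
\[ \sum_{j=0}^{\infty} e^{-\beta(\tfrac14+\rho_j(m)^2)} \sim (m-1)\, \frac{\mathrm{vol}(\Gamma_K \backslash \bH^2)}{16\pi^2 \beta} \qquad (\beta \to 0^+), \]
and symmetrically, for $m \le 0$, the same asymptotic with $\rho_j(m)$ replaced by $\mu_j(m-2)$ and leading coefficient $(1-m)$. Karamata's Tauberian theorem, applied to the non-decreasing sequence of eigenvalues tending to infinity, then delivers the claimed asymptotics for $N_m^{\pm}(T)$. The main obstacle is the uniform-in-$\beta$ control of the hyperbolic-elliptic geometric sum, which is infinite: one must combine the absolute convergence guaranteed by Assumption \ref{assump:test} with the Gaussian decay of $g_1(\log N(\gamma))$ to conclude this sum is $o(\beta^{-1})$, and analogously for the type 2 hyperbolic tail.
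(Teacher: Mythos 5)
Your strategy---the Gaussian test function $h_1(r)=e^{-\beta r^2}$ inserted into the differences of the trace formula (Theorem \ref{th:dtrf}), extraction of the $\beta^{-1}$ term from the identity contribution, and a Tauberian theorem---is exactly the route the paper takes, and your asymptotic analysis of the geometric side (the $O(\beta^{-1/2})$ parabolic/scattering piece and the exponentially small hyperbolic-elliptic and type 2 hyperbolic sums) is sound.

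One assertion in your first step is false, although harmlessly so. You claim that for $m=2$ the space $\Ker(K_{0}^{(2)})$ is trivial because a constant eigenfunction ``is impossible in $L^2_{\mathrm{dis}}(\Gamma_K\backslash\bH^2;(0,0))$ because $\Gamma_K\backslash\bH^2$ is non-compact.'' Non-compactness is not the issue: the Hilbert modular surface has \emph{finite volume}, so the constant function is square-integrable and contributes a genuine discrete (residual) eigenvalue $0$; indeed the paper records $\Ker(\Lambda_0^{(2)})=\Ker(K_0^{(2)})=\C$ in the proof of Corollary \ref{cor:ddtrf2}. The same slip occurs in your symmetric claim that $\Ker(\Lambda_m^{(2)})=\{0\}$ for all $m\le 0$, which fails at $m=0$. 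In each of the two boundary cases ($m=2$ for the first statement, $m=0$ for the second) the offending kernel is one-dimensional and contributes only the single term $h_1(i/2)=e^{\beta/4}=O(1)$ to the heat trace, so the leading asymptotic $|m-1|\cdot\mathrm{vol}(\Gamma_K\backslash\bH^2)\,(16\pi^2\beta)^{-1}$ and hence the Weyl law are unaffected; but the justification should be corrected to ``this space is at most one-dimensional, hence its contribution is $O(1)$'' rather than ``empty.'' For $m\ge 4$ and for $m\le -2$ your comparison with the lower bound of Lemma \ref{lem:least-eigen} is the right argument and agrees with (\ref{eq:m=4}).
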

\begin{proof}
We take the test function $h_1(r) = e^{- \beta r^2}$ $(\beta>0)$
in Theorem \ref{th:dtrf}. 
Then we obtain (by the same computation as in the proof of Proposition \ref{prop:weyl})
\begin{align*} 
& \sum_{j=0}^{\infty}e^{-\beta(1/4+\rho_j(m)^2)}
-\sum_{j=0}^{\infty}e^{-\beta(1/4+\mu_j(m-2)^2)} \\
& \quad = (m-1) \frac{\mathrm{vol}(\Gamma_K \backslash \bH^2)}{16 \pi^2 \, \beta}
- \sgn(m-1) \frac{\log \ve}{\sqrt{4 \pi \beta}} +O(1)
\quad (\beta \to +0).
\end{align*}
The rest is clear.
\end{proof}

\subsection{Prime geodesic theorem} 

We can show the following asymptotic formulas for counting
functions of  $\mathrm{P\Gamma_{HE}}$, 
the set of primitive hyperbolic-elliptic 
$\Gamma_K$-conjugacy classes of $\Gamma_K$, by
Corollary \ref{cor:ddtrf2} and Proposition \ref{prop:weyl}.

\begin{theorem}[Prime geodesic theorem] \label{th:pgt}
For $X\ge 1$, 
\begin{equation} \label{eq:pgt1}
\sum_{\substack{(p,p')  \in \mathrm{P\Gamma_{HE}} \\
    N(p) \le X}}
\log N(p) = 2X
- \sum_{1/2< s_j(2) <1} \frac{X^{s_j(2)}}{s_j(2)}
- \sum_{1/2< s_j(-2) <1}  \frac{X^{s_j(-2)}}{s_j(-2)}
+O \bigl( X^{3/4}   \bigr), 
\end{equation}

\begin{equation} \label{eq:pgt2}
\begin{split}
\sum_{\substack{(p,p')  \in \mathrm{P\Gamma_{HE}}  \\
    N(p) \le X}}
1 =& 2 \Li(X) 
- \sum_{1/2< s_j(2) <1}  \Li \big( X^{s_j(2)} \bigr)
- \sum_{1/2< s_j(-2) <1}  \Li \bigl( X^{s_j(-2)} \bigr) \\
&+O \bigl( X^{3/4} /\log X   \bigr), 
\end{split}
\end{equation}
where, $s_j(2) := 1/2 -i \rho_j(2)$, $s_j(-2) := \frac{1}{2} - i \mu_j(-2)$,
and $\Li(x) := \int_{2}^{x} 1/\log t \, dt$.
$($The condition $1/2<s_j(2), \, s_j(-2)<1$ implies that
$\rho_j(2)^2$ and $\mu_j(-2)^2$ are negative. See $($\ref{eq:rj}$)$. $)$
\end{theorem}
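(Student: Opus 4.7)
The plan is to apply the double-difference trace formula (Corollary \ref{cor:ddtrf2}) with a test function whose Fourier transform $g_1(u)$ is a smooth even approximation to $u \mapsto \chi_{[-\log X,\log X]}(u)\,e^{|u|/2}$, convolved with a smooth bump of width $Y \in (X^{-1/2}, 1)$ to be optimized. Writing each hyperbolic-elliptic class in $\Gamma_{\mathrm{HE}}$ as $(p^k,\cdot)$ for $p$ primitive, the hyperbolic-elliptic term on the geometric side of Corollary \ref{cor:ddtrf2} equals
\[
\sum_{(p,p') \in \mathrm{P\Gamma_{HE}}} \sum_{k \ge 1} \frac{\log N(p)\, g_1(k \log N(p))}{N(p)^{k/2} - N(p)^{-k/2}}.
\]
Since $g_1(u)\,e^{-|u|/2} \approx \chi_{[0,\log X]}(|u|)$, this sum equals $\psi_{\mathrm{HE}}(X) + O(XY)$, where $\psi_{\mathrm{HE}}(X) := \sum_{k \ge 1} \sum_{N(p)^k \le X} \log N(p)/(1 - N(p)^{-k})$ is a Chebyshev-type counting function adapted to $\mathrm{P\Gamma_{HE}}$, and the $O(XY)$ comes from smoothing at the boundary $|u| = \log X$.

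Next I would evaluate the spectral side term by term. The trivial-eigenvalue contribution $-2h_1(i/2)$ produces the main term: since $h_1(i/2) = \int g_1(u)\,e^{-u/2}\, du = X + O(XY + \log X)$, moving this to the geometric side contributes $+2X$ to $\psi_{\mathrm{HE}}(X)$. For each exceptional eigenvalue $\rho_j(2) = i(s_j(2) - 1/2)$ with $s_j(2) \in (1/2,1)$, a direct integral gives $h_1(\rho_j(2)) = X^{s_j(2)}/s_j(2) + O(X^{1/2} + XY)$, producing the correction $-\sum X^{s_j(2)}/s_j(2)$; the exceptional $\mu_j(-2)$ contribute $-\sum X^{s_j(-2)}/s_j(-2)$ analogously. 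The remaining geometric terms of Corollary \ref{cor:ddtrf2} are each $O(X^{1/2})$: the identity integral obeys $\int |r\,h_1(r)\,\tanh(\pi r)|\, dr \ll X^{1/2}$, the finite elliptic sum involves only values $g_1(\log N(\cdot))$ over a bounded set of hyperbolic-elliptic norms, and the terms $2\log\varepsilon\, g_1(0)$ and $4\log\varepsilon \sum g_1(2k\log\varepsilon)\varepsilon^{-k}$ are trivially $O(X^{1/2})$ since $g_1$ grows only like $e^{|u|/2}$.

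The main obstacle is controlling the tempered discrete spectrum $\{\rho_j(2), \mu_j(-2)\} \subset \R$ on the left-hand side. For the smoothed test function, $|h_1(r)| \ll X^{1/2}|r|^{-1}(1+Y|r|)^{-A}$ for any $A \ge 2$, while Proposition \ref{prop:weyl} asserts $N(T) \sim c\,T$ in terms of the eigenvalue $T = 1/4 + r^2$, so the density of $|r_j|$ at scale $R$ is $O(R)$. Hence
\[
\sum_{\text{tempered } r_j} h_1(r_j) \ll X^{1/2} \int_0^{\infty} \frac{R\, dR}{R\,(1+YR)^A} \ll X^{1/2}/Y.
\]
Balancing this against the smoothing error $O(XY)$ and choosing $Y = X^{-1/4}$ yields total error $O(X^{3/4})$. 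Passing from $\psi_{\mathrm{HE}}(X)$ to $\sum_{N(p) \le X} \log N(p)$ costs only $O(X^{1/2}\log X)$, since the $k \ge 2$ terms contribute at most $O(X^{1/2}\log X)$ and $(1 - N(p)^{-1})^{-1} = 1 + O(N(p)^{-1})$. This establishes (\ref{eq:pgt1}).

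Finally, (\ref{eq:pgt2}) follows from (\ref{eq:pgt1}) by Abel summation: setting $\theta(x) := \sum_{N(p) \le x} \log N(p)$,
\[
\sum_{\substack{(p,p') \in \mathrm{P\Gamma_{HE}} \\ N(p) \le X}} 1 = \frac{\theta(X)}{\log X} + \int_2^X \frac{\theta(t)}{t\,\log^2 t}\,dt,
\]
and substituting (\ref{eq:pgt1}) converts each main-order power $x^s$ into $\Li(x^s) + O(x^s/\log^2 x)$, while the $O(x^{3/4})$ error in $\theta$ becomes $O(X^{3/4}/\log X)$ in the counting function, as stated.
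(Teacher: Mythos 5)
Your argument is essentially the paper's own proof (the Iwaniec-style argument: smoothed cutoff test function in Corollary \ref{cor:ddtrf2}, the Weyl law of Proposition \ref{prop:weyl} to control the tempered spectrum, and balancing the smoothing error against the spectral error to get $O(X^{3/4})$), with your smoothing width $Y$ corresponding to the paper's $T^{-1}=Y_{\mathrm{paper}}/X$. Two small points: the identity term is actually $O(X^{1/2}/Y)=O(X^{3/4})$ rather than $O(X^{1/2})$ (harmless, since it is absorbed by the final error), and your $O(XY)$ smoothing estimate tacitly needs a Chebyshev-type upper bound for $\psi_{\mathrm{HE}}$ in short intervals, which the paper obtains by subtracting the trace formula evaluated at $X$ from that at $X+Y$.
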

\begin{proof}
We follow the same procedure as in Iwaniec \cite{Iw}. 
Let us begin with the same test function on \cite[p.155]{Iw}
or \cite[p.401]{IK}, given by  
\[ g_1(u) = 2 \cosh (u/2) \, q(u), \]
where, $q(u)$ is even, smooth, supported on $|u| \le \log(X+Y)$, and such that
$0 \le q(u) \le 1$ and $q(u)=1$ if $|x| \le \log X$. 
The parameters $X \ge Y \ge 1$ will be chosen later.
For $s=1/2+ir$ in the segment $1/2<s \le 1$ we have
\[ h_1(r) = \int_{-\infty}^{\infty}
\Bigl(  e^{su} + e^{(1-s)u} \Bigr) q(u) \, du
= s^{-1} X^s + O(Y+X^{1/2}),
\]
and for $s$ on the line $\RE(s)=1/2$ we get by partial integration that
\[ h_1(r)  \ll |s|^{-1} X^{1/2} \min \bigl\{ 1, |s|^{-2} T^2 \bigr\},
\]
where $T=XY^{-1}$. By using Proposition \ref{prop:weyl},
the spectral side of Corollary \ref{cor:ddtrf2} becomes
(by the same method as in \cite[pp.305--307]{Naud})
\begin{equation*}
\begin{split}
& \sum_{j=0}^{\infty} h_1 \Bigl( \rho_j(2) \Bigr)
 +\sum_{j=0}^{\infty} h_1 \Bigl( \rho_j(-2) \Bigr)
 -2 h_1(i/2) \\
& =  -2X +\sum_{1/2< s_j(2) <1} \frac{X^{s_j(2)}}{s_j(2)}
+ \sum_{1/2< s_j(-2) <1}  \frac{X^{s_j(-2)}}{s_j(-2)}
+O(Y+X^{1/2}T).
\end{split}
\end{equation*}
On the geometric side, the identity term contributes
\[ \frac{\mathrm{vol}(\Gamma_K \backslash \bH^2)}{8 \pi^2}
\int_{-\infty}^{\infty} r h_1(r) \tanh( \pi r ) \, dr
\ll X^{1/2} T
\]
and the elliptic term, 
the parabolic plus scattering and the type 2 hyperbolic plus
scattering terms contribute no more than the above bound.
Gathering these estimates, we arrive at
\begin{equation} \label{eq:pgt3}
\begin{split}
& - \sum_{(p,p')  \in \mathrm{P\Gamma_{HE}}}
q(\log N(p) ) \\
& = - 2X
+ \sum_{1/2< s_j(2) <1} \frac{X^{s_j(2)}}{s_j(2)}
+ \sum_{1/2< s_j(-2) <1}  \frac{X^{s_j(-2)}}{s_j(-2)}
+O \bigl( Y+X^{1/2} T \bigr).
\end{split}
\end{equation}
Subtracting (\ref{eq:pgt3}) from that for $X+Y$ in place of $X$, 
we deduce that
\[ \sum_{X < N(p)< X+Y} \log N(p) \ll Y+X^{1/2} T
.\]
Hence, we can drop the excess over $N(p) \le X$ within the error term
in (\ref{eq:pgt3}).
As usual we choose $Y=X^{3/4}$ to minimize the error term.
We completes the proof.
\end{proof}

\subsection{Binary quadratic forms over the ring 
of real quadratic integers} 

We denote by $\mathcal{D}$ the set of discriminants of integral binary quadratic forms, 
that is,   
\[ \mathcal{D} := \{ d \in \Z \, | \, d \equiv 0, \, 1
\pmod{4}, \, d \mbox{ not a square}, \, 
d > 0 \}
.\]
For each $d \in \mathcal{D}$, let $h(d)$ denote the number of inequivalent primitive
binary quadratic forms of discriminant $d$, and let $(x_d,y_d)$ be the fundamental solution
of the Pellian equation $x^2-dy^2=4$ over $\Z$. Put 
\[ \varepsilon_d: = \frac{x_d+\sqrt{d} \, y_d}{2}.
\]
By using the prime geodesic theorem for $\mathrm{PSL}(2,\Z)$, 
Sarnak \cite{S1} deduced the following theorem
on the average behavior of $h(d)$.  

\begin{theorem}[Sarnak {\cite[Theorem 2.1]{S1}}]
\label{th:sarnak}
For $x \ge 2$, we have
\[ \sum_{\substack{d \in \mathcal{D} \\
    \varepsilon_d \le x}}
h(d) \log \varepsilon_d = \frac{x^2}{2}  
+O \bigl( x^{3/2} (\log x)^3 \bigr)
\quad \quad (x \to \infty)
. \]
\[ 
\sum_{\substack{d \in \mathcal{D} \\
    \varepsilon_d \le x}}
h(d)
= \Li(x^2)  + O \bigl( x^{3/2} (\log x)^2 \bigr)
\quad \quad (x \to \infty)
.\]
Here, $\Li(x) = \int_{2}^{x} 1/\log t \, dt$.
\end{theorem}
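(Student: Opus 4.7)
The plan is to reduce Sarnak's theorem to the prime geodesic theorem for $\mathrm{PSL}(2,\Z)$ via the classical bijection between primitive hyperbolic conjugacy classes of the modular group and classes of primitive indefinite integral binary quadratic forms. I would first set up this dictionary carefully: to each $d \in \mathcal{D}$ and each $\mathrm{SL}(2,\Z)$-equivalence class $[Q]$ of primitive integral binary quadratic forms of discriminant $d$ one attaches the automorph subgroup of $Q$ in $\mathrm{PSL}(2,\Z)$, whose generator gives a primitive hyperbolic conjugacy class $[\gamma_{d,[Q]}]$; and conversely, every primitive hyperbolic conjugacy class arises this way. The essential analytic input is that under this correspondence $N(\gamma_{d,[Q]}) = \varepsilon_d^{2}$, since a generator of the automorph group of $Q$ is conjugate in $\mathrm{PSL}(2,\R)$ to $\mathrm{diag}(\varepsilon_d,\varepsilon_d^{-1})$ acting on $\bH$.

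Second, I would rewrite the left-hand sums as sums over primitive hyperbolic conjugacy classes. Namely,
\[
\sum_{\substack{d \in \mathcal{D} \\ \varepsilon_d \le x}} h(d) \log \varepsilon_d
= \sum_{\substack{[\gamma] \in \mathrm{Prim}(\mathrm{PSL}(2,\Z)) \\ N(\gamma) \le x^{2}}} \tfrac{1}{2}\log N(\gamma),
\]
and similarly
\[
\sum_{\substack{d \in \mathcal{D} \\ \varepsilon_d \le x}} h(d)
= \#\{[\gamma] \in \mathrm{Prim}(\mathrm{PSL}(2,\Z)) : N(\gamma) \le x^{2}\}.
\]
At this point the problem has been converted into a purely spectral/geometric question about $\mathrm{PSL}(2,\Z) \backslash \bH$.

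Third, I would invoke the prime geodesic theorem for $\mathrm{PSL}(2,\Z)$. For the trivial-weight case on the upper half plane this is classical: using Selberg's trace formula with a carefully chosen pair of test functions (essentially $g(u) = 2\cosh(u/2) q(u)$ with a smoothed characteristic function $q$, exactly as in the proof of Theorem \ref{th:pgt}), together with a Weyl-type bound on the number of small Laplace eigenvalues, one obtains
\[
\sum_{N(p) \le X} \log N(p) = X + O(X^{3/4}(\log X)^{c})
\]
with some explicit $c$. Substituting $X = x^{2}$ gives the first asymptotic, and the second follows by partial summation, converting the error $O(x^{3/2}(\log x)^{c})$ into $O(x^{3/2}(\log x)^{c-1})$ inside $\mathrm{Li}$; adjusting the constants yields the stated exponents $3$ and $2$.

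The main obstacle is the error term, not the main term: the leading behavior is an immediate consequence of the bijection and the dominant $X$ in the prime geodesic theorem, but obtaining the sharp $O(x^{3/2}(\log x)^{3})$ requires the best classical bounds on Selberg's small eigenvalue contributions for $\mathrm{PSL}(2,\Z)$ and a careful choice of the truncation parameter $Y = X^{3/4}$ in the smoothing, exactly paralleling the proof of Theorem \ref{th:pgt} above; the bookkeeping in the partial summation step also needs care to keep the logarithmic factors honest. Everything else — the set-up of the bijection, the identification $N(\gamma) = \varepsilon_d^{2}$, and the application of the trace formula — is standard and would follow the template the paper itself uses for the hyperbolic-elliptic conjugacy classes of $\Gamma_{K}$.
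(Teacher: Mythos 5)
This theorem is quoted in the paper from Sarnak \cite{S1} without proof, so there is no in-paper argument to compare against; your proposal is the standard reduction to the prime geodesic theorem for $\mathrm{PSL}(2,\Z)$ via the bijection between proper equivalence classes of primitive indefinite forms of discriminant $d$ and primitive hyperbolic conjugacy classes with $N(\gamma)=\varepsilon_d^{2}$, which is both Sarnak's original method and exactly the template the paper itself uses to prove its generalization (Theorem \ref{th:class-number} via Theorem \ref{th:pgt}). The outline, including the substitution $X=x^{2}$ and the partial-summation step, is correct.
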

There are several works to improve
the error term of the prime geodesic theorem for
$\mathrm{PSL}(2,\Z)$. We refer to \cite{SY}
for this subject.
  
Let us  consider a generalization of Theorem \ref{th:sarnak}
to that for class numbers of indefinite binary quadratic forms 
over the real quadratic integer ring $\mathcal{O}_{K}$.  
Put
\[ \mathcal{D}_{+-} := \{ d \in \mathcal{O}_{K} \, | \,  
\exists b \in \mathcal{O}_K \, \mbox{ s.t. } \, d \equiv b^2
\pmod{4}, \, d \mbox{ not a square in $\mathcal{O}_{K}$}, \, 
d > 0, \, d' < 0 \}
.\]
For each $d \in \mathcal{D}_{+-}$, let $h_{K}(d)$ denote the number of 
inequivalent primitive binary quadratic forms of discriminant $d$
over $\mathcal{O}_{K}$, 
and let $(x_d,y_d) \in \mathcal{O}_{K} \times \mathcal{O}_{K}$ 
be the fundamental solution of the Pellian equation $x^2-dy^2=4$. 
Put 
\[ \varepsilon_K(d): = \frac{x_d+\sqrt{d} \, y_d}{2}.
\]
By using Theorem \ref{th:pgt}, we can deduce the 
following theorem on
the average behavior of $h_{K}(d)$. 

\begin{theorem} \label{th:class-number}
For $x \ge 2$, we have
\begin{equation}
\begin{split}
\sum_{\substack{d \in \mathcal{D}_{+-} \\
    \varepsilon_K(d) \le x}}
h_{K}(d) \log \varepsilon_K(d) = & \, x^2
- \frac{1}{2} \sum_{1/2< s_j(2) <1} \frac{X^{2 s_j(2)}}{s_j(2)} 
- \frac{1}{2} \sum_{1/2< s_j(-2) <1} \frac{X^{2 s_j(-2)}}{s_j(-2)} \\
& + O(x^{3/2}) \quad \quad (x \to \infty), 
\end{split}
\end{equation}
\begin{equation}
\begin{split}
\sum_{\substack{d \in \mathcal{D}_{+-} \\
    \varepsilon_K(d) \le x}}
h_{K}(d)
=& \, 2 \Li(x^2) 
- \sum_{1/2< s_j(2) <1}  \Li \big( x^{2 s_j(2)} \bigr) 
- \sum_{1/2< s_j(-2) <1}  \Li \big( x^{2 s_j(-2)} \bigr) \\
& +O(x^{3/2} /\log x) \quad \quad (x \to \infty)
.
\end{split}
\end{equation}
\end{theorem}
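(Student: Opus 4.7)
The plan is to derive Theorem \ref{th:class-number} from the prime geodesic theorem (Theorem \ref{th:pgt}) via a Gauss-type bijective correspondence between primitive hyperbolic-elliptic $\Gamma_K$-conjugacy classes and equivalence classes of primitive indefinite binary quadratic forms over $\mathcal{O}_K$ with discriminant in $\mathcal{D}_{+-}$. Concretely, to each $(p,p') \in \mathrm{P\Gamma_{HE}}$ attach the discriminant $d_p := \tr(p)^2 - 4 \in \mathcal{O}_K$: the conditions $|\tr(p)|>2$, $|\tr(p')|<2$ translate precisely to $d_p>0$, $d_p'<0$, i.e.\ $d_p \in \mathcal{D}_{+-}$, while primitivity is preserved on both sides. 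Under this correspondence, the eigenvalue $N(p)^{1/2}$ of $p$ equals $(\tr(p)+\sqrt{d_p})/2$ and, since the centralizer of $(p,p')$ is generated by the fundamental solution of $x^2 - d_p y^2 = 4$, one has the key identity
\[
N(p) = \varepsilon_K(d_p)^2, \qquad \text{so that} \qquad \log N(p) = 2 \log \varepsilon_K(d_p),
\]
and each $d \in \mathcal{D}_{+-}$ is realized by exactly $h_K(d)$ inequivalent primitive hyperbolic-elliptic classes.

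The first step is to verify this correspondence in detail, mimicking Sarnak's \cite{S1} treatment for $\mathrm{PSL}(2,\Z)$ but with $\mathcal{O}_K$-coefficients. The only substantive point here is to check that the Galois-conjugate condition $|\tr(p')|<2$ is exactly what forces $d' < 0$, and that the centralizer in $\Gamma_K$ is generated by the Pell solution (rather than by a square thereof), which follows from class number one of $K$ and the absence of units of norm $-1$ being absorbed by the quadratic form equivalence.

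Granted the bijection, the proof is direct. Substituting $X = x^2$ in the first formula \eqref{eq:pgt1} of Theorem \ref{th:pgt} and using $N(p) \le x^2 \Leftrightarrow \varepsilon_K(d_p) \le x$, I get
\begin{align*}
2 \!\!\! \sum_{\substack{d \in \mathcal{D}_{+-} \\ \varepsilon_K(d) \le x}} \!\!\! h_K(d) \log \varepsilon_K(d)
&= \!\!\! \sum_{\substack{(p,p') \in \mathrm{P\Gamma_{HE}} \\ N(p) \le x^2}} \!\!\! \log N(p) \\
&= 2x^2 - \!\!\! \sum_{1/2<s_j(2)<1} \!\!\! \frac{x^{2 s_j(2)}}{s_j(2)} - \!\!\! \sum_{1/2<s_j(-2)<1} \!\!\! \frac{x^{2 s_j(-2)}}{s_j(-2)} + O(x^{3/2}),
\end{align*}
which gives the first asymptotic after dividing by $2$. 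For the second asymptotic, I would apply Abel summation (partial summation) with the weight $1/\log \varepsilon_K(d)$, converting the $\log$-weighted count to the unweighted count; the main term $2x^2$ becomes $2\Li(x^2)$, each exceptional term $x^{2 s_j(\pm 2)}/s_j(\pm 2)$ becomes $\Li(x^{2 s_j(\pm 2)})$, and the error $O(x^{3/2})$ becomes $O(x^{3/2}/\log x)$, exactly as in Sarnak's deduction of the class-number-count formula from the log-weighted one.

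The main obstacle is step one, namely setting up the Gauss-type correspondence rigorously over $\mathcal{O}_K$: verifying that primitive $\Gamma_K$-conjugacy classes biject with $\mathrm{SL}(2,\mathcal{O}_K)$-equivalence classes of primitive indefinite binary quadratic forms of discriminant $d$, and that the centralizer generator corresponds to the fundamental Pell solution $\varepsilon_K(d)$ rather than some power. Once this structural identification is in place, the rest of the argument is a mechanical transfer of the prime geodesic theorem via the substitution $X = x^2$ and standard Abel summation, so the analytic heart of the theorem is entirely contained in Theorem \ref{th:pgt}, and hence in the double-difference trace formula (Theorem \ref{th:ddtrf}).
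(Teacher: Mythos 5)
Your proposal is correct and follows essentially the same route as the paper: establish the Gauss-type bijection between $\mathrm{SL}(2,\mathcal{O}_K)$-classes of primitive indefinite forms of discriminant $d\in\mathcal{D}_{+-}$ and primitive hyperbolic-elliptic conjugacy classes with $N(p)=\varepsilon_K(d)^2$, then substitute $X=x^2$ into Theorem \ref{th:pgt}. The only cosmetic difference is that the paper obtains the unweighted count directly from the second formula (\ref{eq:pgt2}) of the prime geodesic theorem rather than by Abel summation from the $\log$-weighted one; both are valid.
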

\begin{proof}
We recall the assumption that the class number of $K$ is one, 
so that $\mathcal{O}_{K}$ is a PID.
Let $Q(x,y)=ax^2+bxy+cy^2$ be a primitive indefinite 
quadratic forms of discriminant $d \in \mathcal{D}_{+-}$ over $\mathcal{O}_{K}$, 
i.e. $a,b,c \in \mathcal{O}_{K}$, the ideal $(a,b,c) = \mathcal{O}_{K}$
and $d=b^2-4ac>0$, $d'<0$.

The equation $Q(\theta,1)=0$ has two real roots, 
$\theta_1=(-b+\sqrt{d})/2a$ and $\theta_2=(-b-\sqrt{d})/2a$.
By linear change of variable, $\mathrm{SL}(2,\mathcal{O}_{K})$
acts on such forms and the number of equivalence classes is $h_{K}(d)$.
The stabilizer of $Q$ under the action of  $\mathrm{SL}(2,\mathcal{O}_{K})$
is equal to the stabilizer of $\theta_1$ or $\theta_2$ and become a
free abelian group of rank one.
And a generator of this group is given by
\[ g(Q) = 
\Bigl( 
\begin{array}{cc}
(t_0-bu_0)/2 & -c u_0 \\
a u_0 & (t_0+b u_0)/2
\end{array} \Bigr)
\]
where, $(t_0,u_0)$ is the fundamental solution to the Pellian equation
$t^2-du^2=4$ over $\mathcal{O}_{K}$. 
Moreover the norm of $g(Q)$ is $\ve_{K}(d)^2$
with $\ve_K(d)=(t_0+u_0\sqrt{d})/2$.

The map $Q \mapsto (g,g')$ sends primitive $\mathcal{O}_{K}$-integral 
quadratic forms to hyperbolic-elliptic conjugacy classes
of $\Gamma_{K}$. 
It is known that  it induces a bijection between classes of forms
and primitive hyperbolic-elliptic conjugacy classes of $\Gamma_K$.
(We refer to Efrat \cite{E} for details).
Hence, we obtain the desired formula from
Theorem \ref{th:pgt}.
\end{proof}

\bibliographystyle{plain}
\def\cprime{$'$} \def\polhk#1{\setbox0=\hbox{#1}{\ooalign{\hidewidth
  \lower1.5ex\hbox{`}\hidewidth\crcr\unhbox0}}}
\providecommand{\bysame}{\leavevmode\hbox
to3em{\hrulefill}\thinspace}
\providecommand{\MR}{\relax\ifhmode\unskip\space\fi MR }
\providecommand{\MRhref}[2]{%
  \href{http://www.ams.org/mathscinet-getitem?mr=#1}{#2}
} \providecommand{\href}[2]{#2}

\end{document}